\setlist[itemize]{noitemsep,topsep=1.5pt,leftmargin=0.35in,label={\tiny$\blacksquare$}}
\tiny\color{Emerald!61!darkgray}, 
\renewcommand{\lstlistlistingname}{List of Source Code}
\renewcommand\lstlistoflistings{\bgroup
  \let\contentsname\lstlistlistingname
  \def\l@lstlisting##1##2{\@dottedtocline{1}{0em}{5.5em}{##1}{##2}}
  \let\lst@temp\@starttoc \def\@starttoc##1{\lst@temp{lol}}%
  \tableofcontents \egroup}
\titleformat{\section}{\centering\normalfont\bfseries\uppercase}{\thesection}{1em}{}
\titleformat{\subsection}{\centering\bfseries}{\bfseries\thesubsection}{1em}{}
\newcommand{\SectionNumberFormat}[1]{\arabic{#1}}
\newcommand{\TheSectionPrefixName}[0]{Chapter}
\newcommand{\TheSectionPrefixNameUC}[0]{CHAPTER}
\renewcommand{\thesection}{\TheSectionPrefixName\ \SectionNumberFormat{section}}
\renewcommand{\thesubsection}{\SectionNumberFormat{section}.\arabic{subsection}}
\renewcommand{\thesubsubsection}{\thesubsection.\arabic{subsubsection}}
\newcounter{gtsection}
\newcommand{\SectionGTThesisFormatted}[1]{
     \addtocounter{section}{1}
     \addtocounter{gtsection}{1}
     \setcounter{subsection}{0}
     \setcounter{subsubsection}{0}
     \setcounter{figure}{0}
     \setcounter{table}{0}
     \setcounter{lstlisting}{0}
     \setcounter{equation}{0}
     \addcontentsline{toc}{section}{\thesection: #1}
     \begin{center}{\normalfont\bfseries\MakeUppercase{\thesection} \\ \normalfont\bfseries\uppercase{#1}}\end{center}
}
\newcommand{\SubsectionGTThesisFormatted}[1]{
     \addtocounter{subsection}{1}
     \setcounter{subsubsection}{0}
     \addcontentsline{toc}{subsection}{\thesubsection\hspace*{1em}#1}
     \begin{center}{\bfseries{\thesubsection}\hspace*{1em}\bfseries{#1}}\end{center}
}
\newcommand{\SubsubsectionGTThesisFormatted}[1]{
     \addtocounter{subsubsection}{1}
     {\bfseries{\thesubsubsection}\hspace*{1em}\bfseries{#1}}
}
\let\citep\cite
\newcommand{\undersetbrace}[2]{\underset{\displaystyle{#1}}{\underbrace{#2}}}
\newcommand{\gkpSI}[2]{\ensuremath{\genfrac{\lbrack}{\rbrack}{0pt}{}{#1}{#2}}} 
\newcommand{\gkpSII}[2]{\ensuremath{\genfrac{\lbrace}{\rbrace}{0pt}{}{#1}{#2}}}
\newcommand{\cf}{\textit{cf.~}} 
\newcommand{\Iverson}[1]{\ensuremath{\left[#1\right]_{\delta}}} 
\newcommand{\floor}[1]{\left\lfloor #1 \right\rfloor} 
\newcommand{\ceiling}[1]{\left\lceil #1 \right\rceil} 
\newcommand{\e}[1]{e\left(#1\right)} 
\newcommand{\seqnum}[1]{\href{http://oeis.org/#1}{\color{ProcessBlue}{\underline{#1}}}}
\renewcommand{\chi}{\upchi}
\newcommand{\OneFunc}[1]{\ensuremath{\mathds{1}_{#1}}}
\newcommand{\Hn}[2]{
     \ifthenelse{\equal{#2}{1}}{H_{#1}}{H_{#1}^{\left(#2\right)}}
}
\newcommand{\Floor}[2]{\ensuremath{\left\lfloor \frac{#1}{#2} \right\rfloor}}
\DeclareMathOperator{\ds}{ds} 
\DeclareMathOperator{\Id}{Id}
\DeclareMathOperator{\fg}{fg}
\DeclareMathOperator{\Div}{div}
\DeclareMathOperator{\poly}{poly}
\DeclareMathOperator{\Num}{Num}
\DeclareMathOperator{\Denom}{Denom}
\DeclareMathOperator{\ab}{a}
\DeclareMathOperator{\Conv}{Conv} 
\DeclareMathOperator{\ConvP}{P}
\DeclareMathOperator{\ConvQ}{Q}
\theoremstyle{plain} 
\newtheorem{theorem}{Theorem}
\newtheorem{conjecture}[theorem]{Conjecture}
\newtheorem{prop}[theorem]{Proposition}
\newtheorem{lemma}[theorem]{Lemma}
\newtheorem{cor}[theorem]{Corollary}
\numberwithin{theorem}{gtsection}
\theoremstyle{definition} 
\newtheorem{example}[theorem]{Example}
\newtheorem{remark}[theorem]{Remark}
\newtheorem{definition}[theorem]{Definition}
\newtheorem{notation}[theorem]{Notation}
\newtheorem{question}[theorem]{Question}
\newtheorem{discussion}[theorem]{Discussion}
\renewcommand{\arraystretch}{1.25} 
 \renewcommand*{\glsgroupheading}[1]{}%
  \renewcommand{\glossarymark}[1]{}
\newglossaryentry{fCvlg}{
    symbol={\ensuremath{\ast; f \ast g}},
    sort={fg},
    description={The Dirichlet convolution of $f$ and $g$, $$(f \ast g)(n) := \sum\limits_{d|n} f(d) g\left(\frac{n}{d}\right),$$ 
                 for $n \geq 1$. This symbol for the discrete convolution of two arithmetic functions is the only notion of 
                 convolution of functions we employ within the article.},
    type={symbols},
    name={Dirichlet convolution}
    }
\newglossaryentry{coeffExtraction}{
    symbol={\ensuremath{[q^n] F(q)}},
    sort={coeffExtraction},
    description={The coefficient of $q^n$ in the power series expansion of $F(q)$ about zero.},
    type={symbols},
    name={Series coefficient extraction}
    }
\newglossaryentry{MoebiusMuFunc}{
    symbol={\ensuremath{\mu(n)}},
    sort={MoebiusMuFunc},
    description={The M\"obius function.},
    type={symbols},
    name={M\"obius function}
    }
\newglossaryentry{mGCDn}{
    symbol={\ensuremath{\operatorname{gcd}(m, n); (m,n)}},
    sort={mGCDn},
    description={The greatest common divisor of $m$ and $n$. Both notations for the GCD are used 
    interchangably within the article. },
    type={symbols},
    name={Greatest common divisor}
    }
\newglossaryentry{IversonI}{
    symbol={\ensuremath{\Iverson{n=k}}},
    sort={IversonI},
    description={Synonym for $\delta_{n,k}$ which is one if and only if $n = k$, and zero otherwise.},
    type={symbols},
    name={Iverson's convention}
    }
\newglossaryentry{IversonII}{
    symbol={\ensuremath{\Iverson{\mathtt{cond}}}},
    sort={IversonII},
    description={For a boolean-valued \texttt{cond}, $\Iverson{\mathtt{cond}}$ evaluates to one precisely when \texttt{cond} is true, and zero otherwise.},
    type={symbols},
    name={Iverson's convention}
    }
\newglossaryentry{yfn}{
    symbol={\ensuremath{y_f(n)}},
    sort={yfn},
    description={The function $y_f(n)$ denotes the Dirichlet inverse of the function 
    $h(n) := f(n) \phi(n) n^{-2}$ where \glssymbol{phin} is Euler's totient function and 
    $f$ is any invertible arithmetic function such that $f(1) \neq 0$. },
    type={symbols},
    name={A special case Dirichlet inverse function}
    }
\newglossaryentry{tnk}{
    symbol={\ensuremath{t_{n,k}}},
    sort={tnk},
    description={The matrix sequence involved in the generating function expansions of the type I sums defined as 
    \[
    T_{f}(x) = [q^x]\left(\frac{1}{(q; q)_{\infty}} \times \sum_{n \geq 2} \sum_{k=1}^n t_{n,k} f(k) q^n + 
     f(1) q\right)
    \]},
    type={symbols},
    name={Matrix coefficients}
    }
\newglossaryentry{tnkInv}{
    symbol={\ensuremath{t_{n,k}^{(-1)}}},
    sort={tnkInv},
    description={Inverse matrix of the sequence \glssymbol{tnk}.},
    type={symbols},
    name={Inverse matrix coefficients}
    }
\newglossaryentry{chikn}{
    symbol={\ensuremath{\chi_{1,k}(n)}},
    sort={chikn},
    description={The principal Dirichlet character modulo $k$, i.e., the indicator function of the natural numbers which are relatively prime for $n,k \geq 1$, $\chi_{1,k}(n) = \Iverson{(n,k)=1}$.},
    type={symbols},
    name={Principal Dirichlet character modulo $k$}
    }
\newglossaryentry{dn}{
    symbol={\ensuremath{d(n)}},
    sort={dn},
    description={The ordinary divisor function, $d(n) := \sum_{d|n} 1$.},
    type={symbols},
    name={Divisor function}
    }
\newglossaryentry{SigmaSumOfDivisorsFunc}{
    symbol={\ensuremath{\sigma(n)}},
    sort={dn},
    description={The ordinary sum-of-divisors function, $\sigma(n) := \sum_{d|n} d$.},
    type={symbols},
    name={Divisor function}
    }
\newglossaryentry{epsilonN}{
    symbol={\ensuremath{\varepsilon(n)}},
    sort={epsilonN},
    description={The multiplicative identity with respect to Dirichlet convolution, $\varepsilon(n) = \delta_{n,1}$.},
    type={symbols},
    name={Dirichlet multiplicative identity}
    }
\newglossaryentry{munk}{
    symbol={\ensuremath{\mu_{n,k}}},
    sort={munk},
    description={The corresponding invertible sequence is an analog to the role of the M\"obius function in M\"obius inversion. In this case these inversion coefficients are defined such that 
    \[
    g(n) = \sum_{\substack{d=1 \\ (d, n)=1}}^n 
     f(d) \quad\iff\quad f(n) = \sum_{d=1}^n g(d+1) \mu_{n,d}.
    \]
    },
    type={symbols},
    name={Matrix coefficients}
    }
\newglossaryentry{munkInv}{
    symbol={\ensuremath{\mu_{n,k}^{(-1)}}},
    sort={munkInv},
    description={Inverse matrix sequence of \glssymbol{munk}.},
    type={symbols},
    name={Inverse matrix coefficients}
    }
\newglossaryentry{unk}{
    symbol={\ensuremath{u_{n,k}(f, w)}},
    sort={unk},
    description={The matrix sequence defined in the expansion of the generating functions for the type II sums as 
    follows for $w \in \mathbb{C} \setminus \{0\}$: 
    \[
    g(x) = [q^x]\left(\sum_{n \geq 2} \sum_{k=1}^n u_{n,k}(f, w) \left( 
     \sum_{m=1}^k L_{f,g,m}(k) w^m\right) \frac{q^n}{(q; q)_{\infty}}\right). 
    \]},
    type={symbols},
    name={Matrix coefficients}
    }
\newglossaryentry{unkInv}{
    symbol={\ensuremath{u_{n,k}^{(-1)}(f, w)}},
    sort={unkInv},description={Inverse matrix terms of the sequence \glssymbol{unk}.},
    type={symbols},
    name={Inverse matrix coefficients}
    }
\newglossaryentry{DTFT}{
    symbol={\ensuremath{\operatorname{DTFT}[f](k)}},
    sort={DTFT},
    description={The discrete time Fourier transform (DTFT) of $f$ at $k$, also denoted by $F[k]$.},
    type={symbols},
    name={DTFT}
    }
\newglossaryentry{DFT}{
    symbol={\ensuremath{\operatorname{DFT}[f](k)}},
    sort={DTFT},
    description={The discrete Fourier transform (DFT) of $f$ at $k$.}, 
    type={symbols},
    name={DTFT}
    }
\newglossaryentry{phin}{
    symbol={\ensuremath{\phi(n)}},
    sort={phin},
    description={Euler's classical totient function, $\phi(n) := \sum\limits_{\substack{1 \leq d \leq n \\ (d,n) = 1}} 1$.},
    type={symbols},
    name={Euler totient function}
    }
\newglossaryentry{phikn}{
    symbol={\ensuremath{\phi_k(n)}},
    sort={phikn},
    description={Generalized totient function, $\phi_k(n) := \sum\limits_{\substack{1 \leq d \leq n \\ (d,n) = 1}} d^k$.},
    type={symbols},
    name={Generalized totient function}
    }
\newglossaryentry{Tfx}{
    symbol={\ensuremath{T_f(x)}},
    sort={Tfx},
    description={The type I sum over an arithmetic function $f$, $$T_f(n) := \sum\limits_{\substack{d \leq x \\ (d,x)=1}} f(d).$$},
    type={symbols},
    name={Type I sum}
    }
\newglossaryentry{Lfgkx}{
    symbol={\ensuremath{L_{f,g,k}(x)}},
    sort={Lfgkx},
    description={The type II Anderson-Apostol sum over the arithmetic functions $f,g$, 
                 $L_{f,g,k}(x) := \sum\limits_{d|(k,x)} f(d) g\left(\frac{x}{d}\right)$.},
    type={symbols},
    name={Type II sum}
    }
\newglossaryentry{pn}{
    symbol={\ensuremath{p(n)}},
    sort={pn},
    description={The partition function generated by $p(n) = [q^n] \prod\limits_{n \geq 1} (1-q^n)^{-1}$.},
    type={symbols},
    name={Partition function $p$}
    }
\newglossaryentry{Mx}{
    symbol={\ensuremath{M(x)}},
    sort={Mx},
    description={The Mertens function which is the summatory function over $\mu(n)$ denoted 
                 by the partial sums $M(x) := \sum\limits_{n \leq x} \mu(n)$.},
    type={symbols},
    name={Mertens function}
    }
\newglossaryentry{Phinz}{
    symbol={\ensuremath{\Phi_n(z)}},
    sort={Phinz},
    description={The $n^{th}$ cyclotomic polynomial in $z$ defined by 
    $$\Phi_n(z) := \prod\limits_{\substack{1 \leq k \leq n \\ (k,n)=1}} (z-e^{\frac{2\pi\imath k}{n}}).$$},
    type={symbols},
    name={Cyclotomic polynomial}
    }
\newglossaryentry{cqn}{
    symbol={\ensuremath{c_q(n)}},
    sort={cqn},
    description={Ramanujan's sum, $c_q(n) := \sum\limits_{d|(q,n)} d \mu\left(\frac{q}{d}\right)$.},
    type={symbols},
    name={Ramanujan's sum}
    }
\newglossaryentry{SigmaSOD}{
    symbol={\ensuremath{\sigma_{\alpha}(n)}},
    sort={SigmaSOD},
    description={The generalized sum-of-divisors function, $\sigma_{\alpha}(n) := \sum\limits_{d|n} d^{\alpha}$, for any $n \geq 1$ and $\alpha \in \mathbb{C}$.},
    type={symbols},
    name={Generalized sum-of-divisors function}
    }
\newglossaryentry{Zetas}{
    symbol={\ensuremath{\zeta(s)}},
    sort={Zetas},
    description={The Riemann zeta function, defined by $\zeta(s) := \sum_{n \geq 1} n^{-s}$ when $\Re(s) > 1$, and by analytic continuation to the entire complex plane with the exception of a simple pole at $s = 1$.},
    type={symbols},
    name={Riemann zeta function}
    }
\newglossaryentry{Expex}{
    symbol={\ensuremath{e(x)}},
    sort={Expex},
    description={The complex exponential function, $e(x) := \exp(2\pi\imath x)$.},
    type={symbols},
    name={Complex exponential function shorthand}
    }
\newglossaryentry{snk}{
    symbol={\ensuremath{s_{n,k}}},
    sort={snk},
    description={Matrix coefficients in Lambert series type factorizations. These coefficients are defined precisely as the coefficients of the generating function $[q^n] (q; q)_{\infty} q^k (1-q^k)^{-1}$ for $k \geq 1$ where 
    \glssymbol{qPochInfty} is the infinite $q$-Pochhammer symbol.},
    type={symbols},
    name={Lambert series factorization matrix coefficients}
    }
\newglossaryentry{fhatn}{
    symbol={\ensuremath{\hat{f}(n)}},
    sort={fhatn},
    description={A shorthand notation for scaled arithmetic function terms $\hat{f}(n) := w^n (w^n-1)^{-1} f(n)$ for some non-zero indeterminate $w$.},
    type={symbols},
    name={Auxiliary scaled functions}
    }
\newglossaryentry{qPochInftyV01}{
    symbol={$(a_1, \ldots, a_r; q)_n$},
    sort={qPochInftyV01},
    description={We use the shorthand that 
    $$(a_1, \ldots, a_r; q)_n = \prod_{i=1}^{r} (a_i; q)_n.$$},
    type={symbols},
    name={$q$-Pochhammer symbol}
    }
\newglossaryentry{qPochInftyV0}{
    symbol={\ensuremath{(a; q)_{n}}, $(q)_n$},
    sort={qPochInfty},
    description={The $q$-Pochhammer symbol defined as the product $(a; q)_{\infty} := \prod\limits_{n \geq 1} (1-aq^{n-1})$. We adopt the notation that $(q)_n \equiv (q; q)_n$ and that $(a; q)_{\infty}$ denotes the limiting case for $|q| < 1$ as $n \rightarrow \infty$.},
    type={symbols},
    name={Infinite $q$-Pochhammer symbol}
    }
\newglossaryentry{fpmn}{
    symbol={\ensuremath{f_{\pm}(n)}},
    sort={fpmn},
    description={For any arithmetic function $f$, we define 
    $f_{\pm}(n) = f(n) \Iverson{n > 1} - f(1) \Iverson{n = 1}$, i.e., 
    the function that has identical values as $f$ for all $n \geq 2$, and 
    whose initial value is $f_{\pm}(1) := -f(1)$ when $n = 1$.},
    type={symbols},
    name={Case-wise modified functions}
    }
\newglossaryentry{uhatnkfw}{
    symbol={\ensuremath{\hat{u}_{n,k}(f, w)}},
    sort={uhatnkfw},
    description={Matrix coefficients defined in terms of an indeterminate parameter $w$ as $\hat{u}_{n,k}(f, w) := (w^k-1) u_{n,k}(f, w)$.},
    type={symbols},
    name={Matrix coefficients}
    }
\newglossaryentry{dsjn}{
    symbol={\ensuremath{\operatorname{ds}_j(f; n)}},
    sort={dsjn},
    description={Summands in the formula for the Dirichlet inverse of an arithmetic function. The precise definition of this function is given by 
    \[
    \ds_j(f; n) = \begin{cases} 
     (-1)^{\delta_{n,1}} \widehat{f}(n), & \text{ if $j = 1$; } \\ 
     \sum\limits_{\substack{d|n \\ d>1}} \widehat{f}(d) \ds_{j-1}\left(f; \frac{n}{d}\right), & 
     \text{ if $j \geq 2$, } 
     \end{cases} 
    \]
    where the fixed function $\hat{f}$ is defined by glossary symbol 
    \glssymbol{fhatn}.},
    type={symbols},
    name={Inverse function component terms}
    }
\newglossaryentry{Dfn}{
    symbol={\ensuremath{D_f(n)}},
    sort={Dn},
    description={Function related to the Dirichlet inverse of a function $f$. 
    More precisely, this function is defined by the sum 
    $$D_f(n) := \sum\limits_{j=1}^n \frac{\operatorname{ds}_{2j}(f; n)}{\hat{f}(1)^{2j+1}},$$ where this definition involves the glossary symbols 
    \glssymbol{dsjn} and \glssymbol{fhatn}. }, 
    type={symbols},
    name={Inverse function sum function}
    }
\newglossaryentry{fInvn}{
    symbol={\ensuremath{f^{-1}(n)}},
    sort={fInvn},
    description={The Dirichlet inverse of $f$ with respect to convolution defined recursively by 
                 $$f^{-1}(n) = -\frac{1}{f(1)} \sum\limits_{\substack{d|n \\ d>1}} f(d) f^{-1}\left(\frac{n}{d}\right),$$ provided that $f(1) \neq 0$.},
    type={symbols},
    name={Dirichlet inverse of $f$}
    }
\newglossaryentry{qPochInfty}{
    symbol={\ensuremath{(q; q)_{\infty}}},
    sort={qPochInfty},
    description={The infinite $q$-Pochhammer symbol defined as the product $(q; q)_{\infty} := \prod\limits_{n \geq 1} (1-q^n)$ for $|q| < 1$.},
    type={symbols},
    name={Infinite $q$-Pochhammer symbol}
    }
\newglossaryentry{Fk}{
    symbol={\ensuremath{F[k]}},
    sort={Fk},
    description={Discrete Fourier transform coefficients.},
    type={symbols},
    name={DFT coefficients}
    }
\newglossaryentry{skfgn}{
    symbol={\ensuremath{s_k(f, g; n)}},
    sort={skfgn},
    description={Shorthand for the periodic (modulo $k$) divisor sums expanded by the functions listed in 
    \glssymbol{akfgn} of this glossary. The precise expansion and corresponding finite Fourier series expansion of this 
    function is given by $$s_k(f, g; n) = \sum\limits_{d|(n, k)} f(d) g\left(\frac{k}{d}\right) = 
    \sum\limits_{m=1}^{k} a_k(f, g; m) e^{\frac{2\pi\imath mn}{k}}.$$},
    type={symbols},
    name={Shorthand for periodic divisor sums}
    }
\newglossaryentry{akfgn}{
    symbol={\ensuremath{a_k(f, g; n)}},
    sort={akfgn},
    description={Discrete Fourier coefficients of the periodic divisor sums $s_k(f, g; n)$ defined as symbol \glssymbol{skfgn} in this glossary. The precise definition 
    of these sums is given by $$a_k(f, g; n) = \sum\limits_{d|(k, n)} g(d) f\left(\frac{n}{d}\right) \frac{d}{k}.$$},
    type={symbols},
    name={DFT coefficients}
    }
\newglossaryentry{fCvlDashVI}{
    symbol={\ensuremath{f \ast C_{-}(m)}},
    sort={fCvlDashVI},
    description={This notation indicates that the index over which we perform the Dirchlet convolution is given by the dash parameter, 
    $$(f \ast C_{-}(m))(n) := \sum_{d|n} f(d) C_{\frac{n}{d}}(m).$$},
    type={symbols},
    name={Definition of convolution with a fixed parameter}
    }
\newglossaryentry{fCvlDashVII}{
    symbol={\ensuremath{f \ast C_k(-)}},
    sort={fCvlDashVII},
    description={This notation indicates that the index over which we perform the Dirchlet convolution is given by the dash parameter, 
    $$(f \ast C_k(-))(n) := \sum_{d|n} f(d) C_k\left(\frac{n}{d}\right).$$},
    type={symbols},
    name={Definition of convolution with a fixed parameter}
    }
\newglossaryentry{akl}{
    symbol={\ensuremath{a_{k,\ell}}},
    sort={akl},
    description={Sequence of coefficients that are defined explicitly in the discrete Fourier series expansion of the type II sums \glssymbol{Lfgkx}. These coefficients are implicitly defined by Definition \ref{def_NotationAndSpecialExpSums} by the sums 
    $L_{f,g,k}(n) = \sum\limits_{\ell=0}^{k-1} a_{k,\ell} e\left(\frac{\ell n}{k}\right)$, where \glssymbol{Expex} is the shorthand for the complex exponential terms in the exponential sums we define in the article.},
    type={symbols},
    name={Local coefficient definition}
    }
\newglossaryentry{Idkn}{
    symbol={\ensuremath{\operatorname{Id}_k(n)}},
    sort={Idkn},
    description={The power-scaled identity function, $\operatorname{Id}_k(n) := n^k$ for $n \geq 1$.},
    type={symbols},
    name={Dirchlet identity function for powers}
    }
\newglossaryentry{Floorx}{
    symbol={\ensuremath{\lfloor x \rfloor}},
    sort={Floorx},
    description={The floor function $\lfloor x \rfloor := x - \{x\}$ where $0 \leq \{x\} < 1$ denotes the fractional part of $x \in \mathbb{R}$.},
    type={symbols},
    name={Floor function}
    }
\newglossaryentry{Ceilingx}{
    symbol={\ensuremath{\lceil x \rceil}},
    sort={Ceilingx},
    description={The ceiling function $\lceil x \rceil := x + 1 - \{x\}$ where $0 \leq \{x\} < 1$ denotes the fractional part of $x \in \mathbb{R}$.},
    type={symbols},
    name={Ceiling function}
    }
\newglossaryentry{fMultipleConvolution}{
    symbol={\ensuremath{f_{\ast_j}}},
    sort={fMultipleConvolution},
    description={Sequence of nested $j$-convolutions of an arithmetic function $f$ with itself for integers $j \geq 1$. We define $f_{\ast_0}(n) = \delta_{n,1}$, the multiplicative identity with respect to Dirichlet convolution.},
    type={symbols},
    name={Nested $j$-convolutions of $f$ with itself}
    }
\newglossaryentry{Gj}{
    symbol={\ensuremath{G_j}},
    sort={Gj},
    description={Denotes the interleaved (or generalized) sequence of pentagonal numbers defined explictly by the formula $G_j := \frac{1}{2} \ceiling{\frac{j}{2}} \ceiling{\frac{3j+1}{2}}$. The sequence begins as 
    $\{G_j\}_{j \geq 0} = \{0, 1, 2, 5, 7, 12, 15, 22, 26, 35, 40, 51, \ldots\}$.},
    type={symbols},
    name={Interleaved pentagonal numbers}
    }
\newglossaryentry{Ckn}{
    symbol={\ensuremath{C_k(n)}},
    sort={Ckn},
    description={Sequence of nested $k$-convolutions of an arithmetic function $f$ with itself. The precise definition of this sequence is given by 
    \[
     C_k(n) = \begin{cases} 
     \widehat{f}(n) - \widehat{f}(1)\varepsilon(n), & \text{ if $k = 1$; } \\ 
     \sum\limits_{d|n} \left(\widehat{f}(d) - \widehat{f}(1) \varepsilon(d)\right) 
     C_{k-1}\left(\frac{n}{d}\right), & \text{ if $k \geq 2$, } 
     \end{cases} 
     \] 
     where the symbol $\hat{f}(n)$ is defined in glossary entry \glssymbol{fhatn}.},
    type={symbols},
    name={Nested $k$-convolutions of $f$ with itself}
    }    
\newglossaryentry{IndicatorFunctions}{
    symbol={$\OneFunc{\mathbb{S}}$, $\chi_{\mathtt{cond}(x)}$},
    sort={IndCharFuncs},
    description={We use the notation $\mathds{1},\chi: \mathbb{N} \rightarrow \{0,1\}$ to denote indicator, or characteristic functions. In paticular, $\OneFunc{\mathbb{S}}(n) = 1$ if and only if $n \in \mathbb{S}$, and $\chi_{\mathtt{cond}}(n) = 1$ if and only if $n$ satisfies the condition \texttt{cond}.},
    type={symbols},
    name={Indicator function, characteristic function}
    }
\newglossaryentry{StirlingNumbers}{
    symbol={$\gkpSI{n}{k}$, $\gkpSII{n}{k}$},
    sort={StirlingNumbers},
    description={The Stirling numbers of the first and second kinds, respectively. Alternate notation for these triangles is given by $s(n, k) = (-1)^{n-k} \gkpSI{n}{k}$ and $S(n, k) = \gkpSII{n}{k}$.},
    type={symbols},
    name={Stirling numbers}
    }
\newglossaryentry{LambdaFunc}{
    symbol={$\Lambda(n)$},
    sort={LambdaFunc},
    description={The von Mangoldt lambda function $\Lambda(n) = \sum_{d|n} \log(d) \mu\left(\frac{n}{d}\right)$.},
    type={symbols},
    name={von Mangoldt lambda function}
    }
\newglossaryentry{JordanTotientFunc}{
    symbol={$J_t(n)$},
    sort={JordanTotientFunc},
    description={The Jordan totient function $J_t(n) = n^k \times \prod_{p|n} (1-p^{-t})$ satisfies 
    $\sum_{d|n} J_t(d) = n^t$.},
    type={symbols},
    name={Jordan totient function}
    }
\newglossaryentry{rkn}{
    symbol={$r_k(n)$},
    sort={rkn},
    description={The sum of $k$ squares function denotes the number of integer solutions to $n = x_1^2+\cdots+x_k^2$. A generating function is given by $r_k(n) = [q^n] \vartheta_3(q)^k$.},
    type={symbols},
    name={Sum of squares function}
    }
\newglossaryentry{PiPrimeCountingFunc}{
    symbol={$\pi(x)$},
    sort={PrimeCountingFunc},
    description={The prime counting function denotes the number of primes $p \leq x$, i.e., $\pi(x) = \sum_{p \leq x} 1$.},
    type={symbols},
    name={Prime counting function}
    }
\newglossaryentry{OmegaFuncs}{
    symbol={$\omega(n)$, $\Omega(n)$},
    sort={OmegaFuncs},
    description={If $n = p_1^{\alpha_1} \times \cdots \times p_r^{\alpha_r}$ 
    is the prime factorization of $n$ into distinct prime powers, 
    then $\omega(n) = r$ and $\Omega(n) = \alpha_1 + \cdots + \alpha_r$.},
    type={symbols},
    name={Prime omega functions}
    }
\newglossaryentry{JacobiThetaFuncs}{
    symbol={$\vartheta_i(z, q)$, $\vartheta_i(q)$},
    sort={JacobiThetaFuncs},
    description={For $i = 1,2,3,4$, these are the classical Jacobi theta functions where $\vartheta_i(q) \equiv \vartheta_i(0, q)$.},
    type={symbols},
    name={Jacobi theta functions}
    }
\newcommand{\ManuscriptTitle}{Factorization theorems and canonical representations for generating functions of special sums}
\newcommand{\ManuscriptAuthor}{Maxie Dion Schmidt}
\newcommand{\ManuscriptDateMonth}{August}
\newcommand{\ManuscriptDateYear}{2022}
\newcommand{\ManuscriptDateApproved}{July 6, 2022}
\newcommand{\ManuscriptCopyright}{\faCopyright}
\newcommand{\TitlePageSmallSkip}{\vspace{0.5cm} \\}
\newcommand{\TitlePageBigSkip}{\vspace{1.25cm} \\}
\newcommand{\TitlePageHugeSkip}{\vspace{2cm} \\}
\title{\ManuscriptTitle} 
\author{\ManuscriptAuthor} 
\newcounter{pagecnt}
\newcommand{\pagestart}{%
    \renewcommand{\thepage}{%
        \ifnum\thepagecnt=0
            \addtocounter{pagecnt}{1}%
        \else%
            \arabic{pagecnt}%
        \fi%
    }%
}
\begin{document} 

\pagestyle{plain}
\pagenumbering{roman} 
\setcounter{page}{1} 

\begin{titlepage}
   \begin{center}
       \vspace*{1cm}
       \large{\textbf{\MakeUppercase{\ManuscriptTitle}}}
       \TitlePageHugeSkip
       \normalsize A Dissertation \\ 
       \normalsize Presented to \\ 
       \normalsize The Academic Faculty
       \TitlePageBigSkip
       \normalsize By 
       \TitlePageBigSkip
       \ManuscriptAuthor
       \TitlePageBigSkip
       \normalsize In Partial Fulfillment \\ 
       \normalsize of the Requirements for the Degree \\ 
       \normalsize Doctor of Philosophy in the \\ 
       \normalsize School of Mathematics
       \TitlePageHugeSkip
       \normalsize Georgia Institute of Technology
       \TitlePageSmallSkip
       \ManuscriptDateMonth\ \ManuscriptDateYear 
       \vfill 
       \ManuscriptCopyright\ \ManuscriptAuthor\ \ManuscriptDateYear \\ 
   \end{center}
\end{titlepage}

\newpage
\begin{titlepage}
   \begin{center}
       \vspace*{1cm}
       \large{\textbf{\MakeUppercase{\ManuscriptTitle}}} \\
       \vspace*{2cm}
       \vspace*{1.75cm}
       \noindent
       \begin{FlushLeft}
       \normalsize Thesis committee:
       \end{FlushLeft}
       \vspace*{0.75cm}
       \begin{minipage}{0.5\textwidth}
       \normalsize Dr. Josephine Yu \\ 
       \normalsize School of Mathematics \\ 
       \normalsize \textit{Georgia Institute of Technology}
       \end{minipage}\hfil
       \begin{minipage}{0.5\textwidth}
       \normalsize Dr. Jayadev Athreya \\ 
       \normalsize Department of Mathematics \\ 
       \normalsize \textit{University of Washington}
       \end{minipage}
       \TitlePageSmallSkip
       \begin{minipage}{0.5\textwidth}
       \normalsize Dr. Matthew Baker \\ 
       \normalsize School of Mathematics \\ 
       \normalsize \textit{Georgia Institute of Technology}
       \end{minipage}\hfil
       \begin{minipage}{0.5\textwidth}
       \normalsize Dr. Bruce Berndt \\ 
       \normalsize Department of Mathematics \\ 
       \normalsize \textit{University of Illinois at Urbana-Champaign}
       \end{minipage}
       \TitlePageSmallSkip
       \begin{minipage}{\textwidth}
       \normalsize Dr. Rafael de la Llave \\ 
       \normalsize School of Mathematics \\ 
       \normalsize \textit{Georgia Institute of Technology}
       \end{minipage}
       \TitlePageSmallSkip
       \begin{FlushRight}
       \textbf{\textrm{\normalsize Date approved: \ManuscriptDateApproved}}
       \end{FlushRight}
       \vfill 
   \end{center}
\end{titlepage}

\newpage
\begin{titlepage}
\begin{center}
\fbox{\includegraphics[height=0.18\textheight]{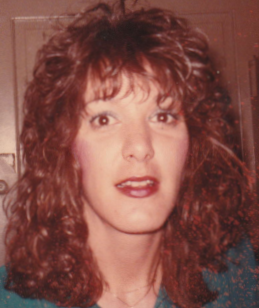}} 
\hspace*{0.075cm}
\fbox{\includegraphics[height=0.18\textheight]{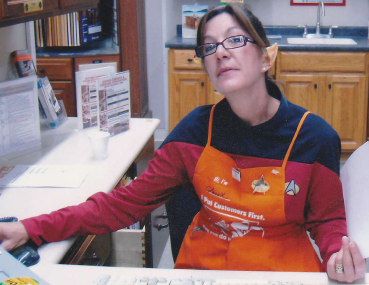}} 
\hspace*{0.075cm}
\fbox{\includegraphics[height=0.18\textheight]{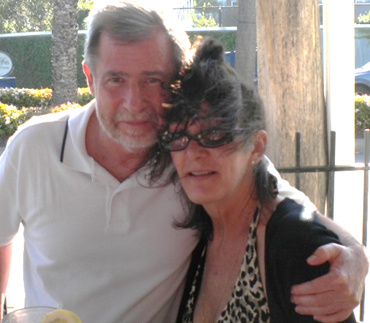}} \\
\bigskip
{\small$\mathwitch*\ \overrightwitchonbroom{}$\ }
{\textbf{In memoriam of \textit{Sarah Jane Sweat Schmidt} (1954--2022)}} 
{\ \small$\overleftwitchonbroom{}\ \reversemathwitch$}
\end{center}
\hrule\hrule\hrule\hrule\bigskip
\noindent
{\small\it
My mother, SJSS, passed away while I was in the last stages 
of completing my doctoral thesis this year. 
As a teenager, SJSS, gave me the original print version of her doctoral 
dissertation to keep with me for inspiration and as a good luck charm. 
It has been with me next to stacks of print math textbooks and countless drafts of manuscripts 
while completing my doctoral thesis at Georgia Tech. 
I would have never gotten here without her support. 
We are deeply saddened that she will not be here in person 
to celebrate this milestone. 
} 
\medskip
\begin{center}
\fbox{\includegraphics[height=0.54\textheight]{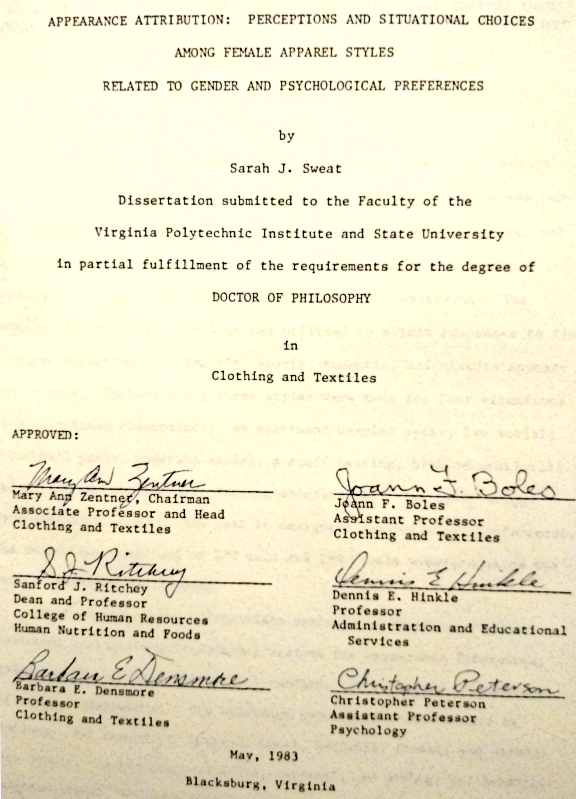}} 
\end{center}
\end{titlepage}

\newpage
\begin{titlepage}
\vspace*{\fill}
\renewcommand{\abstractname}{Acknowledgments}
\begin{abstract}
\normalsize\noindent
To my cat children: Kush, Cameo, George and Fred. 
To my parents, Dr.~Sarah Schmidt and Dr.~Don Schmidt, for all their 
support over the years that allowed me to complete this manuscript.
\end{abstract}
\vspace*{\fill}
\end{titlepage}


\newpage
\setcounter{tocdepth}{2}
\renewcommand{\contentsname}{\hfill\normalsize\bfseries\uppercase{Table of Contents}\hfill}   
\renewcommand{\cftaftertoctitle}{\vspace*{1em}}
\tableofcontents

\newpage
\addcontentsline{toc}{section}{List of Tables}
\renewcommand{\listtablename}{\hfill\normalsize\bfseries\uppercase{List of Tables}\hfill}
\listoftables
\renewcommand{\thetable}{\SectionNumberFormat{section}.\arabic{table}}

\newpage
\addcontentsline{toc}{section}{List of Figures}
\renewcommand{\listfigurename}{\hfill\normalsize\bfseries\uppercase{List of Figures}\hfill}
\listoffigures
\renewcommand{\thefigure}{\SectionNumberFormat{section}.\arabic{figure}}

\newpage
\addcontentsline{toc}{section}{List of Source Code}
\renewcommand{\lstlistlistingname}{\hfill\normalsize\bfseries\uppercase{List of Source Code}\hfill}
\lstlistoflistings
\renewcommand{\thelstlisting}{\SectionNumberFormat{section}.\arabic{lstlisting}}

\newpage
\label{Section_Glossary_NotationConvs} 
\addcontentsline{toc}{section}{Glossary of Notation and Conventions}
\printglossary[type={symbols},title={\normalsize\bfseries\uppercase{Glossary of notation and conventions}},nogroupskip=true]

\newpage
\addcontentsline{toc}{section}{Summary}
\renewcommand{\abstractname}{Summary}
\begin{center}
\MakeUppercase{\ManuscriptTitle} \\[0.5cm]
Maxie Dion Schmidt \\[0.5cm] 
\pageref{TheLastPage} Pages \\[0.5cm] 
Directed by Dr. Josephine Yu
\end{center}
\vspace*{\fill}
\begin{abstract}
\linespread{2}\normalsize
This manuscript explores many convolution (restricted summation) type sequences 
via certain types of matrix based factorizations that can be used to express their 
generating functions. 
The last primary (non-appendix) section of the thesis 
explores the topic of how to best rigorously define a so-termed ``\emph{canonically best}'' 
matrix based factorization for a given class of convolution sum sequences. 
The notion of a canonical factorization for the generating function of such sequences needs to 
match the qualitative properties we find in the factorization theorems for 
Lambert series generating functions (LGFs). The expected qualitatively most 
expressive expansion we find in the LGF case results naturally from algebraic constructions 
of the underlying LGF series type. We propose a precise quantitative requirement to generalize 
this notion in terms of optimal cross-correlation statistics for certain sequences that 
define the matrix based factorizations of the generating function expansions we study. 
We finally pose a few conjectures on the types of matrix factorizations we expect to 
find when we are able to attain the maximal (respectively minimal) 
correlation statistic for a given sum type. 
\end{abstract}
\vspace*{\fill}

\newpage
\renewcommand{\thepage}{\arabic{page}}
\label{page_LastPageOfDocumentFrontMatterCount}
\pagestyle{plain}
\pagenumbering{arabic} 
\setcounter{page}{1}   

\SectionGTThesisFormatted{Introduction} 
\label{Section_Intro}

\begin{quote}
\hrule\hrule\hrule\bigskip
{\it The full beauty of the subject of generating functions emerges only from tuning in on both channels: 
	the discrete and the continuous. See how they make the solution of difference equations into child’s 
	play. Then see how the theory of functions of a complex variable gives, virtually by inspection, the 
	approximate size of the solution. The interplay between the two channels is vitally important for the 
	appreciation of the music.} \\
	\rule{0.9\textwidth}{1.5pt}\medskip \\ 
	{\large H.~S.~Wilf} \\ 
	{\large Philadelphia, PA} \\ 
	{\large September 1, 1989} \\
\hrule\hrule\hrule
\end{quote}

\SubsectionGTThesisFormatted{Generating functions are essential tools in discrete mathematics}

When I first embarked on my college adventures as an 
undergraduate at the University of Illinois in 2004, the 
first book I checked out from the extensive 
Altgeld mathematics library stacks was \emph{Generatingfunctionology} by 
H.~S.~Wilf. The quote featured at the start of this section is transcribed from the 
preface to the book. The usage and importance of 
generating functions in the study of integer sequences is memorably 
summarized by Wilf by the analogy on the first line to his classic introductory 
survey of this subject: 
``\emph{A generating function is a clothesline on which we hang up a sequence of numbers for display}''. 

Generating functions are series expansions summing over a sequence, or arithmetic function, 
with indeterminate weights in powers of an auxiliary variable 
\cite[\S 1]{GFOLOGY} (\cf \cite{ECV1,ECV2,LANDO-GFLECT,ACOMB-BOOK,RIORDAN-COMBIDENTS,ADVCOMB}). 
A sequence generating function may be treated formally in the ring of formal power series 
(e.g., as rigorously motivated in \cite{REZNICK-STERN-NOTES}) or may take on conditional 
meaning as analytic functions of 
a complex variable depending on the context and use cases in applications. 
There are many types and flavors 
of generating functions that we find in practice. 
For any fixed sequence, $\mathcal{F} := \{f_n\}_{n \geq 0} \subset \mathbb{C}$, 
we consider the following types of generating functions\footnote{
    The notational conventions used to name these generating functions is adapted 
    from the good style in Graham, Knuth and Patashnik \cite[\cf \S 7]{GKP} 
    (\cf \cite{KNUTHNOTATION}). 
}:
\begin{itemize}
\item The \textbf{ordinary generating function (OGF)} of $\mathcal{F}$ (in the variable $z$) 
	is defined by 
	\[
	F(z) := \sum_{n \geq 0} f_n z^n. 
	\]
\item The \textbf{exponential generating function (EGF)} $\mathcal{F}$ (in the variable $z$) 
	is defined by 
	\[
	\widehat{F}(z) := \sum_{n \geq 0} \frac{f_n z^n}{n!}. 
	\]
\item The \textbf{Dirichlet generating function (DGF)} 
	of $\mathcal{F}$ (in the variable $s$) 
	is defined by 
	\[
	D[\mathcal{F}](s) := \sum_{n \geq 1} \frac{f_n}{n^s}. 
	\]
\end{itemize}
Most of my undergraduate and graduate research in mathematics 
focuses on studying integer sequences through 
techniques that facilitate exploration of their properties via 
transformations of generating functions. 
One article of mine\footnote{
	Henceforth, the author of this manuscript, abbreviated MDS. 
}
that was assembled my first semester at 
Georgia Tech in 2017 provides a short expository introduction to this type of work. 
The survey article was published in the special issue of the journal \emph{Axioms} titled 
\emph{Mathematical Analysis and Applications II}
\cite{MDS-GFSURVEY}. 
Other peer-reviewed publications on this topic since 
enrolling at Georgia Tech in 2017 include 
\cite{MDS-OJAC-V1,MDS-OJAC-V2,MDS-SQUARE-SERIES-GFTRANS,SCHMIDT-SODFORMULAS,MDS-COMBRESTRDIVSUMS-INTEGERS}. 

A significant subtopic I have explored in this area focuses on enumerating certain forms of 
generalized factorial functions and symbolic product sequences through 
\emph{Jacobi type continued fractions} (J-fractions) 
\cite{MDS-JNT-2017,MDS-RAMJ-CFRACS,MDS-JIS-V2-2017,MDS-INTEGERS-CFRACS-V1,MDS-INTEGERS-CFRACS-V2}. 
A J-fraction is a continued fraction that is symbolic in an auxiliary series, or generating function, 
variable $z$ whose infinite expansion yields the OGF of a sequence, and whose finite 
convergents approximate the OGF in accuracy as a 
truncated power series expansion of the OGF of the same sequence. 
For sequences 
$\{ c_i \}_{i=1}^{\infty}$ and $\{ \ab_i \}_{i=2}^{\infty}$, and 
some typically formal series variable $z \in \mathbb{C}$, 
we associate the J-fraction expansion with these 
sequences defined as follows: 
\begin{align} 
\label{eqn_def_JacobiType_J-Fraction_v1} 
J_{\infty}\left(z\right) 
     & = 
     \cfrac{1}{1-c_1z-\cfrac{\ab_2 z^2}{1-c_2z- 
     \cfrac{\ab_3 z^2}{\cdots},}} \\ 
\notag 
     & = 
     1 + c_1 z + \left(\ab_2+c_1^2\right) z^2 + 
     \left(2\ab_2 c_1+c_1^3+\ab_2 c_2\right) z^3 \\ 
\notag
     & \phantom{=1\ } + 
     \left(\ab_2^2+\ab_2\ab_3+3\ab_2 c_1^2 + c_1^4 + 2\ab_2 c_1c_2 + \ab_2 c_2^2 
     \right) z^4 + 
     \cdots. 
\end{align} 
Provided that the variable $z$ in the last equation can be restricted to a non-trivial 
annulus of convergence, the expansions in 
\eqref{eqn_def_JacobiType_J-Fraction_v1} can be defined as the limit as $h \rightarrow \infty$ 
of the $h^{th}$ convergents to $J_{\infty}(z)$ defined by 
\[
\Conv_h(z) := \frac{\ConvP_h(z)}{\ConvQ_h(z)}. 
\]
The sequences $\{\ConvP_h(z)\}_{h \geq 0}$ and $\{\ConvQ_h(z)\}_{h \geq 0}$ 
are formed by the finite-degree polynomials in $z$ that satisfy the following 
recurrence relations:
\begin{align} 
\label{eqn_ConvFn_PhzQhz_rdefs} 
\ConvP_h(z) & = (1-c_{h}  z) \ConvP_{h-1}(q, z) - 
     \ab_{h} z^2 \ConvP_{h-2}(q, z) + \Iverson{h = 1}, \\ 
\notag 
\ConvQ_h(z) & = (1-c_{h}  z) \ConvQ_{h-1}(q, z) - 
     \ab_{h} z^2 \ConvQ_{h-2}(q, z) + 
     (1-c_{1}  z) \Iverson{h = 1} + \Iverson{h = 0}. 
\end{align} 
A Jacobi type J-fraction expansion comes equipped with an extra structure 
that consitutes the usual rigmarole we 
can assert by working with continued fractions and their finite convergents 
\citep[\cf \S 3.10]{NISTHB} \citep{WALL-CFRACS}. 
Manuscripts originally due to the late, great innovator in combinatorial analysis, P.~Flajolet, 
from the 1980's also prove the next property. The following is a 
result that provides integer congruences for the coefficients in the expansion 
of the rational functions of $z$ that are formed by the $h^{th}$ convergents 
to \eqref{eqn_def_JacobiType_J-Fraction_v1} \cite{FLAJOLET80B,FLAJOLET82}: 
\[
[z^n] J_{\infty}(z) \equiv [z^n] \Conv_h(z) \quad \pmod{\ab_2\ab_3 \cdots \ab_{h+1}}, 
	\text{ for any } n \geq 0 \text{ and } h \geq 2. 
\]
In this manuscript, we will explore another vantage point from which 
we can use generating functions to find new meaning and interpret generating function 
based constructions that enumerate integer sequences. 

\SubsectionGTThesisFormatted{Lambert series generating functions}

In the same way that many combinatorial sequences are described succinctly through the 
expressions of their OGF or EGF, many arithmetic functions of interest in multiplicative number theory 
satisfy a structure that is simple to describe by Dirichlet convolution. 
That is, for any fixed arithmetic functions $f$ and $g$, we define their 
\emph{Dirichlet convolution at $n$} for any integer $n \geq 1$ by 
$$(f \ast g)(n) = \sum_{d|n} f(d) g\left(\frac{n}{d}\right).$$
We can then appeal to expansions of a special kind of generating function to enumerate the 
special functions which we wish to study. 
A Lambert type series, or \emph{Lambert series generating function} (LGF), for an arithmetic function
is a special generating function that allows us to capture and enumerate many multiplicative functions 
$f$ whose descriptions yield a meaningful interpretation of the divisor sums 
$(f \ast \mathds{1})(n)$ at integers $n \geq 1$. 
\nocite{CATALOG-INTDIRSERIES}

\begin{definition}
For any arithmetic function $f: \mathbb{Z}^{+} \rightarrow \mathbb{C}$, we have a corresponding 
LGF for $f$ defined by the following series expansions:
\begin{align} 
\label{eqn_Lfq_LSeriesGF_def_v1} 
L_f(q) & := \sum_{n \geq 1} \frac{f(n) q^n}{1-q^n} = \sum_{m \geq 1} (f \ast \mathds{1})(m) q^m, |q| < 1.
\end{align}
\end{definition}

From 2016--2017, my collaborator M.~Merca and MDS began work on overlapping 
interests from \cite{AA,MERCA-LSFACTTHM} to build on what we have termed as the next 
so-called \emph{Lambert series factorization theorems} 
\cite{MDS-MERCA-AMM,MERCA-SCHMIDT-LSFACTTHM,MERCA-SCHMIDT-PN,MERCA-SCHMIDT-RAMJ}. 
The divisor sum coefficient expansion on the right-hand-side of 
\eqref{eqn_Lfq_LSeriesGF_def_v1} reaffirms that there is a natural way to associate an 
OGF to generate many 
multiplicatively structured functions using $L_f(q)$ for a fixed arithmetic function $f$. 
On the  other hand, by taking common denominators of the series terms, these expansions 
involve the infinite $q$-Pochhammer symbol. 
This observation yields a clear connection of the 
multiplicative functions typically enumerated by these LGF series expansions to the more 
additive theory of integer partitions. 

\begin{subequations}
\label{eqn_LGFFactThmExps_TopLevelSubEq_v0}
The basic construction is initially stated in the following form:  
\[
L_f^{\pm}(q) := \sum_{n \geq 1} \frac{f(n) q^n}{1 \pm q^n} = \frac{1}{(\mp q; q)_{\infty}} \times 
     \sum_{n \geq 1} \left( 
     \sum_{k=1}^{n} \left(s_o(n, k) \pm s_e(n, k)\right) f(k)\right) q^n, |q| < 1. 
\]
In the last equation, the sequences $s_e(n, k)$ (and $s_o(n, k)$)
denote the the number of $k$’s in all partitions of $n$
into an even (and odd, respectively) number of distinct parts for $n \geq k \geq 1$ and where 
$(a; q)_{\infty} := \prod_{m \geq 1} (1-aq^{m-1})$ is the infinite \emph{$q$-Pochhammer symbol}. 
We can re-write the last equation as 
\begin{equation} 
\label{eqn_LambertSeriesFactThm_InitExp} 
\sum_{n \geq 1} \frac{f(n) q^n}{1-q^n} = \frac{1}{(q; q)_{\infty}} \times \sum_{n \geq 1} \left( 
     \sum_{k=1}^{n} s_{n,k} f(k)\right) q^n. 
\end{equation} 
The matrices formed by the lower triangular sequence of $s_{n,k}$ are invertible as are the 
inverse matrices $s_{n,k}^{-1}$. The lower triangular inverse sequence is 
also related to partition theoretic functions.
In particular, we can prove exactly how $s_{n,k}^{-1}$ is related to $p(n)$ as follows: 
\begin{equation}
s_{n,k}^{-1} = \sum_{d|n} p(d-k) \mu\left(\frac{n}{d}\right).
\end{equation}
\end{subequations}

\SubsectionGTThesisFormatted{Generalized forms of the factorization theorems}

\SubsubsectionGTThesisFormatted{Motivation}

\begin{definition}
The \emph{average order} of an arithmetic function $f$ is defined as the arithmetic mean of the summatory function, 
$F(x) := \sum_{n \leq x} f(n)$, as $f_{\operatorname{ave}}(x) := F(x) x^{-1}$. We typically represent the average order of 
a function, $f_{\operatorname{ave}}(x)$, 
in the form of an asymptotic formula where the average order diverges as $x \rightarrow \infty$. 
For example, it is well known that the average order of $\Omega(n)$, which counts the 
number of prime factors of $n$ (counting multiplicity), is 
asymptotically dominated by $\log\log x$. The average order of Euler's totient 
function, $\phi(n)$, grows like $\frac{6x}{\pi^2}$ as $n \rightarrow \infty$ \cite{HARDYWRIGHT}. 
\end{definition}

We are motivated by a decomposition of the partial sums of an 
arithmetic function $f(d)$ whose average order we wish estimate into sums over 
pairwise disjoint sets of component indices $d \leq x$ that correspond to the indices 
of summation in each of the three terms on the right-hand-side of the next equation. 
\begin{align} 
\label{eqn_avg_order_sum_decomp_intro_v1} 
\sum_{d \leq x} f(d) & = \sum_{\substack{1 \leq d \leq x \\ (d,x)=1}} f(d) + \sum_{\substack{d|x \\ d>1}} f(d) + 
     \sum_{\substack{1 < d \leq x \\ 1 < (d,x) < x}} f(d) 
\end{align} 
In evaluating the partial sums of an arithmetic function $f(d)$ over all $d \leq x$, we wish 
to split the terms in these partial sums into three sets: those $d$ relatively prime to $x$, the 
$d$ dividing $x$ (for $d > 1$), and the 
set of indices $d$ which are neither relatively prime to $x$ nor proper divisors of $x$. 
If we let $f$ denote any arithmetic function, we define the remainder terms in our average order 
expansions from \eqref{eqn_avg_order_sum_decomp_intro_v1} as follows: 
\begin{equation} 
\label{eqn_Tfx_remainder_sum_terms_def} 
\widetilde{S}_{f}(x) = \sum_{d \leq x} f(d) - 
     \sum_{\substack{1 \leq d \leq x \\ (d,x)=1}} f(d) - \sum_{\substack{d|x \\ d > 1}} f(d). 
\end{equation} 
We observe that the divisor sum terms in \eqref{eqn_Tfx_remainder_sum_terms_def} correspond to the 
coefficients of powers of $q$ in the Lambert series generating function over $f$ in the form of 
\[
\sum_{d|n} f(d) = [q^n] \left(\sum_{m \geq 1} \frac{f(m) q^m}{1-q^m}\right), n \geq 1. 
\] 
We can also see that the (unscaled) average order sums on the left-hand-side of 
\eqref{eqn_avg_order_sum_decomp_intro_v1} correspond to a hybrid of nested 
divisor and relatively prime divisor type sums as 
\[
\sum_{d \leq x} f(d) = \sum_{m|x} 
     \sum_{\substack{k=1 \\ \left(k, \frac{x}{m}\right) = 1}}^{\frac{x}{m}} f(km) = 
     \sum_{m|x} \sum_{\substack{k=1 \\ (k, m) = 1}}^{m} f\left(\frac{kx}{m}\right). 
\] 
Hence, we seek to reconcile the divisibility structure of 
interesting sequences (and summatory functions) of interest using an 
enumerative approach by generating functions through a study of 
generalizations of the original factorization theorems for Lambert series in 
\eqref{eqn_LambertSeriesFactThm_InitExp} 
(see Section \ref{Section_FactThmsTypeIAndIISums_MousaviSchmidt}). 

\SubsubsectionGTThesisFormatted{Precise formulations of generating function expansions for generalized convolution type sequences}

\begin{subequations}
\label{eqn_GenMatrixBasedSumsEnum_TopLevelSubEqHandle_v0} 
We will see many variants of invertible matrix-based factorizations 
of generating functions for special sums and series in this thesis. 
We first suggest the next form of the 
generating function factorization theorems that generalizes 
\eqref{eqn_LambertSeriesFactThm_InitExp} to many other applications. 
We can consider analogous matrix-based factorizations of the 
generating functions of the sequences of special sums in 
\eqref{eqn_GenMatrixBasedSumsEnum_SetsAn} below provided that these 
transformations are suitably invertible. That is, we can express generating functions for the 
sums $s_n(f, \mathcal{A}) := \sum_{k \in \mathcal{A}_n} f(k)$ where we take 
$\mathcal{A}_n \subseteq [1,n] \cap \mathbb{Z}$ for all $n \geq 1$ 
as expansions of the following form: 
\begin{equation} 
\label{eqn_GenMatrixBasedSumsEnum_SetsAn} 
\sum_{n \geq 1} \left(\sum_{\substack{k \in \mathcal{A}_n \\ 
     \mathcal{A}_n \subseteq [1,n]}} f(k)\right) q^n = 
     \frac{1}{\mathcal{C}(q)} \times 
     \sum_{n \geq 1} \left(\sum_{k=1}^n v_{n,k}(\mathcal{A}, \mathcal{C}) f(k)\right) q^n, 
     \mathcal{C}(0) \neq 0. 
\end{equation} 
The matrix entries are generated by 
\[
v_{n,k}(\mathcal{A}, \mathcal{C}) = [q^n] \mathcal{C}(q) \times \sum_{m \geq 1} 
     \Iverson{k \in \mathcal{A}_m} q^m. 
\]
These lower triangular coefficients lead to invertible transformations provided that 
$v_{n,n}(\mathcal{A}, \mathcal{C}) \neq 0$ for all $n \geq 1$. 
We can also naturally consider sums of an arithmetic function weighted by a lower triangular sequence 
of the form 
\begin{equation} 
\label{eqn_GenMatrixBasedSumsEnum_SetsAn_v2} 
     \sum_{n \geq 1} \left(\sum_{k=1}^{n} \mathcal{T}_{n,k} f(k) 
     \right) q^n = 
     \frac{1}{\mathcal{C}(q)} \times 
     \sum_{n \geq 1} \left(\sum_{k=1}^n u_{n,k}(\mathcal{T}, \mathcal{C}) f(k)\right) q^n, 
     \mathcal{C}(0) \neq 0. 
\end{equation} 
The notable special cases of the so-termed type I and type II sums 
defined in Section \ref{Section_FactThmsTypeIAndIISums_MousaviSchmidt} are given by 
\eqref{eqn_GenMatrixBasedSumsEnum_SetsAn} when 
\begin{align*}
\mathcal{A}_{1,n} &:= \{1 \leq d \leq n: (d, n) = 1\}, \\ 
\mathcal{A}_{2,n} & := \{1 \leq d \leq n: \exists 1 \leq k \leq n \text{ such that } d|(k, n)\},
\end{align*}
respectively. 
\end{subequations}

\SubsubsectionGTThesisFormatted{Identification the most qualitatively significant 
               expansions of the generalized factorization theorems}

The expansion of the OGF series of the forms displayed on the right-hand-sides of 
equation \eqref{eqn_GenMatrixBasedSumsEnum_SetsAn} and 
equation \eqref{eqn_GenMatrixBasedSumsEnum_SetsAn_v2}, respectively, 
is an algebraically very natural way to view the generating functions of the 
coefficient sums $(f \ast \mathds{1})(n)$ that are enumerated by the LGF series cases. 
The pre-mutliplier function given by the infinite $q$-Pochhammer symbol 
when $\mathcal{C}(q) := (q; q)_{\infty}^{-1}$ naturally ``falls out'', 
or is easily identified, as a key feature that distinguishes the limiting expansions 
after we perform the symbolic arithmetic to combine all terms 
in the next partial sums into a single rational expression of $q$ with common denominator, 
$(q; q)_n = (1-q)(1-q^2) \times \cdots \times (1-q^n)$: 
\[
(f \ast \mathds{1})(n) \equiv [q^n] \left(\sum_{1 \leq j \leq N} \frac{f(j) q^j}{1-q^j}\right), 
     \text{ whenever } 1 \leq n \leq N. 
\]
We will restrict our interest to those generalized convolution sum types which are invertible, 
i.e., those sequences such that $n \in \mathcal{A}_n$ for all $n \geq 1$, or where 
$\mathcal{T}_{n,n} \neq 0$ at every $n \geq 1$. 
In these cases, we obtain an invertible sequence of lower triangular matrix entries 
$v_{n,k}(\mathcal{A}, \mathcal{C})$, or $u_{n,k}(\mathcal{T}, \mathcal{C})$, upon 
multiplication of the left-hand-sides of each equation in 
\eqref{eqn_GenMatrixBasedSumsEnum_TopLevelSubEqHandle_v0} by 
considering the coefficients of each 
$f(m)$ for integers $m \geq 1$ as functions (or formal power series) in $q$. 
We assert a reasonable expectation 
that for any given convolution sum type defined by a fixed 
$\{\mathcal{A}_n\}_{n \geq 1}$, or $\{\mathcal{T}_{n,k}\}_{1 \leq k \leq n}$, 
we can prioritize certain choices of the OGF $\mathcal{C}(q)$ so that 
$\left\{[q^n] \mathcal{C}(q)^{\pm 1}\right\}_{n \geq 0}$ and the 
corresponding matrix entries $\{v_{n,k}(\mathcal{A}, \mathcal{C})\}_{n,k \geq 1}$, 
or $\{u_{n,k}(\mathcal{T}, \mathcal{C})\}_{n,k \geq 1}$, reflect a strong 
``coupling`` or ``correlation`` in qualitative relevance between these sequences. 

\begin{question}
\label{question_PrecursorToCorrStats_v1}
What are the naturally ``good'' choices, 
or is a ``\emph{canonically best}`` choice in some sense that we can unamibuously define and make precise, 
of the generating function $\mathcal{C}(q)$ for any fixed definition of the sets 
$\{\mathcal{A}_n\}_{n \geq 1}$ (of the triangles $\{\mathcal{T}_{n,k}\}_{1 \leq k \leq n}$)? 
A solution to this problem that is sufficiently well defined 
should answer this question in such a way 
that the most qualitatively meaningful expressions for the matrix entries 
$v_{n,k}(\mathcal{A}, \mathcal{C})$ (and its inverse) result in the expansion of the 
factorization theorems in equation \eqref{eqn_GenMatrixBasedSumsEnum_SetsAn} or 
equation \eqref{eqn_GenMatrixBasedSumsEnum_SetsAn_v2}?
\end{question}

\begin{remark}[Motivation from the LGF case]
The choice of scaling on the right-hand-side of 
\eqref{eqn_LambertSeriesFactThm_InitExp} 
in the LGF series expansion of $L_f^{-}(q)$ by the OGF $(q; q)_{\infty}^{-1}$ 
is \'{a} priori revealing in many unexpected ways. 
Namely, this reciprocal factor providing a multiple of the 
ordinary generating function for $p(n)$ yields a clear-cut, and algebraically natural, relation of 
both sequences of $s_{n,k}$ and $s_{n,k}^{-1}$ 
from \eqref{eqn_LGFFactThmExps_TopLevelSubEq_v0} 
to functions that arise naturally in the theory of partitions. 
Then at a minimum, to adequately answer Question \ref{question_PrecursorToCorrStats_v1}
in its most general form, we must replicate the phenomenon captured in the LGF series expansions 
formed when $\mathcal{A}_n := \{1 \leq d \leq n: d|n\}$ for $n \geq 1$ 
(or equivalently when $\mathcal{T}_{n,k} := \Iverson{k|d}$ for $1 \leq k \leq n$). 
That is, the 
\emph{canonically best} function $\mathcal{C}(q)$ predicted by the qualitatively precise model that is 
provided by our solution criteria indeed matches the expected OGF 
$\mathcal{C}(q) := (q; q)_{\infty}$. 
\end{remark}

We explore how to best define a corresponding notion of so-termed ``canonically best'' factorization theorems  
for the generating functions of other special sum types in 
Section \ref{Section_CanonicalReprsOfFactThms_KernelBasedDCVL}. 
This last section of the manuscript contains mostly expository material that 
partially addresses Question \ref{question_PrecursorToCorrStats_v1}. 
At the end of the section, we conclude by  hypothesizing a few conjectures 
that seek to identify those OGF candidates $\mathcal{C}(q)$ with 
$\mathcal{C}(0) = 1$ that enumerate integer-valued coefficients and for which we 
expect to witness the most qualitatively expressive, 
or theoretically meaningful, new identities for a given 
convolution sum type that result upon expansion of the factorization theorems. 
In the most general setting for these factorization theorems, 
we will pose quantitatively precise definitions of 
correlations statistics such that when we attain a theoretically maximal (minimal) 
value of these series for any particular sum type, we expect to witness the same 
qualitatively most significant expansions that we found in the LGF case, e.g., where we 
identified the unusual relations between the divisor sum constructions that are natural 
in the study of functions from multiplicative number theory and the more 
additive branch of mathematical partition theorey. 
A concrete approach to rigorously 
quantifying our qualitative observations in our first LGF special case is motivated by 
the discussion in Section \ref{subSection_Intro_NotionsOfCanonicallyBest_v1} below. 

\SubsectionGTThesisFormatted{Examples of canonical and illustrative examples of special sums}

The examples cited in the new few subsections 
provide a survey of interesting applications of the generalized convolution type sequences 
we consider later in the thesis.
We will aperiodically return to these as reference points for comparison with the 
new results and generalized factorization theorems. 

\SubsubsectionGTThesisFormatted{A-convolutions: Restricted classes of Dirichlet convolutions and divisor sums (ACVL type sums)} 

For each $n \geq 1$, let $A(n) \subseteq \{d: 1 \leq d \leq n, d|n\}$ be a subset of the 
divisors of $n$. We say that a natural number $n \geq 1$ is \emph{$A$-primitive} if $A(n) = \{1, n\}$. 
Under a list of assumptions so that the resulting $A$-convolutions are \emph{regular convolutions}, 
we get a generalized multiplicative M\"obius function \cite{HANDBOOKNT-2004}: 
\[
\mu_{\mathcal{A}}(p^{\alpha}) = \begin{cases} 
     1, & \alpha = 0; \\ 
     -1, & p^{\alpha} > 1\text{\ is $\mathcal{A}$-primitive; } \\ 
     0, & \text{otherwise.}
     \end{cases} 
\]
Let the set 
\[
\mathcal{A} := \left\{n \geq 1: n \text{ is $A$-primitive}\right\}. 
\]
This construction leads to a generalized form of M\"obius inversion between the $\mathcal{A}$-convolutions. 
We can consider A-convolution sums for the fixed set $\mathcal{A}$ in the following two flavors for any two 
arithmetic functions $f,g$:
\begin{align*} 
S_{\mathcal{A},1}(f, g; n) & := \sum_{\substack{d|n \\ d \in \mathcal{A}}} f(d) g\left(\frac{n}{d}\right), \\ 
S_{\mathcal{A},2}(f, g; n) & := \sum_{\substack{d|n \\ d \in \mathcal{A}}} f(d) g\left(\frac{n}{d}\right) 
	\chi_{\mathcal{A}}\left(\frac{n}{d}\right). 
\end{align*} 
We can find an inverse function for $f$ 
with respect to this A-convolution if and only if $1 \in A$ and $f(1) \neq 0$. 

\SubsubsectionGTThesisFormatted{Unitary convolutions: Restricted divisor sums (UCVL type sums)} 
\label{example_UnitaryCvlInversion} 

A so-called \emph{unitary convolution} is a divisor sum in which both the index of summation $d$ and the 
residual index $\frac{n}{d}$ are relatively prime:
\[
(f \odot g)(n) := \sum_{\substack{d|n \\ \left(d, \frac{n}{d}\right) = 1}} f(d) g\left(\frac{n}{d}\right). 
\]
An arithmetic function $f$ is invertible with respect to unitary convolution, i.e., there exists a (unique) 
function $f^{-1}$ such that $f \odot f^{-1} = \varepsilon \equiv \delta_{n,1}$, if and only if 
$f(1) \neq 0$. If $f(1) = 1$, then Cohen has given a simple formula to express the inverse of any invertible 
$f$ with respect to the $\odot$ convolution operator \cite{HANDBOOKNT-2004}: 
\[
f^{-1}(n) = \sum_{k=1}^{\omega(n)} (-1)^{k} \times \sum_{d_1 d_2 \times\cdots\times d_k = n} \left(
     \prod_{i=1}^k f(d_i)\right). 
\]

\SubsubsectionGTThesisFormatted{K-convolutions: Divisor sums with a kernel weight function (KCVL type sums)} 

Let the kernel function $K: \mathbb{N} \times \mathbb{N} \rightarrow \mathbb{C}$ be defined on all 
ordered pairs $(n, d)$ such that $n \geq 1$ and $d|n$. 
We define the $K$-convolution of two arithmetic functions $f, g$ to be 
\[
(f \ast_K g)(n) := \sum_{d|n} f(d) g\left(\frac{n}{d}\right) K(n, d). 
\]
We can define generating function factorizations of these sums of the form 
\[
(f \ast_K g)(n) = [q^n] \frac{1}{\mathcal{C}(q)} \times 
     \sum_{m \geq 1} \left(\sum_{k=1}^{m} 
     e_{n,k}(K, \mathcal{C}; g) f(k)\right) q^m. 
\]
Similarly, provided reasonable expansions of the function $K$ (e.g., some recurrence relations or other structural properties), 
we can find inverse functions of an arithmetic function $f$ with respect to $K$-convolution 
(see Section \ref{subSection_GenSummatoryFuncIdents_KCvls}). 

\begin{example}[$B$-convolutions]
Let $\nu_p(n)$ denote the maximum exponent of the prime $p$ in the factorization of $n$. 
That is, 
\[
\nu_p(n) = \begin{cases}
     \alpha, & \text{ if $n \geq 2$ and $p^{\alpha} || n$; } \\ 
     0, & \text{ otherwise. } 
     \end{cases} 
\]
For integers $n \geq d \geq 1$, let the function 
$B(n, d) := \prod_{p|n} \binom{\nu_p(n)}{\nu_p(d)}$ where the product runs over all 
prime divisors of $n$. 
We have an inversion formula given by 
\[
f(n) = \sum_{d|n} g(d) B(n, d) \iff g(n) = \sum_{d|n} f(d) \lambda(d) B(n, d), 
\]
where $\lambda(n) = (-1)^{\Omega(n)}$ is the \emph{Liouville lambda function} 
\cite{HANDBOOKNT-2004}. 
\end{example} 

\SubsubsectionGTThesisFormatted{Discrete convolutions with respect to an index set (DCVL type sums)} 
\label{subsubSection_SpecialSumCharExs_DCVL-1} 

We can form another variant of the typical discrete convolution of coefficients 
(or Cauchy product) resulting from the pointwise multiplication of 
two ordinary generating functions. This generalization involves 
summations of the form 
\[
S_{f,g}(\mathcal{A}; n) := \sum_{k \in \mathcal{A}_n} f(k) g(n+1-k), 
     \mathcal{A}_n \subseteq \{1,2,\ldots,n\} \subset \mathbb{Z}^{+}. 
\]
Given the relation to a convolution of generating functions (or formal power series), it is no 
surprise that we can ``\emph{encode}'' and invertibly 
``\emph{decode}'' auxiliary sequences by multiplying 
an arbitrary OGF by another OGF and its reciprocal. Hence, discrete convolution sums are in general 
invertible operations from which we can solve for either $f$ or $g$. 

More generally, we can define sums of the following type 
for some bivariate kernel weight function, 
$\mathcal{D}: \mathbb{Z}^{+} \times \mathbb{Z}^{+} \rightarrow \mathbb{C}$, 
that is lower triangular and invertible:
\[
(f \boxdot_{\mathcal{D}} g)(n) := \sum_{k=1}^{n} f(k) g(n+1-k) \mathcal{D}(n, k), n \geq 1. 
\]
Sums of this type have familiar expressions by matrix-vector products involving Topelitz matrices, 
which are themselves common and well studied to express the discrete convolutions of sequences 
(see Section \ref{Section_CanonicalReprsOfFactThms_KernelBasedDCVL}). 

\SubsubsectionGTThesisFormatted{Summatory function weighted sums (APT type sums)} 

For $\mathcal{A}_n \subseteq [1, n] \cap \mathbb{Z}^{+}$, consider defining sums of the form 
\[
S_{f,g}(\mathcal{A}; n) := \sum_{k \in \mathcal{A}_n} f(k) g\left(\Floor{n}{k}\right). 
\]
When the sums defined by $S_{f,g}(\mathcal{A}; n)$ 
are finite, and provided that $f$ is Dirichlet invertible
with $f(1) \neq 0$, there are inversion formulas to extract $g(x)$ from 
these sums in the special case where $\mathcal{A}_n = \{1,2,\ldots, n\}$ \cite{APOSTOLANUMT}. 
In general, a consequence of 
\emph{Perron's formula} from complex analysis and analytic number theory provides that if 
$G(s) := \sum_{n \geq 1} g(n) n^{-s}$ is the DGF of $g$, and if 
$F(s) := \int_0^{\infty} f(x) x^{s-1} dx$ is a Mellin transform of $f$, then we have 
contour integral representation given on the left-hand-side of 
\[
\sum_{n \geq 1} f(n) g\left(\frac{x}{n}\right) = 
     \frac{1}{2\pi\imath} \times \int_{c-\imath\infty}^{c+\imath\infty} 
     F(s) G(s) x^{s} ds, c > \max(\sigma_{a,f}, \sigma_{a,g}). 
\]
There is another formula for the summatory function of the Dirichlet convolution 
of any two arithmetic functions $f,h$ of the following form \cite{APOSTOLANUMT}: 
\[
\sum_{n \leq x} \sum_{d|n} f(d) h\left(\frac{n}{d}\right) = \sum_{n \leq x} 
     f(n) H\left(\Floor{x}{n}\right), x \geq 1. 
\]
The notation $H(x) := \sum_{n \leq x} h(n)$ used to state the last equation is 
the summatory function formed by the unweighted partial sums of $h$. 

\SubsectionGTThesisFormatted{Overview of topics in the manuscript}

Within this dissertation, we discuss the results in 
publications based on work completed during 2016--2021 
by the author (MDS) and from two-author collaborations with peers
about generating function based factorization theorems. 
These factorization theorem expansions express the coefficients generated by Lambert series 
and certain restricted GCD type sums that enumerate the sequences of 
partial sums of any arithmetic function $f$. 
We spend several sections recalling the proofs and relevant constructions 
behind articles published in \emph{Acta Arithmetica}, the \emph{Ramanujan Journal}, the 
\emph{American Mathematical Monthly} and \emph{INTEGERS}. 
The articles published in peer-reviewed journals over this time with MDS as the sole author 
include \cite{AA,MDS-COMBRESTRDIVSUMS-INTEGERS}. 
The publications that are coauthored work with MDS compiled include 
\cite{MDS-MERCA-AMM,MERCA-SCHMIDT-LSFACTTHM,MERCA-SCHMIDT-PN,MERCA-SCHMIDT-RAMJ,MOUSAVI-SCHMIDT-2019}. 

We consider the most general form of the convolution type sums defined by the 
next $\mathcal{D}$-convolution type sums. 
For any arithmetic functions $f$ and $g$, and a lower triangular kernel function 
$K(n, k)$ that is unambiguously defined for all integers $n \geq k \geq 1$, we define the 
\emph{$\mathcal{D}$-convolution of $f$ and $g$ at $n$} by 
\begin{equation}
\label{eqn_DCvlSummationBasedExpDefs_restated_v2-Intro_def_v1}
(f \boxdot_{\mathcal{D}} g)(n) := \sum_{k=1}^{n} f(k) g(n+1-k) \mathcal{D}(n, k), n \geq 1. 
\end{equation}
In Section \ref{Section_CanonicalReprsOfFactThms_KernelBasedDCVL}, 
we formulate a rigorous answer to 
Question \ref{question_PrecursorToCorrStats_v1} 
for sums of this most general type and state 
related conjectures that remain open at the time of this publication. 
Namely, 
we will last turn to focus on newer expository results that 
elaborate on a topical open problem rephrased by Michael Lacey 
as a remaining loose end from my Ph.D. 
oral exam presentation at Georgia Tech in August of 2020. 

\SubsectionGTThesisFormatted{Rationale for defining ``canonically best'' 
            generating function factorizations}
\label{subSection_Intro_NotionsOfCanonicallyBest_v1}

There is a natural question that works its way into the analysis of the prior research and 
publications by Merca and Schmidt that we have summarized above. 
One reflection, in hindsight, as to why these seemingly simple expansions related to Lambert 
series generating functions resulted in so many acceptances in excellent peer-reviewed journals 
is that they make special, and as at least one reviewer had pointed out ``rare'', connections 
between classically multiplicative based constructions and the theory of partitions. 
Prior to those several publications, only G.~E.~Andrews and a handful of other authors 
had found such relations, and none yet it seems so general and clear cut to spot. 
The choice of factorizing the Lambert series OGF expansions by inserting a multiple of the 
generating function for the partition function, $p(n) = [q^n] (q; q)_{\infty}^{-1}$, 
leads to a representation for the matrices $s_{n,k}$ and $s_{n,k}^{-1}$ that both 
involve special functions that are central to 
the theory of partitions \cite{ANDREWS}. The initial relation of the 
$s_{n,k}$ to partition theoretic constructions was proved independently by Merca in 2017 
\cite{MERCA-LSFACTTHM}, and seen from a different lense in \cite{AA}, 
near the same time we decided to collaborate on generalizing this material. 

\begin{example}
We can project a sense of distortion (versus similarity) between two 
tuples of values (as truncated vectors of OGF coefficients) onto an easy to identify image 
to visualize our intuitions in the LGF series case. 
This visualization relies on how clearly the 
projected data (via computation of and convolution with a correlation matrix) 
allows us to look at \emph{Tux}, the classic 
good-luck-forebearing Linux penguin, depicted by 
Figure \ref{figure_TuxedoPenguinLGFCorrelationStatsVisualizationExample_v1} 
in the form of his traditional Linux kernel emblem. 
The original Linux penguin image is modified here to assist with clearly depicting
distinguishing features in the correlation statistic values computed by the computer program 
we have used to generate the figure, e.g., to clearly distinguish distortions projected onto 
the original or a lack thereof. 
We use a variant of the built-in image processing functions within modern 
releases of \emph{Mathematica} to generate these images. 
By inspection of the featured functions, $\mathcal{C}(q)$, in the figure that satisfy 
$\mathcal{C}(0) \notin \{0, \pm \infty\}$ with integer-valued series coefficients, 
we conclude that our choice of 
$\mathcal{C}(q) := (q; q)_{\infty}$ does indeed appear to be optimal! 
This is by no means a conclusive proof of the qualitative notion of optimality we have predicted. 
However, it is a convincing 
illustration of a heuristic feature that we wish to 
quantify and make precise for completeness in the last section of the manuscript.
\end{example}

\begin{figure}[ht!]
 
  \caption[Visualization of correlation statistics for LGF factorizations]{
           Correlation statistics projected onto the image of Tux for various choices of the 
           OGF $\mathcal{C}(q)$ (with integer coefficients) to show a visual comparison of how 
           well related the corresponding sequences are. Distortions of the original image indicate 
           a less well correlated set of sequences corresponding to that choice of the 
           $\mathcal{C}(q)$ that defines the precise form of the LGF factorization theorem coefficients.}
  \label{figure_TuxedoPenguinLGFCorrelationStatsVisualizationExample_v1} 
 
  \centering
  \begin{subfigure}[t]{.3\linewidth}
    \centering\includegraphics[scale=0.75]{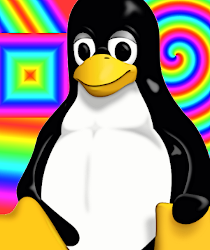}
    \caption{Original image.}
  \end{subfigure}
  \begin{subfigure}[t]{.3\linewidth}
    \centering\includegraphics[scale=0.75]{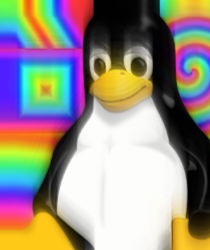}
    \caption{With $\mathcal{C}(q) := (q; q)_{\infty}$.}
  \end{subfigure}
  \begin{subfigure}[t]{.3\linewidth}
    \centering\includegraphics[scale=0.75]{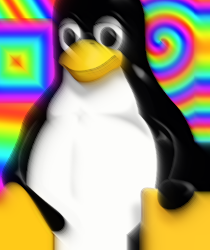}
       \caption{With $\mathcal{C}(q) := (q; q)_{\infty}^{-1}$.}
  \end{subfigure}
  
  \bigskip

  \begin{subfigure}[t]{.3\linewidth}
    \centering\includegraphics[scale=0.75]{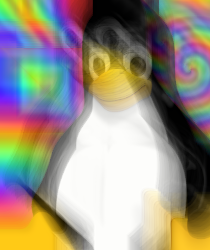}
    \caption{With $\mathcal{C}(q) := (q^2; q^5)_{\infty}$.}
  \end{subfigure}
  \begin{subfigure}[t]{.3\linewidth}
    \centering\includegraphics[scale=0.75]{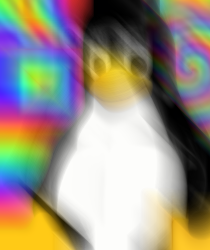}
    \caption{With $\mathcal{C}(q) := (1-q)^{-\frac{3}{2}}$.}
  \end{subfigure}
  \begin{subfigure}[t]{.3\linewidth}
    \centering\includegraphics[scale=0.75]{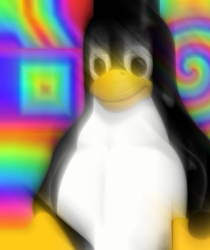}
       \caption{With $\mathcal{C}(q) := (1-q)^{-1}$.}
  \end{subfigure}

\end{figure}

\clearpage
\newpage

\SectionGTThesisFormatted{Factorization theorems for Lambert series generating functions} 
\label{Section_LGFFactTheorems_PriorWork} 

In this section, we will recall results from the following publications: 
\cite{AA,MERCA-SCHMIDT-RAMJ}. 
In Section \ref{subSection_LGFFactThms_HadamardProductsAndDerivativeExps}, 
we also reproduce the results from an unpublished 
manuscript of MDS from 2017 that proves several new, ``exotic sum`` 
type identities involving the pointwise products of 
classical multiplicative functions \cite{MDS-HADAMARD-FACTTHMS}. 

\SubsectionGTThesisFormatted{Lambert series generating functions} 

\begin{definition}
In our most general setting, we define the \emph{generalized Lambert series} expansion 
for integers $0 \leq \beta < \alpha$ and any fixed arithmetic $f$ as 
\[
L_f(\alpha, \beta; q) := \sum_{n \geq 1} \frac{f(n) q^{\alpha n - \beta}}{1-q^{\alpha n-\beta}},  
     |q| < 1. 
\]
The series coefficients of the Lambert series generating function $L_f(\alpha, \beta; q)$ 
are the divisor sums 
\[
[q^n] L_f(\alpha, \beta; q) = \sum_{\alpha d-\beta| n} f(d). 
\]
If we set $(\alpha, \beta) := (1, 0)$, the we recover the classical form of the Lambert series (LGF) 
construction denoted by the generating functions 
$L_f(q) \equiv L_f(1, 0; q)$ in \eqref{eqn_Lfq_LSeriesGF_def_v1}.
\end{definition}

\begin{example}[Famous LGF expansions]
Ramanujan discovered the following remarkable identities 
\cite[\S 2]{RAMANUJAN-LSERIES-SURVEY}: 
\begin{subequations}
\begin{align}
\sum_{n \geq 1} \frac{(-1)^{n-1} q^n}{1-q^n} & = \sum_{n \geq 1} \frac{q^n}{1+q^n} \\ 
\sum_{n \geq 1} \frac{n q^n}{1-q^n} & = \sum_{n \geq 1} \frac{q^n}{(1-q^n)^2} \\ 
\sum_{n \geq 1} \frac{(-1)^{n-1} n q^n}{1-q^n} & = \sum_{n \geq 1} \frac{q^n}{(1+q^n)^2} \\ 
\sum_{n \geq 1} \frac{q^n}{n(1-q^n)} & = \sum_{n \geq 1} \frac{q^n}{1+q^n} \\ 
\sum_{n \geq 1} \frac{(-1)^{n-1} q^n}{n(1-q^n)} & = \sum_{n \geq 1} \log\left(\frac{1}{1-q^n}\right) \\ 
\sum_{n \geq 1} \frac{\alpha^n q^n}{1-q^n} & = \sum_{n \geq 1} \log\left(1+q^n\right) \\ 
\sum_{n \geq 1} \frac{n^2 q^n}{1-q^n} & = \sum_{n \geq 1} \frac{q^n}{(1-q^n)^2} \sum_{k=1}^{n} \frac{1}{1-q^k}. 
\end{align}
\end{subequations}
We also have the following well-known ``classical'' examples of Lambert series identities 
\cite[\S 27.7]{NISTHB} \cite[\S 17.10]{HARDYWRIGHT} \cite[\S 11]{APOSTOLANUMT}:
\begin{subequations}
\begin{align} 
\label{eqn_WellKnown_LamberSeries_Examples} 
\sum_{n \geq 1} \frac{\mu(n) q^n}{1-q^n} & = q, \\ 
\sum_{n \geq 1} \frac{\phi(n) q^n}{1-q^n} & = \frac{q}{(1-q)^2}, \\ 
\sum_{n \geq 1} \frac{n^{\alpha} q^n}{1-q^n} & =  
     \sum_{m \geq 1} \sigma_{\alpha}(n) q^n, \\ 
\sum_{n \geq 1} \frac{\lambda(n) q^n}{1-q^n} & = \sum_{m \geq 1} q^{m^2}, \\ 
\sum_{n \geq 1} \frac{\Lambda(n) q^n}{1-q^n} & = \sum_{m \geq 1} \log(m) q^m, \\ 
\sum_{n \geq 1} \frac{|\mu(n)| q^n}{1-q^n} & = \sum_{m \geq 1} 2^{\omega(m)} q^m, \\ 
\sum_{n \geq 1} \frac{J_t(n) q^n}{1-q^n} & = \sum_{m \geq 1} m^t q^m, \\ 
\sum_{n \geq 1} \frac{\mu(\alpha n) q^n}{1-q^n} & = - \sum_{n \geq 0} q^{\alpha^n}, \alpha \in \mathbb{P}.
\end{align}
\end{subequations}
\end{example}

There is a natural correspondence between a sequence's OGF, and its 
Lambert series generating function. Namely, if $\widetilde{F}(q) := \sum_{m \geq 1} f(m) q^m$ is the OGF of 
$f$, then $$L_f(\alpha, \beta; q) = \sum_{n \geq 1} \widetilde{F}(q^{\alpha n-\beta}).$$ 
The Lambert series over the convolution $(f \ast g)(n)$ is given by the double sum 
\[
L_{f \ast g}(q) = \sum_{n \geq 1} f(n) L_g(q^n), |q| < 1. 
\]
We have by M\"obius inversion that the \emph{ordinary generating function} (OGF) of $f$ is 
given by 
\[
L_{f \ast \mu}(q) = \sum_{n \geq 1} f(n) q^n. 
\]
Higher-order $j^{th}$ derivatives for integer order $j \geq 1$ can be obtained by differentiating 
the Lambert series expansions termwise in the forms of 
\begin{subequations}
\label{eqn_LambertSeriesDerivsGFExps} 
\begin{align}
q^j \times D_q^{(j)}\left[\frac{q^n}{1-q^n}\right] & = 
     \sum_{m=0}^j \sum_{k=0}^m \gkpSI{j}{m} 
     \gkpSII{m}{k} \frac{(-1)^{j-k} k! i^m}{(1-q^i)^{k+1}}, \\ 
     & = \sum_{r=0}^j \left(\sum_{m=0}^j \sum_{k=0}^m \gkpSI{j}{m}
     \gkpSII{m}{k} \binom{j-k}{r} \frac{(-1)^{j-k-r} k! i^m}{(1-q^i)^{k+1}}\right) q^{(r+1)i}. 
\end{align}
\end{subequations}
By the \emph{binomial series} generating functions whose coefficients are 
given by $[z^n] (1-z)^{m+1} = \binom{n+m}{m}$, we find that 
\[
[q^n] \left(\sum_{n \geq t} \frac{f(n) q^{mn}}{(1-q^n)^{k+1}}\right) = 
     \sum_{\substack{d|n \\ t \leq d \leq \Floor{n}{m}}} 
     \binom{\frac{n}{d} - m + k}{k} f(d), 
\]
for positive integers $m,t \geq 1$ and $k \geq 0$. 
This identity leads to explicit closed-form expressions for the coefficients of the 
generating functions in \eqref{eqn_LambertSeriesDerivsGFExps} 
by so-termed ``restricted'' divisor sums of the form of the right-hand-side of the last equation 
\cite{MDS-COMBRESTRDIVSUMS-INTEGERS}. 

\SubsectionGTThesisFormatted{Relating the multiplicativity of LGF coefficients to the theory of partitions} 

The results cited next in this subsetion summarize the work in 
\cite{AA,MERCA-SCHMIDT-LSFACTTHM}. 
Let $f$ denote a fixed arithmetic function. 
For any integers $n \geq 1$, the $n^{th}$ coefficients of the LGF, $L_f(q)$, are 
generated by the truncated partial sums 
\[
[q^n] L_f(q) = [q^n]\left(\sum_{m=1}^{N} \frac{f(m) q^m}{1-q^m}\right), 1 \leq n \leq N < +\infty. 
\]
It is then natural to seek representations for the numerator and denominator of the 
rational generating function in $q$ that generates $(f \ast \mathds{1})(n)$ for any $n \geq 1$. 
The denominator terms tend to the limiting product given by the 
infinite \emph{$q$-Pochhammer symbol}, $(q; q)_{\infty} = \prod_{i \geq 1} (1-q^i)$. 
The sequence of numerator polynomials is more complicated to initially express. 

\begin{figure}[ht!]

\caption[Inverse matrices in the first forms of the LGF factorization theorems]{
         The first $29$ rows of the function $s_{i,j}^{-1}$ where the values of 
         Euler's partition function $p(n)$ are highlighted in blue and the remaining 
         values of the partition function $q(n)$ are highlighted in 
         purple or pink. } 
\label{figure_sijinv_first25_rows_highlighted} 

\centering
\includegraphics[width=\textwidth]{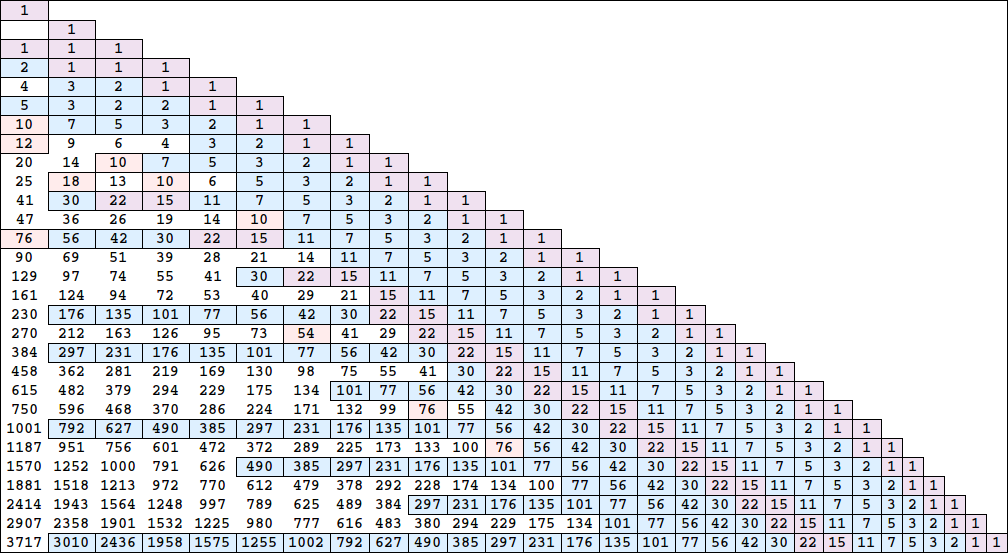}

\end{figure} 

\begin{definition}
\label{def_LGF_ani_afn_funcs_v1} 
For $1 \leq i \leq n$, let 
\begin{align*}
a_{n,i} & := \sum_{\substack{(k, s) : (s+1)i+\frac{k(3k\pm 1)}{2} = n \\ k, s \geq 0}} (-1)^k \\ 
     & \phantom{:} = 
     \sum_{b = \pm 1} \sum_{s=0}^{\left\lfloor \frac{n}{i} \right\rfloor - 1} 
     (-1)^{\left\lfloor \frac{\sqrt{24(n-(s+1)i)+1}-b}{6} \right\rfloor} \cdot 
     \Iverson{\frac{\sqrt{24(n-(s+1)i)+1}-b}{6} \in \mathbb{Z}}. 
\end{align*}
Let the $f$-scaled sums of this triangular sequence to be defined as follows: 
\[
a_f(n) := \sum_{i=1}^{n} a_{n,i} f(i), n \geq 1.
\]
\end{definition}

The arithmetic function $f(n) := n^{\alpha}$ for $\alpha > 0$ such that 
$\sigma_{\alpha}(n) = (f \ast \mathds{1})(n)$ defines the class of 
\emph{generalized sum-of-divisors functions} yields the series expansion given by 
\begin{align} 
\label{eqn_afn_special_cases} 
\sum_{m \geq 0} a_{n^\alpha}(m) q^m & = 1 + q + 2^{\alpha} q^2 + 
     \left(-1-2^{\alpha}+3^{\alpha}\right)q^3 + 
     \left(-1-3^{\alpha}+4^{\alpha}\right)q^4 \\ 
\notag 
     & \phantom{=q\ } + 
     \left(-1-2^{\alpha}-3^{\alpha}-4^{\alpha}+5^{\alpha}\right)q^5 + 
     \left(3^{\alpha}-4^{\alpha}-5^{\alpha}+6^{\alpha}\right) q^6 \\ 
\notag 
     & \phantom{=q\ } + 
     \left(-3^{\alpha}-5^{\alpha}-6^{\alpha}+7^{\alpha}\right) q^7 + \cdots. 
\end{align}
For any $n \geq 1$, the $n \times n$ square matrix $A_n := (a_{i,j})_{1 \leq i,j \leq n}$ is 
lower triangular with ones on its diagonal, and is hence invertible. 
It turns out that in the context of rationalizing the partial sums that generate 
$(f \ast \mathds{1})(n)$ for $n \geq 1$, 
the formula for the entries of these matrices is key. 
Observe the next few cases given in 
Table \ref{table_LGF_AnAnInve_MatrixExamples_v1}, 
as well as the highlighted suggestions found in 
Figure \ref{figure_sijinv_first25_rows_highlighted}. 

\begin{table}[h!]

\caption[LGF factorization matrices]{}
\label{table_LGF_AnAnInve_MatrixExamples_v1}

\begin{center}
     \begin{tabular}{||c||c|c||} \hline\hline
$n$ & $A_n$ & $A_n^{-1}$ \\ \hline 
1 & $[1]$ & $[1]$ \\ 
2 & $\begin{bmatrix} 1 & 0 \\ 0 & 1 \end{bmatrix}$ & 
    $\begin{bmatrix} 1 & 0 \\ 0 & 1 \end{bmatrix}$ \\ 
3 & $\begin{bmatrix} 1 & 0 & 0 \\ 0 & 1 & 0 \\ -1 & -1 & 1 \end{bmatrix}$ & 
    $\begin{bmatrix} 1 & 0 & 0 \\ 0 & 1 & 0 \\ 1 & 1 & 1 \end{bmatrix}$ \\ 
4 & $\begin{bmatrix} 1 & 0 & 0 & 0 \\ 0 & 1 & 0 & 0 \\ -1 & -1 & 1 & 0 \\ 
                     -1 & 0 & -1 & 1 \end{bmatrix}$ & 
    $\begin{bmatrix} 1 & 0 & 0 & 0 \\ 0 & 1 & 0 & 0 \\ 1 & 1 & 1 & 0 \\ 
                     2 & 1 & 1 & 1 \end{bmatrix}$ \\ 
5 & $\begin{bmatrix} 1 & 0 & 0 & 0 & 0 \\ 0 & 1 & 0 & 0 & 0 \\ 
                     -1 & -1 & 1 & 0 & 0 \\ -1 & 0 & -1 & 1 & 0 \\ 
                     -1 & -1 & -1 & -1 & 1 \end{bmatrix}$ & 
    $\begin{bmatrix} 1 & 0 & 0 & 0 & 0 \\ 0 & 1 & 0 & 0 & 0 \\ 
                     1 & 1 & 1 & 0 & 0 \\ 2 & 1 & 1 & 1 & 0 \\ 
                     4 & 3 & 2 & 1 & 1 \end{bmatrix}$ \\ 
\hline\hline
\end{tabular} 
\end{center} 

\end{table}

\begin{theorem}[Matrix factorizations defining $f(n)$] 
\label{prop_MatrixEquations_fi_eq_AinvBbm} 
Let the intermediate sequence of sums be defined for fixed $f$ by 
\[
B_{f \ast \mathds{1},m} :=  (f \ast \mathds{1})(m+1) - \sum_{b = \pm 1} \sum_{k=1}^{\lfloor \frac{\sqrt{24m+1}-b}{6} \rfloor} 
     (-1)^{k+1} (f \ast \mathds{1})\left(m+1-\frac{k(3k+b)}{2}\right), m \geq 0. 
\]
For all $n \geq 1$, we have the following matrix factorization equations: 
\begin{align} 
\label{eqn_fn_matrix_eqn}
\begin{bmatrix} f(1) \\ f(2) \\ \vdots \\ f(n) \end{bmatrix} & = 
     A_n^{-1} \begin{bmatrix} B_{f \ast \mathds{1}, 0} \\  B_{f \ast \mathds{1}, 1} \\ \vdots \\  B_{f \ast \mathds{1}, n-1} \end{bmatrix}
\end{align} 
\end{theorem}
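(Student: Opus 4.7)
The plan is to recognize that everything in the statement is a coefficient-extraction reformulation of Euler's pentagonal number theorem applied to the Lambert series generating function $L_f(q)$. The key identity to establish is
\[
a_{n,i} = [q^n]\,\frac{q^i}{1-q^i}\,(q;q)_\infty,
\]
after which the theorem follows by comparing two expansions of $(q;q)_\infty L_f(q)$.

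First I would unpack the definition of $a_{n,i}$. Writing $\frac{q^i}{1-q^i} = \sum_{s\ge 0} q^{(s+1)i}$ and applying Euler's pentagonal number theorem
\[
(q;q)_\infty = 1 + \sum_{k\ge 1}(-1)^k\!\left(q^{k(3k-1)/2} + q^{k(3k+1)/2}\right),
\]
multiplying these two series and reading off the coefficient of $q^n$ produces exactly the sum over pairs $(k,s)$ with $(s+1)i + k(3k\pm1)/2 = n$, weighted by $(-1)^k$, which is Definition \ref{def_LGF_ani_afn_funcs_v1}. (One must check that the $k=0$ term is counted once, not twice: when $k=0$ the discriminant condition $24m+1=(6k+b)^2$ forces $b=+1$, so the indicator in the second form of $a_{n,i}$ picks out a single contribution, matching the constant $1$ in $(q;q)_\infty$.)

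Next I would compute the $(m+1)$-th coefficient of $(q;q)_\infty L_f(q)$ in two ways. Using $L_f(q) = \sum_{m\ge 1}(f\ast\mathds{1})(m)q^m$ and the pentagonal expansion gives
\[
[q^{m+1}]\,(q;q)_\infty L_f(q) = (f\ast\mathds{1})(m+1) + \sum_{b=\pm 1}\sum_{k\ge 1}(-1)^k (f\ast\mathds{1})\!\left(m+1-\tfrac{k(3k+b)}{2}\right),
\]
where the inner sum truncates at $k = \lfloor(\sqrt{24m+1}-b)/6\rfloor$ because the argument of $f\ast\mathds{1}$ must remain positive. This is precisely $B_{f\ast\mathds{1},m}$. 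On the other hand, using $L_f(q) = \sum_{i\ge 1} f(i)\,q^i/(1-q^i)$ and the identity from the previous paragraph yields
\[
[q^{m+1}]\,(q;q)_\infty L_f(q) = \sum_{i=1}^{m+1} a_{m+1,i}\, f(i).
\]

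Equating these two expressions gives $\sum_{i=1}^{m+1} a_{m+1,i}f(i) = B_{f\ast\mathds{1},m}$ for each $m\ge 0$, which is exactly the matrix identity $A_n \mathbf{f} = \mathbf{B}$. Since $A_n$ is lower triangular with ones on the diagonal (the $s=0$, $k=0$, $i=n$ contribution gives $a_{n,n}=1$), it is invertible, and multiplying on the left by $A_n^{-1}$ produces \eqref{eqn_fn_matrix_eqn}. The main bookkeeping obstacle is keeping the pentagonal index sets, the truncation bounds, and the $k=0$ edge case straight so that the formal series identity translates cleanly into the stated finite matrix equation; after that the result is immediate.
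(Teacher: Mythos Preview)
Your proof is correct. The underlying mathematics is the same as the paper's---both routes rest on the pentagonal number theorem and on reading $(q;q)_\infty L_f(q)$ through the two representations $L_f(q)=\sum_{m\ge1}(f\ast\mathds{1})(m)q^m$ and $L_f(q)=\sum_{i\ge1}f(i)\,q^i/(1-q^i)$---but the organization differs. The paper first proves Lemma~\ref{lemma_LS_partial_sum_exps} about the rational partial sums of $L_f(q)$, then deduces the recurrence in Theorem~\ref{prop_bn_recs} via the standard ``rational generating function $\Rightarrow$ linear recurrence'' argument, and only afterward rearranges that recurrence into the matrix equation $A_n\mathbf{f}=\mathbf{B}$. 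You instead extract the coefficient $[q^{m+1}](q;q)_\infty L_f(q)$ directly in the full formal power series, obtaining $a_f(m+1)=B_{f\ast\mathds{1},m}$ in one step; this simultaneously proves Theorem~\ref{prop_bn_recs} and the present theorem without the detour through truncated partial sums. Your route is more economical; the paper's route has the advantage of making the recurrence relation an explicitly stated intermediate result.
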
 

\begin{theorem}[Recurrence relations for $(f \ast \mathds{1})(n)$] 
\label{prop_bn_recs} 
For all $n \geq 1$, we have the following recurrence relation: 
\begin{align*} 
(f \ast \mathds{1})(n+1) & = \sum_{b = \pm 1} \left(\sum_{k=1}^{\lfloor \frac{\sqrt{24n+1}-b}{6} \rfloor} 
     (-1)^{k+1} (f \ast \mathds{1})\left(n+1-\frac{k(3k+b)}{2}\right)\right) + a_f(n+1). 
\end{align*} 
\end{theorem}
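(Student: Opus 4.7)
The plan is to recognize that the sequence $a_f(n)$ is precisely the coefficient sequence obtained by multiplying the Lambert series $L_f(q)$ by the infinite $q$-Pochhammer symbol, and then to extract coefficients using Euler's pentagonal number theorem. The recurrence is then just the statement that this product expansion must hold coefficient by coefficient.

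First, I would establish the generating function identity
\[
\sum_{n \geq 1} a_f(n) q^n = (q; q)_{\infty}\, L_f(q).
\]
To see this, write $\frac{q^i}{1-q^i} = \sum_{s \geq 0} q^{(s+1)i}$ and apply Euler's classical pentagonal number theorem
\[
(q; q)_{\infty} = 1 + \sum_{k \geq 1} (-1)^k \left(q^{k(3k-1)/2} + q^{k(3k+1)/2}\right).
\]
Convolving these series, the coefficient $[q^n]\bigl((q;q)_\infty \cdot \frac{q^i}{1-q^i}\bigr)$ counts, with sign $(-1)^k$, exactly the pairs $(k, s) \in \mathbb{Z}_{\geq 0}^2$ satisfying $(s+1)i + \tfrac{k(3k \pm 1)}{2} = n$; by Definition \ref{def_LGF_ani_afn_funcs_v1} this equals $a_{n,i}$. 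Summing against $f(i)$ and recalling that $L_f(q) = \sum_m (f \ast \mathds{1})(m) q^m$ gives the claim.

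Second, I would extract the coefficient of $q^{n+1}$ from both sides of this product identity. The right-hand side, rewritten as $L_f(q) + \sum_{k \geq 1,\, b = \pm 1} (-1)^k q^{k(3k+b)/2} L_f(q)$, yields
\[
a_f(n+1) = (f \ast \mathds{1})(n+1) + \sum_{b = \pm 1} \sum_{\substack{k \geq 1 \\ k(3k+b)/2 \leq n}} (-1)^k (f \ast \mathds{1})\!\left(n+1 - \tfrac{k(3k+b)}{2}\right),
\]
where the range constraint $\tfrac{k(3k+b)}{2} \leq n$ ensures the shifted argument is at least $1$ so that a nonzero contribution from $L_f$ is possible. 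Solving the quadratic $3k^2 + bk - 2n \leq 0$ with $b^2 = 1$ gives $k \leq \tfrac{\sqrt{24n+1} - b}{6}$, matching the stated upper limit on the inner sum.

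Finally, rearranging for $(f \ast \mathds{1})(n+1)$ and moving the overall minus sign into the summand (turning $(-1)^k$ into $(-1)^{k+1}$) produces the advertised recurrence. The only thing requiring real care is the bookkeeping around $k = 0$ in the pentagonal expansion: the term $k = 0$ contributes only to the leading $1$ in $(q;q)_\infty$ (both choices of $b$ give the same zero exponent), which is precisely why the main term $(f \ast \mathds{1})(n+1)$ appears exactly once. As an independent cross-check, one can simply read the last row of the matrix equation in Theorem \ref{prop_MatrixEquations_fi_eq_AinvBbm}: it says $a_f(n) = B_{f \ast \mathds{1}, n-1}$, which after shifting $n \to n+1$ and solving for $(f \ast \mathds{1})(n+1)$ is verbatim the recurrence above.
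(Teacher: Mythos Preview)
Your proof is correct and reaches the same identity $\sum_{n \geq 1} a_f(n) q^n = (q;q)_\infty L_f(q)$ that underlies the paper's argument, but you get there by a cleaner route. The paper first proves a lemma about the \emph{partial} sums $\sum_{n=1}^{m+1} f(n) q^n/(1-q^n)$, writing them as rational functions in $q$ with denominator $(q;q)_{m+1}$, identifying the first $m+1$ numerator coefficients as the $a_f(i)$, and then invoking the general fact that a rational generating function of the form $N(q)/D(q)$ with $\deg N < \deg D$ produces a linear recurrence with coefficients read off from $D$. You instead work directly with the full formal power series, verify $a_{n,i} = [q^n]\bigl((q;q)_\infty \cdot q^i/(1-q^i)\bigr)$ from the definition, and extract the coefficient of $q^{n+1}$ in one step.

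What your approach buys is brevity and transparency: no truncation bookkeeping, no appeal to the rational-GF-to-recurrence lemma. What the paper's approach buys is that the partial-sum calculation is the natural origin of the matrix factorization viewpoint (rationalizing the truncated Lambert series is exactly how the $a_{n,i}$ were discovered), so its proof doubles as motivation. Your final remark about reading off the last row of the matrix equation in Theorem~\ref{prop_MatrixEquations_fi_eq_AinvBbm} is a nice consistency check, though note that in the paper the logical order is reversed: Theorem~\ref{prop_MatrixEquations_fi_eq_AinvBbm} is deduced \emph{from} the present recurrence, so invoking it here would be circular if offered as more than a sanity check.
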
 

\begin{lemma}[Partial sums of the Lambert series, $L_f(q)$] 
\label{lemma_LS_partial_sum_exps} 
Let $(q; q)_n := (1-q)(1-q^2) \cdots (1-q^{n})$ denote the 
\emph{$q$-Pochhammer symbol} \citep[\S 17.2]{NISTHB}, and suppose that the functions 
$\poly_{i,m}(f; q)$ are polynomials in $q$ with coefficients depending on $f$ 
(for $i = 1,2$) each of whose degree is linear in the fixed index $m$. 
For a fixed arithmetic function $f$ and for all integers $m \geq 0$ we 
have that 
\begin{subequations} 
\begin{align} 
\label{eqn_bmp1_coeff_exp_va} 
g_f(m+1) & = [q^m]\left(\frac{1}{q} \times \sum_{n=1}^{m+1} 
     \frac{f(n) q^n}{1-q^n}\right) \\ 
\label{eqn_bmp1_coeff_exp_vb} 
     & = 
     [q^m]\left(\frac{\frac{1}{q} (q; q)_{m+1} 
     \left(\frac{f(1) q}{1-q} + 
     \frac{f(2) q^2}{1-q^2} + \cdots + \frac{f(n) q^{m+1}}{1-q^{m+1}}\right)}{ 
     (1-q)(1-q^2) \times \cdots \times (1-q^{m+1})}\right) \\ 
\label{eqn_bmp1_coeff_exp_vc} 
     & = 
     [q^m]\left(\frac{\sum_{1 \leq i \leq m+1} a_f(i) q^i + 
     q^{m+2} \poly_{1,m}(f; q)}{ 
     1 + \sum\limits_{b=\pm 1} \sum\limits_{k=1}^{\left\lfloor \frac{\sqrt{24m+25}+1}{6} \right\rfloor} 
     (-1)^k q^{\frac{k(3k+b)}{2}} + 
     q^{m+2} \poly_{2,m}(f; q)}\right). 
\end{align} 
\end{subequations} 
\end{lemma}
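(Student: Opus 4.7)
The three equalities in the lemma are three successive reformulations of the same rational-function representation of the partial Lambert series, so the plan is to verify them in order: (a) is a direct consequence of the definitions, (b) is a purely algebraic manipulation, and (c) is the content of the lemma, obtained by invoking Euler's pentagonal number theorem and the definition of the auxiliary weights $a_{n,i}$ in Definition \ref{def_LGF_ani_afn_funcs_v1}. Throughout, since we are only interested in $[q^m]$ of each expression, we work modulo $q^{m+1}$ (or $q^{m+2}$ once the $\tfrac{1}{q}$ factor is absorbed), which is what makes the principal-part-plus-remainder form on the right of \eqref{eqn_bmp1_coeff_exp_vc} meaningful.

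For \eqref{eqn_bmp1_coeff_exp_va} I would start from $L_f(q)=\sum_{m\ge 1}(f*\mathds{1})(m)\,q^m$, so that $[q^m](q^{-1}L_f(q))=(f*\mathds{1})(m+1)=g_f(m+1)$. The observation that $\tfrac{f(n)q^n}{1-q^n}$ contributes nothing to $[q^{m+1}]$ when $n\ge m+2$ shows that truncating the outer Lambert series at $n=m+1$ does not affect the coefficient, giving \eqref{eqn_bmp1_coeff_exp_va}. For \eqref{eqn_bmp1_coeff_exp_vb} I would simply multiply numerator and denominator by $(q;q)_{m+1}$; because each $(1-q^n)$ with $n\le m+1$ cancels one factor of the product, the resulting numerator is a polynomial in $q$ of bounded degree, and no value is changed.

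The bulk of the work is in \eqref{eqn_bmp1_coeff_exp_vc}. For the denominator, I would use the identity $(q;q)_{m+1}=(q;q)_\infty\cdot\prod_{n\ge m+2}(1-q^n)^{-1}$; since the correction factor is $1+O(q^{m+2})$, Euler's pentagonal number theorem
\[
(q;q)_\infty=1+\sum_{k\ge 1,\,b=\pm 1}(-1)^k q^{k(3k+b)/2}
\]
gives $(q;q)_{m+1}$ as the finite pentagonal truncation plus a remainder of degree $\ge m+2$. Cutting off the pentagonal sum at the largest $k$ with $k(3k+b)/2\le m+1$ leads, via the quadratic inequality $3k^2-k\le 2(m+1)$, to exactly the upper bound $\lfloor(\sqrt{24m+25}+1)/6\rfloor$ appearing in the lemma. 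For the numerator, expand $\tfrac{f(n)q^n}{1-q^n}=f(n)\sum_{s\ge 1}q^{sn}$ and multiply by the pentagonal series: the coefficient of $q^{j}$ in $(q;q)_\infty\sum_{n=1}^{m+1}\tfrac{f(n)q^n}{1-q^n}$ is
\[
\sum_{n=1}^{\min(j,\,m+1)} f(n)\sum_{\substack{s\ge 1,\ k\ge 0,\ b \\ sn+k(3k+b)/2=j}}(-1)^k,
\]
which, after the change of variables $s=s'+1$, matches the defining sum for $a_{j,n}$ and thus equals $a_f(j)$ whenever $j\le m+1$. Absorbing the prefactor $1/q$ and the switch between $(q;q)_{m+1}$ and $(q;q)_\infty$ (which agree modulo $q^{m+2}$) produces the numerator in the form $\sum_{1\le i\le m+1}a_f(i)q^{i-1}$ plus a $q^{m+1}$-remainder, which is \eqref{eqn_bmp1_coeff_exp_vc} up to the indexing convention.

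The hard part is not any single algebraic step but the bookkeeping that verifies the correspondence between the $(s,k,b)$ triples appearing in the convolution $(q;q)_\infty\cdot L_f^{(m+1)}(q)$ and the representation of $a_{n,i}$ in Definition \ref{def_LGF_ani_afn_funcs_v1} (in particular treating the $k=0$ case carefully so as not to double-count the $b=\pm 1$ values). Once this identification is nailed down, controlling the remainders is routine: all error terms are automatically of degree $\ge m+2$ because $(q;q)_\infty-(q;q)_{m+1}=O(q^{m+2})$ and $L_f^{(m+1)}(q)$ starts at $q^1$, so they are invisible to $[q^m]$ after dividing by $q$.
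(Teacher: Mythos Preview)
Your proposal is correct and follows essentially the same route as the paper: truncation of the Lambert series for \eqref{eqn_bmp1_coeff_exp_va}, passing to a common denominator for \eqref{eqn_bmp1_coeff_exp_vb}, and then Euler's pentagonal number theorem for the denominator together with the geometric-series expansion and Definition~\ref{def_LGF_ani_afn_funcs_v1} for the numerator in \eqref{eqn_bmp1_coeff_exp_vc}. The only cosmetic difference is that you phrase the denominator step as $(q;q)_{m+1}=(q;q)_\infty+O(q^{m+2})$, whereas the paper states directly that $[q^i](q;q)_n$ agrees with the pentagonal coefficients for $i\le n$; these are equivalent, and your extra care about the $k=0$ bookkeeping and the derivation of the floor bound $\lfloor(\sqrt{24m+25}+1)/6\rfloor$ is more explicit than what the paper writes out.
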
 
\begin{proof} 
To justify \eqref{eqn_bmp1_coeff_exp_va}, we observe that for all integers 
$m \geq 1$ and $1 \leq i \leq m$, we have that 
\begin{equation*} 
[q^i] \left(L_f(q) - \sum_{n > m} \frac{f(n) q^n}{1-q^n}\right) = 0, 
\end{equation*} 
i.e., that the $m^{th}$ partial sums of $L_f(q)$ generate the coefficients 
$[q^k] L_f(q)$ for each $k$ in the range $1 \leq k \leq m$. 
This is easy enough to see by considering the numerator multiples, $q^n$, of the 
geometric series, $(1-q^n)^{-1}$, in the individual Lambert series terms in the infinite 
series expansion of $L_f(q)$. 
The result in \eqref{eqn_bmp1_coeff_exp_vb} follows immediately from 
\eqref{eqn_bmp1_coeff_exp_va} by combining the terms in the first partial sum, and 
implies the third result in \eqref{eqn_bmp1_coeff_exp_vc} in two key ways. 

First, the respective form of the denominator terms in 
\eqref{eqn_bmp1_coeff_exp_vc} follows from the statement of 
\emph{Euler's pentagonal number theorem}, which states that 
\citep[\S 19.9, Thm.\ 353]{HARDYWRIGHT} 
\begin{align*} 
(q; q)_{\infty} & = \sum_{n=-\infty}^{\infty} (-1)^n q^{\frac{n(3n+1)}{2}} = 
     1 + \sum_{n \geq 1} (-1)^n \left(q^{\frac{k(3k-1)}{2}} + q^{\frac{k(3k+1)}{2}}\right) \\ 
     & = 
     1 - q - q^2 + q^5+q^7-q^{12}-q^{15}+q^{22}+q^{26} - \cdots. 
\end{align*} 
In particular, we see that the pentagonal number theorem shows that 
\begin{equation*} 
[q^i] (1-q)(1-q^2) \times \cdots \times (1-q^n) = 
     \begin{cases} 
     (-1)^k, & \text{ if $i = \frac{k(3k\pm 1)}{2}$; } \\ 
     0, & \text{otherwise, } 
     \end{cases} 
     i \leq n.
\end{equation*} 
Since $(1-q^i)$ is a factor of $(q; q)_{n}$ for all $1 \leq i \leq n$, 
we see that both of the numerator and denominator of 
\eqref{eqn_bmp1_coeff_exp_vb} are polynomials in $q$, 
each with degree greater than $m+1$. 
This implies the correctness of the 
denominator polynomial form stated in \eqref{eqn_bmp1_coeff_exp_vc}. 

Secondly, since the geometric series in $q^i$ is expanded by 
\begin{equation*} 
\frac{1}{1-q^i} = \sum_{s \geq 0} q^{si}, |q| < 1, 
\end{equation*} 
for integers $i \geq 1$, we have by the definition of $a_f(n)$ given above 
that the first $m+1$ terms of the 
numerator expansion in \eqref{eqn_bmp1_coeff_exp_vc} are correct. 
Thus since the numerator in \eqref{eqn_bmp1_coeff_exp_vc} is polynomial in $q$, 
we have that this statement holds as well. 
\end{proof} 

\begin{proof}[Proof of Theorem \ref{prop_bn_recs}]
We use \eqref{eqn_bmp1_coeff_exp_vc} in Lemma \ref{lemma_LS_partial_sum_exps} 
to prove our result. 
If we let $\Num_m(q)$ and $\Denom_m(q)$ denote the numerator and denominator 
polynomials in \eqref{eqn_bmp1_coeff_exp_vc}, respectively, we see that by 
definition, $\deg_q \left\{\Num_m(q)\right\} < \deg_q \left\{ \Denom_m(q) \right\}$. 
Next, for any sequence, $\{f_n\}_{n \geq 0}$, generated by a rational generating function 
of the form 
\begin{align*} 
\sum_{n \geq 0} f_n q^n & = \frac{a_0+a_1q+a_2q^2+\cdots+a_{k-1}q^{k-1}}{ 
     1 - b_1 q - b_2 q^2 - \cdots - b_k q^{k}}, 
\end{align*} 
for some fixed integer $k \geq 1$, we can prove that $f_n$ satisfies 
at most a  
$k$-order finite difference equation with constant coefficients of the form 
\cite[\S 2.3]{LANDO-GFLECT} 
\begin{align*} 
f_n & = \sum_{i=1}^{\min(k, n)} b_i f_{n-i} + a_n \Iverson{0 \leq n < k}. 
\end{align*} 
Then since we define $f(n) = 0$ for all $n < 1$, 
since the $m^{th}$ partial sums of $L_f(q)$ generate $g_f(i)$ for all 
$1 \leq i \leq m$ by the lemma, and since $g_f(i) = 0$ for all $i < 1$, 
we see that \eqref{eqn_bmp1_coeff_exp_vc} implies our result. 
\end{proof}

\begin{proof}[Proof of Theorem \ref{prop_MatrixEquations_fi_eq_AinvBbm}] 
The theorem is a consequence of Theorem \ref{prop_bn_recs}. 
Specifically, by rearranging terms in the result from the previous theorem, 
we see that 
\begin{align*} 
\tag{i} 
A_n \begin{bmatrix} f(1) \\ f(2) \\ \vdots \\ f(n) \end{bmatrix} & = 
     \begin{bmatrix} B_{g_f,0} \\ B_{g_f,1} \\ \vdots \\ B_{g_f,n-1} \end{bmatrix}. 
\end{align*} 
Then by the definition of $a_{n,i}$, 
it is easy to see that 
$A_n$ is lower triangular with ones on its diagonals, and so is 
invertible for all $n \geq 1$. 
Thus by applying $A_n^{-1}$ to both sides of (i), we have proved 
\eqref{eqn_fn_matrix_eqn} in the statement of the theorem. 
\end{proof} 
\begin{cor}[Recurrence relations for the summatory function of $f \ast \mathds{1}$] 
\label{cor_Sigmabx_recs} 
Let the average order of the function $g_f := f \ast \mathds{1}$ be denoted by 
$\Sigma_{g_f,x} := \sum_{n \leq x} (f \ast \mathds{1})(n)$. 
Then for all $n \geq 1$, we have that 
\begin{align} 
\label{eqn_Sigmabx_recs}
\Sigma_{g_f,n+1} & = \sum_{b = \pm 1} \left( 
     \sum_{k=1}^{\left\lfloor \frac{\sqrt{24n+1}-b}{6} \right\rfloor + 1} 
     (-1)^{k+1} \Sigma_{g_f,n+1-\frac{k(3k+b)}{2}}\right) + 
     \sum_{k=1}^{n} a_f(k+1). 
\end{align} 
\end{cor}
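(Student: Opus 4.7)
The plan is to derive \eqref{eqn_Sigmabx_recs} directly by summing the pointwise recurrence from Theorem \ref{prop_bn_recs} over the range of indices that defines $\Sigma_{g_f, n+1}$, and then swapping the order of summation so that the pentagonal-number shifts on the inside telescope into summatory functions at smaller indices. This is the natural ``$\times (1-q)^{-1}$'' analogue of the generating function manipulation already used in Lemma \ref{lemma_LS_partial_sum_exps}; multiplying the right-hand side of \eqref{eqn_bmp1_coeff_exp_vc} by $(1-q)^{-1}$ inside the coefficient extraction produces exactly the summatory function $\Sigma_{g_f, m}$ on the coefficient side, so a coefficient-level recurrence for $\Sigma_{g_f}$ must exist of the form asserted.

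Concretely, I would first write
\[
\Sigma_{g_f,\, n+1} \;=\; \sum_{m=0}^{n} g_f(m+1)
\]
and substitute the recurrence from Theorem \ref{prop_bn_recs} into each summand. This replaces $g_f(m+1)$ by a signed sum of earlier values $g_f\!\left(m+1 - k(3k+b)/2\right)$ over $b=\pm 1$ and $1 \le k \le \lfloor (\sqrt{24m+1}-b)/6 \rfloor$, plus the inhomogeneous term $a_f(m+1)$.

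Next I would exchange the outer $m$-sum with the inner $(b,k)$-sum. The condition $\lfloor (\sqrt{24m+1}-b)/6 \rfloor \ge k$ is equivalent (since pentagonal numbers are integers) to $m \ge k(3k+b)/2$, so for each fixed $(b,k)$ the admissible range of $m$ is $k(3k+b)/2 \le m \le n$, giving the expected cutoff $k(3k+b)/2 \le n$, i.e.\ $k \le \lfloor (\sqrt{24n+1}-b)/6 \rfloor$. After the interchange I would substitute $j := m+1 - k(3k+b)/2$, so that $j$ ranges from $1$ to $n+1 - k(3k+b)/2$, and the inner $m$-sum becomes
\[
\sum_{m = k(3k+b)/2}^{n} g_f\!\left(m+1-\tfrac{k(3k+b)}{2}\right) \;=\; \sum_{j=1}^{\,n+1 - k(3k+b)/2} g_f(j) \;=\; \Sigma_{g_f,\, n+1 - k(3k+b)/2}.
\]
This produces the $\Sigma_{g_f}$-double sum on the right-hand side of \eqref{eqn_Sigmabx_recs}, while the residual inhomogeneity reindexes as $\sum_{m=0}^{n} a_f(m+1)$, which matches the $a_f$-tail of the claim.

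The only part that needs care is boundary bookkeeping: the ``$+1$'' in the upper index $\lfloor (\sqrt{24n+1}-b)/6 \rfloor + 1$ of the $k$-sum in the statement corresponds to values of $k$ for which $k(3k+b)/2$ exceeds $n$, so that $\Sigma_{g_f,\, n+1 - k(3k+b)/2}$ is evaluated at a nonpositive index and is zero by the standing convention $\Sigma_{g_f, x} = 0$ for $x \le 0$; these extra terms can be included without changing the identity. I expect this boundary matching (and correctly accounting for the $a_f(1)$ term from $m=0$, which carries the initial condition $g_f(1) = f(1)$) to be the only real subtlety; everything else is a Fubini-style interchange applied to the recurrence of Theorem \ref{prop_bn_recs}.
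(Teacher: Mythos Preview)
Your approach is correct but genuinely different from the paper's. The paper proves the corollary by induction on $n$: it checks the base case by direct computation, assumes \eqref{eqn_Sigmabx_recs} at $n=j$, and then passes to $n=j+1$ by adding the single term $g_f(j+2)$, expanded via Theorem \ref{prop_bn_recs}, to both sides. You instead sum the pointwise recurrence of Theorem \ref{prop_bn_recs} over $m=0,\dots,n$ and interchange the order of summation, so that the inner $m$-sums collapse directly into the shifted summatory functions $\Sigma_{g_f,\,n+1-k(3k+b)/2}$. Your derivation makes the origin of each piece of the formula explicit --- in particular, the ``$+1$'' in the upper $k$-limit is seen to contribute only vanishing terms, and the $a_f(1)=f(1)$ boundary contribution is isolated rather than absorbed into an induction hypothesis --- whereas the paper's inductive argument is shorter but leaves exactly those boundary effects implicit. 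Both proofs rest solely on Theorem \ref{prop_bn_recs}; neither needs any further input. Your caution about the $a_f(1)$ term is well placed: reconciling $\sum_{m=0}^{n} a_f(m+1)$ with the stated tail $\sum_{k=1}^{n} a_f(k+1)$ is precisely the step where the initial value $g_f(1)=a_f(1)$ must be accounted for, and the paper's induction handles this through its base case rather than in the inductive step.
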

\begin{proof}[Proof of Corollary \ref{cor_Sigmabx_recs}]
We can show directly by computation that the statement is true for $n = 1$. 
For some $j \geq 1$, suppose that the hypothesis in \eqref{eqn_Sigmabx_recs} is 
true for $n = j$. Then wee see that 
\begin{align*} 
\widetilde{\Sigma}_{g_f,j+1} & = 
     \sum_{b = \pm 1}  
     \sum_{k=1}^{\left\lfloor \frac{\sqrt{24j+25}-b}{6} \right\rfloor}
     (-1)^{k} \left(\Sigma_{g_f,j+1-\frac{k(3k+b)}{2}} + g_f\left(j+2-\frac{k(3k+b)}{2}\right)\right) \\ 
     & \phantom{=\sum1\ } + 
     \sum_{k=1}^{j+1} a_f(k+1) \\ 
\tag{by hypothesis} 
     & = 
     \Sigma_{g_f,j+1} + \sum_{b=\pm 1} 
     \sum_{k=1}^{\left\lfloor \frac{\sqrt{24j+25}-b}{6} \right\rfloor} 
     g_f\left(j+2-\frac{k(3k+b)}{2}\right) + a_f(j+2) \\ 
     & = 
     \Sigma_{g_f,j+1} + g_f(j+2) \\ 
     & = 
     \Sigma_{g_f,j+2}. 
\end{align*} 
The second to last of the previous equations follows from 
Theorem \ref{prop_bn_recs}, the fact that 
$\left\lfloor \frac{(\sqrt{24n+25}-b)}{6} \right\rfloor \geq \left\lfloor \frac{(\sqrt{24n+1}-b)}{6} \right\rfloor$, and 
since $g_f(i) = 0$ for all $i < 1$. 
\end{proof}

\begin{cor}
For integers $x \geq 1$, let the summatory function 
$$\Sigma_{\sigma,x} := \sum_{n \leq x} \sigma(n).$$ 
The function $\sigma(n) \equiv \sigma_1(n)$ is the ordinary sum-of-divisors function.
Then 
\begin{align*} 
\Sigma_{\sigma,x+1} & = \sum_{s = \pm 1} \left( \sum_{0 \leq n \leq x} 
     \sum_{k=1}^{\left\lfloor \frac{\sqrt{24n+25}-s}{6} \right\rfloor} 
     (-1)^{k+1} \frac{k(3k+s)}{2} p(x-n)\right). 
\end{align*} 
\end{cor}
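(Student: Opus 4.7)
The plan is to prove the identity via generating functions, using Euler's pentagonal number theorem and the classical logarithmic-derivative identity for $\sigma$, rather than iterating the recurrence in Corollary \ref{cor_Sigmabx_recs} directly. First I would recall the well-known identity
\[
\sum_{n \geq 1} \sigma(n) q^n \;=\; \sum_{n \geq 1} \frac{n q^n}{1-q^n} \;=\; -\,q\,\frac{(q;q)_\infty'}{(q;q)_\infty},
\]
which comes from logarithmically differentiating $-\log(q;q)_\infty = \sum_{n \geq 1}\sum_{k \geq 1} q^{nk}/k$. Multiplying the summatory generating function identity
\[
\sum_{x \geq 1} \Sigma_{\sigma,x} q^x \;=\; \frac{1}{1-q}\sum_{n \geq 1}\sigma(n)q^n
\]
then gives, after a shift of index, $\sum_{x \geq 0}\Sigma_{\sigma,x+1} q^x = -(q;q)_\infty'\big/\bigl[(1-q)(q;q)_\infty\bigr]$.

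Next I would apply Euler's pentagonal number theorem, $(q;q)_\infty = \sum_{k \in \mathbb{Z}} (-1)^k q^{k(3k-1)/2}$, and differentiate termwise, splitting the resulting sum over $k \neq 0$ into the contributions from $k \geq 1$ and from $k \leq -1$ (re-indexing the latter by $j=-k$). A short calculation yields the compact expansion
\[
-(q;q)_\infty' \;=\; \sum_{s = \pm 1}\,\sum_{k \geq 1} (-1)^{k+1}\,\frac{k(3k+s)}{2}\, q^{k(3k+s)/2-1}.
\]
On the other hand, $1/[(1-q)(q;q)_\infty] = \sum_{n \geq 0} P(n)q^n$ with $P(n) := \sum_{m=0}^n p(m)$, since $1/(q;q)_\infty$ is the OGF of $p(n)$ and multiplication by $1/(1-q)$ produces partial sums.

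I would then multiply these two series and read off $[q^x]$, which produces
\[
\Sigma_{\sigma,x+1} \;=\; \sum_{s=\pm 1}\sum_{\substack{k \geq 1 \\ k(3k+s)/2 \leq x+1}} (-1)^{k+1}\,\frac{k(3k+s)}{2}\, P\!\left(x+1-\tfrac{k(3k+s)}{2}\right).
\]
To match the target form, I would substitute $P(x+1-k(3k+s)/2) = \sum_{n} p(x-n)$, where $n$ ranges over $k(3k+s)/2 - 1 \leq n \leq x$, and interchange the order of summation, pulling the sum over $n$ to the outside and summing over $k$ for fixed $n$ subject to $k(3k+s)/2 \leq n+1$.

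The final step, and the one most likely to be fiddly, is translating this pentagonal condition into the floor bound stated in the corollary. Solving the quadratic inequality $3k^2 + sk - (2n+2) \leq 0$ with the positive root formula gives $k \leq (\sqrt{s^2 + 24(n+1)} - s)/6 = (\sqrt{24n+25}-s)/6$, and taking the integer floor produces exactly the upper limit $\lfloor (\sqrt{24n+25}-s)/6\rfloor$ in the statement. The main (minor) obstacle is simply bookkeeping: verifying that the ``$k(3k+s)/2=0$'' boundary is handled correctly (it does not occur for $k \geq 1$) and confirming that the exchange of summation order does not drop any terms at the extremes $n = x$ or $k(3k+s)/2 = x+1$. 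Assembling these steps produces the claimed identity.
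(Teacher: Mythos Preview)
Your proof is correct, and the generating-function route you outline is clean: the logarithmic-derivative identity $\sum_{n\geq 1}\sigma(n)q^n=-q(q;q)_\infty'/(q;q)_\infty$ is exactly the special feature of $\sigma$ that makes this corollary collapse into a direct Cauchy-product computation, and your handling of the index interchange and the floor bound is accurate.

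The paper, however, does not argue this way. It situates the corollary as an immediate specialization of the general recurrence machinery built up just before it (Theorem \ref{prop_bn_recs} and Corollary \ref{cor_Sigmabx_recs}), taking $f(n)=n$ so that $f\ast\mathds{1}=\sigma$; in that framework the pentagonal-number structure enters through the coefficients $a_f(n)$ and the factorization $L_f(q)=(q;q)_\infty^{-1}\sum a_f(n)q^n$, and the summatory identity is obtained by summing the recurrence rather than by a one-shot generating-function product. Your argument is more elementary and self-contained, but it is genuinely special to $\sigma$ because it leans on the logarithmic-derivative identity; the paper's derivation is less direct here but is the $f=\operatorname{Id}_1$ instance of a template that applies uniformly to every arithmetic function $f$.
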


\begin{cor}[Special cases]
Suppose that for any $m \geq 1$, we define the next functions as 
\begin{align*} 
B_m(\mu) & := \Iverson{m = 0} + \sum_{b = \pm 1} 
     \sum_{k=1}^{\left\lfloor \frac{\sqrt{24m+1}-b}{6} \right\rfloor} 
     (-1)^k \Iverson{m+1-\frac{k(3k+b)}{2} = 1} \\ 
B_m(\phi) & := m+1 - 
     \sum_{b = \pm 1} 
     \sum_{k=1}^{\left\lfloor \frac{\sqrt{24m+1}-b}{6} \right\rfloor} 
     (-1)^{k+1} \left(m+1-\frac{k(3k+b)}{2}\right) \\ 
B_m(\lambda) & := \Iverson{\sqrt{m+1} \in \mathbb{Z}} - 
     \sum_{b = \pm 1} 
     \sum_{k=1}^{\left\lfloor \frac{\sqrt{24m+1}-b}{6} \right\rfloor} 
     (-1)^{k+1} \Iverson{\sqrt{m+1-\frac{k(3k+b)}{2}} \in \mathbb{Z}}. 
\end{align*}
For all $n \geq 1$, we have that 
\begin{align*} 
\mu(n) & = \sum_{k=1}^n \left(\sum_{d|n} p(d-k) \mu\left(\frac{n}{d}\right)\right) B_{k-1}(\mu) \\ 
\phi(n) & = \sum_{k=1}^n \left(\sum_{d|n} p(d-k) \mu\left(\frac{n}{d}\right)\right) B_{k-1}(\phi) \\ 
\lambda(n) & = \sum_{k=1}^n \left(\sum_{d|n} p(d-k) \mu\left(\frac{n}{d}\right)\right) B_{k-1}(\lambda). 
\end{align*}
\end{cor}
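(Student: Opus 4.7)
The plan is to obtain each of the three identities as an immediate specialization of the matrix factorization in Theorem \ref{prop_MatrixEquations_fi_eq_AinvBbm}, after making two structural observations.

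First, I would identify the matrix $A_n$ of Definition \ref{def_LGF_ani_afn_funcs_v1} with the Lambert-series factorization matrix $(s_{i,j})_{1 \le i,j \le n}$ from \eqref{eqn_LambertSeriesFactThm_InitExp}. Indeed, the generating function definition
\[
a_{n,i} = [q^n]\left(\frac{q^i (q;q)_\infty}{1-q^i}\right)
\]
follows directly from expanding $q^i/(1-q^i) = \sum_{s\ge 0} q^{(s+1)i}$ and invoking Euler's pentagonal number theorem to identify the nonzero coefficients of $(q;q)_\infty$. This matches exactly the glossary definition of $s_{n,k}$, and therefore the inverse-matrix relation from \eqref{eqn_LGFFactThmExps_TopLevelSubEq_v0} gives $[A_n^{-1}]_{n,k} = s_{n,k}^{-1} = \sum_{d|n} p(d-k)\, \mu(n/d)$. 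Reading off the $n$-th row of the matrix equation \eqref{eqn_fn_matrix_eqn} then yields the skeleton identity
\[
f(n) = \sum_{k=1}^{n}\left(\sum_{d|n} p(d-k)\,\mu\!\left(\tfrac{n}{d}\right)\right) B_{f\ast\mathds{1},\,k-1},
\]
valid for any arithmetic $f$.

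Second, I would specialize $B_{f\ast\mathds{1},m}$ to the three choices $f\in\{\mu,\phi,\lambda\}$ using the classical LGF evaluations listed in \eqref{eqn_WellKnown_LamberSeries_Examples}. These give explicit closed forms for the divisor sums appearing in the definition of $B_{f\ast\mathds{1},m}$:
\[
(\mu\ast\mathds{1})(n) = \Iverson{n=1},\qquad (\phi\ast\mathds{1})(n) = n,\qquad (\lambda\ast\mathds{1})(n) = \Iverson{\sqrt{n}\in\mathbb{Z}}.
\]
Substituting each into the formula for $B_{f\ast\mathds{1},m}$ and rearranging the sign conventions gives precisely the three stated expressions for $B_m(\mu)$, $B_m(\phi)$, and $B_m(\lambda)$. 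Plugging these into the skeleton identity yields the three formulas claimed in the corollary.

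The steps are essentially mechanical; the only point that requires care is the bookkeeping needed to match the two pentagonal-number indexing conventions (the sum over $b=\pm 1$ with the $(-1)^{k+1}$ factor in the theorem versus the $(-1)^k$ and leading indicator $\Iverson{m=0}$ in the $\mu$-case of the corollary). That matching must be verified by inspecting the edge case $m+1-k(3k+b)/2 = 1$ (which shifts the alternation sign and contributes the Iverson term for the $\mu$-identity), after which the $\phi$ and $\lambda$ specializations follow by exactly the same substitution without any boundary subtlety. Thus the principal obstacle is the identification $A_n = (s_{i,j})$ together with clean handling of the pentagonal-indexing signs; once those are in hand, the three identities follow in a single line each.
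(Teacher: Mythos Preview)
Your proposal is correct and follows essentially the same approach as the paper: the paper's proof simply invokes the classical divisor sum identities $\mu\ast\mathds{1}=\varepsilon$, $\phi\ast\mathds{1}=\operatorname{Id}_1$, $\lambda\ast\mathds{1}=\chi_{\operatorname{sq}}$ and leaves the rest implicit, while you spell out the identification $A_n=(s_{i,j})$, the inverse formula $s_{n,k}^{-1}=\sum_{d|n}p(d-k)\mu(n/d)$, and the substitution into Theorem~\ref{prop_MatrixEquations_fi_eq_AinvBbm}. The added bookkeeping remark about the pentagonal-number sign conventions is appropriate and does not change the underlying strategy.
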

\begin{proof}
These identities follow from the well known divisor sum relations providing that 
$\mu \ast \mathds{1} = \varepsilon$, $\phi \ast \mathds{1} = \operatorname{Id}_1$, and 
$\lambda \ast \mathds{1} = \chi_{\operatorname{sq}}$, where 
$\chi_{\operatorname{sq}}$ is the characteristic function of the squares and 
$\operatorname{Id}_1(n) = n$. 
\end{proof}

\begin{theorem}
\label{theorem_MainThm_InvMatrixDivSums} 
We have that for integers $1 \leq i \leq n$, 
\[
a_{n,i} = s_o(n, i) - s_e(n, i) = [q^n] (q; q)_{\infty} \times \frac{q^i}{1-q^i}, 
\]
where $s_o(n, k)$ and $s_e(n, k)$ are respectively the number of $k$'s in all 
partitions of $n$ into an odd (even) number of distinct parts. 
Moreover, the entries of inverse matrices, $A_n^{-1} := (a_{n,i}^{-1})_{1 \leq i,j \leq n}$, satisfy
\[
a_{n,i}^{-1} = \sum_{d|n} p(d-i) \mu\left(\frac{n}{d}\right). 
\]
The function $\mu(n)$ is the classical M\"obius function and 
$p(n) = [q^n] (q; q)_{\infty}^{-1}$ is the (ordinary, i.e., Euler) partition function. 
\end{theorem}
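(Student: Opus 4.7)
The plan is to break the theorem into three pieces: the generating-function identity for $a_{n,i}$, the combinatorial interpretation via signed counts of distinct-part partitions, and the explicit inverse formula via Möbius inversion.

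First, I would verify the analytic identity $a_{n,i} = [q^n]\,(q;q)_\infty \cdot \tfrac{q^i}{1-q^i}$ directly from the Definition. Expanding the geometric series as $\tfrac{q^i}{1-q^i} = \sum_{s\ge 0} q^{(s+1)i}$ and invoking Euler's pentagonal number theorem (already cited in the proof of Lemma on partial sums), I write
\[
(q;q)_\infty \cdot \frac{q^i}{1-q^i} = \Bigl(\sum_{k\in\mathbb{Z}} (-1)^k q^{k(3k+1)/2}\Bigr)\Bigl(\sum_{s\ge 0} q^{(s+1)i}\Bigr).
\]
Extracting the coefficient of $q^n$ and re-indexing over $b=\pm 1$ yields exactly the double sum that defines $a_{n,i}$.

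Next I would establish the combinatorial identity $a_{n,i} = s_o(n,i) - s_e(n,i)$. The generating function $(q;q)_\infty = \prod_{j\ge 1}(1-q^j)$ assigns to each partition of $m$ into distinct parts the weight $(-1)^{\#\text{parts}}$. Since a part equals $i$ at most once in such a partition, I can split $\prod_j(1-q^j) = (1-q^i)\prod_{j\ne i}(1-q^j)$, which gives $\tfrac{(q;q)_\infty}{1-q^i} = \prod_{j\ne i}(1-q^j)$. Multiplying by $q^i$ and reading off the coefficient of $q^n$ corresponds to adjoining a single part $i$ to a distinct-part partition of $n-i$ with no part equal to $i$; the sign flips with the added part, so $[q^n]\,\tfrac{q^i(q;q)_\infty}{1-q^i}$ counts distinct-part partitions of $n$ containing $i$, weighted by $(-1)^{\#\text{parts}-1}$. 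Collecting the positive and negative contributions gives precisely $s_o(n,i)-s_e(n,i)$.

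For the inverse formula, I would use the first part to rewrite the matrix-vector product. If $c_n := \sum_{k=1}^n a_{n,k} f(k)$, then by the computation above
\[
\sum_{n\ge 1} c_n q^n = (q;q)_\infty \cdot L_f(q).
\]
Dividing by $(q;q)_\infty$ and expanding $(q;q)_\infty^{-1} = \sum_{m\ge 0} p(m) q^m$, the Cauchy product yields $(f \ast \mathds{1})(n) = \sum_{k=1}^n p(n-k)\, c_k$. Classical Möbius inversion then gives
\[
f(n) = \sum_{d\mid n} \mu\!\left(\tfrac{n}{d}\right) (f \ast \mathds{1})(d) = \sum_{k=1}^n c_k \sum_{\substack{d\mid n \\ d \ge k}} \mu\!\left(\tfrac{n}{d}\right) p(d-k).
\]
Extending the inner sum to all $d\mid n$ (using $p(d-k)=0$ for $d<k$) identifies the coefficient of $c_k$ as the claimed expression for $a_{n,k}^{-1}$. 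Uniqueness of the lower-triangular inverse (with invertibility already noted from $a_{n,n}=1$) closes the argument.

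The main obstacle I anticipate is ensuring the combinatorial sign bookkeeping in step two is airtight — in particular, carefully tracking that removing the part $i$ corresponds to the sign change $(-1)^{\#\text{parts}} \to (-1)^{\#\text{parts}-1}$ so that odd-part counts give $+1$ and even-part counts give $-1$. Everything else reduces to formal manipulations already implicit in the partial-sum lemma and standard Möbius inversion.
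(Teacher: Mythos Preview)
Your proposal is correct and largely parallels the paper's approach, with two minor but worthwhile differences. For the combinatorial identity $a_{n,i}=s_o(n,i)-s_e(n,i)$, the paper simply cites Merca's external result, whereas you supply a self-contained argument via the factorization $\tfrac{(q;q)_\infty}{1-q^i}=\prod_{j\ne i}(1-q^j)$; your sign bookkeeping is fine once you note that in distinct-part partitions each part occurs at most once, so $s_o(n,i)$ literally counts partitions containing $i$ with an odd number of parts. For the inverse formula, the paper specializes $f(n):=a_{n,r}^{-1}$ and uses the orthogonality $\sum_k a_{j,k}a_{k,r}^{-1}=\delta_{j,r}$ inside the factorization to obtain $\sum_{d\mid n}a_{d,r}^{-1}=p(n-r)$ directly, then M\"obius-inverts; you instead work with a general $f$, derive $(f\ast\mathds{1})(n)=\sum_k p(n-k)c_k$ from the Cauchy product, M\"obius-invert, and read off the inverse entries from the resulting coefficient of $c_k$. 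Both routes hinge on the same identity and the same M\"obius step; the paper's specialization is slightly slicker, while yours makes the ``for all $f$'' logic explicit and thereby transparently justifies why the displayed coefficient must equal $a_{n,k}^{-1}$.
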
 
\begin{proof}[Proof of Theorem \ref{theorem_MainThm_InvMatrixDivSums}]
The generating function expression in the first formula follows by rearranging the 
series. The partition theoretic interpretation of these coefficients is proved by 
Merca \cite[Thm.~1.2]{MERCA-LSFACTTHM}. The original proof due to Merca relies on 
an expansion of Lambert series by elementary symmetric polynomials. 

It remains to prove the inverse matrix formula in the second equation. 
An equivalent statement is to prove that 
for all $1 \leq k \leq n$, 
\[
p(n-k) = \sum_{d|n} a_{d,k}^{-1}. 
\]
Let $r \geq 1$ be a fixed lower index. 
Consider the expansion of the LGF of the function $f(n) := a_{n,r}^{-1}$.
Notice that the coefficients of the LGF factorization satisfy 
\begin{align*}
\sum_{d|n} a_{d,r}^{-1} & = \sum_{m=0}^{n} \sum_{j=1}^{n-m} \left( 
     s_o(n-m, j) - s_e(n-m, j)\right) a_{j,r}^{-1} p(m) \\ 
     & = \sum_{m=0}^{n} \delta_{n-r,m} p(m) = p(n-r). 
\end{align*}
The second equation in the above lines is a consequence of the matrix inverse 
orthogonality given by 
\[
\sum_{j=1}^{m} (s_o(m, j) - s_e(m, j)) a_{j,r}^{-1} = \delta_{m,r}, m \geq r \geq 1. 
\]
The claimed formula then follows by M\"obius inversion on the divisor sum. 
\end{proof} 

A consequence of the last proof is that the inverse matrix entries have the LGF
\begin{align*} 
\sum_{n \geq 1} \frac{a_{n,r}^{-1} q^n}{1-q^n} & = \frac{q^r}{(q; q)_{\infty}}, r \geq 1. 
\end{align*} 

\begin{example}[More special case expansions]
\label{cor_ExactFormulas_SpArithFns} 
For natural numbers $m \geq 0$, let the next component sequences 
defined in \cite{AA,MERCA-SCHMIDT-RAMJ} be defined by the formulas 
\begin{align*} 
B_{\phi,m} & = m+1 - \frac{1}{8}\Biggl(8 - 5 \cdot (-1)^{u_1} - 4 \left( 
     -2 + (-1)^{u_1} + (-1)^{u_2}\right) m \\ 
     & \phantom{=m+1 - \frac{1}{8}\Biggl(8\ } + 
     2 (-1)^{u_1} u_1 (3u_1+2) + 
     (-1)^{u_2} (6u_2^2+8u_2-3)\Biggr) \\ 
B_{\mu, m} & = \Iverson{m = 0} + \sum_{b = \pm 1} 
     \sum_{k=1}^{\left\lfloor \frac{\sqrt{24m+25}-b}{6} \right\rfloor} 
     (-1)^k \Iverson{m+1-\frac{k(3k+b)}{2} = 1} \\ 
B_{\lambda, m} & = 
     \Iverson{\sqrt{m+1} \in \mathbb{Z}} - \sum_{b = \pm 1} 
     \sum_{k=1}^{\left\lfloor \frac{\sqrt{24m+1}-b}{6} \right\rfloor} 
     (-1)^{k+1} \Iverson{\sqrt{m+1-\frac{k(3k+b)}{2}} \in \mathbb{Z}} \\ 
B_{\Lambda, m} & = 
     \log(m+1) - \sum_{b = \pm 1} 
     \sum_{k=1}^{\left\lfloor \frac{\sqrt{24m+1}-b}{6} \right\rfloor} 
     (-1)^{k+1} \log\left(m+1-\frac{k(3k+b)}{2}\right) \\ 
B_{|\mu|, m} & = 
     2^{\omega(m+1)} - \sum_{b = \pm 1} 
     \sum_{k=1}^{\left\lfloor \frac{\sqrt{24m+1}-b}{6} \right\rfloor} 
     (-1)^{k+1} 2^{\omega\left(m+1-\frac{k(3k+b)}{2}\right)} \\ 
B_{J_t, m} & = 
     (m+1)^t - \sum_{b = \pm 1} 
     \sum_{k=1}^{\left\lfloor \frac{\sqrt{24m+1}-b}{6} \right\rfloor} 
     (-1)^{k+1} \left(m+1-\frac{k(3k+b)}{2}\right)^t, 
\end{align*} 
where $u_1 \equiv u_1(m) := \lfloor \frac{\left(\sqrt{24m+1}+1\right)}{6} \rfloor$ and 
$u_2 \equiv u_2(m) := \lfloor \frac{\left(\sqrt{24m+1}-1\right)}{6} \rfloor$. 
Then we have that 
\begin{align*} 
\phi(n) & = \sum_{m=0}^{n-1} \sum_{d|n} p(d-m-1) \mu\left(\frac{n}{d}\right) B_{\phi,m} \\ 
\mu(n) & = \sum_{m=0}^{n-1} \sum_{d|n} p(d-m-1) \mu\left(\frac{n}{d}\right) B_{\mu,m} \\ 
\lambda(n) & = \sum_{m=0}^{n-1} \sum_{d|n} p(d-m-1) \mu\left(\frac{n}{d}\right) B_{\lambda,m} \\ 
\Lambda(n) & = \sum_{m=0}^{n-1} \sum_{d|n} p(d-m-1) \mu\left(\frac{n}{d}\right) B_{\Lambda,m} \\ 
|\mu(n)| & = \sum_{m=0}^{n-1} \sum_{d|n} p(d-m-1) \mu\left(\frac{n}{d}\right) B_{|\mu|,m} \\ 
J_t(n) & = \sum_{m=0}^{n-1} \sum_{d|n} p(d-m-1) \mu\left(\frac{n}{d}\right) B_{J_t,m}. 
\end{align*}
\end{example} 

\begin{remark}[Related constructions for variants of LGF series]
Merca showed another variant of the Lambert series factorization theorem 
stated in the form of \cite[Cor.\ 6.1]{MERCA-LSFACTTHM}
\begin{align*} 
\sum_{n \geq 1} \frac{f(n) q^{2n}}{1-q^n} & = \frac{1}{(q; q)_{\infty}} \times 
     \sum_{n \geq 1} \left(\sum_{k=1}^{\lfloor n/2 \rfloor} \left( 
     s_o(n-k, k) - s_e(n-k, k)\right) f(k) \right) q^n. 
\end{align*} 
If we consider the generalized Lambert series formed by taking derivatives of 
$L_f(q)$ as in \cite{MDS-COMBRESTRDIVSUMS-INTEGERS} in the 
context of finding new relations between the generalized sum-of-divisors functions, 
$\sigma_{\alpha}(n)$, we can similarly formulate new, alternate forms of the 
factorization theorems unified within this section so far. 
For example, suppose that $k, m \geq 0$ are integers and 
consider the factorization theorem resulting from an analysis of the following sums: 
\begin{align*}  
\sum_{n \geq 1} \frac{f(n) q^{(m+1)n}}{(1-q^n)^{k+1}} & = \frac{1}{(q; q)_{\infty}} \times 
     \sum_{n \geq 1} \left(\sum_{i=1}^{\lfloor \frac{n}{m+1} \rfloor} a_{n-m, i} \times 
     \frac{f(i)}{(1-q^i)^k} \right) q^n. 
\end{align*} 
We have by our factorization theorem that the previous series 
are expanded by 
\begin{align*}  
\sum_{n \geq 1} \frac{f(n) q^{(m+1)n}}{(1-q^n)^{k+1}} & = \frac{1}{(q; q)_{\infty}} \times 
     \sum_{n \geq 1} \left(\sum_{i=1}^{\left\lfloor \frac{n}{m+1} \right\rfloor} 
     \sum_{j=0}^{\left\lfloor \frac{n-m}{i} \right\rfloor} 
     \binom{k-1+j}{k-1} a_{n-m-ji, i} f(i) \right) q^n, 
\end{align*} 
When $m \geq k$ the series coefficients of these modified Lambert series 
generating functions are given by 
\begin{align*} 
\sum_{\substack{d|n \\ d \leq \left\lfloor \frac{n}{m+1} \right\rfloor}} 
     \scriptstyle{\binom{\frac{n}{d}-1-m+k}{k}} f(d) & = 
     \sum_{r=0}^n \sum_{i=1}^{\left\lfloor \frac{n-r}{m+1} \right\rfloor} 
     \sum_{j=0}^{\left\lfloor \frac{n-r-m}{i} \right\rfloor} 
     \binom{k-1+j}{k-1} a_{n-r-m-ji, i} f(i) p(r). 
\end{align*}
\end{remark}

\SubsectionGTThesisFormatted{Expansions of generalized LGFs}

The results in this section summarize the work from \cite{MERCA-SCHMIDT-RAMJ}. 
We extend the factorization theorem results proved above for the classical LGF
expansion cases to the generalized Lambert series cases 
defined in \eqref{eqn_GenLambertSeries_LaAlphaBetaq_def}. 
In general, for $\alpha > 1$ when $\bar{f}_n \neq f(n)$ the resulting matrix with entries 
$s_{n,k}(\alpha, \beta)$ is not invertible. We still arrive at some interesting cases, 
new identities and relations by considering the LGF series factorizations abstracted to this 
level of generality. 

\SubsubsectionGTThesisFormatted{A few special cases} 

\begin{prop}
\label{prop_first_spcase_result} 
For $|q|<1$, we have that 
\begin{align*}
\sum_{n=1}^{\infty} f(n) \frac{q^{2n-1}}{1-q^{2n-1}} = \frac{1}{(q;q^2)_\infty} \times 
     \sum_{n=1}^{\infty} \left(\sum_{k=1}^n s_{n,k}^{\ast}(2, 1) f(k)\right) (-1)^{n-1} q^n,
\end{align*}
where $s_{n,k}^{\ast}(2, 1)$ denotes the number of $(2k-1)$'s in all partitions of $n$ 
into distinct odd parts.
\end{prop}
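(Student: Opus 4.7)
The plan is to multiply both sides by the infinite product $(q;q^2)_{\infty} = \prod_{m\geq 1}(1-q^{2m-1})$ and match coefficients of $q^N$ for each $N\geq 1$. Because the generating function on the left is a \emph{Lambert type} series, the single factor $(1-q^{2n-1})$ in the denominator of its $n^{\text{th}}$ summand cancels cleanly against the corresponding factor of $(q;q^2)_\infty$, leaving
\[
(q;q^2)_{\infty} \cdot \frac{q^{2n-1}}{1-q^{2n-1}} \;=\; q^{2n-1}\prod_{\substack{m\geq 1\\ m\neq n}}(1-q^{2m-1}).
\]
Hence the identity is equivalent to showing, for every fixed $k\geq 1$ and every $N\geq 2k-1$,
\[
[q^N]\Bigl(q^{2k-1}\prod_{m\neq k}(1-q^{2m-1})\Bigr) \;=\; (-1)^{N-1}\,s^{\ast}_{N,k}(2,1),
\]
since the proposition then follows by taking the $f(k)$-weighted sum over $k$ on both sides.

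Next I would expand the finite product combinatorially. Every contribution to $[q^{N-2k+1}]\prod_{m\neq k}(1-q^{2m-1})$ corresponds to a choice of a finite set $S\subset\{2m-1: m\geq 1, m\neq k\}$ with $\sum_{s\in S} s = N-(2k-1)$, contributing sign $(-1)^{|S|}$. Equivalently, the coefficient is $\sum_{\lambda}(-1)^{\#\lambda}$ summed over partitions $\lambda$ of $N-(2k-1)$ into distinct odd parts, none of which equals $2k-1$. The obvious bijection $\lambda\mapsto \lambda\cup\{2k-1\}$ identifies these with the partitions of $N$ into distinct odd parts that include $2k-1$; and since the parts are distinct, the total number of occurrences of $2k-1$ across all partitions of $N$ into distinct odd parts, namely $s^{\ast}_{N,k}(2,1)$, is exactly the cardinality of this latter set.

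The crux is then a parity observation. If $\lambda$ is a partition of $N-(2k-1)$ into $r$ distinct odd parts, then reducing the sum of $r$ odd integers modulo $2$ gives $r\equiv N-(2k-1)\equiv N-1\pmod 2$, so $(-1)^{\#\lambda} = (-1)^{N-1}$ is \emph{constant} over all such $\lambda$. Pulling this sign out of the combinatorial sum yields
\[
[q^N]\Bigl(q^{2k-1}\prod_{m\neq k}(1-q^{2m-1})\Bigr) = (-1)^{N-1}\cdot s^{\ast}_{N,k}(2,1),
\]
as required. The main (very modest) obstacle is recognizing and exploiting this parity invariance; once it is in place the rest is bookkeeping analogous to Lemma~\ref{lemma_LS_partial_sum_exps} and Theorem~\ref{theorem_MainThm_InvMatrixDivSums}. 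Finally, assembling the $f(k)$-weighted sum and reindexing $N\mapsto n$ gives the stated factorization, with convergence being automatic in the formal power series ring since for each fixed $n$ only finitely many pairs $(k,\lambda)$ contribute to the coefficient of $q^n$.
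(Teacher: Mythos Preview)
Your proof is correct. The parity observation --- that a sum of $r$ odd parts equal to $N-(2k-1)$ forces $r\equiv N-1\pmod 2$, so $(-1)^{r}=(-1)^{N-1}$ is constant over the combinatorial sum --- is exactly what makes the sign on the right-hand side emerge.

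The paper's proof is packaged differently: it quotes Merca's elementary symmetric function identity
\[
\sum_{k=1}^n \frac{f(k)x_k}{1-x_k}=\frac{1}{\prod_{k}(1-x_k)}\sum_{k\geq 1}\sum_{i_1<\cdots<i_k}(-1)^{k-1}\bigl(f(i_1)+\cdots+f(i_k)\bigr)x_{i_1}\cdots x_{i_k},
\]
sets $x_k=q^{2k-1}$, and then asserts the result follows in the limit $n\to\infty$. Unwinding that last step still requires your parity observation (the $(-1)^{k-1}$ becomes $(-1)^{N-1}$ because $k$ odd summands total $N$), so the mathematical content is the same; the paper simply outsources the algebra to a cited lemma and leaves the sign identification implicit. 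Your version is self-contained and makes the combinatorics transparent, while the paper's version is shorter but opaque unless the reader chases the reference.
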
 
\begin{proof}
We consider the identity \cite[eq. 2.1]{MERCA-LSFACTTHM}, namely
\begin{align*}
\sum_{k=1}^n \frac{f(k) x_k}{1-x_k} = \left( \prod_{k=1}^{n} \frac{1}{1-x_k}\right) \times \left( 
	\sum_{k=1}^{n} \sum_{1\leq i_1 < \ldots <i_k\leq n} (-1)^{k-1} 
     (f({i_1})+\cdots +f({i_k})) x_{i_1}\cdots x_{i_k}\right).
\end{align*}
By this relation with $x_k$ replaced by the powers $q^{2k-1}$ of the series indeterminate, we get
\begin{align*}
\sum_{k=1}^n & \frac{f(k) q^{2k-1}}{1-q^{2k-1}} \\ 
     & = \frac{1}{(q;q^2)_n} \times \left(  \sum_{k=1}^{n} \sum_{1\leq i_1 < \ldots <i_k\leq n} (-1)^{k-1} 
     \left(f({i_1})+\cdots +f({i_k})\right) q^{(2i_1-1)+\cdots+(2i_k-1)}\right) .
\end{align*}	
The result follows directly from this identity considering the 
limiting case as $n \rightarrow \infty$.
\end{proof}

\begin{example}[Consequences of the proposition] 
The result in Proposition \ref{prop_first_spcase_result} 
allows us to derive many specialized identities involving Euler's partition function and various arithmetic functions. 
For $n \geq 1$ we have that 
\begin{align*}
	& \sum_{k=1}^{n} \sum_{2d-1|k} d^x Q(n-k) = \sum_{k=1}^n (-1)^{n-1} k^x s_{n,k}^{\ast}(2, 1),\\
	& \sum_{k=1}^{n} \sum_{2d-1|k} \mu(d) Q(n-k) = \sum_{k=1}^{n} (-1)^{n-1} \mu(k) s_{n,k}^{\ast}(2, 1), \\
	& \sum_{k=1}^{n} \sum_{2d-1|k} \varphi(d) Q(n-k) = \sum_{k=1}^n (-1)^{n-1} \varphi(k) s_{n,k}^{\ast}(2, 1),\\
	& \sum_{k=1}^{n} \sum_{2d-1|k} \lambda(d) Q(n-k) = \sum_{k=1}^n (-1)^{n-1} \lambda(k) s_{n,k}^{\ast}(2, 1),\\
	& \sum_{k=1}^n \sum_{2d-1|k} \log(d) Q(n-k) = \sum_{k=1}^n (-1)^{n-1} \log(k) s_{n,k}^{\ast}(2, 1),\\
	& \sum_{k=1}^n \sum_{2d-1|k} |\mu(d)| Q(n-k) = \sum_{k=1}^n (-1)^{n-1} | \mu(k) | s_{n,k}^{\ast}(2, 1),\\
	& \sum_{k=1}^n \sum_{2d-1|k} J_t(d) Q(n-k) = \sum_{k=1}^n (-1)^{n-1} J_t(k) s_{n,k}^{\ast}(2, 1),
\end{align*}
where $s_{n,k}^{\ast}(2, 1)$ is defined as in 
Proposition \ref{prop_first_spcase_result}, and the partition function 
$Q(n) := s_e(n) - s_o(n)$ where $s_e(n)$ and $s_o(n)$ 
denote the numbers of partitions of $n$ into an even (respectively odd) number of parts. 
We can also similarly express the relations in the previous equations for any 
special arithmetic function $f$ in the form of 
\begin{align*} 
     \sum_{2d-1|n} f(d) & = \sum_{k=0}^n \sum_{j=1}^k (-1)^{k-1} q(n-k) s_{k,j}^{\ast}(2, 1) f(j), 
\end{align*} 
where the standard partition function $q(n)$ denotes the number partitions of $n$ into (distinct) odd parts. 
Moreover, since we have a direct factorization of the Lambert series generating function 
for the \emph{sum of squares function} as in the appendix, we may write 
\begin{align*} 
\sum_{k=1}^n r_2(k) Q(n-k) & = \sum_{k=1}^n 4 (-1)^{k+1} s_{n,k}^{\ast}(2, 1), 
\end{align*} 
using the same notation as above. Similarly, we can invert to expand $r_2(n)$ as the multiple sum 
\[
r_2(n) = \sum_{k=0}^n \sum_{j=1}^k 4 q(n-k) (-1)^{j+1} s_{k,j}^{\ast}(2, 1), 
\]
for all $n \geq 1$. 
\end{example} 

\begin{definition}
\label{def_LfAlphaBetaq_LfAlphaBetaqFBarExp_GenLGFExps_v1}
For fixed $\alpha, \beta \in \mathbb{Z}$ such that $\alpha \geq 1$ and $0 \leq \beta < \alpha$, and an 
arbitrary sequence $\{f(n)\}_{n \geq 1}$, 
we consider generalized (non-factorized) Lambert series expansions of the following form: 
\begin{align}
\label{eqn_GenLambertSeries_LaAlphaBetaq_def}
L_f(\alpha, \beta; q) & := \sum_{n \geq 1} \frac{f(n) q^{\alpha n-\beta}}{1-q^{\alpha n-\beta}} = 
     \sum_{m \geq 1} \left(\sum_{\substack{\alpha d-\beta | m}} f(d)
     \right) q^m, 
     |q^{\alpha}| < 1. 
\end{align} 
We also consider factorizations of the form 
\begin{align} 
\label{eqn_GenFactThmExp_def_v1} 
L_f(\alpha, \beta; q) & = \frac{1}{C(q)} \times \sum_{n \geq 1} \left(
     \sum_{k=1}^n s_{n,k}(\alpha, \beta) \bar{f}_k \right) q^n, 
\end{align} 
where $C(0) \neq 0$, and the intermediate coefficients 
$\bar{f}_n$ depend only on the $s_{n,k}(\alpha, \beta)$ and on the 
choice of the input arithmetic function $f(n)$. 
\end{definition}

\begin{prop}
\label{prop_GenLGF_p1} 
For $|q|<1$, $0 \leq \beta<\alpha$,
\begin{align*}
\sum_{n=1}^{\infty} f(n) \frac{q^{\alpha n-\beta}}{1-q^{\alpha n-\beta}} = 
     \frac{1}{(q^{\alpha-\beta};q^\alpha)_\infty} \times \sum_{n=1}^{\infty} \left( 
     \sum_{k=1}^n (s_o(\alpha, \beta; n,k)-s_e(\alpha, \beta; n,k)) f(k)\right)  q^n,
\end{align*}
where $s_o(\alpha, \beta; n,k)$ and $s_e(\alpha, \beta; n,k)$ 
denote the number of $(\alpha k-\beta)$'s in all partitions of $n$ into an odd 
(respectively even) number of distinct parts of the form $\alpha k-\beta$.
\end{prop}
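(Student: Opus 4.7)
The plan is to generalize the technique used in Proposition~\ref{prop_first_spcase_result} by substituting the appropriate powers of $q$ into the same underlying symmetric function identity
\[
\sum_{k=1}^n \frac{f(k)\,x_k}{1-x_k} = \Bigl(\prod_{k=1}^n \frac{1}{1-x_k}\Bigr) \sum_{k=1}^n \sum_{1 \leq i_1 < \cdots < i_k \leq n} (-1)^{k-1}\bigl(f(i_1)+\cdots+f(i_k)\bigr) x_{i_1}\cdots x_{i_k}.
\]
First I would set $x_k := q^{\alpha k - \beta}$, so the left-hand side becomes the truncated series $\sum_{k=1}^n f(k)\,q^{\alpha k - \beta}/(1-q^{\alpha k-\beta})$, which is the natural $n$-th partial sum of $L_f(\alpha,\beta;q)$. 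Because $0 \leq \beta < \alpha$, the map $k \mapsto \alpha k - \beta$ is strictly increasing and takes values in the positive integers, so no collisions occur among the $x_k$. The reciprocal product then becomes $\prod_{k=1}^n (1-q^{\alpha k - \beta})^{-1} = (q^{\alpha-\beta};q^\alpha)_n^{-1}$, which tends to $(q^{\alpha-\beta};q^\alpha)_\infty^{-1}$ as $n \to \infty$.

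Next I would extract the coefficient of $f(t)\,q^N$ on the right-hand side. Expanding the inner bracket as $\sum_{r=1}^k f(i_r)$, each term $(-1)^{k-1} f(i_r)\,q^{(\alpha i_1 - \beta) + \cdots + (\alpha i_k - \beta)}$ corresponds to a $k$-element subset $\{i_1 < \cdots < i_k\}$ together with one distinguished element $i_r$. Fixing $i_r = t$ and summing over all valid configurations, the coefficient of $f(t)\,q^N$ equals
\[
\sum_{k \geq 1} (-1)^{k-1} \#\bigl\{\text{$k$-subsets } S \ni t : \textstyle\sum_{j \in S}(\alpha j - \beta) = N\bigr\},
\]
which by the injectivity of $j \mapsto \alpha j - \beta$ is precisely the signed count of partitions of $N$ into distinct parts of the form $\alpha j - \beta$ in which $\alpha t - \beta$ appears, weighted by $+1$ or $-1$ according to the parity of the number of parts. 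This is exactly $s_o(\alpha,\beta;N,t) - s_e(\alpha,\beta;N,t)$.

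Finally I would pass to the limit $n \to \infty$ on both sides. For each fixed power $q^N$ only finitely many indices $k \leq \lceil (N+\beta)/\alpha \rceil$ can contribute on either side, so the coefficient of $q^N$ stabilizes for $n$ large, and $|q|<1$ together with $0 \leq \beta < \alpha$ guarantees absolute convergence of the doubly infinite expansion. Combining the three steps yields the claimed factorization.

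The main obstacle I anticipate is the combinatorial bookkeeping in the middle step: one must verify that the distinguished-element construction produces each relevant partition exactly once per occurrence of $\alpha t - \beta$ (which is trivially once, since the parts are distinct), and that the parity sign $(-1)^{k-1}$ arising from the elementary-symmetric expansion aligns with the $s_o - s_e$ convention rather than its negative. A clean way to check this is to test the identity against the specialization $\alpha = 1$, $\beta = 0$, $f \equiv \mathds{1}$, where the right-hand side must collapse to the classical LGF factorization from \eqref{eqn_LambertSeriesFactThm_InitExp} and Theorem~\ref{theorem_MainThm_InvMatrixDivSums}. The convergence justification at the final step is routine given $|q^{\alpha}| < 1$ and the fact that each series coefficient depends on only finitely many indices.
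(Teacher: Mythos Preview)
Your proposal is correct and follows essentially the same approach as the paper: the paper's proof is the single sentence ``The proof follows from \cite[eq.\ 2.1]{MERCA-LSFACTTHM}, replacing $x_k$ by $q^{\alpha k-\beta}$,'' which is exactly the substitution you carry out in detail. Your elaboration of the combinatorial bookkeeping (injectivity of $k\mapsto\alpha k-\beta$, the distinguished-element count, and the parity sign matching $s_o-s_e$) and the limiting argument are precisely the routine details the paper leaves implicit.
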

\begin{proof}[Proof of Proposition \ref{prop_GenLGF_p1}]
The proof follows from \cite[eq. 2.1]{MERCA-LSFACTTHM}, replacing $x_k$ by $q^{\alpha k-\beta}$.
\end{proof}

\begin{prop}
\label{prop_GenLGF_p2} 
For $|q|<1$, $0\leq \beta<\alpha$,
\begin{align*}
\sum_{n=1}^{\infty} f(n) \frac{q^{\alpha n-\beta}}{1-q^{\alpha n-\beta}} = 
     (q^{\alpha-\beta};q^\alpha)_\infty \times \sum_{n=1}^{\infty} \left( 
     \sum_{k=1}^n s(\alpha, \beta; n,k) f(k)\right)  q^n,
\end{align*}
where $s(\alpha, \beta; n,k)$ denotes the number of 
$(\alpha k-\beta)$'s in all partitions of $n$ into parts of the form $\alpha k-\beta$.
\end{prop}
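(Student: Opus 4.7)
The plan is to read off the generating function for $s(\alpha,\beta;n,k)$ directly from its combinatorial definition and then reverse the order of summation. Throughout, assume $|q^\alpha|<1$ so that all series converge.

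First I would fix $k\geq 1$ and observe that $s(\alpha,\beta;n,k)$ counts the total number of appearances of the part $\alpha k-\beta$ across all partitions of $n$ whose parts belong to $\{\alpha j-\beta:j\geq 1\}$. Using the standard ``mark a distinguished part'' device applied to the generating function
\[
\prod_{j\geq 1}\frac{1}{1-q^{\alpha j-\beta}}=\sum_{n\geq 0}\left(\#\text{ partitions of }n\text{ into parts }\alpha j-\beta\right)q^n,
\]
we obtain, for each fixed $k\geq 1$,
\[
\sum_{n\geq 1}s(\alpha,\beta;n,k)\,q^n=\frac{q^{\alpha k-\beta}}{1-q^{\alpha k-\beta}}\cdot\prod_{j\geq 1}\frac{1}{1-q^{\alpha j-\beta}}=\frac{q^{\alpha k-\beta}}{1-q^{\alpha k-\beta}}\cdot\frac{1}{(q^{\alpha-\beta};q^\alpha)_\infty}.
\]
This is the only nontrivial combinatorial input; everything else is bookkeeping.

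Next I would multiply by $f(k)$, sum over $k\geq 1$, and swap the order of summation (justified by absolute convergence inside the disk $|q^\alpha|<1$):
\[
\sum_{n\geq 1}\left(\sum_{k=1}^{n}s(\alpha,\beta;n,k)f(k)\right)q^n=\frac{1}{(q^{\alpha-\beta};q^\alpha)_\infty}\sum_{k\geq 1}\frac{f(k)\,q^{\alpha k-\beta}}{1-q^{\alpha k-\beta}}=\frac{L_f(\alpha,\beta;q)}{(q^{\alpha-\beta};q^\alpha)_\infty}.
\]
Note that the truncation of the inner sum at $k=n$ on the left-hand side is automatic, since $s(\alpha,\beta;n,k)=0$ whenever $\alpha k-\beta>n$.

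Finally I would clear the denominator by multiplying through by $(q^{\alpha-\beta};q^\alpha)_\infty$, which yields the claimed factorization. The main point to be careful about is making sure the marked-part generating function identity is stated correctly for parts supported on the arithmetic progression $\alpha j-\beta$; I do not anticipate a genuine obstacle, as this is a routine consequence of the standard partition product formula combined with the geometric series $q^{\alpha k-\beta}/(1-q^{\alpha k-\beta})=\sum_{m\geq 1}q^{m(\alpha k-\beta)}$ tallying the number of copies of the chosen part.
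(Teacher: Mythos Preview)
Your proof is correct and follows essentially the same approach as the paper: both rest on the observation that $\frac{q^{\alpha k-\beta}}{1-q^{\alpha k-\beta}}\cdot\frac{1}{(q^{\alpha-\beta};q^\alpha)_\infty}$ is the generating function for $s(\alpha,\beta;n,k)$, after which the result follows by summing over $k$ and clearing the product. The paper states this in a single sentence, whereas you spell out the marked-part argument and the summation swap in more detail, but the underlying idea is identical.
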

\begin{proof}[Proof of Proposition \ref{prop_GenLGF_p2}]
We take into account the fact that
$$\frac{q^{\alpha n-\beta}}{1-q^{\alpha n-\beta}} \times \frac{1}{(q^{\alpha-\beta};q^\alpha)_\infty},$$
is the generating function for the number of $(\alpha k-\beta)$'s in all partitions of $n$ into 
parts of the form $\alpha k-\beta$. This generating function interpretation implies our result. 
\end{proof}

\SubsubsectionGTThesisFormatted{The generalized factorization matrix entries} 
\begin{theorem} 
\label{theorem1_GenFormula_for_snk} 
For fixed $\alpha, \beta, \gamma, \delta \in \mathbb{Z}$ such that 
$\alpha, \gamma \geq 1$, $1 \leq \beta < \alpha$, and $1 \leq \delta < \gamma$, the 
pair $(\mathcal{C}(q), s_{n,k}[\mathcal{C}](\alpha, \beta, \gamma, \delta))$ in the 
generalized Lambert series factorization expanded by 
\begin{align*} 
\tag{i}
L_f(\alpha, \beta, \gamma, \delta; q) & := 
     \sum_{n \geq 1} \frac{f(n) q^{\alpha n + \beta}}{1-q^{\gamma n+\delta}} = 
     \frac{1}{\mathcal{C}(q)} \times 
     \sum_{n \geq 1} \sum_{k=1}^n s_{n,k}[\mathcal{C}](\alpha, \beta, \gamma, \delta) f(k) 
     q^n, 
\end{align*} 
satisfies 
\begin{align*} 
s_{n,k}[\mathcal{C}](\alpha, \beta, \gamma, \delta) & = 
     [q^n]\left(\mathcal{C}(q) \times \frac{q^{\alpha n + \beta}}{1-q^{\gamma n+\delta}}\right). 
\end{align*} 
\end{theorem}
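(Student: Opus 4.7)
The plan is to reduce the claim to a routine formal power series manipulation after multiplying both sides of the identity (i) by $\mathcal{C}(q)$. The statement is an assertion about a factorization expansion, so the natural tactic is to clear the denominator $\mathcal{C}(q)$, expand each side as a formal series in $q$, and then specialize the arithmetic function $f$ to extract the matrix entries individually.

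First, I would multiply the asserted factorization (i) through by $\mathcal{C}(q)$ and then, on the left, interchange the summation over $k$ with the multiplication by $\mathcal{C}(q)$, writing
\begin{align*}
\mathcal{C}(q) \sum_{k \geq 1} \frac{f(k)\, q^{\alpha k+\beta}}{1-q^{\gamma k+\delta}}
 & = \sum_{k \geq 1} f(k)\, \mathcal{C}(q) \cdot \frac{q^{\alpha k+\beta}}{1-q^{\gamma k+\delta}} \\
 & = \sum_{n \geq 1}\Biggl(\sum_{k \geq 1} f(k) \cdot [q^n]\!\left(\mathcal{C}(q) \cdot \frac{q^{\alpha k+\beta}}{1-q^{\gamma k+\delta}}\right)\Biggr) q^n.
\end{align*}
The interchange of the sum over $k$ with coefficient extraction is legitimate as a formal power series manipulation because $\mathcal{C}(0) \neq 0$ forces only finitely many $k$ to contribute to each coefficient $[q^n]$: indeed, $[q^n](\mathcal{C}(q)\, q^{\alpha k + \beta}/(1-q^{\gamma k + \delta}))$ vanishes whenever $\alpha k + \beta > n$, so the inner sum truncates at $k \leq (n-\beta)/\alpha \leq n$ since $\alpha \geq 1$.

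Next, I would extract the coefficient of $q^n$ from the right-hand side of (i), after multiplying by $\mathcal{C}(q)$, to obtain the finite sum $\sum_{k=1}^{n} s_{n,k}[\mathcal{C}](\alpha,\beta,\gamma,\delta)\, f(k)$. Equating the two expressions for $[q^n](\mathcal{C}(q)\, L_f(\alpha,\beta,\gamma,\delta; q))$ yields, for every arithmetic function $f$ and every $n \geq 1$,
\[
 \sum_{k=1}^{n} s_{n,k}[\mathcal{C}](\alpha,\beta,\gamma,\delta)\, f(k) \;=\; \sum_{k=1}^{n} f(k) \cdot [q^n]\!\left(\mathcal{C}(q) \cdot \frac{q^{\alpha k+\beta}}{1-q^{\gamma k+\delta}}\right).
\]
Finally, because $f$ is arbitrary, I would specialize $f$ to the indicator $f(j) := \Iverson{j = k_0}$ for each $1 \leq k_0 \leq n$, which isolates $s_{n,k_0}[\mathcal{C}](\alpha,\beta,\gamma,\delta)$ and gives the desired closed form.

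There is essentially no hard step. The only place one has to be careful is the interchange of summation and coefficient extraction — which is the reason for the hypotheses $\alpha \geq 1$ and $\mathcal{C}(0) \neq 0$ (so that $\mathcal{C}(q)$ is a unit in $\mathbb{C}[[q]]$ and the truncation argument goes through). Apart from that, the argument is purely formal and follows the same template as the derivation of $s_{n,k} = [q^n](q;q)_\infty \cdot q^k/(1-q^k)$ in Theorem \ref{theorem_MainThm_InvMatrixDivSums}, which is the special case $\mathcal{C}(q) = (q;q)_\infty$ and $(\alpha,\beta,\gamma,\delta) = (1,0,1,0)$ of the present theorem.
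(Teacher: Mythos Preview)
Your proposal is correct and follows essentially the same approach as the paper: multiply (i) through by $\mathcal{C}(q)$, then compare coefficients. The only cosmetic difference is the order of extraction---the paper equates coefficients of $f(k)$ first (treating the $f(k)$ as indeterminates) and then reads off $[q^n]$, whereas you extract $[q^n]$ first and then specialize $f$ to indicators---but these are equivalent routine manipulations, and your added remark about why only finitely many $k$ contribute is a nice point of rigor the paper leaves implicit.
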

\begin{proof}[Proof of Theorem \ref{theorem1_GenFormula_for_snk}]
We begin by rewriting (i) in the form of 
\begin{align*} 
\mathcal{C}(q) \times \sum_{k \geq 1} \frac{f(k) q^{\alpha k+\beta}}{1-q^{\gamma k+\delta}} & = 
     \sum_{k \geq 1} \left(\sum_{n \geq 1} s_{n,k}[\mathcal{C}](\alpha, \beta, \gamma, \delta) q^n\right) f(k). 
\end{align*} 
Then if we equate the coefficients of $a_k$ in the previous equation, we see that 
\begin{align*} 
\mathcal{C}(q) \times \frac{q^{\alpha k+\beta}}{1-q^{\gamma k+\delta}} & = 
     \sum_{n \geq 1} s_{n,k}[\mathcal{C}](\alpha, \beta, \gamma, \delta) q^n, 
\end{align*} 
which implies the stated result. 
\end{proof} 

\begin{cor} 
\label{cor_ConsequenceOfThm1_ConnectionBetweenOrdFactThms} 
Let $\alpha \geq 1$ and $0 \leq \beta < \alpha$ be integers and suppose that $\delta \in \mathbb{Z}$. 
Suppose that 
\begin{align*} 
\sum_{n \geq 1} \frac{f(n) q^{n}}{1-q^{n}} = 
     \frac{1}{\mathcal{C}(q)} \times \sum_{n \geq 0} 
     \sum_{k=1}^n s_{n,k}[\mathcal{C}] f(k) q^n, 
\end{align*} 
and that 
\begin{align*} 
\sum_{n \geq 1} \frac{f(n) q^{\alpha n-\beta+\delta}}{1-q^{\alpha n-\beta}} = 
     \frac{1}{\mathcal{C}(q)} \times \sum_{n \geq 0} \sum_{k=1}^n s_{n,k}[\mathcal{C}](\alpha, \beta; \delta) 
     f(k) q^n. 
\end{align*} 
Then we have that 
\begin{align*} 
s_{n,k}[\mathcal{C}](\alpha, \beta; \delta) & = s_{n-\delta,\alpha k-\beta}[\mathcal{C}]. 
\end{align*} 
\end{cor}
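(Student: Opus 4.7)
The plan is to derive the identity directly from Theorem \ref{theorem1_GenFormula_for_snk}, which gives an explicit coefficient-extraction formula for the generalized factorization matrix entries. The key observation is that both Lambert series in the statement of the corollary are special cases of the four-parameter LGF $L_f(\alpha, \beta, \gamma, \delta; q)$ introduced in the theorem, and their matrix entries can therefore each be written as a coefficient of $q^n$ in a simple rational modification of $\mathcal{C}(q)$.

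First I would instantiate Theorem \ref{theorem1_GenFormula_for_snk} in two ways. The standard case $\sum_{n\geq 1} f(n) q^n/(1-q^n)$ corresponds to parameters $(1,0,1,0)$, so the theorem yields (after correcting the minor typo in the index of the exponent, as is clear from the proof of the theorem)
\[
s_{m,j}[\mathcal{C}] \;=\; [q^m]\!\left(\mathcal{C}(q)\,\frac{q^{j}}{1-q^{j}}\right).
\]
The second, shifted series $\sum_{n\geq 1} f(n) q^{\alpha n - \beta + \delta}/(1-q^{\alpha n - \beta})$ is obtained from the theorem with the parameter identifications $\alpha_{\mathrm{thm}} = \alpha$, $\beta_{\mathrm{thm}} = \delta-\beta$, $\gamma_{\mathrm{thm}} = \alpha$, and $\delta_{\mathrm{thm}} = -\beta$, giving
\[
s_{n,k}[\mathcal{C}](\alpha,\beta;\delta) \;=\; [q^{n}]\!\left(\mathcal{C}(q)\,\frac{q^{\alpha k - \beta + \delta}}{1-q^{\alpha k - \beta}}\right).
\]

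Next I would factor the shift $q^{\delta}$ out of the numerator, writing
\[
\frac{q^{\alpha k - \beta + \delta}}{1-q^{\alpha k - \beta}} \;=\; q^{\delta}\cdot \frac{q^{\alpha k - \beta}}{1-q^{\alpha k - \beta}},
\]
and then apply the elementary identity $[q^{n}]\bigl(q^{\delta} H(q)\bigr) = [q^{n-\delta}] H(q)$ for any formal (or analytic) series $H(q)$. Comparing the resulting expression with the first instantiation, evaluated at $(m,j) = (n-\delta, \alpha k - \beta)$, immediately yields the asserted identity $s_{n,k}[\mathcal{C}](\alpha,\beta;\delta) = s_{n-\delta,\alpha k - \beta}[\mathcal{C}]$.

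There is essentially no obstacle here beyond bookkeeping: the proof is a direct consequence of Theorem \ref{theorem1_GenFormula_for_snk} combined with the trivial translation property of coefficient extraction. The only subtlety to flag is that when $\delta < 0$ or when $n - \delta$ falls outside the naturally expected range, the formula should be interpreted via the convention $s_{m,j}[\mathcal{C}] = 0$ for $m < j$ (or $m \leq 0$), and similarly when $\alpha k - \beta \leq 0$; this is automatic from the coefficient-extraction definition since the generating function $q^j/(1-q^j)$ has lowest-order term $q^j$.
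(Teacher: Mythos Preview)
Your proposal is correct and follows essentially the same approach as the paper: the paper's proof is a one-sentence pointer to the generating-function formula of Theorem~\ref{theorem1_GenFormula_for_snk}, and you have simply written out that argument in full, including the parameter identification and the coefficient-shift step $[q^n](q^{\delta}H(q))=[q^{n-\delta}]H(q)$. Your observation about the index typo in the theorem statement (the exponent should read $\alpha k+\beta$, as the theorem's own proof makes clear) and your remark on the boundary conventions are both apt and do not affect the argument.
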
 
\begin{proof}[Proof of Corollary \ref{cor_ConsequenceOfThm1_ConnectionBetweenOrdFactThms}]
The proof follows from the second statement in the last theorem, which 
provides a generating function for $s_{n,k}[\mathcal{C}]$ for all $n, k \geq 1$. 
\end{proof}

Given the difficulty in proving that which find numerically 
holds for all $n \geq k \geq 1$, we will cite some of the interesting conjectures for properties 
satisfied by the factorization matrices for a particular degenerate LGF case. 
Generalizations of this property are not as immediately apparent, though do seem to degrade nicely 
for $\alpha \geq 3$ based on preliminary computational inspection. 

\begin{conjecture}[Invertible matrix factorizations of degenerate LGF cases]
For $1 \leq k \leq n$, the factorization matrix inverse in the expansion of the degenerate LGF 
$L_f(1, 0, 2, 1; q)$ satisfies the following relation:
\begin{align} 
\label{eqn_DegenCasesOfThm_example_v1}
s_{n,k}^{-1}(1, 0, 2, 1) & = 
     p(n-k) - \sum_{i=1}^n p\left(\frac{n-i}{2i+1}-k\right) \Iverson{n \equiv i \bmod 2i+1} \\ 
\notag 
     & \phantom{=p(n-k)\ } + 
     \sum_{m=2}^n \sum_{i=1}^n p\left(\frac{n-p(m+1)i-p(m-1)}{p(m+1)(2i+1)} - k\right) \times \\ 
\notag
     & \phantom{=p(n-k)+\sum\sum\ \ } \times 
     \Iverson{n \equiv p(m+1)i+p(m-1) \bmod p(m+1)(2i+1)}. 
\end{align} 
\end{conjecture}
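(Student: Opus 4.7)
The plan is to translate the defining matrix inverse relation into a generating-function identity, extract a recurrence for $s_{n,k}^{-1}(1,0,2,1)$, and then solve that recurrence in closed form. First I would invoke Theorem \ref{theorem1_GenFormula_for_snk} with $(\alpha,\beta,\gamma,\delta) = (1,0,2,1)$ and the natural choice $\mathcal{C}(q) = (q;q)_{\infty}$, which yields $s_{n,k}(1,0,2,1) = [q^n]\bigl((q;q)_{\infty}\, q^k/(1-q^{2k+1})\bigr)$. Multiplying the matrix equation $\sum_{j} s_{n,j}\, s^{-1}_{j,r} = \delta_{n,r}$ by $q^n$ and summing mirrors the argument at the end of the proof of Theorem \ref{theorem_MainThm_InvMatrixDivSums} and produces the dual generating-function identity
\[
\sum_{n \geq 1} \frac{s_{n,r}^{-1}(1,0,2,1)\, q^n}{1-q^{2n+1}} = \frac{q^r}{(q;q)_{\infty}},
\]
which encodes the inverse row as a Lambert-type series with denominator $1-q^{2n+1}$.

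Next I would expand both sides of this identity in powers of $q$. Using the geometric series $1/(1-q^{2n+1}) = \sum_{j \geq 0} q^{j(2n+1)}$ on the left and $q^r/(q;q)_{\infty} = \sum_{m \geq 0} p(m)\, q^{m+r}$ on the right, and equating coefficients of $q^N$, one obtains
\[
\sum_{j \geq 0,\; (2j+1)\,|\,(N-j)} s^{-1}_{(N-j)/(2j+1),\,r} = p(N-r).
\]
Isolating the $j = 0$ term gives the recurrence
\[
s^{-1}_{N,r} = p(N-r) - \sum_{j \geq 1} s^{-1}_{(N-j)/(2j+1),\,r}\, \Iverson{N \equiv j \bmod 2j+1},
\]
which already matches the base term $p(n-k)$ and the first correction sum in the conjecture.

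The third and hardest step is to iterate this recurrence and show that the unbounded iteration collapses into the single $m$-indexed sum appearing on the right-hand side of \eqref{eqn_DegenCasesOfThm_example_v1}. I expect the composite modular condition $n \equiv p(m+1)\,i + p(m-1) \pmod{p(m+1)(2i+1)}$ to arise as an $m$-fold composition of the elementary congruence $N \equiv j \pmod{2j+1}$ for a specific family of chains $(j_1, \ldots, j_m)$ with $j_s \geq 1$, where the partition values $p(m \pm 1)$ emerge as the aggregate multiplier and offset of the telescoped arithmetic progression. This reduction is also the main obstacle: a naive iteration produces a signed sum indexed by all tuples $(j_1, \ldots, j_m)$, whereas the conjecture asserts that these tuples re-parametrize (after sign cancellations) by the single index $i$ with coefficients determined solely by $m$. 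Establishing this will require either a direct combinatorial matching of chain contributions or an alternative manipulation of $q^r/(q;q)_{\infty}$ --- for example, substituting a pentagonal-number-style expansion of $(q;q)_{\infty}^{-1}$ against the double series on the left --- that exposes the arithmetic progression indexing term-by-term. Verifying the numerical prediction that the coefficients and offsets are precisely $p(m+1)$ and $p(m-1)$, rather than some other sequence sharing the initial values, is the subtle quantitative claim around which the conjecture turns.
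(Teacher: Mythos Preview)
The crucial context you may have missed is that this statement is presented in the paper as a \emph{conjecture}, not a theorem: the paper offers no proof whatsoever, only the remark that the identity is suggested by numerical computation and that proving it ``for all $n \geq k \geq 1$'' is difficult. So there is no ``paper's own proof'' to compare against; any argument you produce would be new.

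Your first two steps are sound. The generating-function identity
\[
\sum_{n \geq 1} \frac{s_{n,r}^{-1}(1,0,2,1)\, q^n}{1-q^{2n+1}} = \frac{q^r}{(q;q)_{\infty}}
\]
does follow from substituting $f(n) = s_{n,r}^{-1}$ into the factorization, exactly as in the proof of Theorem~\ref{theorem_MainThm_InvMatrixDivSums}, and the recurrence you extract from it is correct and already recovers the leading term $p(n-k)$ together with the first correction sum.

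The genuine gap is precisely where you locate it, and it is more serious than a routine bookkeeping exercise. Iterating your recurrence $m$ times produces a signed sum over chains $(i_1,\ldots,i_m)$ with denominators $\prod_s (2i_s+1)$ and offsets that are polynomials in the $i_s$; the conjecture asserts that this entire multi-sum collapses to a \emph{single} index $i$ with denominator $p(m{+}1)(2i{+}1)$ and offset $p(m{+}1)\,i + p(m{-}1)$. There is no visible mechanism by which products of odd integers $(2i_1+1)\cdots(2i_m+1)$ should reorganize into multiples of the partition numbers $p(m{+}1)$, and the sign pattern from naive iteration is $(-1)^m$, whereas the conjecture has a uniform $+$ sign for all $m \geq 2$. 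So either substantial cancellation occurs among chains, or the formula encodes something other than straightforward iteration --- possibly a different expansion of $q^r/(q;q)_\infty$ altogether. Until that mechanism is identified, what you have is a promising reformulation of the conjecture (as a recurrence) rather than a proof, and the paper is in the same position.
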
 

\begin{theorem}[Another generalized factorization theorem] 
\label{theorem2_AnotherGenFactThm}
Define the factorization pair $(\mathcal{C}(q), s_{n,k}[\mathcal{C}](\gamma))$ where 
$\mathcal{C}(0) \neq 0$ by the requirement that 
\begin{align*}
     s_{n,k}^{-1}[\mathcal{C}](\gamma) & := \sum_{d|n} [q^{d-k}] \frac{1}{C(q)} \times \gamma\left(\frac{n}{d}\right), 
\end{align*} 
for some fixed arithmetic functions $\gamma(n)$ where $\widetilde{\gamma}(n) := \sum_{d|n} \gamma(d)$. 
We have that the sequence of 
$\bar{f}_n$ in the notation of \eqref{eqn_GenFactThmExp_def_v1} 
is given by the following formula for $n \geq 1$: 
\begin{align*} 
\bar{f}_n & = 
     \sum_{\substack{d|n \\ d \equiv \beta \bmod \alpha}} f\left({\frac{d-\beta}{\alpha}}\right) 
     \widetilde{\gamma}\left(\frac{n}{d}\right). 
\end{align*} 
\end{theorem}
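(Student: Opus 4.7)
The plan is to invert the matrix factorization to isolate $\bar{f}_n$, substitute the defining formula for $s_{n,k}^{-1}[\mathcal{C}](\gamma)$, and then collapse the resulting nested sums via two coordinated applications of reversing the summation order.

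First I would rewrite the factorization \eqref{eqn_GenFactThmExp_def_v1} in the equivalent coefficient form
\[
[q^n]\bigl(\mathcal{C}(q) \, L_f(\alpha,\beta;q)\bigr) = \sum_{k=1}^{n} s_{n,k}(\alpha,\beta)\,\bar{f}_k,
\]
and invert using the lower-triangular sequence $s_{n,k}^{-1}[\mathcal{C}](\gamma)$ to obtain
\[
\bar{f}_n = \sum_{k=1}^{n} s_{n,k}^{-1}[\mathcal{C}](\gamma) \cdot [q^k]\bigl(\mathcal{C}(q) \, L_f(\alpha,\beta;q)\bigr).
\]
Substituting the defining formula for $s_{n,k}^{-1}[\mathcal{C}](\gamma)$ and pulling the divisor $d$ of $n$ outside, the residual inner sum takes the form
\[
\sum_{k \geq 0} [q^{d-k}]\frac{1}{\mathcal{C}(q)} \cdot [q^k]\bigl(\mathcal{C}(q) \, L_f(\alpha,\beta;q)\bigr),
\]
which is exactly the Cauchy-product coefficient extraction for $\mathcal{C}(q)^{-1} \cdot \mathcal{C}(q) \cdot L_f(\alpha,\beta;q) = L_f(\alpha,\beta;q)$. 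Hence the inner sum collapses to $[q^d] L_f(\alpha,\beta;q)$, and the intermediate identity reduces to
\[
\bar{f}_n = \sum_{d|n} \gamma\!\left(\frac{n}{d}\right) [q^d] L_f(\alpha,\beta;q).
\]

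In the final step, I would unfold $[q^d] L_f(\alpha,\beta;q)$ via the divisor-sum representation dictated by the series in \eqref{eqn_GenLambertSeries_LaAlphaBetaq_def}, namely a sum of $f\!\bigl((j-\beta)/\alpha\bigr)$ over divisors $j$ of $d$ lying in the appropriate residue class modulo $\alpha$. Exchanging the order of the outer sum over $d \mid n$ and this inner sum over $j \mid d$, so that $j$ now ranges over divisors of $n$ in the prescribed residue class and $d$ ranges over multiples of $j$ dividing $n$, the substitution $d = j e$ identifies the induced inner sum as
\[
\sum_{\substack{d|n \\ j|d}} \gamma\!\left(\frac{n}{d}\right) = \sum_{e|(n/j)} \gamma(e) = \widetilde{\gamma}\!\left(\frac{n}{j}\right),
\]
yielding precisely the announced closed form for $\bar{f}_n$.

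The delicate step is verifying that the Cauchy-product telescoping in the second stage is valid without spurious boundary contributions: because $\mathcal{C}(0) \neq 0$ the series $1/\mathcal{C}(q)$ is a well-defined formal power series with no negative-degree terms, so $[q^{d-k}]\mathcal{C}(q)^{-1}$ vanishes whenever $k > d$, and the range of $k$ may be harmlessly extended to all nonnegative integers. Granting this bookkeeping, the remainder of the argument is a routine reshuffling of nested divisor sums driven by the transitivity of divisibility $j \mid d \mid n$.
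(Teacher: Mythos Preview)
Your proof is correct and follows essentially the same strategy as the paper: invert the factorization, substitute the defining formula for $s_{n,k}^{-1}[\mathcal{C}](\gamma)$, and exploit the Cauchy-product cancellation $\mathcal{C}(q)^{-1}\cdot\mathcal{C}(q)=1$ before reshuffling the resulting nested divisor sums. The only organizational difference is that the paper extracts the coefficient of $f(d)$ first and performs the Cauchy-product collapse termwise on each $\frac{q^{\alpha d+\beta}}{1-q^{\alpha d+\beta}}$, whereas you collapse the product on the full series $L_f(\alpha,\beta;q)$ at once and then expand; your version is slightly more streamlined but the underlying mechanism is identical.
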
 
\begin{proof}[Proof of Theorem \ref{theorem2_AnotherGenFactThm}] 
We begin by noticing that 
\begin{align*} 
\bar{f}_n & = \sum_{k=1}^n s_{n,k}^{(-1)} \times [q^k] \left(\sum_{d=1}^k 
     \frac{f(d) q^{\alpha d+\beta}}{1-q^{\alpha d+\beta}}\right). 
\end{align*} 
Then if we let the series coefficients $c_n := [q^n] \mathcal{C}(q)^{-1}$ and set 
\begin{align*} 
t_{k,d}(\alpha, \beta) & := [q^k]\left(\mathcal{C}(q) \times 
     \frac{q^{\alpha d+\beta}}{1-q^{\alpha d+\beta}}\right), 
\end{align*} 
where $t_{i,d}(\alpha, \beta) = 0$ whenever $i < \alpha d+\beta$, 
we have that for each $1 \leq d \leq n$ 
\begin{align*}
[a_d] \bar{f}_n & = \sum_{k=1}^n s_{n,k}^{-1}[\mathcal{C}](\gamma) t_{k,d}(\alpha, \beta) \\ 
     & = 
     \sum_{k=d}^n \left(\sum_{r|n} c_{r-k} \gamma\left(\frac{n}{r}\right)\right) t_{k,d}(\alpha, \beta) \\ 
     & = 
     \sum_{r|n} \left(\sum_{i=d}^r c_{r-i} t_{i,d}(\alpha, \beta)\right) \gamma\left(\frac{n}{r}\right) \\ 
     & = 
     \sum_{r|n} \left(\sum_{i=0}^r c_{r-i} t_{i,d}(\alpha, \beta)\right) \gamma\left(\frac{n}{r}\right), 
\end{align*} 
The inner sums in the previous equations are generated by 
\begin{align*} 
[q^r] \frac{1}{\cancel{\mathcal{C}(q)}} \cdot 
     \frac{q^{\alpha d+\beta}}{1-q^{\alpha d+\beta}} \cancel{\mathcal{C}(q)} & = 
     \Iverson{\alpha d+\beta | r}. 
\end{align*} 
Then we have that for integers $d \geq 1$ 
\begin{align*} 
[f(d)] \bar{f}_n & = \sum_{\substack{r|n \\ \alpha d + \beta | r}} \gamma\left(\frac{n}{r}\right), 
\end{align*} 
and that 
\begin{align*} 
\bar{f}_n & = \sum_{\substack{d|n \\ d \equiv \beta \bmod \alpha}} f\left({\frac{d-\beta}{\alpha}}\right)
     \sum_{\substack{r|n \\ d | r}} \gamma\left(\frac{n}{r}\right) \\ 
     & = 
\sum_{\substack{d|n \\ d \equiv \beta \bmod \alpha}} f\left({\frac{d-\beta}{\alpha}}\right) 
     \sum_{r | \frac{n}{d}} \gamma\left(\frac{n}{dr}\right). 
     \qedhere
\end{align*} 
\end{proof} 

\begin{example}[A new identity for the sum of squares function]
We have that 
\begin{align*} 
\sum_{\substack{d|n \\ d\text{ odd}}} & (-1)^{(d+1)/2} \left(r_2\left(\frac{n}{d}\right) 
     -4 d\left(\frac{n}{2d}\right) \Iverson{\frac{n}{d}\text{ even}}\right) \\ 
     & = 
     \sum_{k=1}^n \sum_{d|n} p(d-k) (-1)^{n/d+1} \times [q^k] (q; q)_{\infty} \vartheta_3(q)^2, 
\end{align*} 
where $d(n) \equiv \sigma_0(n)$ denotes the divisor function and where the 
powers of the Jacobi theta function, $\vartheta_3(q) = 1 + 2 \sum_{n \geq 1} q^{n^2}$, 
generate the series over the sums of squares functions, 
$r_k(n) = [q^n] \vartheta_3(q)^k$. 
\end{example}

\SubsubsectionGTThesisFormatted{Factorization theorems for classical LGFs over Dirichlet convolutions} 
\label{Section_LSFactThms_CvlOfTwoFns_and_Apps}

\begin{definition}
Given any two arithmetic functions $f$ and $g$ we define their \emph{Dirichlet convolution}, 
denoted by $h = f \ast g$, to be the function \cite[\S 2.6]{APOSTOLANUMT}
\begin{align*} 
(f \ast g)(n) & := \sum_{d|n} f(d) g\left(\frac{n}{d}\right), n \geq 1.
\end{align*} 
The function $h^{-1}(n)$ is called the \emph{Dirichlet inverse} of $h(n)$ 
(or inverse of $h$ with respect to Dirichlet convolution) if 
$h^{-1} \ast h = h \ast h^{-1} = \varepsilon$, where $\varepsilon(n) = \delta_{n,1}$ 
is the multiplicative identity with respect to Dirichlet convolution. 
An arithmetic function $h$ has a Dirichlet inverse iff $h(1) \neq 0$. 
If $h^{-1}$ exists, then it is unique and can be computed recursively by the formula 
\[
h^{-1}(n) = \begin{cases} 
     \frac{1}{h(1)}, & n = 1; \\ 
     -\frac{1}{h(1)} \times 
     \sum\limits_{\substack{d|n \\ d>1}} h(d) h^{-1}\left(\frac{n}{d}\right), & n \geq 2. 
     \end{cases} 
\]
\end{definition}

\begin{table}[h!]

\caption[Symbolic computations of the Dirichlet inverse of a function]{
	 The first few cases of a Dirichlet invertible $h$ evaluated symbolically. 
         Recall that an arithmetic function $h$ is Dirichlet invertible if and only if 
         $h(1) \neq 0$. When the Dirichlet inverse of the function $h$ exists, it is unique. }

\begin{center}
\begin{tabular}{|c|l|c|l|c|l|} \hline 
$\mathbf{n}$ & $h^{-1}(n)$ & $\mathbf{n}$ & $h^{-1}(n)$ & $\mathbf{n}$ & $h^{-1}(n)$ \\ \hline 
\textbf{1} & $\displaystyle\frac{1}{h(1)}$ & \textbf{4} & $-\displaystyle\frac{h(1) h(4)-h(2)^2}{h(1)^3}$ & 
\textbf{7} & $-\displaystyle\frac{h(7)}{h(1)^2}$ \\ 
\textbf{2} & $-\displaystyle\frac{h(2)}{h(1)^2}$ & \textbf{5} & $-\displaystyle\frac{h(5)}{h(1)^2}$ & 
\textbf{8} & $-\displaystyle\frac{h(2)^3-2 h(1) h(4) h(2)+h(1)^2 h(8)}{h(1)^4}$ \\ 
\textbf{3} & $-\displaystyle\frac{h(3)}{h(1)^2}$ & \textbf{6} & $-\displaystyle\frac{h(1) h(6)-2 h(2) h(3)}{h(1)^3}$ & 
\textbf{9} & $-\displaystyle\frac{h(1) h(9)-h(3)^2}{h(1)^3}$ \\ \hline 
\end{tabular} 
\end{center} 

\end{table}

\begin{prop}[One possible factorization] 
\label{prop_OnePossibleLSFact} 
Let $f$ and $g$ denote non-identically-zero arithmetic functions. 
Suppose that we have an ordinary Lambert series factorization for any prescribed 
arithmetic function $f(n)$ of the form 
\begin{align*} 
\tag{i}
\sum_{n \geq 1} \frac{f(n) q^n}{1-q^n} & = \frac{1}{\mathcal{C}(q)} \times 
     \sum_{n \geq 1} \left(\sum_{k=1}^n 
     s_{n,k}[\mathcal{C}] f(k) \right) q^n, 
\end{align*} 
A factorization theorem for the Lambert series over the 
Dirichlet convolution $h = f \ast g$ is expanded as follows: 
\begin{align*} 
\tag{ii} 
\sum_{n \geq 1} \frac{(f \ast g)(n) q^n}{1-q^n} & = \frac{1}{\mathcal{C}(q)} \times 
     \sum_{n \geq 1} \left(\sum_{k=1}^n 
     \widetilde{s}_{n,k}[\mathcal{C}](g) f(k) \right) q^n. 
\end{align*} 
The matrix coefficients in the previous equation satisfy 
\begin{equation*} 
\tag{iii}
\widetilde{s}_{n,k}[\mathcal{C}](g) = \sum_{j=1}^n s_{n,kj}[\mathcal{C}] g(j). 
\end{equation*} 
\end{prop}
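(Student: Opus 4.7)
The plan is to start from the left-hand side of (ii) and manipulate it into the form of the right-hand side, reading off the matrix entries $\widetilde{s}_{n,k}[\mathcal{C}](g)$ as the coefficients of $f(k)$ in the result. First I would expand the Dirichlet convolution inside the Lambert series and re-index via $n = dm$ to obtain
\[
\sum_{n \geq 1} \frac{(f \ast g)(n)\, q^n}{1 - q^n} = \sum_{d \geq 1} \sum_{m \geq 1} f(d)\, g(m)\, \frac{q^{dm}}{1 - q^{dm}},
\]
which is the standard identity $L_{f \ast g}(q) = \sum_{d \geq 1} f(d) L_g(q^d)$ written out symmetrically. This reorganization is crucial because it exposes each Lambert series kernel $q^{dm}/(1 - q^{dm})$ in the form on which the hypothesis (i) can be brought to bear.

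Next I would multiply both sides of the target equation (ii) by $\mathcal{C}(q)$ to clear the pre-multiplier and invoke the explicit generating function for the matrix entries $s_{n,k}[\mathcal{C}]$ provided by Theorem \ref{theorem1_GenFormula_for_snk} (specialized to $\alpha = \gamma = 1$, $\beta = \delta = 0$), namely
\[
\mathcal{C}(q) \cdot \frac{q^k}{1 - q^k} = \sum_{n \geq 1} s_{n,k}[\mathcal{C}]\, q^n.
\]
Applying this with $k$ replaced by the product $dm$ inside the double sum of the previous display, and interchanging the resulting triple summation, yields
\[
\mathcal{C}(q) \sum_{n \geq 1} \frac{(f \ast g)(n)\, q^n}{1 - q^n} = \sum_{n \geq 1} q^n \sum_{d \geq 1} \sum_{m \geq 1} s_{n,\, dm}[\mathcal{C}]\, f(d)\, g(m).
\]
The interchange is legitimate in the formal power series ring because for each fixed $n$ only finitely many pairs $(d,m)$ contribute, since $s_{n,k}[\mathcal{C}] = 0$ whenever $k > n$ by the coefficient-extraction formula above.

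Finally, I would collect terms, relabel the inner indices $d \mapsto k$ and $m \mapsto j$, and use the triangularity $s_{n,kj}[\mathcal{C}] = 0$ for $kj > n$ to truncate the inner sums to $1 \leq k,j \leq n$. Comparing coefficients of $q^n$ with the right-hand side of (ii) then forces
\[
\widetilde{s}_{n,k}[\mathcal{C}](g) = \sum_{j=1}^{n} s_{n,\, kj}[\mathcal{C}]\, g(j),
\]
which is exactly (iii). I do not anticipate a substantive obstacle here; the argument is a bookkeeping exercise that leverages the generating-function representation of $s_{n,k}[\mathcal{C}]$ together with the substitution $n = dm$. The most delicate point is justifying the triple-summation interchange, which is handled by observing that each $q^n$-coefficient involves only finitely many nonzero summands, so the manipulations are valid formally and also analytically whenever $|q| < 1$.
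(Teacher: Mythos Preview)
Your proposal is correct and follows essentially the same approach as the paper: both arguments amount to expanding the Dirichlet convolution inside the Lambert series, then reading off the coefficient of $f(k)$ to obtain $\sum_j s_{n,kj}[\mathcal{C}]\,g(j)$. The only cosmetic difference is that the paper applies hypothesis (i) directly to the sequence $f \ast g$ and then extracts $[f(k)]$ from $\sum_i s_{n,i}[\mathcal{C}](f\ast g)(i)$, whereas you first decompose $L_{f\ast g}(q)$ as $\sum_{d,m} f(d)g(m)\,q^{dm}/(1-q^{dm})$ and then invoke the termwise generating-function identity for $s_{n,k}[\mathcal{C}]$ from Theorem~\ref{theorem1_GenFormula_for_snk}; the underlying computation is identical.
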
 
\begin{proof}[Proof of Proposition \ref{prop_OnePossibleLSFact}] 
It is apparent by the expansions on the left-hand side of (ii) that there is some sequence of 
$\widetilde{s}_{n,k}[\mathcal{C}](g)$ depending on the function $g$ that satisfies the factorization of the 
form in (i) when $a_n \mapsto (f \ast g)(n)$. 
For a fixed $k \geq 1$, we begin by evaluating the coefficients of $f(k)$ on the 
right-hand side of (ii) as follows: 
\begin{align*} 
[f(k)] \left(\sum_{n \geq 1} \sum_{i=1}^n s_{n,i}[\mathcal{C}] (f \ast g)(i) q^n\right) 
     & = \sum_{n \geq 1} \left(\sum_{i=1}^n [f(k)] s_{n,i}[\mathcal{C}] \sum_{d|i} f(d) 
     g\left(\frac{i}{d}\right)\right) q^n \\ 
     & = \sum_{n \geq 1} \left(\sum_{i=1}^n s_{n,i}[\mathcal{C}] g\left(\frac{i}{d}\right) 
     \Iverson{k|i}\right) q^n \\ 
     & = \sum_{n \geq 1} \left(\sum_{j=1}^n s_{n,kj}[\mathcal{C}] g(j)\right) q^n. 
\end{align*} 
Thus we have that the formula for $\widetilde{s}_{n,k}[\mathcal{C}](g)$ given in equation 
(iii) holds. 
\end{proof} 

\begin{table}[h!]

\caption[Symbolic inverse matrices for LGFs of a Dirichlet convolution]{}
\label{table_InvMatrixExtries_Discussion_v1}

\begin{equation*}
\arraycolsep=1.4pt\def\arraystretch{2.2}
\begin{array}{|c|llllll|} \hline 
\mathbf{n \setminus k} & \mathbf{1} & \mathbf{2} & \mathbf{3} & \mathbf{4} & \mathbf{5} & \mathbf{6} \\ \hline 
\mathbf{1} & \frac{1}{g(1)} & 0 & 0 & 0 & 0 & 0 \\
\mathbf{2} & -\frac{g(2)}{g(1)^2} & \frac{1}{g(1)} & 0 & 0 & 0 & 0 \\
\mathbf{3} & \frac{g(1)^5-g(1)^4 g(3)}{g(1)^6} & \frac{1}{g(1)} & \frac{1}{g(1)} & 0 & 0 & 0 \\
\mathbf{4} & \frac{2 g(1)^5-g(4) g(1)^4+g(2)^2 g(1)^3}{g(1)^6} & \frac{g(1)^5-g(1)^4 g(2)}{g(1)^6} & \frac{1}{g(1)} & \frac{1}{g(1)} & 0 & 0 \\
\mathbf{5} & \frac{4 g(1)^5-g(1)^4 g(5)}{g(1)^6} & \frac{3}{g(1)} & \frac{2}{g(1)} & \frac{1}{g(1)} & \frac{1}{g(1)} & 0 \\
\mathbf{6} & \frac{5 g(1)^5-g(2) g(1)^4-g(6) g(1)^4+2 g(2) g(3) g(1)^3}{g(1)^6} & \frac{3 g(1)^5-g(2) g(1)^4-g(3) g(1)^4}{g(1)^6} & \frac{2 g(1)^5-g(1)^4
   g(2)}{g(1)^6} & \frac{2}{g(1)} & \frac{1}{g(1)} & \frac{1}{g(1)} \\ \hline 
\end{array}
\end{equation*} 

\end{table}

\begin{table}[h!]

\caption[Inverse matrices for LGFs of a Dirichlet convolution (special case)]{}
\label{table_InvMatrixExtries_Discussion_v2}

\tiny
\begin{equation*}
\arraycolsep=1.6pt\def\arraystretch{2.2}
\begin{array}{|c|l|l|l|l|} \hline 
 & \mathbf{1} & \mathbf{2} & \mathbf{3} & \mathbf{4} \\ \hline 
\mathbf{1} & 1 & 0 & 0 & 0 \\
\mathbf{2} & -g(2) & 1 & 0 & 0 \\
\mathbf{3} & 1-g(3) & 1 & 1 & 0 \\
\mathbf{4} & g(2)^2-g(4)+2 & 1-g(2) & 1 & 1 \\
\mathbf{5} & 4-g(5) & 3 & 2 & 1 \\
\mathbf{6} & 2 g(3) g(2)-g(2)-g(6)+5 & -g(2)-g(3)+3 & 2-g(2) & 2 \\
\mathbf{7} & 10-g(7) & 7 & 5 & 3 \\
\mathbf{8} & -g(2)^3+2 g(4) g(2)-2 g(2)-g(8)+12 & g(2)^2-g(2)-g(4)+9 & 6-g(2) & 4-g(2) \\
\mathbf{9} & g(3)^2-g(3)-g(9)+20 & 14-g(3) & 10-g(3) & 7 \\
\mathbf{10} & 2 g(5) g(2)-4 g(2)-g(10)+25 & -3 g(2)-g(5)+18 & 13-2 g(2) & 10-g(2) \\ \hline 
\end{array}
\end{equation*} 

\end{table}

\begin{discussion}[Formulating some intuition for the forms of the resulting inverse matrices]
The next examples suggest insight that is gathered by computation and experimental mathematics on the 
inverse matrix entry dataset with \emph{Mathematica}. 
Consider factorizing the Lambert series generating function of $f \ast g$: 
\[
\sum_{n \geq 1} \frac{(f \ast g)(n) q^n}{1-q^n} = \frac{1}{(q; q)_{\infty}} \times \sum_{n \geq 1} 
     \left(\sum_{k=1}^{n} \widetilde{s}_{n,k}(g) f(k)\right) q^n. 
\]
Then we can prove that 
$\widetilde{s}_{n,k}(g) = \sum_{j=1}^{n} s_{n,kj} g(j)$. 
The pattern that characterizes the inverse matrix entries is less obvious to see immediately. 
Consider the listing of the first several entries of the inverse matrix sequence, 
$\widetilde{s}_{n,k}^{-1}(g)$, given in 
Table \ref{table_InvMatrixExtries_Discussion_v1}. 
Looking closely at the entries in the last table 
suggests that the terms are partition-scaled multiple ($m$-fold) Dirichlet convolutions
related to the indices $(n, k)$.
Upon setting $g(1) \mapsto 1$, the special case makes clear the relation of these matrix inverses, 
the partition function $p(n)$, and the Dirichlet inverse of the function $g$ 
we observe in Table \ref{table_InvMatrixExtries_Discussion_v2}.
\end{discussion}

\begin{definition}
For a fixed arithmetic function $g$ with $g(1) := 1$, 
let the functions $\ds_{j,g}(n)$ be defined recursively for natural numbers 
$j \geq 1$ as 
\begin{align*} 
\ds_{j,g}(n) & := 
     \begin{cases} 
     g_{\pm}(n), & \text{ if $j = 1$; } \\ 
     \sum\limits_{\substack{d|n \\ d>1}} g(d) \ds_{j-1,g}\left(\frac{n}{d}\right), & \text{ if $j > 1$, } 
     \end{cases} 
\end{align*} 
where $g_{\pm}(n) := g(n) \Iverson{n > 1} - \delta_{n,1}$. 
Let the notation for the \emph{$k$-shifted partition function} be defined as 
$p_k(n) := p(n-k)$ for $k \geq 1$. If we set the function $\widetilde{\ds}_{j,g}(n)$ 
to be the $j$-fold convolution of $g$ with itself, i.e., that 
$$\widetilde{\ds}_{j,g}(n) = \underset{\text{$j$ times}}{\underbrace{\left(g_{\pm} \ast g \ast \cdots \ast g\right)}}(n),$$ 
then we can prove easily by induction that for all $m, n \geq 1$ we have the expansion 
$$\ds_{m,g}(n) = \sum_{i=0}^{m-1} \binom{m-1}{i} (-1)^{m-1-i} \widetilde{\ds}_{i+1,g}(n).$$ 
We then define the following notation for the sums of the variant convolution functions for 
use in the theorem below for any $n \geq 1$: 
\begin{align*} 
D_{n,g}(n) & := \sum_{j=1}^n \ds_{2j,g}(n) = \sum_{m=1}^{\lfloor \frac{n}{2} \rfloor} \sum_{i=0}^{2m-1} 
     \binom{2m-1}{i} (-1)^{i+1} \widetilde{\ds}_{i+1,g}(n). 
\end{align*}
\end{definition} 

\begin{theorem}[Inverse matrix sequences]
\label{claim_snk_inverses}
Suppose that $g(1) \neq 0$, i.e., that $g$ is Dirichlet invertible. 
When $C(q) = (q; q)_{\infty}$ we write $s_{n,k}^{-1}[\mathcal{C}](g) \equiv s_{n,k}^{-1}(g)$. 
For fixed integers $k \geq 1$, the inverse matrix sequences from Proposition \ref{prop_OnePossibleLSFact} satisfy 
\begin{align*} 
\sum_{d|n} s_{n,k}^{-1}(g) & = p_k(n) + \sum_{j=1}^{\Omega(n)} (p_k \ast \ds_{2j,g})(n) \\ 
     & = p_k(n) + (p_k \ast D_{n,g})(n), 
\end{align*} 
Equivalently, we have that 
\begin{align*} 
s_{n,k}^{(-1)}(g) & = (p_k \ast \mu)(n) + \sum_{j=1}^{\Omega(n)} (p_k \ast \ds_{2j,g} \ast \mu)(n) \\ 
     & = (p_k \ast \mu)(n) + (p_k \ast D_{n,g} \ast \mu)(n). 
\end{align*} 
\end{theorem}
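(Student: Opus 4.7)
The strategy is to first reduce the inverse matrix $\widetilde{s}^{-1}_{n,k}(g)$ to the classical LGF inverse $s^{-1}_{n,k}$ via a Dirichlet convolution with $g^{-1}$, then invoke Theorem \ref{theorem_MainThm_InvMatrixDivSums}, and finally substitute the closed-form expansion of the Dirichlet inverse $g^{-1}$ in terms of the nested sums $\ds_{j,g}$.

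First I would exploit the formula $\widetilde{s}_{n,k}(g) = \sum_{j=1}^n s_{n,kj}\,g(j)$ from Proposition \ref{prop_OnePossibleLSFact}. The defining orthogonality $\sum_{k=1}^n \widetilde{s}_{n,k}(g)\,\widetilde{s}^{-1}_{k,m}(g) = \delta_{n,m}$, combined with the change of index $\ell = kj$, yields
\[
\sum_{\ell=1}^n s_{n,\ell}\left[\,\sum_{j\mid \ell} g(j)\,\widetilde{s}^{-1}_{\ell/j,\,m}(g)\right] = \delta_{n,m}.
\]
Since $(s_{n,\ell})$ is lower triangular with unit diagonal, and the classical inverse satisfies $\sum_\ell s_{n,\ell}\,s^{-1}_{\ell,m} = \delta_{n,m}$, the bracketed factor must equal $s^{-1}_{\ell,m}$. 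This produces the clean convolution identity
\[
s^{-1}_{\ell,m} \;=\; \bigl(g \ast \widetilde{s}^{-1}_{\cdot,m}(g)\bigr)(\ell),
\]
which, since $g(1) = 1 \neq 0$ makes $g$ Dirichlet invertible, inverts to
\[
\widetilde{s}^{-1}_{n,m}(g) \;=\; \bigl(g^{-1}\ast s^{-1}_{\cdot,m}\bigr)(n).
\]

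Next I would apply Theorem \ref{theorem_MainThm_InvMatrixDivSums} to substitute $s^{-1}_{n,m} = (p_m \ast \mu)(n)$, giving $\widetilde{s}^{-1}_{n,m}(g) = (g^{-1} \ast p_m \ast \mu)(n)$. Equivalently, summing over divisors and using M\"obius inversion,
\[
\sum_{d\mid n} \widetilde{s}^{-1}_{d,m}(g) \;=\; (g^{-1}\ast p_m)(n).
\]
It remains to rewrite $g^{-1}$ in the desired form. The goal is the identity
\[
g^{-1}(n) \;=\; \varepsilon(n) + \sum_{j=1}^{\Omega(n)} \ds_{2j,\,g}(n) \;=\; \varepsilon(n) + D_{n,g}(n),
\]
which expresses the Dirichlet inverse of $g$ (with $g(1)=1$) as an alternating collapse of nested divisor sums. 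Granted this, the desired formula follows at once:
\[
\sum_{d\mid n} \widetilde{s}^{-1}_{d,m}(g) \;=\; p_m(n) + (p_m \ast D_{\cdot,g})(n),
\]
and M\"obius-inverting again yields the second displayed form $\widetilde{s}^{-1}_{n,m}(g) = (p_m \ast \mu)(n) + (p_m \ast D_{\cdot,g} \ast \mu)(n)$.

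The main obstacle is establishing the closed-form expansion of $g^{-1}$ in terms of $\ds_{2j,g}$. This is an arithmetic identity independent of the Lambert series machinery, and should be derivable by iterating the defining recursion $g^{-1}(n) = -\sum_{d\mid n,\,d>1} g(d)\,g^{-1}(n/d)$, organizing the resulting tree-sum by depth $j$, and verifying that the alternating contributions from odd depths cancel pairwise against the $g_{\pm}$ sign convention, leaving only the even-depth terms $\ds_{2j,g}$ plus $\varepsilon$. The truncation at $j = \Omega(n)$ is automatic, because $\ds_{j,g}(n)$ vanishes once $j$ exceeds the number of prime factors of $n$ (every branch in the recursion strictly decreases $\Omega$). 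Assuming this Dirichlet-inverse expansion is available (it is suggested by the glossary entries for $\ds_j(f;n)$ and $D_f(n)$ and should be proved in an earlier section of the thesis), the remainder of the argument is the formal convolution bookkeeping described above.
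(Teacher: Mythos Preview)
Your argument is correct and reaches the same endpoint as the paper, but by a considerably more direct route. The paper's proof proceeds by introducing auxiliary coefficients $\rho_{n,k}^{(i)} := \sum_j s_{n,ij}\,s_{j,k}^{(-1)}$, then $t_{n,k}(h) := \sum_i \rho_{n,k}^{(i)} h(i)$ and $u_{n,k}^{(i)} := \sum_m \rho_{n-m,k}^{(i)} p(m)$, and proves a generating-function subclaim that $\sum_n u_{n,k}^{(i)} q^n = q^{ik}\sum_n p(n) q^{in}$ via the product identity $\prod_{j=1}^{i-1}(q^j;q^i)_\infty \cdot (q;q)_\infty^{-1}$. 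Only after this machinery does the paper arrive at the factorization $\widetilde{s}_{n,k}^{(-1)}(g) = s_{n,k}^{(-1)} \ast t_{n,k}^{(-1)}(g)$ with $t \ast g = \varepsilon$, i.e., exactly your identity $\widetilde{s}_{n,k}^{(-1)}(g) = (g^{-1} \ast s_{\cdot,k}^{(-1)})(n)$. You obtain this in one stroke from orthogonality plus the change of variable $\ell = kj$ and uniqueness of the inverse of the triangular matrix $(s_{n,\ell})$, which is both shorter and more transparent. Both proofs then terminate at the same unproved lemma, that $g^{-1} = \varepsilon + D_{\cdot,g}$ when $g(1)=1$; the paper explicitly declines to prove this (``a result which we do not prove here and only mention for the sake of brevity''), so your acknowledging it as the remaining obstacle is in line with the original. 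Your approach buys simplicity and avoids the partition-product subclaim entirely; the paper's route has the minor advantage of making the connection to the shifted-partition generating functions $q^{ik}/(q;q)_\infty$ visible along the way.
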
 

\begin{proof}[Proof of Theorem \ref{claim_snk_inverses}]  
Let the proposed inverse sequence function be defined in the following notation: 
\begin{align*} 
\widehat{s}_{n,k}^{(-1)}(g) & := (p_k \ast \mu)(n) + (p_k \ast D_{n,g} \ast \mu)(n). 
\end{align*} 
We begin as in the proof of Theorem {3.2} in \cite{MERCA-SCHMIDT-LSFACTTHM} to consider the ordinary, 
non-convolved Lambert series. 
More precisely, by the expansion in (i) of the proposition we must show that 
\begin{align*} 
\sum_{d|n} \widehat{s}_{d,k}^{(-1)} & := p_k(n) + (p_k \ast D_{n,g})(n) \\ 
     & = \sum_{m=0}^n \sum_{j=1}^{n-m} \widetilde{s}_{n-m,j}(g) \widehat{s}_{j,k}^{(-1)} p(m). 
\end{align*} 
For integers $n, k, i \geq 1$ with $k, i \leq n$, let the coefficient functions, $\rho_{n,k}^{(i)}$ 
be defined as 
\[
\rho_{n,k}^{(i)} := \sum_{j=1}^n s_{n,ij} \widetilde{s}_{j,k}^{(-1)}. 
\] 
Then for any fixed arithmetic function $h$,
by considering the related expansions of the 
factorizations in (ii) of the proposition for $\widehat{s}_k \ast g$, we can prove that 
\begin{align*} 
\tag{i}
t_{n,k}(h) & := \sum_{j=1}^n s_{n,j} (\widetilde{s}_{n,k}^{(-1)} \ast h)(j) 
      = \sum_{i=1}^n \rho_{n,k}^{(i)} h(i). 
\end{align*} 
It remains to show that 
\[
\tag{ii} 
\sum_{m=0}^n t_{n-m,k}(h) p(m) = (p_k \ast h)(n). 
\] 
Since we can expand the left-hand side of the previous sum as 
\begin{align*} 
\sum_{m=0}^n \sum_{i=1}^{n-m} \rho_{n-m,k}^{(i)} h(i) p(m) & = 
     \sum_{i=1}^n h(i) \underset{ := u_{n,k}^{(i)}}{\underbrace{\left( 
     \sum_{m=0}^n \rho_{n-m,k}^{(i)} p(m)\right)}}, 
\end{align*} 
to complete the proof of (ii) we need to prove a subclaim 
that (I) $u_{n,k}^{(i)} = 0$ if $i \not{\mid} n$; and (II) 
if $i | n$ then $u_{n,k}^{(i)} = p\left(\frac{n}{i}-k\right)$. 

\noindent
\textit{Proof of Subclaim}: 
For $i := 1$, this is clearly the case since $\rho_{n-m,k}^{(i)} = \Iverson{n-m=k}$. 
For subsequent cases of $i \geq 2$, it is apparent that 
$$\rho_{n,k}^{(i)} = \rho_{n-(k-1)i,1}$$ much as in the cases of the tables for the inverse sequences, 
$s_{n,k}^{(-1)} = (p_k \ast \mu)(n)$. Finally, we claim that generating functions for the 
sequences of $u_{n,k}^{(i)}$ for each $i \geq 2$ are expanded in the form of
\[
\sum_{n \geq 0} u_{n,k}^{(i)} \cdot q^n = 
     \prod_{j=1}^{i-1} (q^j; q^i)_{\infty} \times \frac{q^{ik}}{(q; q)_{\infty}} = 
     q^{ik} \times \sum_{n \geq 0} p(n) q^{in}, 
\] 
which we see by comparing coefficients on the right-hand side of the 
previous equation implies our claim. 

\noindent
\textit{Completing the Proof of the Inverse Formula}: 
What we have shown by proving (ii) above is an inverse formula for an ordinary 
Lambert series factorization over the sequence of 
$a_j := (\widetilde{s}_{n,k}^{(-1)} \ast g)(j)$. 
In particular, by M\"obius inversion (ii) shows that we have 
\begin{align*} 
(f \ast g)(n) = (s_{n,k}^{(-1)} \ast g)(n) & \iff 
(f - s_{n,k}^{(-1)}) \ast g \equiv 0 \\ 
     & \implies 
     f_n = s_{n,k}^{(-1)}, \text{ when $g \not\equiv 0$. } 
\end{align*} 
More to the point, when we define $f_n := \widetilde{s}_{n,k}^{(-1)}(g)$ 
where by convenience and experimental suggestion we let 
$$\widetilde{s}_{n,k}^{(-1)}(g) = s_{n,k}^{(-1)} \ast t_{n,k}^{(-1)}(g),$$ 
for some convolution-wise factorization of this inverse sequence, we can now prove the 
exact formula for the inverse sequence claimed in the theorem statement. 
In the forward direction, we suppose that 
\[t_{n,k}^{(-1)}(g) = D_{n,g}(n) + \varepsilon(n),\] 
where $\varepsilon(n) = \delta_{n,1}$ 
is the multiplicative identity, and then see from the formulas for $D_{n,g}(n)$ discussed 
before the claim that $g \ast (D_{n,g}+\varepsilon) = \varepsilon$, which proves that 
our inverse formula is correct in this case. 
Conversely, if we require that 
\[
s_{n,k}^{(-1)} \ast t_{n,k}^{(-1)}(g) \ast g = s_{n,k}^{(-1)} 
\] 
for all $n$ and choices of the function $g$, we must have that 
$t_{n,k}^{(-1)} \ast g = \varepsilon$, and so we see that 
$t_{n,k}^{(-1)} = D_{n,g} + \varepsilon$ as required. 
That is to say, we have proved our result using the implicit statement that 
$t \ast g = \varepsilon$ if and only if $t = D_{n,g} + \varepsilon$, i.e., 
that $t = D_{n,g} + \varepsilon$ is the unique function such that 
$t \ast g = \varepsilon$ for all $n \geq 1$, 
a result which we do not prove here and only mention for the sake of brevity. 
\end{proof}

\begin{cor}[Formulas for the Dirichlet inverse] 
\label{cor_FormulasForTheDirichletInverse_v1}
For any arithmetic function $f$ defined such that $f(1) = 1$, we have 
a formula for its Dirichlet inverse function given by 
\begin{align*}
f^{-1}(n) & = \sum_{k=1}^n \left((p_k \ast \mu)(n) + (p_k \ast D_{n,f} \ast \mu)(n)\right) \times 
     [q^{k-1}] \frac{(q; q)_{\infty}}{1-q}. 
\end{align*} 
\end{cor}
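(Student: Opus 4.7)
The plan is to specialize Proposition~\ref{prop_OnePossibleLSFact} with $\mathcal{C}(q) := (q;q)_{\infty}$, substituting $f^{-1}$ in place of the ``$f$'' appearing in that proposition and $f$ in place of the ``$g$''. Since $f^{-1} \ast f = \varepsilon$, the Lambert series on the left-hand side collapses to $L_{\varepsilon}(q) = q/(1-q)$, so the factorization reads
\[
\frac{q}{1-q} \;=\; \frac{1}{(q;q)_{\infty}} \sum_{n \geq 1} \left(\sum_{k=1}^n \widetilde{s}_{n,k}(f)\, f^{-1}(k)\right) q^n.
\]
Multiplying through by $(q;q)_{\infty}$ and reading off the coefficient of $q^n$ produces, for every $n \geq 1$, the lower-triangular linear system
\[
\sum_{k=1}^{n} \widetilde{s}_{n,k}(f)\, f^{-1}(k) \;=\; [q^{n-1}] \frac{(q;q)_{\infty}}{1-q}.
\]

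The next step is to invert this system. The hypothesis $f(1) = 1$ forces $\widetilde{s}_{n,n}(f) = s_{n,n}\, f(1) = 1 \neq 0$, since $s_{n,n} = [q^0](q;q)_{\infty} = 1$, so the triangular matrix $\widetilde{s}_{n,k}(f)$ is indeed invertible. Applying its inverse $s_{n,k}^{-1}(f)$, furnished by Theorem~\ref{claim_snk_inverses}, to both sides of the identity above yields
\[
f^{-1}(n) \;=\; \sum_{k=1}^{n} s_{n,k}^{-1}(f) \cdot [q^{k-1}] \frac{(q;q)_{\infty}}{1-q}.
\]
Substituting the closed form $s_{n,k}^{-1}(f) = (p_k \ast \mu)(n) + (p_k \ast D_{n,f} \ast \mu)(n)$ from the same theorem then produces precisely the identity asserted in the corollary.

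The proof is essentially an application of the nontrivial inverse-matrix formula already proved in Theorem~\ref{claim_snk_inverses}, so there is no genuine obstacle beyond routine bookkeeping. The only slightly delicate point is confirming the invertibility of the $\widetilde{s}_{n,k}(f)$ system, which is handled as above, and verifying the coefficient-shift identity $[q^n]\bigl((q;q)_{\infty}\, q/(1-q)\bigr) = [q^{n-1}]\bigl((q;q)_{\infty}/(1-q)\bigr)$ used to pass from the factorized Lambert series of $\varepsilon$ to the displayed linear system.
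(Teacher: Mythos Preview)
Your proof is correct and follows essentially the same approach as the paper: specialize Proposition~\ref{prop_OnePossibleLSFact} to the convolution $f^{-1}\ast f=\varepsilon$ so that the Lambert series becomes $q/(1-q)$, then invert the resulting triangular system via the inverse-matrix formula of Theorem~\ref{claim_snk_inverses}. The paper's proof is terser but the underlying argument is identical.
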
 
\begin{proof}[Proof of Corollary \ref{cor_FormulasForTheDirichletInverse_v1}]
The proof follows from Theorem \ref{claim_snk_inverses} applied in the form of 
Proposition \ref{prop_OnePossibleLSFact}. In particular, since 
$f^{-1} \ast f = \delta_{n,1}$ by definition, the right-hand side of our Lambert series expansion 
over the convolved function $a_n := f^{-1} \ast f$ is given by $q (1-q)^{-1}$. 
\end{proof} 

\begin{remark}
We also note that given any sequence $b(n)$, we can generate $b(n)$ by the 
Lambert series over $b \ast \mu$. This implies that we have recurrence relations for any 
arithmetic function $b$ defined such that $b(n) = 0$ for all $n < 0$ expanded in the 
following two forms where $s_{n,k} := [q^n] (q; q)_{\infty} q^k (1-q^k)^{-1}$: 
\begin{align*} 
b(n) & = \sum_{k=1}^n \left(p_k \ast \mu+p_k \ast D_{n,\mu} \ast \mu\right)(n) \left( 
     b(k) + \sum_{s = \pm 1} \sum_{j=1}^k (-1)^j b\left(k-\frac{j(3j+s)}{2}\right)\right), \\ 
b(n) & = \sum_{j=1}^n \sum_{k=1}^j \left(\sum_{i=1}^{\left\lfloor \frac{j}{k} \right\rfloor} 
     s_{j,ki} \cdot \mu(i)\right) b(k) p(n-j). 
\end{align*}
\end{remark}

\begin{cor}[Convolution formulas for arbitrary arithmetic functions] 
Suppose that we have any two arithmetic functions $f$ and $h$ and we seek the form 
of a third function $g$ satisfying $f \ast g = h \ast \mu$ for all $n \geq 1$. Then we have a formula for the 
function $g$ expanded in the form 
\begin{align*}
g(n) & = \sum_{k=1}^n \left((p_k \ast \mu)(n) + (p_k \ast D_{n,f} \ast \mu)(n)\right) \times \\ 
     & \phantom{=\sum\ } \times 
     \left(h(k) + \sum_{s = \pm 1} \sum_{j=1}^{\left\lfloor \frac{\sqrt{24k+1}-s}{6} \right\rfloor} 
     (-1)^j h\left(k-\frac{j(3j+s)}{2}\right)\right). 
\end{align*} 
\end{cor}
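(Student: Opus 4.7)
The plan is to reduce this to the Lambert series factorization theorem with the roles of $f$ and $g$ interchanged. Since Dirichlet convolution is commutative, Proposition \ref{prop_OnePossibleLSFact} applied to $f \ast g = g \ast f$ yields
\[
\sum_{n \geq 1} \frac{(f \ast g)(n)\, q^n}{1-q^n} \;=\; \frac{1}{(q;q)_{\infty}} \sum_{n \geq 1}\left(\sum_{k=1}^{n} \widetilde{s}_{n,k}(f)\, g(k)\right) q^n,
\]
so that the triangular matrix to be inverted depends on $f$, and $g$ plays the role of the unknown function we wish to recover.

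Next, I would identify the Lambert series on the left. By hypothesis $f \ast g = h \ast \mu$, so the left-hand side equals $L_{h \ast \mu}(q) = \sum_{n \geq 1} h(n) q^n$, i.e., the ordinary generating function of $h$. Multiplying both sides of the factorization above by $(q;q)_{\infty}$ and invoking Euler's pentagonal number theorem to expand the product, I would extract the coefficient of $q^k$ on the left to obtain
\[
h(k) \;+\; \sum_{s = \pm 1}\sum_{j=1}^{\left\lfloor (\sqrt{24k+1}-s)/6 \right\rfloor} (-1)^j\, h\!\left(k - \frac{j(3j+s)}{2}\right),
\]
which is precisely the second parenthesized factor appearing in the claimed formula for $g(n)$.

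Finally, I would invert the lower triangular system. Theorem \ref{claim_snk_inverses}, applied with the arithmetic function argument set to $f$, gives the closed form
\[
\widetilde{s}_{n,k}^{(-1)}(f) \;=\; (p_k \ast \mu)(n) + (p_k \ast D_{n,f} \ast \mu)(n),
\]
which is exactly the first parenthesized factor in the corollary. Multiplying these inverse matrix entries against the extracted coefficient from the previous step and summing over $1 \le k \le n$ assembles the claimed identity.

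The entire argument is a chaining of already-established results, so there is no genuinely hard step; the only subtle point worth flagging is the role-swap at the outset (invoking commutativity of Dirichlet convolution) together with the tacit assumption $f(1) \neq 0$. This assumption is required both to guarantee that $\widetilde{s}_{n,n}(f) = f(1) \neq 0$ so that Theorem \ref{claim_snk_inverses} applies, and for the equation $f \ast g = h \ast \mu$ to admit a unique solution $g = f^{-1} \ast h \ast \mu$ in the first place.
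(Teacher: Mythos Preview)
Your proposal is correct and follows essentially the same approach as the paper, which simply states that the result is an immediate consequence of Proposition \ref{prop_OnePossibleLSFact} and Theorem \ref{claim_snk_inverses}. You have spelled out the details the paper leaves implicit, including the role-swap via commutativity and the needed hypothesis $f(1)\neq 0$.
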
 
\begin{proof} 
This result is an immediate consequence of Proposition \ref{prop_OnePossibleLSFact} and the formula 
for the inverse sequences defined by Theorem \ref{claim_snk_inverses}. 
\end{proof} 

\SubsectionGTThesisFormatted{Hadamard products and derivatives of LGFs} 
\label{subSection_LGFFactThms_HadamardProductsAndDerivativeExps} 

\begin{definition}[Hadamard products for Lambert series generating functions] 
\label{def_HP_for_LSGFs}
For any fixed arithmetic functions $f$ and $g$, we define the Hadamard product of the 
two Lambert series over $f$ and $g$ to be the auxiliary Lambert series generating function 
over the composite function $a_{\fg}(n)$ whose coefficients are given by 
\[
\sum_{d|n} a_{\fg}(d) = [q^n] \sum_{m \geq 1} \frac{a_{\fg}(m) q^m}{1-q^m} := 
     \underset{:=\fg(n)}{\underbrace{\left(\sum_{d|n} f_d\right) \times \left(\sum_{d|n} g_d\right)}}. 
\] 
By M\"obius inversion we have that 
\[
a_{\fg}(n) = \sum_{d|n} \fg(d) \mu\left(\frac{n}{d}\right) = (\fg \ast \mu)(n). 
\] 
We also define the following function to expand divisor sums over arithmetic functions as 
ordinary sums for any integers $1 \leq k \leq n$: 
\[
T_{\Div}(n, k) := 
     \begin{cases} 
     1, & \text{ if $k|n$; } \\ 
     0, & \text{ otherwise. }
     \end{cases} 
\]
\end{definition} 

The results in this subsection are found in the unpublished manuscript \cite{MDS-HADAMARD-FACTTHMS}. 
The theorems we present here provide some useful extensions of the factorization theorems and 
related constructions we have cited in the previous subsections so far. 
The next theorems in this section define the key matrix sequences, 
$h_{n,k}(f)$ and $h_{n,k}^{-1}(f)$, in terms of the 
next factorization of the Lambert series over $a_{\fg}(n)$ in the form of 
\begin{align} 
\label{eqn_HPFactGenExp} 
\sum_{n \geq 1} \frac{a_{\fg}(n) q^n}{1-q^n} & = \frac{1}{(q; q)_{\infty}} \times \sum_{n \geq 1} 
     \sum_{k=1}^n h_{n,k}(f) g_k q^n, 
\end{align} 
where the matrix entries $h_{n,k}(f)$ are independent of the function $g$. This expansion 
is equivalent to defining the factorization expansion by the inverse matrix sequences as 
\begin{align} 
\label{eqn_HPFactGenExp_v2} 
g_n & = \sum_{k=1}^n h_{n,k}^{-1} \times 
     [q^k] \left((q; q)_{\infty} \times \sum_{n \geq 1} \frac{a_{\fg}(n) q^n}{1-q^n}\right). 
\end{align} 

\begin{theorem} 
For all integers $1 \leq k \leq n$, we have the following definition of the factorization matrix 
sequence defining the expansion on the right-hand-side of \eqref{eqn_HPFactGenExp} where 
we adopt the notation $\widetilde{f}(n) := \sum_{d|n} f_d$: 
\begin{align*} 
 & h_{n,k}(f) = T_{\Div}(n, k) \widetilde{f}(n) \\ 
 & \phantom{=\quad\ } + \sum_{b=\pm 1} 
     \sum_{j=1}^{\left\lfloor \frac{\sqrt{24(n-k)+1}-b}{6} \right\rfloor} 
     (-1)^j T_{\Div}\left(n-\frac{j(3j+b)}{2}, k\right) 
     \widetilde{f}\left(n-\frac{j(3j+b)}{2}\right).
\end{align*} 
\end{theorem}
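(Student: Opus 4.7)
The plan is to invert the factorization identity \eqref{eqn_HPFactGenExp} by multiplying through by $(q; q)_{\infty}$, then applying Euler's pentagonal number theorem to extract coefficients, and finally reading off the coefficient of each $g_k$ in the resulting expansion. First, multiplying both sides of \eqref{eqn_HPFactGenExp} by $(q; q)_{\infty}$, the left-hand side becomes $(q; q)_{\infty} \sum_{m \geq 1} \fg(m) q^m$, since by definition of $a_{\fg}$ we have the Lambert series coefficient identity
\[
[q^n] \sum_{m \geq 1} \frac{a_{\fg}(m) q^m}{1-q^m} = \sum_{d | n} a_{\fg}(d) = \fg(n) = \widetilde{f}(n) \widetilde{g}(n).
\]
The right-hand side reduces to $\sum_{n \geq 1} \sum_{k=1}^{n} h_{n,k}(f) g_k q^n$.

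Next, I would invoke the pentagonal number theorem in the form $(q; q)_{\infty} = 1 + \sum_{b = \pm 1} \sum_{j \geq 1} (-1)^j q^{j(3j+b)/2}$ to compute
\[
[q^n] \bigl( (q; q)_{\infty} \sum_{m \geq 1} \fg(m) q^m \bigr) = \fg(n) + \sum_{b = \pm 1} \sum_{j \geq 1} (-1)^j \fg\!\left(n - \tfrac{j(3j+b)}{2}\right),
\]
where the inner sums are truncated by $n - j(3j+b)/2 \geq 1$. Substituting $\fg(m) = \widetilde{f}(m) \widetilde{g}(m)$ and then expanding $\widetilde{g}(m) = \sum_{k=1}^{m} T_{\Div}(m, k) g_k$, I would collect the coefficient of $g_k$ on the resulting expression. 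Matching this to the coefficient $h_{n,k}(f)$ on the right-hand side yields precisely the stated formula, provided the range of summation on $j$ is correctly identified.

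The final step is determining the explicit upper limit on $j$. The summand contributes nonzero only when $T_{\Div}(n - j(3j+b)/2, k)$ is nonzero, which requires $n - j(3j+b)/2 \geq k$, i.e., $3j^2 + bj \leq 2(n-k)$. Solving this quadratic inequality (using $b^2 = 1$) gives $j \leq (\sqrt{24(n-k)+1} - b)/6$, and hence $j$ ranges from $1$ to $\lfloor (\sqrt{24(n-k)+1} - b)/6 \rfloor$ as claimed.

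The manipulations here are essentially routine once the two main ingredients are in place: (i) the standard Lambert series identity that forgets the $(1-q^n)^{-1}$ weighting when summing over divisors, and (ii) Euler's pentagonal number theorem. The only mild subtlety — and the step most likely to produce an off-by-one error — is verifying that the truncation of the pentagonal series coincides exactly with the truncation forced by the divisibility indicator $T_{\Div}(n - j(3j+b)/2, k)$, so that the explicit floor expression in the theorem statement matches. Otherwise the computation is a clean extraction of coefficients, and the independence of $h_{n,k}(f)$ from the function $g$ is automatic from the construction since the derivation never specializes $g$.
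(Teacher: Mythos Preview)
Your proof is correct and follows essentially the same approach as the paper: multiply through by $(q;q)_{\infty}$, use that the Lambert series coefficients are $\fg(n)=\widetilde{f}(n)\widetilde{g}(n)$, expand $\widetilde{g}$ via $T_{\Div}$, extract the coefficient of $g_k$, and apply the pentagonal number theorem. You are in fact more explicit than the paper about the final truncation bound on $j$, which the paper leaves implicit.
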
 
\begin{proof} 
By the factorization in \eqref{eqn_HPFactGenExp} and the definition of $a_{\fg}(n)$ given above, we have 
that for $\widetilde{f}(n) = \sum_{d|n} f_d$ 
\begin{align*} 
h_{n,k}(f) & = [g_k] \left(\sum_{d|n} f_d\right) \times \sum_{d=1}^n g_d T_{\Div}(n, d), \\ 
     & = 
     [q^n] (q; q)_{\infty} \times \sum_{n \geq 1} T_{\Div}(n, k) \widetilde{f}(n) q^n. 
\end{align*} 
The last equation gives the stated expansion of the sequence by 
Euler's pentagonal number theorem. The statement of the pentagonal number theorem 
is that for $|q| < 1$
\begin{align*} 
(q; q)_{\infty} & = 1 + \sum_{j \geq 1} (-1)^j \left(q^{\frac{j(3j-1)}{2}} + 
     q^{\frac{j(3j+1)}{2}}\right). 
\end{align*} 
The theorem statement follows thusly.
\end{proof} 

\begin{theorem}[Inverse matrix sequences] 
\label{theorem_HP_InvSeqs} 
For all integers $1 \leq k \leq n$, we have the next definition of the inverse factorization matrix 
sequence which equivalently defines the expansion on the right-hand-side of \eqref{eqn_HPFactGenExp}. 
\[
h_{n,k}^{-1}(f) = \sum_{d|n} \frac{p(d-k)}{\widetilde{f}(d)} \mu\left(\frac{n}{d}\right). 
\] 
\end{theorem}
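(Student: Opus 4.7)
The plan is to directly verify that the candidate formula
\[
H_k(n) := \sum_{d|n} \frac{p(d-k)}{\widetilde{f}(d)} \mu\!\left(\frac{n}{d}\right)
\]
serves as the $k^{th}$ column of the inverse of the lower triangular matrix $(h_{n,j}(f))_{n,j \geq 1}$, i.e.\ that $\sum_{j=1}^{n} h_{n,j}(f) H_k(j) = \delta_{n,k}$. Since $(h_{n,j}(f))$ is lower triangular with nonzero diagonal (inherited from the LGF factorization that defines it), this matrix identity uniquely determines $h_{n,k}^{-1}(f) = H_k(n)$.

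First I would apply M\"obius inversion to the definition of $H_k$. Writing $H_k = (p_k/\widetilde{f}) \ast \mu$, where $p_k(n) := p(n-k)$ and $\ast$ denotes Dirichlet convolution, a single convolution with the constant function $\mathds{1}$ yields
\[
\widetilde{H_k}(n) := \sum_{d|n} H_k(d) = \frac{p(n-k)}{\widetilde{f}(n)}.
\]
Next I would insert $g := H_k$ into the master factorization \eqref{eqn_HPFactGenExp}. By Definition \ref{def_HP_for_LSGFs}, the associated composite function is $\fg(n) = \widetilde{f}(n)\widetilde{H_k}(n) = p(n-k)$, whence the divisor sum coefficients of the corresponding LGF satisfy
\[
\sum_{d|n} a_{\fg}(d) = \fg(n) = p(n-k),
\]
and therefore
\[
\sum_{m \geq 1} \frac{a_{\fg}(m) q^m}{1-q^m} = \sum_{n \geq 1} p(n-k)\, q^n = \frac{q^k}{(q;q)_{\infty}}.
\]

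Comparing this with the right-hand side of \eqref{eqn_HPFactGenExp} evaluated at $g = H_k$ gives
\[
\frac{q^k}{(q;q)_{\infty}} = \frac{1}{(q;q)_{\infty}} \sum_{n \geq 1} \left(\sum_{j=1}^{n} h_{n,j}(f) H_k(j)\right) q^n,
\]
and extracting coefficients of $q^n$ yields $\sum_{j=1}^{n} h_{n,j}(f) H_k(j) = \delta_{n,k}$, as required.

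The main technical obstacle in carrying this out rigorously is handling the invertibility assumption $\widetilde{f}(d) \neq 0$ for all $d \leq n$ that divide $n$, which is implicit in the definition of $H_k(n)$; one should state this hypothesis cleanly (it is analogous to the Dirichlet-invertibility condition $f(1) \neq 0$ used in the inversion results of Section \ref{Section_LSFactThms_CvlOfTwoFns_and_Apps}). A secondary but mild subtlety is confirming that the reversed identity $\sum_{k=1}^{n} H_k(n) h_{k,j}(f) = \delta_{n,j}$ also holds; this is automatic because a lower triangular matrix with invertible diagonal has two-sided inverse, but it is worth noting to reconcile the inversion formula \eqref{eqn_HPFactGenExp_v2} with the matrix identity we have established.
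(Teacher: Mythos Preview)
Your proposal is correct and follows essentially the same route as the paper. Both arguments substitute the inverse column (the paper uses the unknown $g_n := h_{n,r}^{-1}(f)$, you use the candidate $g := H_k$) into the Hadamard factorization \eqref{eqn_HPFactGenExp}, identify the resulting LGF coefficients as $p(n-k)$, and then pass between $\sum_{d|n} h_{d,k}^{-1}(f) = p(n-k)/\widetilde{f}(n)$ and the closed form via M\"obius inversion; the only difference is that the paper derives the formula from orthogonality whereas you verify that the candidate satisfies it.
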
 
\begin{proof}[Proof of Theorem \ref{theorem_HP_InvSeqs}] 
We expand the right-hand-side of the factorization in \eqref{eqn_HPFactGenExp} 
for the sequence $g_n := h_{n,r}^{-1}(f)$, i.e., the exact inverse sequence, 
for some fixed $r \geq 1$ as follows: 
\begin{align*} 
\widetilde{f}(n) \times \sum_{d|n} h_{d,r}^{-1}(f) & = 
     \sum_{j=0}^n \sum_{k=1}^j h_{j,k} h_{k,r}^{-1} p(n-j) \\ 
     & = 
     \sum_{j=0}^n \Iverson{j = r} p(n-j) = p(n-r). 
\end{align*} 
Then the last equation implies that 
$$\sum_{d|n} h_{d,r}^{-1}(f) = \frac{p(n-r)}{\widetilde{f}(n)}, $$ 
which by M\"obius inversion implies our stated result. 
\end{proof}

\begin{example}[Applications] 
\label{example_HPThm_consequences}
If we form the Hadamard product of generating functions of the two 
Lambert series over Euler's totient function, $\phi(n)$, we obtain the following more 
exotic-looking sum for our multiplicative function of interest: 
\[
\phi(n) = \sum_{k=1}^n \sum_{d|n} \frac{p(d-k)}{d} \mu\left(\frac{n}{d}\right) \left[k^2 + 
     \sum_{b=\pm 1} \sum_{j=1}^{\left\lfloor \frac{\sqrt{24k-23}-b}{6} \right\rfloor} (-1)^j 
     \left(k-\frac{j(3j+b)}{2}\right)^2\right]. 
\]
We consider the arithmetic function pairs 
$$(f, g) := (n^t, n^s), (\phi(n), \Lambda(n)), (n^t, J_t(n)), $$ 
respectively, and some constants $s,t \in \mathbb{C}$ where $\sigma_{\alpha}(n)$ denotes the 
generalized sum-of-divisors function, $\Lambda(n)$ is von Mangoldt's function, 
$\phi(n)$ is Euler's totient function, and $J_t(n)$ is the Jordan totient function. 
We then employ the equivalent expansions of the factorization result in \eqref{eqn_HPFactGenExp_v2} 
to formulate the following ``\emph{exotic}'' \emph{sums} as consequences of the theorems above: 
\begin{align}
\label{eqn_nsPow_Sigmast_HPIdent_v1} 
n^s & = \sum_{k=1}^n \sum_{d|n} \frac{p(d-k)}{\sigma_t(d)} \mu\left(\frac{n}{d}\right) 
     \Biggl[\sigma_t(k)\sigma_s(k) \\ 
\notag 
     & \phantom{=\sum\ } + 
     \sum_{b=\pm 1} \sum_{j=1}^{\left\lfloor \frac{\sqrt{24k+1}-b}{6} \right\rfloor} (-1)^j 
     \sigma_t\left(k-\frac{j(3j+b)}{2}   \right)\sigma_s\left(k-\frac{j(3j+b)}{2}\right)\Biggr], \\ 
\notag 
\Lambda(n) & = \sum_{k=1}^n \sum_{d|n} \frac{p(d-k)}{d} \mu\left(\frac{n}{d}\right) \Biggl[k\log(k) \\ 
\notag 
     & \phantom{=\sum\ } + 
     \sum_{b=\pm 1} \sum_{j=1}^{\left\lfloor \frac{\sqrt{24k-23}-b}{6} \right\rfloor} (-1)^j 
     \left(k-\frac{j(3j+b)}{2}\right)\log\left(k-\frac{j(3j+b)}{2}\right)\Biggr], \\ 
\notag 
J_t(n) & = \sum_{k=1}^n \sum_{d|n} \frac{p(d-k)}{d^t} \mu\left(\frac{n}{d}\right) \left[k^{2t} + 
     \sum_{b=\pm 1} \sum_{j=1}^{\left\lfloor \frac{\sqrt{24k-23}-b}{6} \right\rfloor} (-1)^j 
     \left(k-\frac{j(3j+b)}{2}\right)^{2t}\right].  
\end{align}
By forming a second sum over the divisors of $n$ on both sides of the first equation above, 
the first more exotic-looking sum for the sum-of-divisors functions leads to an expression for 
$\sigma_s(n)$ as a sum over the paired pointwise products, $\sigma_t(n) \sigma_s(n)$. 
We are not aware of another identity like the first equation in 
\eqref{eqn_nsPow_Sigmast_HPIdent_v1} 
relating Hadamard products of the generalized sum-of-divisors functions in 
a review of surrounding literature.
\end{example}

\begin{cor}[The Riemann zeta function] 
\label{cor_SumOfDivFns_ZetaFnIdents}
For fixed $s,t \in \mathbb{C}$ such that $\Re(s) > 1$, 
we have the following representation of the Riemann zeta function: 
\begin{align*} 
\zeta(s) & = \sum_{n \geq 1} \sum_{k=1}^n \sum_{d|n} \frac{p(d-k)}{\sigma_t(d)} \mu\left(\frac{n}{d}\right) \times 
	\sum_{\substack{j \geq 0 \\ G_j < k}} \frac{(-1)^{\lceil \frac{j}{2} \rceil} 
	\sigma_t(k-G_j) \sigma_s(k-G_j)}{(k-G_j)^s}. 
\end{align*} 
The sequence of interleaved pentagonal numbers is denoted by 
$G_j = \frac{1}{2} \left\lceil \frac{j}{2} \right\rceil \left\lceil \frac{3j+1}{2} \right\rceil$ for 
$j \geq 0$ \cite[\seqnum{A001318}]{OEIS}. 
\end{cor}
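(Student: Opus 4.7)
The plan is to derive this by specializing Example \ref{example_HPThm_consequences} to the arithmetic function pair $(f, g) := (n^t, n^{-s})$, converting the resulting negative-order divisor sums via an elementary identity, and summing over $n \geq 1$. The derivation of \eqref{eqn_nsPow_Sigmast_HPIdent_v1} in the Example uses only Theorem \ref{theorem_HP_InvSeqs} applied to $\widetilde{f}(n) = \sigma_t(n)$, combined with Euler's pentagonal number theorem applied to $[q^k](q;q)_{\infty}$. Nothing in that derivation requires a positive exponent on the second factor, so running the same argument with $g(n) = n^{-s}$ and $\widetilde{g}(n) = \sigma_{-s}(n)$ yields the analogue
\begin{align*}
n^{-s} & = \sum_{k=1}^n \sum_{d|n} \frac{p(d-k)}{\sigma_t(d)} \mu\left(\frac{n}{d}\right) \Biggl[\sigma_t(k)\sigma_{-s}(k) \\
& \phantom{=\ } + \sum_{b = \pm 1} \sum_{j=1}^{\left\lfloor \frac{\sqrt{24k+1}-b}{6}\right\rfloor} (-1)^j \sigma_t\left(k - \tfrac{j(3j+b)}{2}\right) \sigma_{-s}\left(k - \tfrac{j(3j+b)}{2}\right)\Biggr].
\end{align*}

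Next I would invoke the elementary identity $\sigma_{-s}(m) = \sigma_s(m)/m^s$, obtained from the definition $\sigma_{-s}(m) = \sum_{d|m} d^{-s}$ by the substitution $d \mapsto m/d$. This converts every occurrence of $\sigma_t(k-G)\sigma_{-s}(k-G)$ on the right-hand side into $\sigma_t(k-G)\sigma_s(k-G)/(k-G)^s$. The leading term $\sigma_t(k)\sigma_{-s}(k)$ (corresponding to $G = 0$) together with the doubled $b = \pm 1$ pentagonal sum can then be re-indexed as a single sum over $j \geq 0$ via the interleaved pentagonal sequence \glssymbol{Gj}: the familiar sign pattern $+, -, -, +, +, -, -, \ldots$ from the statement of Euler's pentagonal number theorem matches $(-1)^{\lceil j/2 \rceil}$, and the inequality $\frac{j(3j+b)}{2} \leq k$ in the Example (extended to $j = 0$ to include the leading term) tightens to the strict inequality $G_j < k$ to keep the denominator $(k-G_j)^s$ nonzero (the excluded $G_j = k$ cases correspond to $\sigma_t(0)\sigma_s(0) = 0$ and so contribute nothing anyway).

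Finally, summing the resulting per-$n$ identity over all $n \geq 1$ collapses the left-hand side to $\zeta(s) = \sum_{n \geq 1} n^{-s}$, which converges absolutely for $\Re(s) > 1$. The right-hand side is a finite expression for each fixed $n$, since the constraints $k \leq n$, $d \mid n$, and $G_j < k$ impose finite ranges; consequently no interchange of summations is needed, and the outer sum over $n$ inherits the absolute convergence of $\zeta(s)$. The only obstacle worth flagging is the careful bookkeeping that collapses the doubled pentagonal sum (over $b = \pm 1$ and $j \geq 1$), together with the $G = 0$ leading term, into the single sum indexed by the interleaved $G_j$ with signs $(-1)^{\lceil j/2 \rceil}$; this is pure index-chasing rather than a genuine analytic difficulty.
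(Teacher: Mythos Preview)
Your proposal is correct and matches the paper's own proof essentially line for line: the paper also cites \eqref{eqn_nsPow_Sigmast_HPIdent_v1} as the starting point, invokes the symmetry identity $\sigma_{-\alpha}(n) = \sigma_{\alpha}(n) n^{-\alpha}$, re-indexes via the pentagonal expansion $(q;q)_{\infty} = \sum_{j \geq 0} (-1)^{\lceil j/2 \rceil} q^{G_j}$, and appeals to $\Re(s) > 1$ for convergence. Your write-up is in fact more careful than the paper's terse proof, particularly in flagging the $G_j < k$ versus $G_j \leq k$ bookkeeping.
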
 
\begin{proof} 
These two identities follow as special cases of the theorem in the form of 
\eqref{eqn_nsPow_Sigmast_HPIdent_v1} above where we note that the symmetry 
identity for the generalized 
sum-of-divisors functions which provides that $\sigma_{-\alpha}(n) = \sigma_{\alpha}(n) n^{-\alpha}$ 
for all $\alpha \in \mathbb{R}$. 
The pentagonal number theorem employed in the inner sums depending on $j$ 
utilizes the classical expansion 
\[
(q; q)_{\infty} = \sum_{j \geq 0} (-1)^{\lceil \frac{j}{2} \rceil} q^{G_j}, |q| < 1. 
\] 
The convergence of these infinite series is guaranteed by our hypothesis that $\Re(s) > 1$. 
\end{proof} 

In the unpublished manuscript \cite{MDS-HADAMARD-FACTTHMS}, we have other variants of 
factorization type theorems corresponding to derivatives of Lambert series generating functions 
and the identities we proved in \cite{MDS-COMBRESTRDIVSUMS-INTEGERS}. 

\newpage
\SectionGTThesisFormatted{Factorization theorems for GCD type sums} 
\label{Section_FactThmsTypeIAndIISums_MousaviSchmidt}

This section summarizes the work of Mousavi and Schmidt published in the 
\emph{Ramanujan Journal} in 2020 (completed for review by $2018$) 
\cite{MOUSAVI-SCHMIDT-2019}. 
The article provides analogs of the factorization theorems for Lambert series 
generating functions studied so far to other combinatorially relevant sums. 
Our new results here provide generating function expansions for the type I and type II 
sums in the form of matrix-based factorization theorems. 
These new results characterizing the expansions of the OGFs of these sum types are 
analogous in many ways to the LGF cases studied in the last section. 
The matrix products involved in expressing the coefficients of these generating 
functions for arbitrary arithmetic functions $f$ and $g$ are also closely related to the 
partition function $p(n)$ (\cf the discussion motivated in 
Section \ref{Section_CanonicalReprsOfFactThms_KernelBasedDCVL}). 

The known Lambert series factorization theorems 
proved in the references, and which are summarized in 
the previous sections on variants above 
demonstrate the flavor of the matrix-based expansions of these forms for 
ordinary divisor sums of the form $(f \ast \mathds{1})(n) = \sum_{d|n} f(d)$. 
Our extensions of these factorization theorem in the context of the 
forms of the type I and type II sums similarly relate special arithmetic functions 
from number theory to partition functions and more additive branches of mathematics. 
The last results proved in 
Section \ref{subSection_DFT_exps} are expanded in the spirit of these 
matrix factorization constructions using discrete Fourier transforms of functions 
evaluated at greatest common divisors. We pay special 
attention to illustrating our new results with many relevant examples and 
new identities.

\begin{definition}
\label{def_TypeITypeIISums_v1}
For any arithmetic functions $f$ and $g$, 
the classes of sums termed \emph{type I} and \emph{type II} sums by the authors 
are defined in respective order as follows:
\begin{align} 
\label{eqn_sum_vars_TL_defs_intro} 
T_{f}(x) & = \sum_{\substack{d=1 \\ (d,x)=1}}^x f(d), x \geq 1 \\ 
\notag 
L_{f,g,k}(x) & = \sum_{d|(k,x)} f(d) g\left(\frac{x}{d}\right), x \geq k \geq 1, x \geq 1. 
\end{align}
We seek to write the type I, or GCD type sums, denoted by $T_f(x)$ 
for any arithmetic function $f$ and $x \geq 2$, 
as the coefficients of the following matrix factorized OGF expansion:
\begin{align} 
\label{eqn_intro_two_sum_fact_types_def_v1}
T_{f}(x) & = [q^x]\left(\frac{1}{(q; q)_{\infty}} \times \sum_{n \geq 2} \sum_{k=1}^n t_{n,k} f(k) q^n + 
     f(1) q\right). 
\end{align} 
For an indeterminate parameter $w$, we seek to factorize the type-II, or Anderson-Apostol like sums, 
$L_{f,g,k}(n)$, according to the expansions 
\begin{align}
\label{eqn_intro_two_sum_fact_types_def_v2}
g(x) & = [q^x]\left(\frac{1}{(q; q)_{\infty}} \times 
     \sum_{n \geq 2} \sum_{k=1}^n u_{n,k}(f, w) \left(
     \sum_{m=1}^k L_{f,g,m}(k) w^m\right) q^n\right),\ w \in \mathbb{C} \setminus \{0\}. 
\end{align} 
The sequence $u_{n,k}(f, w)$ is lower triangular and 
invertible for suitable choices of the indeterminate parameter $w$. 
For a fixed $N \geq 1$, we can truncate these sequences after $N$ rows 
and form the $N \times N$ matrices whose entries are 
$u_{n,k}(f, w)$ for $1 \leq n,k \leq N$. 
The corresponding inverse matrices have terms denoted by 
$u_{n,k}^{(-1)}(f, w)$. 
That is to say, for $n \geq 2$, these inverse matrices satisfy 
\begin{align} 
\label{eqn_intro_two_sum_fact_types_def_v4_invs}
\sum_{m=1}^n L_{f,g,m}(k) w^m & = \sum_{k=1}^n u_{n,k}^{(-1)}(f, w) [q^k] \left((q; q)_{\infty} 
     \times \sum_{n \geq 1} g(n) q^n\right),\ w \in \mathbb{C} \setminus \{0\}.
\end{align} 
\end{definition}

\SubsectionGTThesisFormatted{Factorization theorems for a class of GCD sums (type I sums)}

Since the resulting matrices with entries $t_{n,k}$ are lower triangular and invertible, 
we then obtain that 
\begin{align} 
\label{eqn_intro_two_sum_fact_types_def_v3_invs}
f(n) & = \sum_{k=1}^n t_{n,k}^{(-1)} [q^k](q; q)_{\infty} \times \left(\sum_{n \geq 1} 
     T_f(n) q^n\right), n \geq 1.
\end{align}
In the next results, the function $\chi_{1,k}(n)$ 
refers to the principal Dirichlet character modulo $k$ for some $k \geq 1$.

\begin{theorem}[Exact formulas for the factorization matrix sequences] 
\label{theorem_snk_snkinv_seq_formulas_v1} 
Let lower triangular sequence $t_{n,k}$ be defined by the first expansion 
in \eqref{eqn_intro_two_sum_fact_types_def_v1} above. 
The corresponding inverse matrix coefficients are denoted by $t_{n,k}^{(-1)}$. 
For integers $n \geq k \geq 1$, the two lower triangular 
factorization sequences defining the expansion of 
\eqref{eqn_intro_two_sum_fact_types_def_v1} satisfy exact formulas given by 
\begin{align*} 
\tag{i} 
t_{n,k} & = \sum_{j=0}^{n} (-1)^{\lceil \frac{j}{2} \rceil} \chi_{1,k}(n+1-G_j) \Iverson{n-G_j \geq 1}, \\ 
\tag{ii} 
t_{n,k}^{(-1)} & = \sum_{d=1}^n p(d-k) \mu_{n,d}. 
\end{align*} 
We define the sequence of interleaved pentagonal numbers $G_j$ 
as in the introduction, and the lower tringular sequence $\mu_{n,k}$ as in 
the M\"obius inversion analog proved in 
Proposition \ref{prof_inversion_formula} (see below). 
\end{theorem}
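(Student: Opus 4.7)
The plan is to prove (i) and (ii) by exploiting two different descriptions of the forward map $f\mapsto T_f$. For (i), I multiply the defining factorization through by $(q;q)_{\infty}$ to obtain the linear identity of formal power series
\[
(q;q)_{\infty}\Bigl(\sum_{x\ge 1}T_f(x)q^x - f(1)q\Bigr) = \sum_{n\ge 2}\sum_{k=1}^n t_{n,k}\,f(k)\,q^n.
\]
Since both sides are linear in $f$, I then test against $f=\delta_k$. Because $T_{\delta_k}(x)=\chi_{1,k}(x)\,\Iverson{x\ge k}$, for $k\ge 2$ the identity collapses to
\[
\sum_{n\ge k} t_{n,k}\,q^n = (q;q)_{\infty}\sum_{x\ge k}\chi_{1,k}(x)\,q^x,
\]
while for $k=1$ the correction produced by the $f(1)q$ term leaves the analogous identity $\sum_{n\ge 2}t_{n,1}q^n = (q;q)_{\infty}\sum_{x\ge 2}q^x$. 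Substituting Euler's pentagonal number theorem $(q;q)_{\infty}=\sum_{j\ge 0}(-1)^{\lceil j/2\rceil}q^{G_j}$ and reading off the coefficient of $q^n$ on the right-hand side recovers the closed form claimed in (i).

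For (ii), I use the fact that the forward map $f\mapsto T_f$ admits two different descriptions whose comparison yields the inverse matrix. The factorization theorem decomposes it as a matrix action $f\mapsto(t\cdot f)$ followed by convolution with the partition sequence $p(n)=[q^n](q;q)_{\infty}^{-1}$. Independently, the M\"obius-type inversion recorded as Proposition \ref{prof_inversion_formula} asserts that $f\mapsto T_f$ is inverted directly by $\mu_{n,d}$ via $f(n)=\sum_{d=1}^n T_f(d+1)\mu_{n,d}$. Inverting the factorization description in reverse order amounts to undoing the convolution with $p$ (for which the shifted kernel $p(d-k)=[q^d]q^k/(q;q)_{\infty}$ is the natural input) and then inverting the lower triangular matrix $(t_{n,k})$. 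Identifying this composition with the single-step $\mu$-inverse yields $t_{n,k}^{(-1)}=\sum_{d=1}^n p(d-k)\mu_{n,d}$. Equivalently, one can verify the formula directly by substituting it into $\sum_{k=1}^n t_{n,k}^{(-1)} t_{k,m}=\delta_{n,m}$, swapping the order of summation over $k$ and $d$, recognizing the inner sum $\sum_k p(d-k)t_{k,m}$ as the value $\chi_{1,m}(d)\Iverson{d\ge m}$ (and its analogue for $m=1$) via the generating-function identity from (i), and concluding by the defining property $\sum_d T_{\delta_m}(d+1)\mu_{n,d}=\delta_{n,m}$ of the M\"obius coefficients.

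The main obstacle will be the off-by-one bookkeeping. The factorization carries an extra $f(1)q$ correction, the M\"obius analog uses the shifted argument $g(d+1)$, and the pentagonal decomposition must be indexed so that $n-G_j$ satisfies the correct lower bound for each $k$. Aligning these three boundary conventions consistently across (i) and (ii) is where the care lies; once that is done, the remaining manipulations are formal power series identities together with a single application of Euler's pentagonal number theorem.
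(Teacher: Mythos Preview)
Your proposal is correct and follows essentially the same route as the paper. For (i), the paper isolates the coefficient of $f(k)$ in the factorization (their Lemma~\ref{lemma_cvl_ident_rpints_indicatorfn}), which is exactly your test against $f=\delta_k$, and then invokes the pentagonal number theorem; for (ii), the paper plugs $f(n)=t_{n,r}^{(-1)}$ into the factorization, uses orthogonality to obtain $\sum_{(d,n)=1} t_{d,r}^{(-1)}=p(n-1-r)$, and then applies the $\mu_{n,d}$-inversion of Proposition~\ref{prof_inversion_formula} --- the same two ingredients (the convolution identity $\sum_j t_{j,m}p(d-j)=\chi_{1,m}(d)$ and the $\mu$-inverse property) that your direct verification uses, just composed in the opposite order. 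Your caution about the off-by-one bookkeeping is well placed, since the paper's own indexing (the $f(1)q$ correction, the $n\ge 2$ restriction, and the shift in $g(d+1)$) is exactly where the care is required.
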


Before we prove the main theorem, we provide several key examples that apply these results to 
formulate new expansions of classical number theoretic functions and polynomials 
in terms of partition functions. 
First, we obtain the following identities for 
Euler's totient function based on our new constructions: 
\begin{align*} 
\phi(n) & = \sum_{j=0}^n \sum_{k=1}^{j-1} \sum_{i=0}^{j} 
     p(n-j) (-1)^{\lceil \frac{i}{2} \rceil} \chi_{1,k}(j-k-G_i) \Iverson{j-k-G_i \geq 1} + 
     \Iverson{n=1} \\ 
\phi(n) & = \sum_{\substack{d=1 \\ (d,n)=1}}^n 
     \left(\sum_{k=1}^{d+1} \sum_{i=1}^d \sum_{j=0}^k 
     p(i+1-k) (-1)^{\lceil \frac{j}{2} \rceil} \phi(k-G_j) \mu_{d,i} \Iverson{k-G_j \geq 1}\right). 
\end{align*} 
To give another related example that applies to classical multiplicative functions, 
recall that we have a known representation for the M\"obius function given as an 
exponential sum in terms of powers of the 
$n^{th}$ primitive roots of unity of the following form \citep[\S 16.6]{HARDYWRIGHT}: 
\[ 
\mu(n) = \sum_{\substack{d=1 \\ (d,n)=1}}^n \omega_n^d. 
\] 
The \emph{Mertens function}, $M(x)$, is defined as the summatory function over the 
M\"obius function $\mu(n)$ for all $n \leq x$. Using the definition of the M\"obius function as one of our 
type I sums defined above, we have new expansions for the Mertens function given by 
(\cf Corollary \ref{cor_Mertens_function_v2}) 
\[
M(x) = \sum_{1 \leq k < j \leq n \leq x} \left(\sum_{i=0}^j p(n-j) (-1)^{\lceil i/2 \rceil} 
     \chi_{1,k}(j-k-G_i) \Iverson{j-k-G_i \geq 1} \omega_n^k\right). 
\] 
Finally, we can form another related polynomial sum of the type indicated above 
when we consider that the logarithm of the \emph{cyclotomic polynomials} leads to the sums 
\begin{align*} 
\log \Phi_n(z) & = \sum_{\substack{1 \leq k \leq n \\ (k, n) = 1}} 
     \log\left(z-\omega_n^k\right) \\ 
     & = 
     \sum_{1 \leq k < j \leq n} \left(\sum_{i=0}^j p(n-j) (-1)^{\lceil i/2 \rceil} 
     \chi_{1,k}(j-k-G_i) \Iverson{j-k-G_i \geq 1} 
     \log\left(z-\omega_n^k\right)
     \right). 
\end{align*} 

\SubsubsectionGTThesisFormatted{Inversion relations} 

We begin our exploration by expanding an inversion formula which is analogous to 
M\"obius inversion for ordinary divisor sums. 
We prove the following result which is the analog to the sequence 
inversion relation provided by the M\"obius transform in the context of our sums over the 
integers relatively prime to $n$ \cite[\cf \S 2, \S 3]{RIORDAN-COMBIDENTS}. 

\begin{prop}[Inversion formula] 
\label{prof_inversion_formula}
For all $n \geq 2$, there is a unique lower triangular sequence, denoted by $\mu_{n,k}$, 
which satisfies the next lower triangular inversion relation, i.e., 
so that $\mu_{n,d} = 0$ whenever $n < d$. 
\begin{subequations}
\begin{equation}
\label{eqn_propInvFormula_v1}
g(n) = \sum_{\substack{d=1 \\ (d, n)=1}}^n 
     f(d) \quad\iff\quad f(n) = \sum_{d=1}^n g(d+1) \mu_{n,d}. 
\end{equation}
Moreover, if we form the matrix $(\mu_{i,j} \Iverson{j \leq i})_{1 \leq i,j \leq n}$ for any $n \geq 2$, we have that the 
inverse sequence satisfies 
\begin{equation} 
\label{eqn_propInvFormula_v2}
\mu_{n,k}^{(-1)} = \Iverson{(n+1, k) = 1} \Iverson{k \leq n}. 
\end{equation} 
\end{subequations} 
\end{prop}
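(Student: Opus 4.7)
The plan is to observe that a simple re-indexing converts the stated biconditional into a standard unit lower-triangular matrix inversion. I would introduce $\tilde{g}(m) := g(m+1)$ for $m \geq 1$, so the two claimed identities become
\[
\tilde{g}(m) = \sum_{d=1}^{m+1} f(d) \Iverson{(d, m+1)=1} \qquad \text{and} \qquad f(n) = \sum_{d=1}^n \mu_{n,d}\, \tilde{g}(d).
\]
Since $(m+1, m+1) = m+1 \geq 2$ for $m \geq 1$, the $d=m+1$ term of the first sum vanishes, and we may truncate the range, rewriting it as $\tilde{g}(m) = \sum_{d=1}^{m} M_{m,d}\, f(d)$ with the coefficient matrix $M_{m,d} := \Iverson{(d, m+1) = 1}$.

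Next I would exploit the critical identity $(m, m+1) = 1$ (consecutive integers are coprime), which gives $M_{m,m} = 1$ for every $m \geq 1$. Hence, for any truncation size $n \geq 2$, the matrix $M = (M_{i,j})_{1 \leq i, j \leq n}$ is lower triangular with unit diagonal. Such a matrix is invertible over $\mathbb{Z}$, its inverse is likewise lower triangular with unit diagonal, and its entries are uniquely determined by the row-by-row recursion $\sum_{j} M_{i,j} \mu_{j,k} = \delta_{i,k}$. Defining $\mu_{n,d}$ to be the $(n,d)$-entry of $M^{-1}$ simultaneously establishes existence and uniqueness of the claimed sequence, and by construction it yields the equivalence in \eqref{eqn_propInvFormula_v1}.

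The ``moreover'' assertion then follows immediately: since $\mu$ and $M$ are two-sided inverses of each other, the matrix inverse of $(\mu_{i,j}\Iverson{j \leq i})$ is $M$ itself, whose $(n,k)$-entry is exactly $\Iverson{(n+1, k) = 1}\,\Iverson{k \leq n}$, matching \eqref{eqn_propInvFormula_v2}. No substantive obstacle arises in this argument; the only real ingenuity lies in the shift $\tilde{g}(m) = g(m+1)$. Without this shift the natural matrix $(\Iverson{(d,n)=1})_{d,n}$ has vanishing diagonal for $n \geq 2$ (since $(n,n) = n$) and is not directly invertible, so the index shift, together with coprimality of consecutive integers, is precisely what restores the unit diagonal and makes the inversion well-defined.
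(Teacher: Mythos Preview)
Your proof is correct and follows essentially the same approach as the paper: both arguments reduce the problem to inverting a unit lower-triangular matrix by shifting the index so that the diagonal entries become $\Iverson{(m,m+1)=1}=1$, invoking coprimality of consecutive integers. Your exposition via the substitution $\tilde g(m)=g(m+1)$ is in fact cleaner than the paper's somewhat tangled indexing, but the mathematical content is identical.
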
 
\begin{proof}[Proof of Proposition \ref{prof_inversion_formula}]
Consider the $(n-1) \times (n-1)$ matrix 
\begin{equation}
\label{eqn_InvRel_proof_matrix_def_v1} 
\left(\Iverson{(i,j-1)=1\mathrm{\ and \ } j \leq i}\right)_{1 \leq i,j < n},
\end{equation} 
which effectively corresponds to the formula on the left-hand-side of \eqref{eqn_propInvFormula_v1} by 
applying the matrix to the vector of $[f(1)\ f(2)\ \cdots f(n)]^{T}$ and extracting the 
$(n+1)^{th}$ column of the matrix formed by extracting the $\{0,1\}$-valued coefficients of $f(d)$. 
Since $\gcd(i, j-1)=1$ for all $i = j$ with $i,j \geq 1$, we see that the 
matrix \eqref{eqn_InvRel_proof_matrix_def_v1} 
is lower triangular with ones on its diagonal. Thus the matrix is non-singular and its 
unique inverse, which we denote by $(\mu_{i,j})_{1 \leq i,j < n}$, leads to the sum on the 
right-hand-side of the sum in \eqref{eqn_propInvFormula_v1} when we shift $n \mapsto n+1$. 
The second equation stated in \eqref{eqn_propInvFormula_v2} 
restates the form of the first matrix of $\mu_{i,j}$ 
as on the right-hand-side of \eqref{eqn_propInvFormula_v1}. 
\end{proof} 

\begin{figure}[ht!]

\caption[GCD sum inversion coefficients]{Inversion formula coefficient sequences.} 
\label{figure_munk_and_inv_seqs} 

\begin{minipage}{\linewidth} 
\begin{center} 
\small
\begin{equation*} 
\boxed{ 
\begin{array}{ccccccccccccccccc}
 1 & 0 & 0 & 0 & 0 & 0 & 0 & 0 & 0 & 0 & 0 & 0 & 0 & 0 & 0 & 0 & 0 \\
 -1 & 1 & 0 & 0 & 0 & 0 & 0 & 0 & 0 & 0 & 0 & 0 & 0 & 0 & 0 & 0 & 0 \\
 -1 & 0 & 1 & 0 & 0 & 0 & 0 & 0 & 0 & 0 & 0 & 0 & 0 & 0 & 0 & 0 & 0 \\
 1 & -1 & -1 & 1 & 0 & 0 & 0 & 0 & 0 & 0 & 0 & 0 & 0 & 0 & 0 & 0 & 0 \\
 -1 & 0 & 0 & 0 & 1 & 0 & 0 & 0 & 0 & 0 & 0 & 0 & 0 & 0 & 0 & 0 & 0 \\
 1 & 0 & 0 & -1 & -1 & 1 & 0 & 0 & 0 & 0 & 0 & 0 & 0 & 0 & 0 & 0 & 0 \\
 1 & 0 & -1 & 0 & -1 & 0 & 1 & 0 & 0 & 0 & 0 & 0 & 0 & 0 & 0 & 0 & 0 \\
 -1 & 0 & 2 & -1 & 0 & 0 & -1 & 1 & 0 & 0 & 0 & 0 & 0 & 0 & 0 & 0 & 0 \\
 -1 & 0 & 0 & 0 & 1 & 0 & -1 & 0 & 1 & 0 & 0 & 0 & 0 & 0 & 0 & 0 & 0 \\
 1 & 0 & -1 & 1 & 0 & -1 & 1 & -1 & -1 & 1 & 0 & 0 & 0 & 0 & 0 & 0 & 0 \\
 -1 & 0 & 1 & 0 & 0 & 0 & -1 & 0 & 0 & 0 & 1 & 0 & 0 & 0 & 0 & 0 & 0 \\
 1 & 0 & -1 & 0 & 0 & 0 & 1 & 0 & 0 & -1 & -1 & 1 & 0 & 0 & 0 & 0 & 0 \\
 3 & 0 & -2 & 0 & -2 & 0 & 2 & 0 & -1 & 0 & -1 & 0 & 1 & 0 & 0 & 0 & 0 \\
 -3 & 0 & 1 & 0 & 3 & 0 & -1 & -1 & 1 & 0 & 0 & 0 & -1 & 1 & 0 & 0 & 0 \\
 -1 & 0 & 1 & 0 & 1 & 0 & -1 & 0 & 0 & 0 & 0 & 0 & -1 & 0 & 1 & 0 & 0 \\
 1 & 0 & 0 & 0 & -2 & 0 & 0 & 1 & 0 & 0 & 1 & -1 & 1 & -1 & -1 & 1 & 0 \\
 -3 & 0 & 2 & 0 & 2 & 0 & -2 & 0 & 1 & 0 & 0 & 0 & -1 & 0 & 0 & 0 & 1 \\
\end{array}
}
\end{equation*}
\end{center} 
\subcaption*{$\mu_{n,k}$ for $1 \leq n,k < 18$ with rows indexed by $n$ and columns by $k$.} 
\end{minipage} 

\end{figure} 

\begin{remark} 
There is not an apparent 
simple closed-form function for the sequence $\{\mu_{n,k}\}_{1 \leq k \leq n}$. 
These coefficients are defined recursively by  applying the rules 
\[
\begin{cases}
\mu_{n,1} = 1, & \text{ if $n = 1$ and $k = 1$; } \\ 
\sum\limits_{\substack{2 \leq j \leq n+1 \\ (n+1,j)=1}} \mu_{n+1-k,j} = -\chi_{1,k}(n+1), & 
     \text{ if $n \geq 2$ and $1 \leq k \leq n$. } \\ 
\mu_{n,k} = 0, & \text{ otherwise. }
\end{cases}
\]
On the other hand, we can readily see by construction that the 
sequence and its inverse satisfy 
\begin{align*} 
\sum_{\substack{d=1 \\ (d,n)=1}}^n \mu_{d,k} & = 0 \\ 
\sum_{\substack{d=1 \\ (d,n)=1}}^n \mu_{d,k}^{(-1)} & = \phi(n), 
\end{align*} 
where $\phi(n)$ is Euler's totient function. 
The first columns of the $\mu_{n,1}$ appear in the online integer sequences 
database (OEIS) as the entry \cite[\seqnum{A096433}]{OEIS}. 
\end{remark} 

\SubsubsectionGTThesisFormatted{Exact formulas for the factorization matrices} 

The next result is key to proving the exact formulas for the matrix sequences, 
$t_{n,k}$ and $t_{n,k}^{(-1)}$, and their expansions by the partition functions defined 
in the introduction. We prove the following result
first as a lemma which we will use in the proof of 
Theorem \ref{theorem_snk_snkinv_seq_formulas_v1} given below. 
The first several rows of the matrix sequence $t_{n,k}$ and its inverse 
implicit to the factorization theorem in \eqref{eqn_intro_two_sum_fact_types_def_v1} are tabulated in 
Figure \ref{figure_lseriesfact_s3nk_tables} for intuition on the 
formulas we prove in the next proposition and following theorem. 

\begin{lemma}[A convolution identity] 
\label{lemma_cvl_ident_rpints_indicatorfn} 
For all natural numbers $n \geq 2$ and $k \geq 1$ with $k \leq n$, we have the 
following expression for the principal Dirichlet character modulo $k$: 
\[ 
\sum_{j=1}^n t_{j,k} p(n-j) = \chi_{1,k}(n). 
\] 
Equivalently, we have that 
\begin{align} 
\label{eqn_Proof_cvl_ident_rpints_indicatorfn_EQ1}
t_{n,k} & = \sum_{i=0}^n (-1)^{\lceil \frac{i}{2} \rceil} \chi_{1,k}(n-G_i) \Iverson{n-G_i \geq k+1} \\ 
\notag
     & = 
     \sum_{b = \pm 1} \left[\sum_{i=0}^{\floor{\frac{\sqrt{24(n-k-1)+1}-b}{6}}} \
     (-1)^{\ceiling{\frac{i}{2}}} \chi_{1,k}\left(n - \frac{i(3i-b)}{2}\right)
     \right].  
\end{align} 
\end{lemma}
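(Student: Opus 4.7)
The plan is to extract the $k^{\text{th}}$ column of the matrix $t_{n,k}$ from the defining generating function identity \eqref{eqn_intro_two_sum_fact_types_def_v1} by specializing to a Kronecker delta input. Setting $f(j) := \Iverson{j = k}$ makes $T_f(x) = \chi_{1,k}(x) \Iverson{x \geq k}$, and the $f(1) q = \Iverson{k=1} q$ correction term on the right side of the factorization exactly absorbs the $x = 1$ contribution when $k = 1$ (since $\chi_{1,1}(1) = 1$), while for $k \geq 2$ the identity $\chi_{1,k}(k) = 0$ makes the series $\sum_{x \geq k} \chi_{1,k}(x) q^x$ and $\sum_{x \geq k+1} \chi_{1,k}(x) q^x$ coincide. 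Both cases yield uniformly, for $k \geq 1$,
\[
\sum_{n \geq 2} t_{n,k}\, q^n \;=\; (q;q)_{\infty} \sum_{x \geq k+1} \chi_{1,k}(x)\, q^x.
\]

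Next I would divide by $(q;q)_{\infty}$ and expand $(q;q)_{\infty}^{-1} = \sum_{m \geq 0} p(m) q^m$ as the ordinary generating function for the partition function; extracting the coefficient of $q^n$ gives $\sum_{j \geq 1} t_{j,k}\, p(n - j) = \chi_{1,k}(n) \Iverson{n \geq k+1}$. For $(n, k)$ in the hypothesis range of the lemma ($n \geq 2$ and $k \leq n$), the Iverson bracket is either automatically satisfied (when $n > k$), or the remaining case $n = k$ with $k \geq 2$ forces $\chi_{1,k}(n) = 0$ anyway, so in every valid case the convolution identity stated in the lemma is recovered.

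For the closed form \eqref{eqn_Proof_cvl_ident_rpints_indicatorfn_EQ1}, I would then apply Euler's pentagonal number theorem in the form $(q;q)_{\infty} = \sum_{i \geq 0} (-1)^{\lceil i/2 \rceil} q^{G_i}$ directly to the boxed column generating function identity above. Reading off the coefficient of $q^n$ on the right then gives immediately the first explicit formula $t_{n,k} = \sum_{i \geq 0} (-1)^{\lceil i/2 \rceil} \chi_{1,k}(n - G_i) \Iverson{n - G_i \geq k+1}$, and the upper cutoff $i \leq n$ is free since $G_i > n$ forces the Iverson bracket to vanish. The equivalent $b = \pm 1$ formulation is then a repackaging: split the enumeration of the interleaved pentagonal numbers into the two branches $i(3i - b)/2$ with $b \in \{-1,+1\}$, and invert the inequality $i(3i - b)/2 \leq n - k - 1$ by the quadratic formula to recover the summation bound $\lfloor (\sqrt{24(n-k-1)+1} - b)/6 \rfloor$ stated in the lemma.

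The only nontrivial step, and hence the main obstacle, is the uniform handling of the boundary case $k = 1$: one must carefully verify that the standalone $f(1) q$ piece in \eqref{eqn_intro_two_sum_fact_types_def_v1} exactly accounts for the $x = 1$ value of $T_{\delta_1}$, since this is precisely what lets the column generating function identity take the same form for $k = 1$ as for $k \geq 2$ and is also the source of the restriction $n - G_i \geq k+1$ (rather than $\geq k$) in the explicit formula. Once that boundary alignment is in place, the remainder of the argument is just formal power series manipulation combined with the pentagonal number theorem.
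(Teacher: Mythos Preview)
Your argument is correct and follows essentially the same approach as the paper: both extract the coefficient of $f(k)$ from the factorization \eqref{eqn_intro_two_sum_fact_types_def_v1} (which is equivalent to your specialization $f = \delta_k$) to obtain the convolution identity, and then invert via the pentagonal number theorem to get the explicit formula. Your treatment of the boundary term $f(1)q$ and the cases $k=1$ versus $k\geq 2$, as well as $n=k$, is in fact more explicit than the paper's own proof, which glosses over these index shifts.
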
 
\begin{proof} 
We begin by noticing that the right-hand-side expression in the statement of the lemma 
is equal to $\mu_{n,k}^{(-1)}$ by the construction of the sequence in 
Proposition \ref{prof_inversion_formula}. 
Next, we see that the factorization in 
\eqref{eqn_intro_two_sum_fact_types_def_v1} is equivalent to the 
expansion 
\begin{align}
\label{eqn_tnk_formula_proof_formula_v2}  
\sum_{d=1}^{n-1} f(d) \mu_{n,d}^{(-1)} & = 
     \sum_{j=1}^n \sum_{k=1}^{j} p(n-j) t_{j,k} f(k). 
\end{align} 
Since $\mu_{n,k}^{(-1)} = \Iverson{(n+1, k) = 1}$, we may take the coefficients of $f(k)$ on 
each side of \eqref{eqn_tnk_formula_proof_formula_v2} 
for each $1 \leq k < n$ to establish the result we have claimed in this lemma. 
The equivalent statement of the result follows by a generating function argument 
applied to the product that generates the left-hand-side Cauchy product in 
\eqref{eqn_Proof_cvl_ident_rpints_indicatorfn_EQ1}. 
\end{proof}

\begin{figure}[ht!]

\caption[The factorization matrices and their inverses for type I GCD sums]{
	 The factorization matrices, $t_{n,k}$ and $t_{n,k}^{(-1)}$, for 
         $1 \leq n,k < 14$ with rows indexed by $n$ and columns by $k$.} 
\label{figure_lseriesfact_s3nk_tables} 

\begin{minipage}{\linewidth} 
\begin{center} 
\small
\begin{equation*} 
\boxed{ 
\begin{array}{cccccccccccccc}
1 & \\
0 & 1 & \\
-1 & -1 & 1 & \\
-1 & 0 & 0 & 1 & \\
-1 & -1 & -2 & -1 & 1 & \\
0 & 0 & 0 & 0 & 0 & 1 & \\
0 & 0 & 0 & -1 & -1 & -1 & 1 & \\
1 & 0 & -1 & 0 & -1 & -1 & 0 & 1 & \\
1 & 1 & 1 & 0 & -2 & 0 & -1 & -1 & 1 & \\
1 & 0 & 1 & 0 & 1 & 1 & -1 & 0 & 0 & 1 & \\
1 & 1 & 0 & 1 & 1 & 0 & -1 & -1 & -2 & -1 & 1 & \\
1 & 0 & 1 & 0 & 1 & 0 & 0 & 0 & 0 & 0 & 0 & 1 & \\
0 & 1 & 1 & 1 & 1 & 0 & -1 & 0 & 0 & -1 & -1 & -1 & 1 & \\
0 & -1 & 0 & 0 & -1 & -1 & 2 & 0 & -1 & -1 & -1 & -1 & 0 & 1 \\
\end{array}
}
\end{equation*}
\end{center} 
\subcaption*{\rm{(i)} $t_{n,k}$} 
\end{minipage} 

\begin{minipage}{\linewidth} 
\begin{center} 
\small
\begin{equation*} 
\boxed{ 
\begin{array}{ccccccccccccc}
 1 & \\
 0 & 1 & \\
 1 & 1 & 1 & \\
 1 & 0 & 0 & 1 & \\
 4 & 3 & 2 & 1 & 1 & \\
 0 & 0 & 0 & 0 & 0 & 1 & \\
 5 & 3 & 2 & 2 & 1 & 1 & 1 & \\
 4 & 4 & 3 & 1 & 1 & 1 & 0 & 1 & \\
 15 & 11 & 8 & 5 & 4 & 2 & 1 & 1 & 1 & \\
 -1 & -1 & -1 & 1 & 0 & 0 & 1 & 0 & 0 & 1 & \\
 32 & 24 & 18 & 12 & 9 & 6 & 4 & 3 & 2 & 1 & 1 & \\
 -6 & -4 & -3 & -1 & -1 & 0 & 0 & 0 & 0 & 0 & 0 & 1 & \\
 24 & 17 & 13 & 12 & 8 & 7 & 6 & 3 & 2 & 2 & 1 & 1 & 1 \\
\end{array}
}
\end{equation*}
\end{center} 
\subcaption*{\rm{(ii)} $t_{n,k}^{(-1)}$} 
\end{minipage} 

\end{figure} 

\begin{proof}[Theorem \ref{theorem_snk_snkinv_seq_formulas_v1}: Proof of (i)]  
It is plain to see by the considerations in our construction of the factorization theorem that 
both matrix sequences are lower triangular. Thus, we need only consider the cases where 
$n \leq k$. By a convolution of generating functions, the identity in 
Lemma \ref{lemma_cvl_ident_rpints_indicatorfn} shows that 
\[ 
t_{n,k} = \sum_{j=k}^n [q^{n-j}] (q; q)_{\infty} \Iverson{(j+1, k) = 1}. 
\] 
Then shifting the index of summation in the previous equation implies (i). 
\end{proof} 
\begin{proof}[Theorem \ref{theorem_snk_snkinv_seq_formulas_v1}: Proof of (ii)] 
To prove (ii), we consider the factorization theorem when $f(n) := t_{n,r}^{(-1)}$ for some 
fixed $r \geq 1$. We then expand \eqref{eqn_intro_two_sum_fact_types_def_v1} as 
\begin{align*} 
\sum_{\substack{d=1 \\ (d,n)=1}}^n 
     t_{d,r}^{(-1)} & = [q^n] \frac{1}{(q; q)_{\infty}} \times \sum_{n \geq 1} 
     \left(\sum_{k=1}^{n-1} t_{n,k} t_{k,r}^{(-1)}\right) q^n \\ 
     & = 
     \sum_{j=1}^n p(n-j) \left(\sum_{k=1}^{j-1} t_{j,k} t_{k,r}^{(-1)}\right) \\ 
     & = 
     \sum_{j=1}^n p(n-j) \Iverson{r=j-1} \\ 
     & = 
     p(n-1-r). 
\end{align*} 
Hence, we may perform the inversion by Proposition \ref{prof_inversion_formula} to the 
left-hand-side sum in the previous equations to obtain our stated result. 
\end{proof} 

\begin{remark}[Relations to the Lambert series factorization theorems]
We notice that by inclusion-exclusion applied to the right-hand-side of 
\eqref{eqn_intro_two_sum_fact_types_def_v1}, 
we may write our matrices $t_{n,k}$ in terms of the triangular 
sequence expanded as differences of restricted partitions in the 
ordinary Lambert series factorizations involving the sequence 
$s_{n,k} := [q^{n-k}] \frac{(q; q)_{\infty}}{1-q^k}$. 
For example, when $k := 12$ we see that 
\[
\sum_{n \geq 12} \Iverson{(n, 12) = 1} q^n = \frac{q^{12}}{1-q} - \frac{q^{12}}{1-q^2} - 
     \frac{q^{12}}{1-q^3} + \frac{q^{12}}{1-q^6}. 
\]
In general, when $k > 1$ we can expand 
\begin{equation}
\label{eqn_GCDnk_GFSeries_FiniteMobiusExp_v1} 
\sum_{n \geq k} \Iverson{(n, k) = 1} q^n = \sum_{d|k} \frac{q^k \mu(d)}{1-q^d}. 
\end{equation}
Thus, we can relate the triangles $t_{n,k}$ in this article to the 
$s_{n,k} = [q^n] (q; q)_{\infty} q^k (1-q^k)^{-1}$ for $n \geq k \geq 1$ employed in the 
expansions from the references as follows: 
\[
t_{n,k} = \begin{cases} 
     s_{n,k}, & k = 1; \\ 
     \sum\limits_{d|k} \mu(d) s_{n+1-k+d,d}, & k > 1. 
     \end{cases} 
\]
\end{remark} 

\SubsubsectionGTThesisFormatted{Completing the proofs of the main applications} 

We remark that as in the Lambert series factorization results from the references \cite{MERCA-LSFACTTHM}, 
we have three primary types of expansion identities that we will consider for 
any fixed choice of the arithmetic function $f$ in the forms of 
\begin{subequations} 
\label{eqn_factthm_exp_idents_v1} 
\begin{align} 
\sum_{\substack{d=1 \\ (d,n)=1}}^n 
     f(d) & = \sum_{j=1}^n \sum_{k=1}^{j-1} p(n-j) t_{j-1,k} f(k) + 
     f(1) \Iverson{n = 1} \\ 
\sum_{k=1}^{n-1} t_{n-1,k} f(k) & = \sum_{j=1}^n \left(\sum_{\substack{d=1 \\ (d,j)=1}}^j 
      f(d) \times [q^{n-j}] (q; q)_{\infty}\right) - f(1) \times [q^{n-1}](q; q)_{\infty}, 
\end{align} 
and the 
corresponding inverted formula providing that 
\begin{align} 
\label{eqn_factthm_exp_idents_v1_eq.III} 
f(n) & = \sum_{k=1}^{n} t_{n,k}^{(-1)}\left( 
     \sum_{\substack{j \geq 0 \\ k+1-G_j > 0}} (-1)^{\ceiling{\frac{j}{2}}} 
     T_f(k+1-G_j) - [q^k] (q; q)_{\infty} f(1)\right). 
\end{align} 
\end{subequations}
Now the applications cited in the introduction follow immediately and require no 
further proof other than to cite these results for the respective special cases of $f$. 

\begin{example}[Sum-of-divisors functions] 
For any $\alpha \in \mathbb{C}$, the expansion identity given in 
\eqref{eqn_factthm_exp_idents_v1_eq.III} also implies the following new formula for the 
generalized sum-of-divisors functions, $\sigma_{\alpha}(n) = \sum_{d|n} d^{\alpha}$: 
\[
\sigma_{\alpha}(n) = \sum_{d|n} \sum_{k=1}^{d} t_{d,k}^{(-1)}\left( 
     \sum_{\substack{j \geq 0 \\ k+1-G_j > 0}} (-1)^{\ceiling{\frac{j}{2}}} 
     \phi_{\alpha+1}(k+1-G_j) - [q^k] (q; q)_{\infty}\right). 
\] 
In particular, when $\alpha := 0$ we obtain the next identity for the divisor function 
$d(n) \equiv \sigma_0(n)$ expanded in terms of Euler's totient function, $\phi(n)$. 
\[
d(n) = \sum_{d|n} \sum_{k=1}^{d} t_{d,k}^{(-1)}\left( 
     \sum_{\substack{j \geq 0 \\ k+1-G_j > 0}} (-1)^{\ceiling{\frac{j}{2}}} 
     \phi(k+1-G_j) - [q^k] (q; q)_{\infty}\right). 
\] 
\end{example} 

\begin{example}[Menon's identity and related arithmetical sums] 
\label{example_Menons_Identity_Toth} 
We can use our new results proved in this section to expand new identities for 
known closed-forms of special arithmetic sums. For example, 
\emph{Menon's identity} \cite{TOTH} states that 
\[
\phi(n) d(n) = \sum_{\substack{1 \leq k \leq n \\ (k,n)=1}} \gcd(k-1, n), 
\]
where $\phi(n)$ is Euler's totient function and $d(n) = \sigma_0(n)$ is the 
divisor function. We can then expand the right-hand-side of Menon's identity as 
follows: 
\[
\phi(n) d(n) = \sum_{j=0}^n \sum_{k=1}^{j-1} \sum_{i=0}^j 
     p(n-j) (-1)^{\ceiling{\frac{i}{2}}} \chi_{1,k}(j-k-G_i) \Iverson{j-k-G_i \geq 1} 
     \gcd(k-1, n). 
\] 
As another application, we show a closely related identity considered by T\'oth in \cite{TOTH}. 
T\'oth's identity states that (\cf \cite{GCD-SUMS}) for an arithmetic function $f$ we have
\[
\sum_{\substack{1 \leq k \leq n \\ (k,n)=1}} f\left(\gcd(k-1, n)\right) = 
     \phi(n) \times \sum_{d|n} \frac{(\mu \ast f)(d)}{\phi(d)}. 
\] 
We can use our new formulas to write a gcd-related recurrence relation for $f$ 
in two steps. First, we observe that the right-hand-side divisor sum in the 
previous equation is expanded by 
\begin{align*} 
\sum_{d|n} \frac{(\mu \ast f)(d)}{\phi(d)} & = 
     \sum_{\substack{0 \leq j \leq n \\ 0 \leq k \leq j-2 \\ 0 \leq i \leq j}}
     \frac{(-1)^{\ceiling{\frac{i}{2}}} p(n-j)}{\phi(n)} 
     \chi_{1,k}(j-k-2-G_i) f(\gcd(k, n)) \\ 
     & \phantom{=\sum\qquad\ } + f(1) \Iverson{n = 1}. 
\end{align*} 
Next, by M\"obius inversion and noting that the Dirichlet inverse of $\mu(n)$ is 
$\mu \ast \mathds{1} = \varepsilon$, where $\varepsilon(n) = \delta_{n,1}$ is the multiplicative 
identity with respect to Dirichlet convolution, we can express $f(n)$ as follows: 
\begin{align*} 
f(n) & = \sum_{d|n} \sum_{r|d} \sum_{j=0}^r \sum_{k=1}^{j-1} \sum_{i=0}^j \Biggl[
     p(r-j) (-1)^{\ceiling{\frac{i}{2}}} \chi_{1,k}(j-k-G_i) \Iverson{j-k-G_i \geq 1} \times \\ 
     & \phantom{=\sum_{d|n} \sum_{r|d} \sum_{j=0}^r \sum_{k=1}^{j-1} \sum_{i=0}^j\quad\ } 
     \times 
     f(\gcd(k-1, r)) \frac{\phi(d)}{\phi(r)} \mu\left(\frac{d}{r}\right)\Biggr] + 
     \sum_{d|n} f(1) \phi(d) \mu(d). 
\end{align*} 
We can expand the last right-hand-side term by noting that for $f$ multiplicative 
\cite[\S 27]{NISTHB}
\[
\sum_{d|n} f(d) \mu(d) = \prod_{p|n} (1-f(p)), n \geq 1. 
\]
Therefore, the last term satisfies 
\[
\sum_{d|n} \phi(d) \mu(d) = \prod_{p|n} (2-p), n \geq 1. 
\]
\end{example} 

\SubsectionGTThesisFormatted{Factorization theorems for Anderson-Apostol sums (type II sums)}

Recall the notation from Definition \ref{def_TypeITypeIISums_v1}. 
The sums $L_{f,g,k}(n)$ are sometimes refered to as \emph{Anderson-Apostol sums} named 
after the authors who first defined these sums 
(\cf \cite[\S 8.3]{APOSTOLANUMT} \cite{APOSTOL-APGRS}). 
Other variants and generalizations of these sums are studied in \cite{IKEDA,KIUCHI-AVGS}. 
There are many number theoretic applications of the periodic sums factorized in this form. 
For example, the famous expansion of Ramanujan's sum $c_q(n)$ is expressed as the 
following right-hand-side divisor sum \cite[\S IX]{RAMANUJAN}: 
\[
c_q(n) = \sum_{\substack{d=1 \\ (d, n)=1}}^n 
     \omega_q^{dn} = \sum_{d|(q,n)} d \cdot \mu\left(\frac{q}{d}\right). 
\] 
The applications of our new results to Ramanujan's sum include the 
expansions 
\begin{align*} 
c_n(x) & = [w^{x}]\left(\sum_{k=1}^n u_{n,k}^{(-1)}(\mu, w) \times \sum_{j \geq 0} 
     (-1)^{\lceil \frac{j}{2} \rceil} \mu(k-G_j)\right) \\ 
     & = 
     \sum_{k=1}^n \left(\sum_{d|(n,x)} d \cdot p\left(\frac{n}{d}-k\right)\right) \times \sum_{j \geq 0} 
     (-1)^{\lceil \frac{j}{2} \rceil} \mu(k-G_j), 
\end{align*} 
where the inverse matrices $u_{n,k}^{(-1)}(\mu, w)$ 
are expanded according to Proposition \ref{prop_unk_inverse_matrix}. 
We then immediately have the following new results for the next special expansions of the 
generalized sum-of-divisors functions when $\Re(s) > 0$: 
\begin{align*} 
\sigma_s(n) & = n^s \zeta(s+1) \times \sum_{i=1}^{\infty} \sum_{k=1}^i \left(\sum_{d|(n,i)} 
     d \cdot p\left(\frac{i}{d}-k\right)\right) \sum_{j \geq 0} 
     \frac{(-1)^{\lceil \frac{j}{2} \rceil} \mu(k-G_j)}{i^{s+1}}. 
\end{align*}

\SubsubsectionGTThesisFormatted{Formulas for the inverse matrices} 
\label{subSection_typeII_inv_matrices} 

It happens that in the case of the series expansions we defined in 
\eqref{eqn_intro_two_sum_fact_types_def_v2}, the 
corresponding terms of the inverse matrices $u_{n,k}^{(-1)}(f, w)$ 
satisfy considerably simpler formulas that the ordinary matrix entries themselves. 
We first prove a partition-related explicit formula for these inverse matrices as 
Proposition \ref{prop_unk_inverse_matrix} and 
then discuss several applications of this result. 

\begin{prop}[Formulas for the inverse matrices of type II sums] 
\label{prop_unk_inverse_matrix} 
For all $n \geq 1$ and $1 \leq k \leq n$, any fixed arithmetic function $f$, 
and $w \in \mathbb{C} \setminus \{0\}$, we have that 
\[
u_{n,k}^{(-1)}(f, w) = \sum_{m=1}^n \left(\sum_{d|(m,n)} f(d) p\left(\frac{n}{d}-k\right)\right) w^m. 
\]
\end{prop}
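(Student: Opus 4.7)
The plan is to verify the formula by substituting the proposed expression into the inverse relation \eqref{eqn_intro_two_sum_fact_types_def_v4_invs} and showing it reduces to a basic partition generating function identity, rather than trying to invert the matrix $u_{n,k}(f,w)$ directly (which would require an explicit closed form for the forward matrix that is harder to come by). First, I would introduce the abbreviations $G(q) := \sum_{n \geq 1} g(n) q^n$ and $b_n := [q^n]\bigl((q;q)_\infty G(q)\bigr)$, so that the factorization \eqref{eqn_intro_two_sum_fact_types_def_v2} is equivalent, after clearing the $1/(q;q)_\infty$ prefactor and extracting $[q^n]$, to the lower-triangular system
\[
b_n = \sum_{k=1}^n u_{n,k}(f,w) \left( \sum_{m=1}^k L_{f,g,m}(k)\, w^m \right), \quad n \geq 2.
\]
Because the diagonal entries $u_{n,n}(f,w)$ are nonzero for generic $w \in \mathbb{C}\setminus\{0\}$, this system is invertible, and uniqueness of the inverse reduces the theorem to checking that the proposed matrix
\[
\tilde{u}_{n,k} := \sum_{m=1}^n \left( \sum_{d|(m,n)} f(d)\, p\!\left(\tfrac{n}{d}-k\right) \right) w^m
\]
satisfies $\sum_{k=1}^n \tilde{u}_{n,k}\, b_k = \sum_{m=1}^n L_{f,g,m}(n)\, w^m$ for every $n \geq 1$.

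The core computation is to swap orders of summation on the left-hand side and isolate the inner sum against $b_k$:
\[
\sum_{k=1}^n \tilde{u}_{n,k}\, b_k = \sum_{m=1}^n w^m \sum_{d|(m,n)} f(d) \sum_{k=1}^{n/d} p\!\left(\tfrac{n}{d}-k\right) b_k,
\]
where the upper limit of the innermost sum can be cut to $n/d$ because $p$ vanishes on negative arguments. Here I would invoke the standard identity $1/(q;q)_\infty = \sum_{n \geq 0} p(n) q^n$ to recognize the innermost Cauchy-type sum as
\[
\sum_{k=1}^{n/d} p\!\left(\tfrac{n}{d}-k\right) b_k = [q^{n/d}]\bigl(\tfrac{1}{(q;q)_\infty} \cdot (q;q)_\infty G(q)\bigr) = g\!\left(\tfrac{n}{d}\right).
\]

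Substituting this back yields $\sum_{m=1}^n w^m \sum_{d|(m,n)} f(d)\, g(n/d) = \sum_{m=1}^n L_{f,g,m}(n)\, w^m$ directly from the definition of the type II sum, completing the verification. Since the matrix $u_{n,k}(f,w)$ is lower triangular with nonzero diagonal, its inverse is unique, and therefore $\tilde{u}_{n,k} = u_{n,k}^{(-1)}(f,w)$. The main obstacle here is essentially notational rather than conceptual: the argument requires selecting the order of summation that exposes the partition-function convolution, and keeping track of which divisor goes with which index ($d \mid m$ versus $d \mid n$) so that the sum over $m$ factors cleanly. No delicate analysis or new identities are needed beyond the generating function for $p(n)$ and the definition of $L_{f,g,m}(n)$.
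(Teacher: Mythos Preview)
Your proof is correct and uses essentially the same ingredients as the paper's: the factorization relation, the partition convolution identity $\sum_k p(N-k) b_k = g(N)$, and the definition of $L_{f,g,m}(n)$. The only difference is one of direction: the paper fixes $r$, sets $g(n) := p(n-r)$, and uses the orthogonality $\sum_k u_{j,k} u_{k,r}^{(-1)} = \Iverson{j=r}$ inside the factorization to read off $u_{n,r}^{(-1)} = \sum_m L_{f,p(\cdot - r),m}(n)\,w^m$ directly, whereas you take the formula as given and verify it against the inverse relation for arbitrary $g$. Both routes collapse to the same Cauchy-product step, so there is no substantive gap or novelty in either direction.
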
 
\begin{proof}[Proof of Proposition \ref{prop_unk_inverse_matrix}]
Let $1 \leq r \leq n$ and for some suitably chosen arithmetic function $g$ define 
\[
\tag{i} 
u_{n,r}^{(-1)}(f, w) := \sum_{m=1}^n L_{f,g,m}(n) w^m. 
\]
By directly expanding the series on the right-hand-side of 
\eqref{eqn_intro_two_sum_fact_types_def_v2}, we obtain that 
\begin{align*} 
g(n) & = \sum_{j=0}^n \left( 
     \sum_{k=1}^j u_{j,k}(f, w) u_{k,r}^{(-1)}(f, w)\right) p(n-j) \\ 
     & = \sum_{j=0}^n p(n-j) \Iverson{j = r} 
     = p(n-r). 
\end{align*} 
Hence the choice of the function $g$ which satisfies (i) above is given by 
$g(n) := p(n-r)$. The claimed expansion of the inverse matrices then follows. 
\end{proof}

\begin{prop}
\label{prop_exact_exp_of_Lfgmn} 
We have the following identity that holds for any arithmetic functions $f,g$: 
\begin{equation} 
\label{eqn_gen_Lfgmn_formula} 
L_{f,g,m}(n) = \sum_{k=1}^n \sum_{d|(m,n)} f(d) p\left(\frac{n}{d}-k\right) \times 
     \sum_{\substack{j \geq 0 \\ k > G_j}} (-1)^{\ceiling{\frac{j}{2}}} 
     g\left(k-G_j\right).
\end{equation} 
\end{prop}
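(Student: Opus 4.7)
The plan is to derive the identity directly from the inversion form of the factorization theorem stated in equation \eqref{eqn_intro_two_sum_fact_types_def_v4_invs}, now that we have the explicit closed form for the inverse matrix entries $u_{n,k}^{(-1)}(f, w)$ from Proposition \ref{prop_unk_inverse_matrix}. In other words, the work has essentially been done in the previous proposition; this one is a corollary obtained by unwinding the coefficient extractions on both sides of the inversion identity.

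First I would simplify the coefficient extraction appearing on the right-hand side of \eqref{eqn_intro_two_sum_fact_types_def_v4_invs}, namely
\[
[q^k]\left((q; q)_{\infty} \times \sum_{i \geq 1} g(i) q^i\right),
\]
by invoking Euler's pentagonal number theorem $(q;q)_{\infty} = \sum_{j \geq 0} (-1)^{\lceil j/2 \rceil} q^{G_j}$ and performing the Cauchy product with the OGF of $g$. This produces exactly the inner finite alternating sum $\sum_{j \geq 0,\, G_j < k} (-1)^{\lceil j/2 \rceil} g(k - G_j)$ that appears in the statement of the proposition.

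Next I would substitute the explicit formula for $u_{n,k}^{(-1)}(f, w)$ from Proposition \ref{prop_unk_inverse_matrix}, which is a polynomial in $w$ of degree at most $n$, into the right-hand side of \eqref{eqn_intro_two_sum_fact_types_def_v4_invs}. This yields
\[
\sum_{m=1}^n L_{f,g,m}(n) w^m = \sum_{k=1}^n \left(\sum_{m=1}^n \sum_{d|(m,n)} f(d)\, p\!\left(\tfrac{n}{d} - k\right) w^m\right)\left(\sum_{\substack{j \geq 0 \\ G_j < k}} (-1)^{\lceil j/2 \rceil} g(k - G_j)\right),
\]
where I have replaced the outer series index variable $k$ in \eqref{eqn_intro_two_sum_fact_types_def_v4_invs} by $n$ to match the index usage in the statement of the proposition.

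Finally, I would extract the coefficient $[w^m]$ on both sides. The left-hand side gives $L_{f,g,m}(n)$ directly, while on the right the factor $\sum_{j} (-1)^{\lceil j/2 \rceil} g(k - G_j)$ does not depend on $w$ and so passes through the extraction, and the coefficient of $w^m$ in the polynomial $u_{n,k}^{(-1)}(f, w)$ is precisely $\sum_{d|(m,n)} f(d) p(n/d - k)$. Combining these observations produces \eqref{eqn_gen_Lfgmn_formula} exactly as stated. The only mildly delicate point is ensuring the range of the index $j$ is correctly transferred from the expansion of $(q;q)_\infty$ (which requires $G_j \leq k$ and $k - G_j \geq 1$, i.e., $G_j < k$ since the values $g$ takes are defined only for positive integer arguments); there is no substantive obstacle beyond this bookkeeping.
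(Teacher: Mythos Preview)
Your proposal is correct and follows essentially the same route as the paper: both arguments start from the inversion identity \eqref{eqn_intro_two_sum_fact_types_def_v4_invs}, substitute the explicit formula for $u_{n,k}^{(-1)}(f,w)$ from Proposition~\ref{prop_unk_inverse_matrix}, expand $(q;q)_\infty$ via the pentagonal number theorem, and then compare coefficients of $w^m$ on both sides. Your write-up is in fact a bit more careful about the bookkeeping on the index $j$ than the paper's own proof.
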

\begin{proof}[Proof of Proposition \ref{prop_exact_exp_of_Lfgmn}]
Since the coefficients on the left-hand-side of the next equation correspond to a 
right-hand-side matrix product as 
\[
[q^n] (q; q)_{\infty} \times \sum_{m \geq 1} g(m) q^m = 
     \sum_{k=1}^n u_{n,k}(f, w) \times \sum_{m=1}^k L_{f,g,m}(k) w^m, 
\] 
we can invert the matrix product on the right to obtain that 
\[
\sum_{m=1}^k L_{f,g,m}(k) w^m = \sum_{k=1}^{n} \left(\sum_{m=1}^n \sum_{d|(n,m)} 
     f(d) p\left(\frac{n}{d}-k\right) w^m\right) \times 
     [q^k] (q; q)_{\infty} \times \sum_{m \geq 1} g(m), 
\]
so that by comparing coefficients of $w^m$ for $1 \leq m \leq n$, we obtain 
\eqref{eqn_gen_Lfgmn_formula}. 
\end{proof} 

\begin{cor}[A new formula to express Ramanujan sums]
\label{cor_RamSumNewFormula} 
For any natural numbers $x,m \geq 1$, we have that 
\[
c_x(m) = \sum_{k=1}^x \sum_{d|(m,x)} d \cdot p\left(\frac{x}{d}-k\right) \times 
     \sum_{\substack{j \geq 0 \\ k > G_j}} (-1)^{\ceiling{\frac{j}{2}}} 
     \mu\left(k-G_j\right). 
\] 
\end{cor}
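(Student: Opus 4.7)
The plan is to recognize Ramanujan's sum $c_x(m)$ as a special case of the type II Anderson-Apostol sum $L_{f,g,k}(n)$ and then invoke Proposition \ref{prop_exact_exp_of_Lfgmn} as a black box. First I would recall the classical divisor-sum representation of Ramanujan's sum that is already quoted in the excerpt, namely
\[
c_x(m) = \sum_{d \mid (m,x)} d \cdot \mu\!\left(\frac{x}{d}\right),
\]
and compare it to the definition $L_{f,g,k}(n) := \sum_{d \mid (k,n)} f(d) g(n/d)$ from Definition \ref{def_TypeITypeIISums_v1}. The comparison dictates the choices $f := \operatorname{Id}_1$ (so that $f(d) = d$) and $g := \mu$, with the parameter correspondence $(k, n) \mapsto (m, x)$. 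This yields the compact identification
\[
c_x(m) = L_{\operatorname{Id}_1,\, \mu,\, m}(x).
\]

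Next, I would substitute $f = \operatorname{Id}_1$ and $g = \mu$ directly into the right-hand side of the formula in Proposition \ref{prop_exact_exp_of_Lfgmn}, and relabel the free variable $n$ there as $x$ in order to match the standard notation for Ramanujan sums. The factor $f(d)$ in that proposition becomes $d$, and the factor $g(k - G_j)$ becomes $\mu(k - G_j)$. After these purely cosmetic substitutions one reads off exactly
\[
c_x(m) = \sum_{k=1}^{x} \sum_{d \mid (m,x)} d \cdot p\!\left(\frac{x}{d} - k\right) \sum_{\substack{j \geq 0 \\ k > G_j}} (-1)^{\lceil j/2 \rceil} \mu(k - G_j),
\]
which is the stated identity.

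There is no genuine obstacle here: the corollary is a direct specialization, and the real analytical content has already been absorbed into Proposition \ref{prop_exact_exp_of_Lfgmn} (which in turn rests on the partition-theoretic inverse matrix formula of Proposition \ref{prop_unk_inverse_matrix}). The only sanity check worth performing is to verify that the convergence/indexing conventions match: both sums are finite because the inner pentagonal-index sum is cut off at $k > G_j$ and the outer sum is truncated at $k \leq x$, while the Möbius values $\mu(k - G_j)$ make sense for $k - G_j \geq 1$. So the proof amounts to a one-line application of the proposition after the correct identification of $f$ and $g$.
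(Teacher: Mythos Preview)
Your proposal is correct and matches the paper's own proof essentially verbatim: the paper simply states that the Ramanujan sums correspond to the special case of Proposition~\ref{prop_exact_exp_of_Lfgmn} with $f(n):=n$ and $g(n):=\mu(n)$. Your identification $c_x(m)=L_{\operatorname{Id}_1,\mu,m}(x)$ and the subsequent substitution are exactly what is intended.
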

\begin{proof}[Proof of Corollary \ref{cor_RamSumNewFormula}]
The Ramanujan sums correspond to the special case of 
Proposition \ref{prop_exact_exp_of_Lfgmn} 
where $f(n) := n$ is the identity function and $g(n) := \mu(n)$ is the 
M\"obius function. 
\end{proof} 

\begin{remark} 
We define the following shorthand notation for fixed arithmetic functions $f,g$, 
integers $n \geq 1$ and $w \in \mathbb{C} \setminus \{0\}$: 
\begin{equation*} 
\widehat{L}_{f,g}(n; w) := \sum_{m=1}^{n} L_{f,g,m}(n) w^m. 
\end{equation*}
In this notation we have that $u_{n,k}^{(-1)}(f, w) = \widehat{L}_{f,g}(n; w)$ when 
$g(n) := p(n-k)$. 
Moreover, if we denote by $T_n(x)$ the polynomial 
$T_n(x) := 1+x+x^2+\cdots+x^{n-1} = \frac{1-x^n}{1-x}$, 
then we have expansions of these sums as convolved ordinary divisor sums by 
polynomial terms of the form 
\begin{align}
\label{eqn_lfgnw_sum_divsum_exp_v1} 
\widehat{L}_{f,g}(n; w) & = \sum_{d|n} w^d f(d) T_{\frac{n}{d}}(w^d) 
     g\left(\frac{n}{d}\right) \\ 
\notag 
     & = (w^n-1) \times \sum_{d|n} \frac{w^d}{w^d-1} f(d) g\left(\frac{n}{d}\right). 
\end{align} 
The Dirichlet inverse of this divisor sums is also not difficult to express, though 
we will not give its formula here. 
These sums lead to the expressions for the 
ordinary matrix entries $u_{n,k}(f, w)$ given by the next corollary. 
\end{remark} 

\begin{cor}[A formula for the ordinary matrix entries] 
\label{cor_unkfw_ord_matrix_formula_v1} 
To distinguish notation, let $$\widehat{P}_{f,k}(n; w) := \widehat{L}_{f(n),p(n-k)}(n; w),$$ which is an 
immediate shorthand for the matrix inverse terms $u_{n,k}^{(-1)}(f, w)$ that we will precisely 
enumerate below. 
For $n \geq 1$ and $1 \leq k < n$, we have the following formula: 
\begin{align*} 
 & u_{n,k}(f, w) = -\frac{(1-w)^2}{w^2 (1-w^n)(1-w^k) f(1)^2}\Biggl( 
     \widehat{P}_{f,k}(n; w) \\ 
     & \phantom{=\ } + \sum_{m=1}^{n-k-1} 
     \left(\frac{w-1}{w f(1)}\right)^{m} \left[ 
     \sum_{k \leq i_1 < \cdots < i_m < n} \frac{\widehat{P}_{f,k}(i_1; w) 
     \widehat{P}_{f,i_1}(i_2; w) \times\cdots\times 
     \widehat{P}_{f,i_m}(n; w)}{ 
     (1-w^{i_1}) \times\cdots\times (1-w^{i_m})} 
     \right]\Biggr) 
\end{align*} 
When $k = n$, we have that 
$$u_{n,n}(f, w) = \frac{1-w}{w(1-w^n) f(1)}.$$ 
\end{cor}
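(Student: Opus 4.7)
The plan is to compute the diagonal entry $u_{n,n}(f,w)$ directly and to obtain the off-diagonal entries by Neumann series inversion of the lower triangular matrix $\left(u_{n,k}^{(-1)}(f,w)\right)_{1\leq k\leq n}$ whose terms are given by Proposition \ref{prop_unk_inverse_matrix}.

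First I would treat the diagonal case $k=n$. By the divisor sum expansion recorded in \eqref{eqn_lfgnw_sum_divsum_exp_v1}, the shorthand $\widehat{P}_{f,k}(n;w)$ satisfies
\[
\widehat{P}_{f,k}(n;w) \;=\; (w^n-1)\sum_{d|n}\frac{w^d}{w^d-1}\,f(d)\,p\!\left(\frac{n}{d}-k\right).
\]
When $k=n$ only the divisor $d=1$ contributes, since $p(n/d-n)=0$ unless $d=1$, and then $p(0)=1$. A short computation then gives $u_{n,n}^{(-1)}(f,w)=\widehat{P}_{f,n}(n;w) = w(1-w^n)f(1)/(1-w)$, and hence $u_{n,n}(f,w)=(1-w)/\bigl(w(1-w^n)f(1)\bigr)$, which is the stated closed form.

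Next I would handle the case $1\leq k<n$. Write the inverse matrix in the form $U^{(-1)}=D+N$, where $D$ is the diagonal matrix with entries $D_{jj}=u_{j,j}^{(-1)}(f,w)=w(1-w^j)f(1)/(1-w)$ and $N$ is the strictly lower triangular matrix with entries $N_{i,j}=\widehat{P}_{f,j}(i;w)$ for $i>j$. Because $ND^{-1}$ is strictly lower triangular, $(ND^{-1})^m=0$ whenever $m$ exceeds the matrix size, so the geometric series
\[
U \;=\; \bigl(D+N\bigr)^{-1} \;=\; \sum_{m\geq 0}(-1)^m\,D^{-1}\bigl(ND^{-1}\bigr)^{m}
\]
terminates and is valid formally. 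Extracting the $(n,k)$ entry produces a finite sum
\[
u_{n,k}(f,w)\;=\;\sum_{m\geq 1}\frac{(-1)^m}{D_{nn}D_{kk}}\sum_{k<j_1<\cdots<j_{m-1}<n}\frac{N_{n,j_{m-1}}N_{j_{m-1},j_{m-2}}\cdots N_{j_1,k}}{D_{j_{m-1},j_{m-1}}\cdots D_{j_1,j_1}}.
\]
The prefactor $1/(D_{nn}D_{kk})$ equals $(1-w)^2/\bigl(w^2(1-w^n)(1-w^k)f(1)^2\bigr)$, matching the outer scaling in the corollary, and the $m=1$ term isolates the ``bare'' summand $\widehat{P}_{f,k}(n;w)$.

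Finally I would reindex and collect signs. Substituting $N_{i,j}=\widehat{P}_{f,j}(i;w)$ and $D_{j_r,j_r}=w(1-w^{j_r})f(1)/(1-w)$ and relabeling $i_r=j_r$ with $m'=m-1$, the inner denominator contributes a factor $\bigl((1-w)/(w f(1))\bigr)^{m'}$ that combines with $(-1)^{m'+1}$ to yield $-\bigl((w-1)/(w f(1))\bigr)^{m'}$, which is exactly the weight appearing in the corollary's sum; the upper limit $m'\leq n-k-1$ comes from requiring that $k<i_1<\cdots<i_{m'}<n$ be realizable. The main obstacle in this argument is the bookkeeping of signs, exponents of $(1-w)$, and the re-indexing of the nested summations; everything else is a direct consequence of Proposition \ref{prop_unk_inverse_matrix}, the divisor sum identity in the preceding remark, and the terminating Neumann inversion for lower triangular matrices.
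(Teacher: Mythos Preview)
Your proposal is correct and takes essentially the same approach as the paper: the paper states a general closed-form path-sum formula for the inverse of a lower triangular matrix (equation \eqref{eqn_lt_invmatrix_formula_v1}) and applies it with $a_{n,k}:=u_{n,k}^{(-1)}(f,w)$, remarking that the formula itself follows by induction, while you derive that same formula via the terminating Neumann series $(D+N)^{-1}=\sum_{m\geq 0}(-1)^m D^{-1}(ND^{-1})^m$. The two presentations are equivalent---your geometric-series expansion is precisely what unrolls the recursion the paper alludes to---so the substance of the argument, including the diagonal computation from \eqref{eqn_lfgnw_sum_divsum_exp_v1} and the chain-sum structure for $k<n$, matches.
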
 
\begin{proof}
This follows inductively from the inversion relation between the coefficients of 
a matrix and its inverse. For any $n \times n$ invertible lower triangular 
matrix $(a_{i,j})_{1 \leq i,j \leq n}$, we can express a 
non-recursive formula for the inverse matrix entries as follows: 
\begin{align} 
\label{eqn_lt_invmatrix_formula_v1} 
a_{n,k}^{(-1)} & = \frac{1}{a_{n,n}} \times \begin{cases}
     -\frac{a_{n,k}}{a_{k,k}} + \sum\limits_{m=1}^{n-k-1} 
     \sum\limits_{k \leq i_1 < \cdots < i_m < n} \frac{(-1)^{m+1} 
     a_{i_1,k} a_{i_2,i_1} \cdots 
     a_{i_m,i_{m-1}} a_{n,i_m}}{a_{k,k} a_{i_1,i_1} \cdots 
     a_{i_m,i_m}}, & \text{ if $k < n$; } \\ 
     1, & \text{ if $k = n$; } \\ 
     0, & \text{ otherwise.} 
     \end{cases}
\end{align} 
The proof of our result is then just an application of the formula in 
\eqref{eqn_lt_invmatrix_formula_v1} when $a_{n,k} := u_{n,k}^{-1}(f, w)$. 
While the identity in \eqref{eqn_lt_invmatrix_formula_v1} is not immediately obvious 
from the known inversion formulas between inverse matrices in the form of 
\[
a_{n,k}^{(-1)} = \frac{\Iverson{n=k}}{a_{n,n}} - \frac{1}{a_{n,n}} \times 
     \sum_{j=1}^{n-k-1} a_{n,j} a_{j,k}^{(-1)}, 
\]
the result is easily obtained by induction on $n$ so we do not prove it here. 
\end{proof} 

\SubsubsectionGTThesisFormatted{Formulas for simplified variants of the ordinary matrices} 
\label{subSection_formulas_for_ordinary_matrices} 

In Corollary \ref{cor_unkfw_ord_matrix_formula_v1} we proved an exact 
expansion of the ordinary matrix entries $u_{n,k}(f, w)$ by sums of 
weighted products of the inverse matrices $u_{n,k}^{(-1)}(f, w)$ that is expressed in 
closed form through Proposition \ref{prop_unk_inverse_matrix}. 
We will now develop the machinery needed to more precisely 
express the ordinary forms of these matrices for 
general cases of the indeterminate indexing parameter $w \in \mathbb{C} \setminus \{0\}$. 

\begin{table}[ht!] 

\caption[Symbolic factorization matrices for type II GCD sums]{
	 The matrix entries $\widehat{u}_{n,k}(f, w)$ for $1 \leq n,k \leq 6$ where 
	 $\hat{f}(n) = \frac{f(n) w^n}{w^n-1}$ for any arithmetic function $f$ such that $f(1) \neq 0$.}
\label{table_simplifies_unkfw} 

\begin{equation*} 
\boxed{
\begin{array}{lllll}
 \frac{1}{\widehat{f}(1)} & 0 & 0 & 0 & 0 \\
 -\frac{\widehat{f}(2)}{\widehat{f}(1)^2}-\frac{1}{\widehat{f}(1)} & \frac{1}{\widehat{f}(1)} & 0 & 0 & 0 \\
 \frac{\widehat{f}(2)}{\widehat{f}(1)^2}-\frac{\widehat{f}(3)}{\widehat{f}(1)^2}-\frac{1}{\widehat{f}(1)} & -\frac{1}{\widehat{f}(1)} & \frac{1}{\widehat{f}(1)} & 0 & 0 \\
 \frac{\widehat{f}(2)^2}{\widehat{f}(1)^3}+\frac{\widehat{f}(2)}{\widehat{f}(1)^2}+\frac{\widehat{f}(3)}{\widehat{f}(1)^2}-\frac{\widehat{f}(4)}{\widehat{f}(1)^2} &
   -\frac{\widehat{f}(2)}{\widehat{f}(1)^2}-\frac{1}{\widehat{f}(1)} & -\frac{1}{\widehat{f}(1)} & \frac{1}{\widehat{f}(1)} & 0 \\
 -\frac{\widehat{f}(2)^2}{\widehat{f}(1)^3}+\frac{\widehat{f}(3)}{\widehat{f}(1)^2}+\frac{\widehat{f}(4)}{\widehat{f}(1)^2}-\frac{\widehat{f}(5)}{\widehat{f}(1)^2} &
   \frac{\widehat{f}(2)}{\widehat{f}(1)^2} & -\frac{1}{\widehat{f}(1)} & -\frac{1}{\widehat{f}(1)} & \frac{1}{\widehat{f}(1)} \\
-\frac{\widehat{f}(2)^2}{\widehat{f}(1)^3}+\frac{2 \widehat{f}(3)
   \widehat{f}(2)}{\widehat{f}(1)^3}+\frac{\widehat{f}(4)}{\widehat{f}(1)^2}+\frac{\widehat{f}(5)}{\widehat{f}(1)^2}-\frac{\widehat{f}(6)}{\widehat{f}(1)^2}+\frac{1}{\widehat{f}(1)} &
   \frac{\widehat{f}(2)}{\widehat{f}(1)^2}-\frac{\widehat{f}(3)}{\widehat{f}(1)^2} & 
   -\frac{\widehat{f}(2)}{\widehat{f}(1)^2} & -\frac{1}{\widehat{f}(1)} &
   -\frac{1}{\widehat{f}(1)}
\end{array}
}
\end{equation*} 

\end{table}  

\begin{remark}[Simplifications of the matrix terms] 
\label{remark_simplified_unkfw} 
Using the formula for the coefficients of $u_{n,k}(f, w)$ in 
\eqref{eqn_intro_two_sum_fact_types_def_v2} expanded by 
\eqref{eqn_lfgnw_sum_divsum_exp_v1}, we can simplify the 
form of the matrix entries we seek closed-form expressions for in the next 
calculations. In particular, we make the following definitions for 
$1 \leq k \leq n$: 
\begin{align*} 
\widehat{f}(n) & := \frac{w^n}{w^n-1} f(n) \\ 
\widehat{u}_{n,k}(f, w) & := (w^k - 1) u_{n,k}(f, w). 
\end{align*} 
Then an equivalent formulation of finding the exact formulas for $u_{n,k}(f, w)$ is 
to find exact expressions expanding the triangular sequence of 
$\widehat{u}_{n,k}(f, w)$ satisfying 
\[
\sum_{\substack{j \geq 0 \\ n-G_j > 0}} (-1)^{\ceiling{\frac{j}{2}}} g(n-G_j) = 
     \sum_{k=1}^{n} \widehat{u}_{n,k}(f, w) \times \sum_{d|k} 
     \widehat{f}(d) g\left(\frac{n}{d}\right). 
\] 
We will obtain precisely such formulas in the next few results. 
Table \ref{table_simplifies_unkfw} provides the first few rows of our 
simplified matrix entries. 
\end{remark}

\begin{table}[ht!] 

\caption[The multiple convolution function $D_f(n)$]{
	 The function $D_f(n)$ for $2 \leq n \leq 11$ where 
	 $\widehat{f}(n) := \frac{f(n) w^n}{w^n-1}$ for an 
         arbitrary arithmetic function $f$ such that $f(1) \neq 0$.} 
\label{table_cases_of_Dn} 

\centering
\begin{tabular}{|c|l|c|l|} \hline 
$n$ & $D_f(n)$ & $n$ & $D_f(n)$ \\ \hline 
2 & $-\frac{\widehat{f}(2)}{\widehat{f}(1)^2}$ & 7 & 
    $-\frac{\widehat{f}(7)}{\widehat{f}(1)^2}$ \\ 
3 & $-\frac{\widehat{f}(3)}{\widehat{f}(1)^2}$ & 8 & 
    $\frac{2 \widehat{f}(2) \widehat{f}(4)-\widehat{f}(1)
    \widehat{f}(8)}{\widehat{f}(1)^3}-\frac{\widehat{f}(2)^3}{\widehat{f}(1)^4}$ \\ 
4 & $\frac{\widehat{f}(2)^2-\widehat{f}(1)
    \widehat{f}(4)}{\widehat{f}(1)^3}$ & 9 & 
    $\frac{\widehat{f}(3)^2-\widehat{f}(1) \widehat{f}(9)}{\widehat{f}(1)^3}$ \\ 
5 & $-\frac{\widehat{f}(5)}{\widehat{f}(1)^2}$ & 10 & 
    $\frac{2\widehat{f}(2) \widehat{f}(5)-\widehat{f}(1)
   \widehat{f}(10)}{\widehat{f}(1)^3}$ \\ 
6 & $\frac{2 \widehat{f}(2) \widehat{f}(3)-\widehat{f}(1)
    \widehat{f}(6)}{\widehat{f}(1)^3}$ & 11 & 
    $-\frac{\hat{f}(11)}{\hat{f}(1)^2}$ 
    \\ \hline 
\end{tabular} 

\end{table} 

\begin{definition}[Special forms of multiple convolutions] 
\label{def_djn_Dn_func_defs} 
For $n,j \geq 1$, we define the following nested 
$j$-convolutions of the function $\widehat{f}(n)$ 
\cite{MERCA-SCHMIDT-RAMJ}: 
\begin{align*} 
\ds_j(f; n) = \begin{cases} 
     (-1)^{\delta_{n,1}} \widehat{f}(n), & \text{ if $j = 1$; } \\ 
     \sum\limits_{\substack{d|n \\ d>1}} \widehat{f}(d) \ds_{j-1}\left(f; \frac{n}{d}\right), & 
     \text{ if $j \geq 2$. } 
     \end{cases} 
\end{align*} 
Then we define our primary multiple convolution function of interest as 
\[
D_f(n) := \sum_{j=1}^n \frac{\ds_{2j}(f; n)}{\widehat{f}(1)^{2j+1}}. 
\]
\end{definition}

The first few cases of $D_f(n)$ for $2 \leq n \leq 16$ are 
computed in Table \ref{table_cases_of_Dn}. 
The examples in the table should clarify precisely what multiple convolutions 
we are defining by the function $D_f(n)$. Namely, a signed sum of all possible 
ordinary $k$ Dirichlet convolutions of $\widehat{f}$ with itself evaluated at $n$.

\begin{lemma} 
\label{lemma_Dastfhat} 
We claim that for all $n \geq 1$
\[
(D_f \ast \widehat{f})(n) \equiv \sum_{d|n} f(d) D_f\left(\frac{n}{d}\right) = 
     -\frac{\widehat{f}(n)}{\widehat{f}(1)} + \varepsilon(n). 
\]
where $\varepsilon(n) \equiv \delta_{n,1}$ is the multiplicative identity function 
with respect to Dirichlet convolution. 
\end{lemma}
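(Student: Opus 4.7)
The plan is to recognize $D_f$ as a (reindexed, regrouped) Neumann-style series that yields the Dirichlet inverse of $\widehat{f}$. The identity then falls out after a one-line convolution.

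First I would isolate the "top-of-diagonal" piece by writing $\widehat{f} = h + \widehat{f}(1)\varepsilon$, where $h(d) := \widehat{f}(d)\,\Iverson{d>1}$ vanishes at $d=1$. With this notation the defining recursion of $\ds_j$ becomes simply $\ds_j = h \ast \ds_{j-1}$ for $j \geq 2$, while $\ds_1 = h - \widehat{f}(1)\varepsilon$ (reading off the $(-1)^{\delta_{n,1}}$ factor). A straightforward induction on $j$ then gives the closed form
\[
\ds_j(f;\,\cdot\,) \;=\; h^{\ast j} \;-\; \widehat{f}(1)\, h^{\ast (j-1)}, \qquad j \geq 1,
\]
where $h^{\ast 0} := \varepsilon$. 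Note that for any fixed $n$ we have $h^{\ast k}(n) = 0$ whenever $k > \Omega(n)$, since every prime-power factorization into $k$ parts $>1$ requires $n$ to have at least $k$ prime factors with multiplicity; this makes every infinite sum below finite and manipulations entirely formal.

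Next I would plug this expression into the defining sum $D_f = \sum_{j \geq 1} \ds_{2j}(f;\,\cdot\,)/\widehat{f}(1)^{2j+1}$ and regroup the resulting even- and odd-index pieces by the parity of $k$ to obtain the alternating telescoping form
\[
D_f \;=\; \sum_{k \geq 1} \frac{(-1)^k\, h^{\ast k}}{\widehat{f}(1)^{k+1}} \;=\; \frac{1}{\widehat{f}(1)} \sum_{k \geq 1} \left(-\frac{h}{\widehat{f}(1)}\right)^{\ast k}.
\]
Adding the $k=0$ term $\varepsilon/\widehat{f}(1)$ to both sides and recognizing the right-hand side as a Neumann/geometric series in the Dirichlet convolution ring shows that $D_f + \varepsilon/\widehat{f}(1)$ is the (formal) convolution inverse of $\varepsilon + h/\widehat{f}(1) = \widehat{f}/\widehat{f}(1)$. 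Because $\widehat{f}(1) \neq 0$, $\widehat{f}$ is Dirichlet-invertible in the sense of the paper, and we conclude
\[
D_f \;=\; \widehat{f}^{\,-1} \;-\; \frac{\varepsilon}{\widehat{f}(1)}.
\]

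Convolving with $\widehat{f}$ then yields $D_f \ast \widehat{f} = \varepsilon - \widehat{f}/\widehat{f}(1)$, which is exactly the claimed identity. The only place anything delicate happens is justifying the geometric-series inversion pointwise; the observation that $h^{\ast k}(n) = 0$ for $k > \Omega(n)$ makes this a finite algebraic identity at each $n$ rather than an analytic convergence issue, so I expect this to be a mild step rather than a real obstacle. The main conceptual content is really just the rewriting in Step~1 and the parity regrouping in Step~2; once those are in place the result is immediate. An alternative route would be to give a direct induction on $\Omega(n)$ using the recursive definition of $\ds_j$, but the generating-convolution-algebra approach above is cleaner and explains why $D_f$ naturally appears in the inverse-matrix formula of Theorem~\ref{claim_snk_inverses}.
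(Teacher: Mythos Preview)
Your proof is correct and follows essentially the same route as the paper: both reduce the lemma to showing $D_f + \varepsilon/\widehat{f}(1) = \widehat{f}^{-1}$, and both do so by rewriting $\ds_j$ as $\widehat{f}_{\pm} \ast (\widehat{f}-\widehat{f}(1)\varepsilon)^{\ast(j-1)} = h^{\ast j} - \widehat{f}(1)\,h^{\ast(j-1)}$ and then unwinding the iterated convolutions of $h$. Your Neumann-series regrouping is a cleaner packaging of what the paper does via recursive substitution into the Dirichlet-inverse recurrence and the auxiliary functions $C_k(n)$, but the underlying argument is the same.
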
 
\begin{proof}[Proof of Lemma \ref{lemma_Dastfhat}]
The statement of the lemma is equivalent to showing that 
\begin{equation} 
\label{eqn_fhat_Dinv_formula_exp_v1} 
\left(D_f + \frac{\varepsilon}{\widehat{f}(1)}\right)(n) = \widehat{f}^{-1}(n). 
\end{equation} 
A general recursive formula for the inverse of $\widehat{f}(n)$ is given by 
\cite{APOSTOLANUMT} 
\[
\widehat{f}^{-1}(n) = \left(-\frac{1}{\widehat{f}(1)} \times \sum_{\substack{d|n \\ d > 1}} 
     \widehat{f}(d) \widehat{f}^{-1}\left(\frac{n}{d}\right)\right) 
     \Iverson{n > 1} + \frac{\Iverson{n = 1}}{\widehat{f}(1)}. 
\]
This definition is almost how we defined $\ds_j(f; n)$ above. Let's see how to modify this 
recurrence relation to obtain the formula for $D_f(n)$. 
We can recursively substitute in the formula for $\widehat{f}^{-1}(n)$ until we hit the 
point where successive substitutions only leave the base case of 
$\widehat{f}^{-1}(1) = \widehat{f}(1)^{-1}$. This occurs after $\Omega(n)$ substitutions 
where $\Omega(n)$ denotes the number of prime factors of $n$ counting multiplicity. 
We can write the nested formula for $\ds_j(f; n)$ as 
\[
\ds_j(f; n) = \widehat{f}_{\pm} \ast 
\undersetbrace{j-1\text{ factors}}{\left(\widehat{f}-\widehat{f}(1) \varepsilon\right) 
     \ast \cdots 
     \ast \left(\widehat{f}-\widehat{f}(1) \varepsilon\right)}(n), 
\]
where we define 
$\widehat{f}_{\pm}(n) := \widehat{f}(n) \Iverson{n > 1} - \widehat{f}(1) \Iverson{n = 1}$. 
Next, define the nested $k$-convolutions $C_k(n)$ recursively by 
\label{page_eqn_Ckn_kCvls} 
\[
C_k(n) = \begin{cases} 
     \widehat{f}(n) - \widehat{f}(1)\varepsilon(n), & \text{ if $k = 1$; } \\ 
     \sum\limits_{d|n} \left(\widehat{f}(d) - \widehat{f}(1) \varepsilon(d)\right) 
     C_{k-1}\left(\frac{n}{d}\right), & \text{ if $k \geq 2$. } 
     \end{cases} 
\] 
Then we can express the inverse of $\widehat{f}(n)$ using this definition as follows: 
\[
\widehat{f}^{-1}(n) = \sum_{d|n} \widehat{f}(d)\left( 
     \sum_{j=1}^{\Omega(n)} \frac{C_{2k}\left(\frac{n}{d}\right)}{\widehat{f}(1)^{\Omega(n)+1}} - 
     \frac{\varepsilon\left(\frac{n}{d}\right)}{\widehat{f}(1)^2}\right). 
\]
Then based on the initial conditions for $k = 1$ (or $j = 1$) in the definitions of 
$C_k(n)$ (and $\ds_j(f; n)$), we see that the function in 
\eqref{eqn_fhat_Dinv_formula_exp_v1} is in fact the inverse of $\widehat{f}(n)$. 
\end{proof}

\begin{prop} 
\label{prop_exact_pnrec_for_hatunk} 
For all $n \geq 1$ and $1 \leq k \leq n$, we have that 
\[
\sum_{i=0}^{n-1} p(i) \widehat{u}_{n-i,k}(f, w) = D_f\left(\frac{n}{k}\right) 
     \Iverson{n \equiv 0 \pmod{k}} + \frac{\Iverson{n = k}}{\widehat{f}(1)}. 
\] 
\end{prop}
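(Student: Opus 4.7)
The plan is to leverage the factorization identity \eqref{eqn_intro_two_sum_fact_types_def_v2} itself, rewritten in terms of the hatted quantities, and then apply it to a single cleverly chosen arithmetic function $g$ that turns the right-hand side into the target quantity $P_k(n) := \sum_{i=0}^{n-1} p(i)\widehat{u}_{n-i,k}(f,w)$. The key strategic observation is that Lemma \ref{lemma_Dastfhat} tells us $\widehat{f}^{-1} = D_f + \varepsilon/\widehat{f}(1)$, which is exactly the right-hand side of the target formula (evaluated at $n/k$ and supported on multiples of $k$). This strongly suggests choosing $g$ so that $(\widehat{f} \ast g)(\ell)$ is a delta concentrated at $\ell = k$.

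First I would rewrite the factorization theorem in $\widehat{u}$-form. Combining the identity $\widehat{L}_{f,g}(k;w) = (w^k-1)(\widehat{f}\ast g)(k)$ from \eqref{eqn_lfgnw_sum_divsum_exp_v1} with the definition $\widehat{u}_{n,k}(f,w) = (w^k-1)u_{n,k}(f,w)$, equation \eqref{eqn_intro_two_sum_fact_types_def_v2} becomes
\[
g(n) \;=\; \sum_{j=1}^n p(n-j) \sum_{\ell=1}^j \widehat{u}_{j,\ell}(f,w)\,(\widehat{f}\ast g)(\ell), \qquad n \geq 1,
\]
valid for every arithmetic function $g$. Swapping the two summations and using that $\widehat{u}_{j,\ell} = 0$ for $j < \ell$ converts this into the single sum $g(n) = \sum_{\ell=1}^n P_\ell(n)(\widehat{f}\ast g)(\ell)$, so the proposition reduces to evaluating $P_k(n)$ for each fixed $k \geq 1$.

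Second, for the chosen $k$ I would set
\[
g(n) \;:=\; \widehat{f}^{-1}(n/k)\,\Iverson{k\mid n} \;=\; D_f(n/k)\Iverson{k\mid n} + \frac{\Iverson{n=k}}{\widehat{f}(1)},
\]
where the second equality is Lemma \ref{lemma_Dastfhat}. A short divisor-sum rearrangement shows that any $g$ of the form $g(n) = h(n/k)\Iverson{k\mid n}$ satisfies $(\widehat{f}\ast g)(\ell) = (\widehat{f}\ast h)(\ell/k)\Iverson{k\mid \ell}$. Taking $h = \widehat{f}^{-1}$ this specializes to $(\widehat{f}\ast g)(\ell) = \varepsilon(\ell/k)\Iverson{k\mid \ell} = \Iverson{\ell = k}$. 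Substituting back into $g(n) = \sum_{\ell=1}^n P_\ell(n)(\widehat{f}\ast g)(\ell)$ collapses the sum to the single term $P_k(n)$, yielding $P_k(n) = g(n)$, which is precisely the asserted identity.

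The main obstacle I anticipate is the supported-on-multiples divisor-sum identity $(\widehat{f}\ast g)(\ell) = (\widehat{f}\ast h)(\ell/k)\Iverson{k\mid \ell}$. It is short but must be handled carefully: one has to check that the conditions ``$d\mid \ell$ and $k \mid \ell/d$'' together are equivalent to ``$k\mid \ell$ and $d\mid \ell/k$'', so that the divisors $d$ of $\ell$ contributing to $(\widehat{f}\ast g)(\ell)$ biject with divisors of $\ell/k$. Once this step is in place, the remainder is purely mechanical bookkeeping with Lemma \ref{lemma_Dastfhat} and the $\widehat{u}$-form of the factorization theorem.
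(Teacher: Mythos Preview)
Your proposal is correct and follows essentially the same route as the paper. Both arguments rewrite the factorization \eqref{eqn_intro_two_sum_fact_types_def_v2} in the $\widehat{u}$-form $g(n)=\sum_{\ell} P_\ell(n)\,(\widehat f\ast g)(\ell)$ and invoke Lemma~\ref{lemma_Dastfhat} to identify $\widehat f^{-1}=D_f+\varepsilon/\widehat f(1)$; the only cosmetic difference is that the paper matches coefficients by writing $g=\widehat f^{-1}\ast(\widehat f\ast g)$ for arbitrary $g$, whereas you pick the specific $g(n)=\widehat f^{-1}(n/k)\Iverson{k\mid n}$ that collapses $(\widehat f\ast g)(\ell)$ to $\Iverson{\ell=k}$, which is just the constructive version of the same coefficient extraction.
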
 
\begin{proof}[Proof of Proposition \ref{prop_exact_pnrec_for_hatunk}] 
We notice that Lemma \ref{lemma_Dastfhat} implies that 
\[
\varepsilon(n) = \left(\left(D_f + \frac{\varepsilon}{\widehat{f}(1)}\right) \ast 
     \widehat{f}\right)(n), 
\] 
where $\varepsilon(n)$ is the multiplicative identity for Dirichlet convolutions. 
The last equation implies that 
\[
\tag{i} 
g(n) = \left(\left(D_f + \frac{\varepsilon}{\widehat{f}(1)}\right) \ast 
     \widehat{f} \ast g\right)(n).  
\] 
Additionally, we know by the expansion of 
\eqref{eqn_intro_two_sum_fact_types_def_v2} and that 
$\widehat{u}_{n,n}(f, w) = \widehat{f}(1)^{-1}$ that we have the expansion 
\[
\tag{ii} 
g(n) = \sum_{k \geq 1} \left(\sum_{j=0}^{n-1} p(j) \widehat{u}_{n-j,k}\right) 
     \sum_{d|k} \widehat{f}(d) g\left(\frac{k}{d}\right). 
\] 
So we can equate (i) and (ii) to see that 
\[
\sum_{j=0}^{n-1} p(j) \widehat{u}_{n-j,k} = D_f\left(\frac{n}{k}\right) \Iverson{k | n} + 
     \frac{\Iverson{n=k}}{\widehat{f}(1)}. 
\] 
This establishes our claim. 
\end{proof} 

\begin{cor}[An exact formula for the ordinary matrices] 
\label{cor_exact_formula_for_hatunkfw} 
For all $n \geq 1$ and $1 \leq k \leq n$ 
\[
\widehat{u}_{n,k}(f, w) = \sum_{\substack{j \geq 0 \\ n-G_j > 0}} 
     (-1)^{\ceiling{\frac{j}{2}}} \left(D_f\left(\frac{n-G_j}{k}\right) 
     \Iverson{n-G_j \equiv 0 \bmod{k}} + \frac{\Iverson{n-G_j = k}}{\widehat{f}(1)} 
     \right). 
\]
\end{cor}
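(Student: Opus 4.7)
The plan is to invert the Cauchy convolution identity established in Proposition \ref{prop_exact_pnrec_for_hatunk}. Defining the OGFs
\[
U_k(q) := \sum_{n \geq 1} \widehat{u}_{n,k}(f,w)\, q^n, \qquad R_k(q) := \sum_{n \geq 1} \left(D_f\!\left(\tfrac{n}{k}\right) \Iverson{k \mid n} + \frac{\Iverson{n=k}}{\widehat{f}(1)}\right) q^n,
\]
and recalling that $\sum_{n \geq 0} p(n) q^n = (q;q)_\infty^{-1}$, the convolution identity in Proposition \ref{prop_exact_pnrec_for_hatunk} reads exactly as the formal power series identity
\[
(q;q)_\infty^{-1} \cdot U_k(q) = R_k(q),
\]
after extending $\widehat{u}_{0,k}(f,w) := 0$ so the truncated sum $\sum_{i=0}^{n-1} p(i) \widehat{u}_{n-i,k}$ agrees with the full Cauchy product.

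The next step is to multiply through by $(q;q)_\infty$ and invoke Euler's pentagonal number theorem in the form
\[
(q;q)_\infty = \sum_{j \geq 0} (-1)^{\lceil j/2 \rceil} q^{G_j},
\]
which is exactly the identity underlying the sequence $G_j$ of interleaved pentagonal numbers used throughout the thesis (and already deployed, for instance, in Lemma \ref{lemma_LS_partial_sum_exps}). This produces $U_k(q) = \left(\sum_{j \geq 0} (-1)^{\lceil j/2 \rceil} q^{G_j}\right) R_k(q)$.

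Finally, extracting the coefficient of $q^n$ by Cauchy product yields
\[
\widehat{u}_{n,k}(f,w) = \sum_{\substack{j \geq 0 \\ n - G_j \geq 1}} (-1)^{\lceil j/2 \rceil} \left(D_f\!\left(\tfrac{n-G_j}{k}\right) \Iverson{k \mid n - G_j} + \frac{\Iverson{n - G_j = k}}{\widehat{f}(1)}\right),
\]
which matches the claimed formula (the constraint $n - G_j > 0$ arises because $R_k(q)$ has no constant term, so only indices with $n - G_j \geq 1$ contribute).

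There is essentially no substantive obstacle here: once Proposition \ref{prop_exact_pnrec_for_hatunk} is in hand, the corollary reduces to inverting a one-sided Cauchy convolution by the partition generating function and reading off the pentagonal-number expansion. The only bookkeeping to be careful about is the index convention at the boundary $n - G_j = 0$, which is excluded precisely because $R_k$ is indexed from $n = 1$; this is reflected in the constraint $n - G_j > 0$ in the displayed sum.
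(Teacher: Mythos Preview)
Your proof is correct and follows exactly the same approach as the paper: invert the partition-function convolution from Proposition \ref{prop_exact_pnrec_for_hatunk} by multiplying through by $(q;q)_\infty$ and applying the pentagonal number theorem. The paper's proof is terser but identical in substance.
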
 
\begin{proof} 
This is an immediate consequence of Proposition \ref{prop_exact_pnrec_for_hatunk} 
by noting that the generating function for $p(n)$ is $(q; q)_{\infty}^{-1}$ and that 
\[
(q; q)_{\infty} = \sum_{j \geq 0} (-1)^{\ceiling{\frac{j}{2}}} q^{G_j}. 
     \qedhere
\]
\end{proof} 

\SubsectionGTThesisFormatted{Applications to DTFTs and finite Fourier series expansions} 
\label{subSection_DFT_exps} 

We expand the left-hand-side function 
$g(x)$ in \eqref{eqn_intro_two_sum_fact_types_def_v2} by considering a new 
indirect method involving the type II sums $L_{f,g,k}(n)$. The expansions we derive in this section 
employ results for discrete Fourier 
transforms of functions of the greatest common divisor studied in 
\cite{kamp-gcd-transform,SCHRAMM}. 
This method allows us to study the factorization forms in 
\eqref{eqn_intro_two_sum_fact_types_def_v2} where we effectively bypass the complicated forms of the 
ordinary matrix coefficients $u_{n,k}(f, w)$. Results enumerating the ordinary matrices with coefficients 
given by $u_{n,k}(f, w)$ are treated in 
Corollary \ref{cor_unkfw_ord_matrix_formula_v1} of 
Section \ref{subSection_typeII_inv_matrices}. 

The discrete Fourier series methods we use to prove our theorems in these sections 
lead to the next key result proved in Theorem \ref{theorem_main_lfgmn_exp_v1}
which states that for any arithmetic functions $f,g$ we have 
\[
\sum_{d|k} \sum_{r=0}^{k-1} d L_{f,g,r}(k) \e{-\frac{rd}{k}} \mu\left(\frac{k}{d}\right) = 
  \sum_{d|k} \phi(d) f(d) \left(\frac{k}{d}\right)^2 g\left(\frac{k}{d}\right), k \geq 1, 
\] 
where $\e{x} = \exp(2\pi\imath \cdot x)$ is standard notation for the 
complex exponential function. 

The proof of the result given in 
Theorem \ref{theorem_main_lfgmn_exp_v1} below builds on several results on 
discrete Fourier transforms of functions evauated at the greatest common divisor, 
$(k, n) \equiv \gcd(k, n)$, developed in \cite{kamp-gcd-transform}. 
For the remainder of this section we take $k \geq 1$ to be fixed and 
consider the divisor sums of the following form which are periodic with respect to 
$k$ for any $n \geq 1$: 
\[
L_{f,g,k}(n) := \sum_{d|(n,k)} f(d) g\left(\frac{n}{d}\right). 
\]
In \cite{kamp-gcd-transform} these sums are called $k$-convolutions of $f$ and $g$. 
We will first need to discuss some more terminology related to discrete Fourier 
transforms before moving on. 

\begin{definition}
\label{page_DTFT_coefficient_defs}
A \emph{discrete Fourier transform} (DFT) maps a (finite) sequence of complex numbers 
$\{f[n]\}_{n=0}^{N-1}$ onto their associated Fourier coefficients 
$\{F[n]\}_{n=0}^{N-1}$ defined according to the following reversion 
formulas relating these sequences: 
\begin{align*} 
F[k] & = \phantom{\frac{1}{N}\times} \sum_{n=0}^{N-1} f[n] e\left(-\frac{kn}{N}\right), \\ 
f[k] & = \frac{1}{N} \times \sum_{n=0}^{N-1} F[k] e\left(\frac{kn}{N}\right). 
\end{align*} 
The discrete Fourier transform of functions of the greatest common 
divisor, which we will employ 
repeatedly to prove Theorem \ref{theorem_main_lfgmn_exp_v1} below,  
is characterized by the formula in the next lemma 
\cite{kamp-gcd-transform,SCHRAMM}. 
\end{definition}

\begin{lemma} 
If we take any two arithmetic functions $f$ and $g$, 
we can express periodic divisor sums modulo any $k \geq 1$ of the form 
\begin{subequations} 
\label{eqn_snk_akm_intro_Fourier_coeff_exps}
\begin{align} 
s_k(f, g; n) & := \sum_{d|(n,k)} f(d) g\left(\frac{k}{d}\right) 
     = \sum_{m=1}^k a_k(f, g; m) e^{\frac{2\pi\imath mn}{k}}, n \geq 1. 
\end{align} 
The discrete Fourier coefficients on the right-hand-side 
of the previous equation are given by  
\begin{equation}
a_k(f, g; m) = \sum_{d|(m,k)} g(d) f\left(\frac{k}{d}\right) \frac{d}{k}. 
\end{equation} 
\end{subequations}
\end{lemma}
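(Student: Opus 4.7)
The plan is to observe that $s_k(f,g;n)$, viewed as a function of $n$ with $k$ fixed, depends only on $(n,k)$, and is therefore periodic with period $k$. Any such function admits a finite Fourier series expansion with exactly $k$ terms, so the existence of the expansion in \eqref{eqn_snk_akm_intro_Fourier_coeff_exps} is automatic; only the explicit evaluation of the Fourier coefficients requires work.

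First I would apply the standard inversion formula for the discrete Fourier transform to recover
\[
a_k(f,g;m) = \frac{1}{k}\sum_{n=1}^{k} s_k(f,g;n)\, e\!\left(-\frac{mn}{k}\right),
\]
noting that $e(kn/k)=1$ so the indexing $m=1,\ldots,k$ is equivalent to $m=0,\ldots,k-1$. Substituting the definition of $s_k(f,g;n)$ yields a double sum over pairs $(n,d)$ with $d\mid(n,k)$.

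Next I would swap the order of summation: for each divisor $d\mid k$ (independent of $n$), the inner sum runs over those $n\in\{1,\ldots,k\}$ divisible by $d$. Writing $n=dj$ for $1\le j\le k/d$ reduces the exponential sum to a geometric sum
\[
\sum_{j=1}^{k/d} e\!\left(-\frac{mj}{k/d}\right) = \begin{cases} k/d, & \text{if } (k/d)\mid m,\\ 0, & \text{otherwise,}\end{cases}
\]
by the standard orthogonality of roots of unity. Substituting this back cancels the factor $1/k$ against $k/d$ and leaves
\[
a_k(f,g;m) = \sum_{\substack{d\mid k \\ (k/d)\mid m}} \frac{f(d)\,g(k/d)}{d}.
\]
Finally, the change of variables $e := k/d$ converts the divisibility condition to $e\mid(k,m)$, and relabelling $e$ back to $d$ yields exactly
\[
a_k(f,g;m) = \sum_{d\mid(k,m)} g(d)\,f\!\left(\frac{k}{d}\right)\frac{d}{k},
\]
as claimed.

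There is no real obstacle here: the argument is a standard orthogonality computation, and the only step that requires any care is the bookkeeping in the swap of summation and the final change of variable $d\leftrightarrow k/d$, which converts the divisibility constraint on the complementary divisor into the clean divisibility constraint $d\mid(k,m)$ displayed in the statement.
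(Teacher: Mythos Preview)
Your proof is correct and complete; the orthogonality computation and the change of variable $d\leftrightarrow k/d$ are carried out accurately. The paper itself does not supply an argument for this lemma at all: it simply refers the reader to Apostol's \emph{Introduction to Analytic Number Theory} (\S 8.3), the NIST Handbook (\S 27.10), and the gcd-transform papers of Schramm and van der Kamp. So your write-up is strictly more than what the paper provides, and it is exactly the standard argument one finds in those references.
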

\begin{proof}
For a proof of these relations consult the references 
\cite[\S 8.3]{APOSTOLANUMT} \cite[\cf \S 27.10]{NISTHB}. 
These relations are also related to the gcd-transformations proved in 
\cite{kamp-gcd-transform,SCHRAMM}.
\end{proof} 

\begin{definition}
The function $c_m(a)$ defined for integers $m,a \geq 1$ by 
\[
c_m(a) := \sum_{\substack{k=1 \\ (k, m) = 1}}^{m} e\left(\frac{ka}{m}\right), 
\] 
is called \emph{Ramanujan's sum}. Ramanujan's sum is expanded as in the divisor sums in 
Corollary \ref{cor_RamSumNewFormula} of the last subsection. 
\end{definition}

\begin{lemma}[DFT of functions of the greatest common divisor] 
Let $h$ be any arithmetic function. 
For natural numbers $m \geq 1$, the discrete Fourier transform (DFT) of $h$ in the GCD sense 
is defined by the following function: 
\[
\widehat{h}[a](m) := \sum_{k=1}^m h\left(\gcd(k, m)\right) 
     e\left(\frac{ka}{m}\right). 
\] 
This form of the DFT of $h(\gcd(n, k))$ (for $k \geq 1$ a free parameter) satisfies 
$\widehat{h}[a] = h \ast c_{-}(a)$. 
This function is summed explicitly at any 
$n \geq 1$ as the Dirichlet convolution 
\[
\widehat{h}[a](n) = (h \ast c_{-}(a))(n) = 
\sum_{d|n} h\left(\frac{n}{d}\right) c_d(a), \text{ for } n \geq 1, a \in \mathbb{C}. 
\]
\end{lemma}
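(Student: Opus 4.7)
The plan is to regroup the sum defining $\widehat{h}[a](m)$ according to the value of $\gcd(k,m)$, which is the standard technique for handling sums of arithmetic functions of a gcd. Since every $k$ in the range $1 \le k \le m$ has a unique gcd with $m$ that divides $m$, we can partition the outer index set by these divisors and then reparametrize.

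Concretely, I would first write
\[
\widehat{h}[a](m) = \sum_{d \mid m} h(d) \sum_{\substack{1 \le k \le m \\ \gcd(k,m) = d}} e\!\left(\frac{ka}{m}\right).
\]
For the inner sum, substitute $k = d k'$; the condition $\gcd(k,m)=d$ is equivalent to $\gcd(k', m/d) = 1$ with $1 \le k' \le m/d$, and the exponential becomes $e(k' a/(m/d))$. Therefore the inner sum is exactly Ramanujan's sum $c_{m/d}(a)$ as defined just above the lemma:
\[
\sum_{\substack{1 \le k' \le m/d \\ \gcd(k', m/d)=1}} e\!\left(\frac{k' a}{m/d}\right) = c_{m/d}(a).
\]

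Putting these two steps together yields
\[
\widehat{h}[a](m) = \sum_{d \mid m} h(d)\, c_{m/d}(a),
\]
and reindexing by $d \mapsto m/d$ gives the stated form $\sum_{d|m} h(m/d) c_d(a)$. This is precisely the Dirichlet convolution $(h \ast c_{-}(a))(m)$ in the notation defined in the glossary entry for $f \ast C_{-}(m)$, so the identity $\widehat{h}[a] = h \ast c_{-}(a)$ follows. There is no substantial obstacle here; the only subtlety worth flagging is the bookkeeping in the change of variables $k = d k'$, specifically verifying that the coprimality condition $\gcd(k', m/d)=1$ is exactly what characterizes $\gcd(dk', m) = d$, and that the parameter $a$ passes through unchanged because $e(dk'a/m) = e(k'a/(m/d))$.
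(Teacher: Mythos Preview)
Your argument is correct and is the standard proof of this identity: partition the index set $\{1,\ldots,m\}$ by the value $d=\gcd(k,m)$, substitute $k=dk'$, and recognize the resulting inner sum as the Ramanujan sum $c_{m/d}(a)$. The paper does not actually supply its own proof of this lemma; it is stated as a known result with attribution to the references \cite{kamp-gcd-transform,SCHRAMM}, and the argument you have written is precisely the one found there.
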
 

\begin{definition}[Notation for certain exponential sums] 
\label{def_NotationAndSpecialExpSums} 
In what follows, for $1 \leq \ell \leq k$ 
we denote the $\ell^{th}$ Fourier coefficient with respect to $k$ 
of the function $L_{f,g,k}(n)$ by $a_{k,\ell}$. This is well defined since 
$L_{f,g,k}(n) = L_{f,g,k}(n+k)$ is periodic with period $k$ over the 
integers $n \geq 1$. We have an expansion of this function of the form 
\[
L_{f,g,k}(n) = \sum_{\ell=0}^{k-1} a_{k,\ell} \times \e{\frac{\ell n}{k}}. 
\] 
We can compute the coefficients, $a_{m,\ell}$, 
directly from $L_{f,g,k}(n)$ according to the formula 
\[
a_{k,\ell} = \sum_{n=0}^{k-1} L_{f,g,k}(n) \times \e{-\frac{\ell n}{k}}. 
\] 
These Fourier coefficients are given explicitly in terms of 
$f$ and $g$ by the formulas cited in 
\eqref{eqn_snk_akm_intro_Fourier_coeff_exps}. 
\end{definition} 

\begin{theorem}
\label{theorem_main_lfgmn_exp_v1} 
For any arithmetic functions $f,g$ and $k \geq 1$, we have that 
\begin{equation}
\label{eqn_ThmMainLfgkn_exp}
\sum_{d|k} \sum_{r=0}^{k-1} d L_{f,g,r}(k) \times 
     \e{-\frac{rd}{k}} \mu\left(\frac{k}{d}\right) = 
     \sum_{d|k} \phi(d) f(d) \left(\frac{k}{d}\right)^2 g\left(\frac{k}{d}\right),
\end{equation}  
where $\phi(n)$ is Euler's totient function. 
\end{theorem}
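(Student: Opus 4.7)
The plan is to unwind both sides using the finite Fourier theory already set up in this section, reducing the claim to the elementary divisor-sum identity $\phi = \operatorname{Id}_1 \ast \mu$.

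First, I would identify $L_{f,g,r}(k)$ with the periodic divisor sum $s_k(f, g; r)$ defined in \eqref{eqn_snk_akm_intro_Fourier_coeff_exps}. Indeed, by comparing the definitions of \glssymbol{Lfgkx} and \glssymbol{skfgn}, we have
\[
L_{f,g,r}(k) = \sum_{d|(r,k)} f(d) g\!\left(\frac{k}{d}\right) = s_k(f, g; r).
\]
Hence the expansion from Definition \ref{def_NotationAndSpecialExpSums} gives
\[
L_{f,g,r}(k) = \sum_{m=1}^{k} a_k(f, g; m)\, \e{\tfrac{mr}{k}}.
\]

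Next, I would substitute this Fourier expansion into the inner sum on $r$ in the left-hand side of \eqref{eqn_ThmMainLfgkn_exp} and apply orthogonality of the additive characters modulo $k$. Since $d \mid k$, we have $1 \le d \le k$, so exactly one residue class $m \in \{1,\ldots,k\}$ contributes:
\[
\sum_{r=0}^{k-1} L_{f,g,r}(k)\, \e{-\tfrac{rd}{k}} \;=\; \sum_{m=1}^{k} a_k(f,g;m) \sum_{r=0}^{k-1} \e{\tfrac{r(m-d)}{k}} \;=\; k\, a_k(f,g;d).
\]
Plugging in the explicit formula $a_k(f,g;d) = \sum_{e \mid (d,k)} g(e) f(k/e)\, e/k$ from \eqref{eqn_snk_akm_intro_Fourier_coeff_exps} and cancelling the factor of $k$, the left-hand side becomes
\[
\sum_{d \mid k} d\, \mu\!\left(\tfrac{k}{d}\right) \sum_{e \mid d} e\, g(e)\, f\!\left(\tfrac{k}{e}\right),
\]
where the condition $e \mid (d,k)$ simplifies to $e \mid d$ because $d \mid k$.

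Finally, I would swap the order of summation, writing $d = e m$ with $m \mid k/e$, so $k/d = (k/e)/m$. This rearranges the double sum into
\[
\sum_{e \mid k} e\, g(e)\, f\!\left(\tfrac{k}{e}\right) \sum_{m \mid k/e} e m\, \mu\!\left(\tfrac{k/e}{m}\right) \;=\; \sum_{e \mid k} e^2\, g(e)\, f\!\left(\tfrac{k}{e}\right) \phi\!\left(\tfrac{k}{e}\right),
\]
where the inner sum is evaluated using $(\operatorname{Id}_1 \ast \mu)(N) = \phi(N)$, the Möbius inversion of $n = \sum_{d \mid n} \phi(d)$. Reindexing via $d = k/e$ produces the right-hand side of \eqref{eqn_ThmMainLfgkn_exp}. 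The only delicate step is the bookkeeping in this reindexing and confirming the orthogonality range $d \in \{1,\ldots,k\}$ is covered exactly once for each $d \mid k$; neither is a serious obstacle.
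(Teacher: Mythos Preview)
Your proposal is correct and follows essentially the same route as the paper's proof: identify $L_{f,g,r}(k)$ with the periodic sum $s_k(f,g;r)$, use orthogonality of the additive characters modulo $k$ to collapse the inner sum to $k\,a_k(f,g;d)$, then interchange the two divisor sums and finish with $\phi = \operatorname{Id}_1 \ast \mu$. Your write-up is in fact slightly cleaner than the paper's in making the orthogonality step explicit and in tracking the reindexing $d = k/e$ at the end.
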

\begin{proof}[Proof of Theorem \ref{theorem_main_lfgmn_exp_v1}] 
We see that the left-hand-side of 
\eqref{eqn_ThmMainLfgkn_exp} corresponds to a divisor sum of the form 
\begin{align*} 
\sum_{d|k} \sum_{r=0}^{k-1} d L_{f,g,r}(k) \times 
     \e{-\frac{rd}{k}} \mu\left(\frac{k}{d}\right) & = 
     \sum_{d|k} d \cdot \hat{a}_{k,d} \times \mu\left(\frac{k}{d}\right). 
\end{align*} 
The Fourier coefficients $a_{k,d}$ in this expansion are given by 
\eqref{eqn_snk_akm_intro_Fourier_coeff_exps} 
\cite[\S 8.3]{APOSTOLANUMT} so that \cite[\S 27.10]{NISTHB} 
\begin{align*}
\sum_{0 \leq r < k} L_{f,g,r}(k) \e{-\frac{rd}{k}} & = 
     \sum_{0 \leq r < k} \sum_{\ell=0}^{r-1} a_{k,\ell} \times 
     \e{\frac{\ell r}{k}} \e{-\frac{rd}{k}}, 
\end{align*} 
and where whenever we have that $k|d$, we get that the exponential sum 
\[
\sum_{0 \leq r < k} \e{-\frac{rd}{k}} = k. 
\]
Then we have that 
\[
\hat{a}_{k,d} = k \times \sum_{r|(k,d)} g(r) f\left(\frac{k}{r}\right) \frac{r}{k}. 
\]
The left-hand-side of our expansion then becomes 
(\cf \eqref{eqn_sum_over_divsum_interchange_exp_ident} below) 
\begin{align*} 
\sum_{d|k} d \cdot a_{k,d} \mu\left(\frac{k}{d}\right) & = 
     \sum_{d|k} \sum_{r|d} dr \cdot g(r) f\left(\frac{k}{r}\right) 
     \mu\left(\frac{k}{d}\right) \\ 
     & = 
     \sum_{d=1}^k d \cdot \mu\left(\frac{k}{d}\right) 
     \left(\sum_{r|d} r \cdot g(r) f\left(\frac{k}{r}\right)\right) \Iverson{d|k} \\ 
     & = 
     \sum_{r|k} r \cdot g(r) f\left(\frac{k}{r}\right) 
     \left(\sum_{d=1}^{\frac{k}{r}} dr \cdot \mu\left(\frac{k}{dr}\right) \Iverson{d|k}\right) \\ 
     & = 
     \sum_{r|k} r^2 \cdot g(r) f\left(\frac{k}{r}\right) 
     \phi\left(\frac{k}{r}\right). 
\end{align*} 
The conclusion follows by interchanging the index of summation by setting 
$d \leftrightarrow \frac{n}{d}$ and vice versa as inputs to the functions 
in the last equation.
\end{proof} 

\begin{cor} 
\label{cor_exact_formula_forn_gn} 
For any $n \geq 1$ and arithmetic functions $f,g$ we have the formula 
$$g(n) = \sum_{d|n} \sum_{j|d} \sum_{r=0}^{d-1} \frac{j \cdot L_{f,g,r}(d)}{d^2} \times 
  \e{-\frac{rj}{d}} \mu\left(\frac{d}{j}\right) y_f\left(\frac{n}{d}\right),$$
where $y_f(n) = (f \phi \Id_{-2})^{-1}(n)$ and $\operatorname{Id}_k(n) := n^k$ for any 
fixed integer $k$ and $n \geq 1$.  
\end{cor}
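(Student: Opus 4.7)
The plan is to recognize the right-hand side of Theorem \ref{theorem_main_lfgmn_exp_v1} as an ordinary Dirichlet convolution, after which the statement reduces to a single application of Dirichlet inversion using the function $y_f$ defined in the glossary.

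First I would take the identity \eqref{eqn_ThmMainLfgkn_exp} and divide both sides by $k^2$. The key algebraic observation is that
\[
\frac{1}{k^2}\sum_{d|k} \phi(d) f(d) \left(\frac{k}{d}\right)^2 g\!\left(\frac{k}{d}\right) = \sum_{d|k} \frac{\phi(d) f(d)}{d^2}\, g\!\left(\frac{k}{d}\right) = (h \ast g)(k),
\]
where $h(d) := f(d)\phi(d)\, d^{-2}$ is exactly the function whose Dirichlet inverse, by the glossary entry for \glssymbol{yfn}, is $y_f$. Simultaneously, dividing the left-hand side by $k^2$ produces the triple sum
\[
\Sigma(k) := \sum_{j|k}\sum_{r=0}^{k-1} \frac{j\, L_{f,g,r}(k)}{k^2} \e{-\tfrac{rj}{k}} \mu\!\left(\frac{k}{j}\right),
\]
so Theorem \ref{theorem_main_lfgmn_exp_v1} may be restated concisely as $\Sigma = h \ast g$.

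Next, since the standing hypothesis $f(1) \neq 0$ ensures $h(1) = f(1) \neq 0$, the function $h$ is Dirichlet invertible, so $y_f = h^{-1}$ exists and is unique. Convolving both sides of $\Sigma = h \ast g$ by $y_f$ and using associativity of Dirichlet convolution together with $y_f \ast h = \varepsilon$, I obtain
\[
g(n) = (y_f \ast \Sigma)(n) = \sum_{d|n} \Sigma(d)\, y_f\!\left(\frac{n}{d}\right),
\]
which, upon substituting the definition of $\Sigma(d)$, gives precisely the claimed triple-sum formula.

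There is no real obstacle here: the only non-mechanical step is spotting that the symmetric reindexing $d \leftrightarrow k/d$ exhibits the right-hand side of \eqref{eqn_ThmMainLfgkn_exp} as $k^2 \cdot (h \ast g)(k)$ with exactly the function $h$ whose inverse has been prepackaged as $y_f$. Everything after that point is a routine appeal to Dirichlet inversion, so the argument should fit in a few lines.
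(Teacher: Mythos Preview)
Your proposal is correct and follows essentially the same approach as the paper: divide both sides of Theorem \ref{theorem_main_lfgmn_exp_v1} by $k^2$, recognize the right-hand side as the Dirichlet convolution $(f\phi\Id_{-2}) \ast g$, and then convolve with $y_f = (f\phi\Id_{-2})^{-1}$ to isolate $g$. The paper's proof is stated more tersely, but you have simply made explicit the convolution structure and the invertibility check that the paper leaves implicit.
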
 
\begin{proof}[Proof of Corollary \ref{cor_exact_formula_forn_gn}] 
We first divide both sides of the result in 
Theorem \ref{theorem_main_lfgmn_exp_v1} 
by $k^2$. 
Then we apply a Dirichlet convolution of the left-hand-side of the formula in 
Theorem \ref{theorem_main_lfgmn_exp_v1} with 
$y_f(n)$ defined as above to obtain the exact expansion for $g(n)$. 
\end{proof} 

\begin{cor}[The Mertens function] 
\label{cor_Mertens_function_v2} 
For all $x \geq 1$, the Mertens function, denoted by the partial sums 
$M(x) := \sum_{n \leq x} \mu(n)$, is expanded by Ramanujan's sum as 
\begin{align} 
\label{eqn_Mx_DFT_exp_v1} 
M(x) & = \sum_{d=1}^x \sum_{n=1}^{\floor{\frac{x}{d}}} \sum_{r=0}^{d-1} 
     \sum_{j|d} \frac{j \cdot c_d(r) y_M(n)}{d^2} \times \e{-\frac{rj}{d}} 
     \mu\left(\frac{d}{j}\right), 
\end{align} 
where $y_M(n) = (\phi \Id_{-1})^{-1}(n)$. 
\end{cor}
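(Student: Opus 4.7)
\textbf{Proof proposal for Corollary \ref{cor_Mertens_function_v2}.} The plan is to specialize Corollary \ref{cor_exact_formula_forn_gn} to the case $f := \operatorname{Id}_1$ and $g := \mu$, then sum the resulting identity for $\mu(n)$ over $1 \leq n \leq x$ and interchange the order of summation. No new analytic input is needed beyond the already-proved formula for $g(n)$.

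First I would verify the two identifications that make the specialization line up with the statement. Taking $f(d) := d$, the type II sum in Definition \ref{def_TypeITypeIISums_v1} becomes
\[
L_{\operatorname{Id}_1,\mu,r}(d) = \sum_{j \mid (r,d)} j \cdot \mu\!\left(\frac{d}{j}\right) = c_d(r),
\]
by the divisor-sum definition of Ramanujan's sum (compare the glossary entry for $c_q(n)$ and Corollary \ref{cor_RamSumNewFormula}). Next, the glossary entry for $y_f(n)$ tells us that $y_{\operatorname{Id}_1}$ is the Dirichlet inverse of the function $h(n) := f(n) \phi(n) n^{-2} = \phi(n)/n = (\phi \cdot \operatorname{Id}_{-1})(n)$, so $y_{\operatorname{Id}_1}(n) = (\phi \operatorname{Id}_{-1})^{-1}(n) = y_M(n)$ exactly in the notation of the corollary. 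Substituting these two identifications into Corollary \ref{cor_exact_formula_forn_gn} yields the pointwise identity
\[
\mu(n) = \sum_{d \mid n} \sum_{j \mid d} \sum_{r=0}^{d-1} \frac{j \cdot c_d(r)}{d^2} \, \e{-\tfrac{rj}{d}} \, \mu\!\left(\frac{d}{j}\right) y_M\!\left(\frac{n}{d}\right).
\]

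Finally I would sum this identity over $1 \leq n \leq x$ and swap the outer two sums. For a fixed divisor $d$, the condition $d \mid n$ with $1 \leq n \leq x$ is equivalent to writing $n = d m$ with $1 \leq m \leq \lfloor x/d \rfloor$, and the inner factor $y_M(n/d)$ becomes $y_M(m)$. Since $d$ can range only up to $x$ (otherwise no multiple of $d$ lies in $[1,x]$), the outer sum becomes $\sum_{d=1}^{x}$, and the resulting rearrangement is precisely \eqref{eqn_Mx_DFT_exp_v1}.

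The only mildly delicate step is the interchange of summations in the last paragraph, but since the inner sums are all finite for each fixed $n$, Fubini is trivial and there is no convergence issue; the ``hard part'' really amounts to matching the two identifications $L_{\operatorname{Id}_1,\mu,r}(d) = c_d(r)$ and $y_{\operatorname{Id}_1} = y_M$ correctly, both of which are immediate from the relevant glossary definitions.
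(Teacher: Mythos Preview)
Your proposal is correct and follows essentially the same route as the paper: specialize Corollary~\ref{cor_exact_formula_forn_gn} with $f=\operatorname{Id}_1$, $g=\mu$ (so that $L_{f,g,r}(d)=c_d(r)$ and $y_f=y_M$), sum the pointwise identity for $\mu(n)$ over $n\le x$, and interchange the divisor sum with the outer sum. The paper packages the final interchange by invoking the identity in \eqref{eqn_sum_over_divsum_interchange_exp_ident}, whereas you carry it out directly via the substitution $n=dm$; the content is identical.
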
 
\begin{proof}[Proof of Corollary \ref{cor_Mertens_function_v2}]
We begin by citing Theorem \ref{theorem_main_lfgmn_exp_v1} 
in the special case corresponding to 
$L_{f,g,k}(n)$ a Ramanujan sum for $f(n) = n$ and $g(n) = \mu(n)$. 
Then we sum over the left-hand-side $g(n)$ in 
Corollary \ref{cor_exact_formula_forn_gn} to obtain the 
initial summation identity for $M(x)$ given by 
\[
\tag{i} 
M(x) = \sum_{n \leq x} \sum_{d|n} \sum_{j|d} 
     \sum_{r=0}^{d-1} \frac{j}{d^2} \times c_d(r) \e{-\frac{rj}{d}} 
     \mu\left(\frac{d}{j}\right) y_M\left(\frac{n}{d}\right). 
\]
We can then apply the identity that for any arithmetic functions $h,u,v$ we can 
interchange nested divisor sums as
\begin{equation} 
\label{eqn_sum_over_divsum_interchange_exp_ident} 
\sum_{k=1}^n \sum_{d|k} h(d) u\left(\frac{k}{d}\right) v(k) = \sum_{d=1}^n h(d) \left( 
     \sum_{k=1}^{\floor{\frac{n}{d}}} u(k) v(dk)\right). 
\end{equation} 
Application of this identity to equation (i) leads to the 
first form for $M(x)$ stated in \eqref{eqn_Mx_DFT_exp_v1}. 
\end{proof} 

There is a related identity to compare to 
equation \eqref{eqn_sum_over_divsum_interchange_exp_ident} 
which allows us to interchange the order of 
summation in the Anderson-Apostol sums of the following form 
for any natural numbers $x \geq 1$ and arithmetic functions 
$f,g,h: \mathbb{N} \rightarrow \mathbb{C}$: 
\[
\sum_{d=1}^x f(d) \sum_{r|(d,x)} g(r) h\left(\frac{d}{r}\right) = 
       \sum_{r|x} g(r) \times \sum_{d=1}^{\frac{x}{r}} h(d) 
       f\left(\gcd(x,r) d\right). 
\]

\begin{cor}[Euler's totient function]
\label{cor_EulersTotientFunc_v5}
For any $n \geq 1$ we have 
\[
\frac{\phi(n)}{n} = \sum_{d|n} \sum_{j|d} \sum_{r=0}^{d-1} \frac{j}{d^2} \times c_d(r) 
     \e{-\frac{rj}{d}} \mu\left(\frac{d}{j}\right). 
\]
We have the following expansion of the average order sums for $\phi(n)$ 
given by 
\[
\sum_{2 \leq n \leq x} \phi(n) = \sum_{d=1}^x \sum_{r=0}^{d-1} \sum_{j|d}  
     j \cdot \e{-\frac{rj}{d}} \mu\left(\frac{d}{j}\right)  
     \frac{c_d(r)}{2d} \floor{\frac{x}{d}}\left(\floor{\frac{x}{d}}-1\right). 
\] 
\end{cor}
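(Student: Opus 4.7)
The plan is to specialize Theorem \ref{theorem_main_lfgmn_exp_v1} to the Ramanujan-sum case, divisor-sum the resulting identity to expose $\phi(n)/n$, and then interchange the order of summation to pass to the partial-sum statement.

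First I would apply Theorem \ref{theorem_main_lfgmn_exp_v1} with $f := \Id$ and $g := \mu$, since with this choice $L_{f,g,r}(m) = c_m(r)$ is Ramanujan's sum. After the substitution $e = m/j$, the right-hand side of the theorem becomes $m\cdot\bigl((\Id\cdot\mu)\ast\phi\bigr)(m)$. The Dirichlet series of $\Id\cdot\mu$ is $1/\zeta(s-1)$ and that of $\phi$ is $\zeta(s-1)/\zeta(s)$, so this convolution equals $\mu(m)$; the right-hand side therefore simplifies to $m\mu(m)$. Dividing through by $m^{2}$ should yield the key identity
\begin{equation*}
\sum_{j\mid m}\sum_{r=0}^{m-1}\frac{j\, c_m(r)}{m^{2}}\,\e{-\tfrac{rj}{m}}\,\mu\!\left(\tfrac{m}{j}\right) \;=\; \frac{\mu(m)}{m}.
\end{equation*}

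Next I would sum this identity over $m\mid n$. The right-hand side collapses to $\sum_{m\mid n}\mu(m)/m = \phi(n)/n$ via the classical product/divisor-sum expansion $\phi(n)/n = \prod_{p\mid n}(1-1/p) = \sum_{d\mid n}\mu(d)/d$, which establishes the first identity. For the summatory version I would multiply the first identity through by $n$, sum over $2\le n\le x$, and interchange the outer $n$-sum with the inner divisor sum over $d$. For each fixed $d$ the inner sum $\sum_{d\mid n,\,2\le n\le x} n$ equals $\tfrac{d}{2}\floor{x/d}\bigl(\floor{x/d}+1\bigr)$, except at $d = 1$, where the excluded $\phi(1) = 1$ term produces a deficit of $1$. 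Absorbing this deficit uniformly by means of the classical identity $\sum_{d=1}^{x}\mu(d)\floor{x/d} = 1$ converts the factor $\floor{x/d}(\floor{x/d}+1)$ into $\floor{x/d}(\floor{x/d}-1)$ for every $d$, which produces the claimed triple sum.

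The main obstacle is the Dirichlet-series identification $(\Id\cdot\mu)\ast\phi = \mu$, which is what makes the specialized right-hand side of Theorem \ref{theorem_main_lfgmn_exp_v1} collapse to the clean form $m\mu(m)$ and thereby expose the factor $\mu(m)/m$ needed for the divisor-sum step. Once this identification is in hand, everything else reduces to classical M\"obius inversion for $\phi$ together with the standard bookkeeping for the interchange of $n$- and $d$-summations in the partial-sum step.
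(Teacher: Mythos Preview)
Your proof is correct and follows essentially the same route as the paper: specialize Theorem~\ref{theorem_main_lfgmn_exp_v1} to $f=\Id$, $g=\mu$ (so that $L_{f,g,r}(k)=c_k(r)$), simplify the right-hand side, divide by $k^2$, and divisor-sum over $k\mid n$; then interchange summation for the partial-sum version. Your presentation is actually more explicit than the paper's on two points: you identify the key simplification $(\Id\cdot\mu)\ast\phi=\mu$ via Dirichlet series (the paper only says ``by convolution and multiplication by the factor of $n$'' and gestures at $\mu\ast\mathds{1}=\varepsilon$), and you carefully account for the boundary correction that converts $\lfloor x/d\rfloor(\lfloor x/d\rfloor+1)$ into $\lfloor x/d\rfloor(\lfloor x/d\rfloor-1)$ via $\sum_{d\le x}\mu(d)\lfloor x/d\rfloor=1$, which the paper's one-line appeal to~\eqref{eqn_sum_over_divsum_interchange_exp_ident} leaves implicit.
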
 
\begin{proof}[Proof of Corollary \ref{cor_EulersTotientFunc_v5}]
We consider the formula in Theorem \ref{theorem_main_lfgmn_exp_v1} with 
$f(n) = n$ and $g(n) = \mu(n)$. Since the Dirichlet inverse of the M\"obius 
function is $\mu \ast \mathds{1} = \varepsilon$, we obtain our result by convolution and 
multiplication by the factor of $n$. The average order identity follows from the 
first expansion by applying \eqref{eqn_sum_over_divsum_interchange_exp_ident}. 
\end{proof} 

\newpage
\SectionGTThesisFormatted{Generalized factorization theorems} 

\SubsectionGTThesisFormatted{$K$-convolutions: A generalized form of Dirichlet convolutions} 
\label{subSection_GenSummatoryFuncIdents_KCvls}

\begin{definition}
\label{def_KCvlSumsDefs} 
Consider the next generalization of Dirichlet convolution, 
$(f \ast g)(n) := \sum_{d|n} f(d) g\left(\frac{n}{d}\right)$, 
or divisor sums with respect to some kernel function $K(n,d)$ that is well defined for all 
divisors $d|n$ of any $n \geq 1$ by the following $K$-convolution operation: 
\begin{align}
\label{eqn_KCvl_fg_def_v1}
(f \ast_K g)(n) := \sum_{d|n} f(d) g\left(\frac{n}{d}\right) K(n, d). 
\end{align}
Typically, we restrict the function $K$ so that 
$K(n, d) = K\left(n, \frac{n}{d}\right)$ to assure commuativity of 
the associated convolution operation as $f \ast_K g = g \ast_K f$, though this requirement is 
not a strict necessity for applications. 
Furthermore, we assume that any admissible kernel function $K$ (subject to the requirements of 
our definitions above) can be expressed in factored form by $K(n, d) = K_0(n,d) \Iverson{d|n}$ 
for some other suitably defined kernel function $K_0$ that characterizes the operation. 
For any fixed finite $1 \leq N < \infty$, we also use the notation 
\[
\mathcal{K}_0(N) := (K_0(n,d))_{1 \leq n,d \leq N}, \mathcal{K}(N) := (K(n,d))_{1 \leq n,d \leq N}. 
\]
We note that the sums of this type are expressed by matrix-vector type convolutions defined in 
Example \ref{example_KCvlSumsExpressedByTopelitzProductConstructions} in a later section.
\end{definition} 

\begin{prop}[Generalized M\"obius inversion] 
\label{prop_GenInversionForKCvl_v1} 
For any kernel functions $(K, K_0)$ associated to the definition given in 
\eqref{eqn_KCvl_fg_def_v1} above such that $K_0$ is invertible, 
and any arithmetic functions $f,g$ we have that 
\[
g(n) = \sum_{d|n} f(d) K_0(n,d) \iff f(n) = 
     \sum_{d|n} \sum_{r|d} g(r) \mu\left(\frac{d}{r}\right) K_0^{-1}(n, d), 
     n \geq 1. 
\]
\end{prop}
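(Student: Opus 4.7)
The plan is to establish this equivalence through two successive inversions: a classical M\"obius inversion to undo the inner divisor-sum structure, followed by a matrix inversion associated with the kernel $K_0$. The forward direction is merely the definition of the $K$-convolution applied with $g \equiv \mathds{1}$, so I focus on the reverse implication.

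First I would observe that the defining identity $g(n) = \sum_{d|n} f(d) K_0(n,d)$ expresses $g$ as the image of $f$ under the lower-triangular (with respect to divisibility) operator whose matrix entries are $\mathcal{M}_{n,d} := K_0(n,d) \Iverson{d|n}$. The assumed invertibility of $\mathcal{K}_0$ guarantees that this operator is invertible, so $f$ is uniquely determined by $g$. The strategy is to realize this inverse as a composition of the M\"obius inversion operator with the inverse kernel $K_0^{-1}$.

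Next I would decompose the operator as $\mathcal{M} = \mathcal{K}_0 \circ Z$, where $Z_{n,d} = \Iverson{d|n}$ is the standard divisibility zeta matrix (whose inverse is the M\"obius matrix $Z^{-1}_{n,d} = \mu(n/d) \Iverson{d|n}$) and $\mathcal{K}_0$ acts entrywise via the kernel weight. Inverting the composition and translating back to summation notation yields
\[
f(n) = \sum_{d|n} K_0^{-1}(n, d) \, (g \ast \mu)(d) = \sum_{d|n} \sum_{r|d} g(r) \, \mu\!\left(\tfrac{d}{r}\right) K_0^{-1}(n, d),
\]
which is the claimed formula. The reverse direction is then verified by substituting $g(r) = \sum_{e|r} f(e) K_0(r,e)$ back into the right-hand side, swapping orders of summation to isolate the coefficient of each $f(e)$, and showing that this coefficient collapses to $\Iverson{e = n}$ by invoking the orthogonality relation between $K_0$ and $K_0^{-1}$ together with the identity $(\mu \ast \mathds{1})(m) = \varepsilon(m)$.

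The main obstacle will be the careful bookkeeping of the decomposition $\mathcal{M} = \mathcal{K}_0 \circ Z$ and its inversion: because the kernel weight $K_0(n,d)$ couples the row and column indices nontrivially, the two factors do not commute in general, and an appropriate compatibility condition of $K_0$ with the divisibility order is needed for the inverse to factor cleanly in the stated form. Once this structural compatibility is pinned down (so that $K_0^{-1}$ is unambiguously defined through $\mathcal{K}_0 \cdot K_0^{-1} = I$), the remaining verification reduces to routine Dirichlet convolution manipulations of the type employed throughout Section \ref{Section_LGFFactTheorems_PriorWork}.
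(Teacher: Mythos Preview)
Your proposal is correct and follows essentially the same route as the paper: both arguments view the transformation $g(n)=\sum_{d\mid n}f(d)K_0(n,d)$ as a matrix operator that factors as the kernel matrix $\mathcal{K}_0$ composed with the divisibility zeta matrix $T_{\operatorname{div}}$, and then invert the composition to obtain $f = \mathcal{K}_0^{-1}\cdot(\mu\ast g)$. Your caution about the compatibility condition needed for the factorization $\mathcal{M}=\mathcal{K}_0\circ Z$ to hold is well placed---the paper's proof glosses over this point and simply asserts the matrix product form, so your version is in fact slightly more careful on exactly the issue you flagged as the main obstacle.
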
 
\begin{proof}[Proof of Proposition \ref{prop_GenInversionForKCvl_v1}]
Notice that for fixed $N \geq 1$, with the $N \times N$ matrix 
$T_{\operatorname{div}}(N) := (\Iverson{j|i})_{1 \leq i,j \leq N}$ 
invertible via the M\"obius inversion theorem, we 
can write a matrix-vector product system of the form 
\[
\begin{bmatrix} f(1) \\ f(2) \\ \vdots \\ f(N)\end{bmatrix} = \mathcal{K}_0(N) T_{\operatorname{div}}(N) 
     \begin{bmatrix} g(1) \\ g(2) \\ \vdots \\ g(N)\end{bmatrix}. 
\]
By ordinary matrix inversion, the previous equation implies that for any $n \leq N$
\begin{align*}
     f(n) & = \sum_{d|n} \mu\left(\frac{n}{d}\right) \times \sum_{r=1}^d K_0^{-1}(d, r) g(r) \\ 
     & = \sum_{r=1}^{n} K_0^{-1}(n, r) \sum_{d|r} \mu\left(\frac{r}{d}\right) g(r). 
\end{align*}
Because we can restrict the non-trivial values of $\mathcal{K}_0^{-1}(N)$ to only those $(i,j)^{th}$ entries 
such that $j|i$, we obtain from the last equation that 
$f(n) = (K_0^{-1}(n, -) \ast \mu \ast g)(n)$ for all integers $n \geq 1$. 
\end{proof}

In Section \ref{Section_CanonicalReprsOfFactThms_KernelBasedDCVL}, 
we formulate properties that characterize a class of the most general expansions 
of weighted convolution type sums which we call $\mathcal{D}$-convolutions. 
Because the inversion relations we find in the divisor sum variants we have used to define the class of 
$K$-convlutions are simpler, and represented by products with the classical M\"obius function, 
we consider properties of the latter sums in this section before formalizing the general case 
constructions. 

\begin{theorem}[Factorization theorems for $K$-convolutions]
\label{theorem_InvertibleFactThmStmtsForKCvls_v1} 
For integers $n \geq 1$, let the triangular generating functions be defined as 
\[
\mathcal{H}_{K,n}(q) := \sum_{k \geq 1} K_0(n, k) q^{nk}. 
\]
Suppose that given a fixed $K$-convolution kernel pair 
$(K, K_0)$, and any arithmetic function $f$, 
we define the LGF analog for $K$-convolutions as 
\[
\mathcal{L}_{K,f}(q) := \sum_{n \geq 1} \mathcal{H}_{K,n}(q) f(n). 
\]
For all $n \geq 1$, $[q^n] \mathcal{L}_{K,f}(q) = (f \ast_K 1)(n)$. 
We define a factorized form of these generating functions in the form of 
\[
\mathcal{L}_{K,f}(q) := \prod_{j \geq 1} \mathcal{H}_{K,j}(q) \times 
     \sum_{n \geq 1} \left(\sum_{k=1}^n \hat{S}_K(n, k) f(k)\right) q^n, 
\]
so that we have the following results that for all integers $n \geq m \geq 1$: 
\begin{itemize} 
\item[(i)] $\hat{S}_K(n, m) = [q^m] \mathcal{H}_{K,m}(q) \times \prod_{j \geq 1} \mathcal{H}_{K,j}^{-1}(q)$; 
\item[(ii)] $\hat{S}_K^{-1}(n, m) = \sum\limits_{d|n} \sum\limits_{s|d} \mu\left(\frac{d}{s}\right) K_0(n, d) 
             \times [q^{s-r}] \prod\limits_{j \geq 1} \mathcal{H}_{K,j}$. 
\end{itemize}
\end{theorem}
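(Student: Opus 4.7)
The plan is to verify the basic generating function identity $[q^n]\mathcal{L}_{K,f}(q) = (f \ast_K 1)(n)$, then read off (i) directly from the factored form, and finally deduce (ii) by combining the formal inversion of that factorization with the generalized M\"obius inversion of Proposition \ref{prop_GenInversionForKCvl_v1}.

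First, I would expand $\mathcal{L}_{K,f}(q) = \sum_{m \geq 1} f(m) \sum_{k \geq 1} K_0(m,k) q^{mk}$ and pick off the coefficient of $q^n$, which collects precisely those $(m,k)$ with $mk = n$. Under the kernel symmetry $K_0(n,d) = K_0(n, n/d)$ imposed in Definition \ref{def_KCvlSumsDefs}, this recovers the divisor sum $\sum_{d|n} f(d)\,K_0(n,d) = (f \ast_K 1)(n)$ and justifies thinking of $\mathcal{L}_{K,f}(q)$ as the natural $K$-analog of a Lambert series.

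Second, to prove (i), I would equate the two expressions for $\mathcal{L}_{K,f}(q)$ in the statement, multiply through by $\prod_j \mathcal{H}_{K,j}^{-1}(q)$ (well defined as a formal power series as soon as each $\mathcal{H}_{K,j}$ has a non-vanishing lowest-order coefficient), and use linearity in $f$ to compare the coefficient of $f(m)$ on both sides. This gives $\sum_{n \geq m} \hat{S}_K(n,m)\, q^n \;=\; \mathcal{H}_{K,m}(q)\,\prod_{j \geq 1}\mathcal{H}_{K,j}^{-1}(q)$, after which extracting $[q^n]$ yields the claim (reading the $[q^m]$ in the stated formula as the intended $[q^n]$).

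Third, for (ii), I would invert the factorization at the level of coefficients. Setting $p_K(n) := [q^n]\prod_j \mathcal{H}_{K,j}(q)$ and $T_m := \sum_{k} \hat{S}_K(m,k) f(k)$, the factorization gives $(f \ast_K 1)(n) = \sum_{m=1}^{n} p_K(n-m)\,T_m$. Next I would apply Proposition \ref{prop_GenInversionForKCvl_v1} to the relation $(f \ast_K 1)(n) = \sum_{d|n} f(d)\,K_0(n,d)$, obtaining an expression of $f(n)$ as a double sum over $d|n$ and $s|d$ involving $\mu(d/s)$, the kernel (inverse), and $(f \ast_K 1)(s)$. Substituting the above expansion of $(f \ast_K 1)(s)$ in terms of the $T_m$ and reading off the coefficient of $T_m$ produces the proposed closed form for $\hat{S}_K^{-1}(n,m)$; in particular the exponent $s-r$ in the statement should be read as $s-m$.

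The main obstacle will be the kernel-bookkeeping in the final step: Proposition \ref{prop_GenInversionForKCvl_v1} naturally contributes $K_0^{-1}(n,d)$, whereas the theorem as stated displays $K_0(n,d)$, so I will need to either reconcile the two using the kernel symmetry together with the identity $(\mathcal{K}_0 \mathcal{K}_0^{-1})(n,m)=\delta_{n,m}$, or interpret the $K_0$ in the statement as denoting the inverse kernel entries. Modulo this convention and the minor typographical corrections ($[q^m] \to [q^n]$ and $s-r \to s-m$), both identities follow routinely once the factorization identity of step one is in hand.
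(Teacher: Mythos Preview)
Your proposal is correct and follows essentially the same route as the paper. The paper proves the coefficient identity and (i) exactly as you do; for (ii) it specializes $f(n) := \hat{S}_K^{-1}(n,r)$ so that the inner sums collapse to $\Iverson{m=r}$, giving $(\bar{f}\ast_K 1)(n)=[q^{n-r}]\prod_j \mathcal{H}_{K,j}(q)$ directly, and then applies Proposition~\ref{prop_GenInversionForKCvl_v1}. Your version keeps $f$ general and equates coefficients of $T_m$, which is the same argument with one extra (harmless) layer; your observations about the typos $[q^m]\to[q^n]$, $s-r\to s-m$, and $K_0$ versus $K_0^{-1}$ are all accurate and match what the paper's own proof actually yields.
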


\emph{Remarks on Theorem \ref{theorem_InvertibleFactThmStmtsForKCvls_v1}.} 
Some partition theoretic analogs that we should expect from the LGF factorization 
theorem cases appear in \cite[\S 14.10]{APOSTOLANUMT}. In general, the structure and exact combinatorial 
interpretation of the matrix coefficients, $\hat{s}_K(n, k)$, are more complicated at determined by the 
series coefficients of the products of the $\prod_{j \geq 1} \mathcal{H}_{K,j}(q)$. 
While we do not state a precise formula here, the analogs to the LGF factorization theorems for 
the Dirichlet convolutions $f \ast g$ with $g(1) \neq 0$ from 
Section \ref{Section_LSFactThms_CvlOfTwoFns_and_Apps}, 
must assume that the arithmetic function $g$ is invertible with respect to the operation of $K$-convolution. 
These generalized factorization theorem results, like the work proved in the preceeding section by Merca and Schmidt, 
involve expansions that are stated in terms of the inverse function of $g$ with respect to $K$-convolution. 
The prior LGF series cases we defined in 
Section \ref{Section_LGFFactTheorems_PriorWork}
correspond to choosing $K_0(n, k) \equiv 1$ for all $1 \leq k \leq n$. 
\begin{proof}[Proof of Theorem \ref{theorem_InvertibleFactThmStmtsForKCvls_v1}] 
To prove that the coefficients of $\mathcal{L}_{K,f}(n)$ are given by the $K$-concolution formula 
stated above, we consider the following expansions for any fixed $n \geq 1$: 
\begin{align*}
[q^n] \mathcal{L}_{K,f}(n) & = \sum_{j \geq 1} f(j) \times [q^n] \mathcal{H}_{K,n}(q), \\ 
     & = \sum_{j \geq 1} f(j) \left(\sum_{\substack{m \geq 1 \\ mj=n}} K_0(n, m)\right), \\ 
     & = \sum_{j|n} f(j) K_0\left(n, \frac{n}{j}\right), \\ 
     & = \sum_{j|n} f(j) K_0(n, j). 
\end{align*} 
The expansion in (i) follows from 
\begin{align*}
\hat{S}(n, m) & = [q^n][f(m)] \mathcal{L}_{K,f}(q) \times \prod_{j \geq 1} \mathcal{H}_{K,j}(q)^{-1}, \\ 
     & = [q^n] \mathcal{H}_{k,m} \times \prod_{j \geq 1} \mathcal{H}_{K,j}(q)^{-1}. 
\end{align*} 
The formula in (ii) is somewhat more complicated to show. For any fixed $r \geq 1$ and $n \geq r$, 
let the function $\bar{f}(n) := \hat{S}^{-1}_K(n, r)$. 
By orthogonality relations on the right multiplication of a matrix by its inverse, we see that 
\[
\sum_{k=1}^{n} \hat{S}_K(n, k) \bar{f}(k) = \Iverson{n = r}, n \geq 1. 
\]
The last equation implies that 
\begin{align*} 
(\bar{f} \ast_{K} 1)(n) & = \sum_{d|n} \hat{S}^{-1}_K(d, r) K_0(n, d), \\ 
     & = [q^n] \mathcal{L}_{K,\bar{f}}(q) 
     = [q^n] q^r \times \prod_{j \geq 1} \mathcal{H}_{K,j}(q). 
\end{align*} 
The inversion theorem in Proposition \ref{prop_GenInversionForKCvl_v1} 
applied to the local functions $f$ and $g$ given by 
$f(n) \equiv \bar{f}(n) = \hat{S}^{-1}(n, r)$ and $g(n) \equiv [q^{n-r}] \prod_{j \geq 1} \mathcal{H}_{K,j}(q)$ 
implies the formula in (ii) correct. 
\end{proof}

\SubsectionGTThesisFormatted{Topelitz matrix constructions to express discrete convolution sum types}

\nocite{MATRIX-COOKBOOK,TOPELITZ-GRAY}

\begin{definition}[Topelitz matrices]
For any sequence $\{t_n\}_{n \geq 1} \subseteq \mathbb{C}$ such that $t_1 \neq 0$, we define its 
associated $N \times N$ \emph{Topelitz matrix} to be the lower triangular operator 
\[
T_N \equiv T_N(t) := \begin{bmatrix} t_1 & 0 & \cdots & 0 \\ t_2 & t_1 & \cdots & 0 \\ 
          \vdots & & \ddots \\ 
          t_N & \cdots & & t_1
     \end{bmatrix} 
\]
Topelitz matrices define a structure by which we can express the discrete convolution of any 
two sequences (arithmetic functions) via a matrix-vector product. 
Namely, we have that for any $N \times 1$ vector $\vec{g} := (g_1, \ldots, g_N)^{T}$ 
\[
\left(T_N(f) \cdot \vec{g}\right)_{n} = \sum_{k=1}^{n} f_k g_{n+1-k}. 
\]
\end{definition} 

\begin{example} 
\label{example_KCvlSumsExpressedByTopelitzProductConstructions} 
The $K$-convolution sums from 
Definition \ref{def_KCvlSumsDefs} 
are naturally and concisely expressed by Topelitz matrices in the form of 
\[
\left(\mathcal{K} \cdot T_N(f) \cdot \vec{g}\right)_n = 
     \sum_{d|n} f(d) g\left(\frac{n}{d}\right) K_0(n, d), 
\]
where for $K_0$ an invertible tranformation, we have that 
$\mathcal{K}^{-1} = \left(\mu\left(\frac{n}{d}\right) \Iverson{d|n}\right)_{1 \leq n,d \leq N} \cdot \mathcal{K}_0^{-1}$. 
Note that we have the special case of the Topelitz matrix for the constant unitary function 
given by $T_N(1) = (\Iverson{i \leq j})_{1 \leq i,j \leq N}$ and such that its 
inverse is expressed by shift matrices: 
\[
T_N(1)^{-1} \equiv \left(I-U^T\right)^{-1} = 
     (\delta_{i,j} - \delta_{i+1,j})_{1 \leq i,j \leq N}, N \geq 1. 
\]
\end{example}

\begin{remark}[M\"obius inversion formulas on posets]
It happens that we can draw comparisons with these summation types, their 
corresponding generating factorizations by invertible matrices, and other inversion identities to 
constructions that arise in \emph{incidence alegbras}, of which the classically defined 
$\mu(n)$ is the special case where $\mu(n,d) = \mu\left(\frac{n}{d}\right)$. 
We recall that Rota studied 
generalized forms of the M\"obius function for posets $\mathcal{P}$ endowed with a 
partial ordering $\leq$ \cite{ROTA}. Within this context, we define 
\[
\mu(s,s) = 1, \forall s \in \mathcal{P}, \mu(s,u) = -\sum_{s \leq t < u} \mu(s,t), \forall s<u \in \mathcal{P}, 
\]
where we have an inversion relation for all $t \in \mathcal{P}$ of the following form:
\[
g(t) = \sum_{s \leq t} f(s) \iff f(t) = \sum_{s \leq t} g(s) \mu(s,t). 
\]
\end{remark}

\SubsectionGTThesisFormatted{Functional equations for generating functions of triangular sequences} 

\begin{example}[The Binomial transform]
Suppose that we have a sequence, $\{f_n\}_{n \geq 0}$, and its (formal) 
\emph{ordinary generating function} (OGF) is given by $F(z) := \sum_{n \geq 0} f_n z^n$. 
Then a standard method for generating the summatory functions of the $f_n$ is to scale by a 
factor of the geometric series as 
\[
[z^n] \frac{F(z)}{1-z} = \sum_{n=0}^{x} f_n. 
\]
In fact, we can actually go farther with a so-called \emph{binomial transform} of generating functions 
to express 
\[
[z^n] \frac{F\left(-\frac{z}{1-z}\right)}{1-z} = \sum_{n=0}^{x} \binom{x}{n} (-1)^{n} f_n. 
\]
Other generating function transformations can be used to generate finite sums of a sequence scaled 
by another lower triangular sequence, such as the Stirling numbers using the 
\emph{Stirling transform} to note, in a similar manner. 
\end{example}

\begin{prop}
\label{prop_ClassicalFuncOGFEGFEqns_InitSurveyOfSpCases_v1}
Fix any sequence $\{f_n\}_{n \geq 1}$ and some lower triangular seguence $g_{n,k}$. 
Let the associated sequence of sums be defined by 
\[
S_{f}[g](n) := \sum_{k=1}^{n} g_{n,k} f_k, n \geq 1.
\]
Let the column generating functions of $g$ be defined as 
absolutely convergent series when $|q| < \sigma_{g,a}$ for some $\sigma_{g,a} > 1$ 
for integers $k \geq 1$ as follows:
\[
G_k(q) := \sum_{n \geq k} g_{n,k} q^n. 
\]
Suppose that the OGF of $S_f[g](n)$ is given by 
$$\widetilde{S}_f[g](q) := \sum_{n \geq 1} S_f[g](n) q^n,$$ 
and that $F(q), \widehat{F}(q)$ denote the OGF and EGF of $\{f_n\}_{n \geq 0}$, respectively. 
We have the following formulas for special case series types that provide 
explicit relations between these OGFs:
\begin{itemize}
\item[(A)] If $G_k(q) = H_1(q) H_2(q)^{k}$ for some fixed functions $H_i(q)$, then 
           \[
           \widetilde{S}_f[g](q) = H_1(q) H_2(2)\left(F\left(H_2(q)\right) - f_0\right). 
           \]
\item[(B)] If $G_k(q) = H_3(q) \frac{H_4(q)^k}{k!}$ for some component functions $H_i(q)$, 
           then 
           \[
           \widetilde{S}_f[g](q) = H_3(q) \left(\widehat{F}\left(H_4(q)\right) - f_0\right). 
           \]
\end{itemize} 
\end{prop}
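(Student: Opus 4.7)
The plan is to recognize that the proposition asserts a double-sum identity which, once reorganized by Fubini, reduces to the defining power series identities for the OGF $F$ and EGF $\widehat{F}$ of $\{f_n\}_{n \geq 0}$. The only real content is the switch of summation order, which is legitimate either formally in $\mathbb{C}[\![q]\!]$ (each coefficient of $q^n$ in $\widetilde{S}_f[g](q)$ is a finite sum) or analytically in the disc $|q| < \sigma_{g,a}$ where the column generating functions $G_k(q)$ converge absolutely.

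First I would rewrite the left-hand side by collecting contributions according to $k$ rather than $n$:
\[
\widetilde{S}_f[g](q) \;=\; \sum_{n \geq 1} q^n \sum_{k=1}^{n} g_{n,k} f_k \;=\; \sum_{k \geq 1} f_k \sum_{n \geq k} g_{n,k} q^n \;=\; \sum_{k \geq 1} f_k\, G_k(q).
\]
This is the pivot step, and it works because the triangularity of $g_{n,k}$ (vanishing for $k > n$) means that each $q$-coefficient on the two sides is a finite sum of the same terms. I would note here that, in the analytic regime, absolute convergence of $G_k(q)$ for $|q| < \sigma_{g,a}$ justifies the unconditional interchange, whereas in the formal regime the identity holds term by term in $\mathbb{C}[\![q]\!]$.

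Next I would substitute the hypothesized factored forms into the last display. For case (A), plugging $G_k(q) = H_1(q) H_2(q)^{k}$ pulls $H_1(q)$ outside and leaves
\[
\sum_{k \geq 1} f_k\, H_2(q)^{k} \;=\; F\bigl(H_2(q)\bigr) - f_0,
\]
since $F(z) = \sum_{n \geq 0} f_n z^n$ and we have simply removed the $k=0$ term. This yields $\widetilde{S}_f[g](q) = H_1(q)\bigl(F(H_2(q)) - f_0\bigr)$, matching the claimed identity (the factor $H_2(q)$ in the statement appears to be a typo; regardless, the derivation is mechanical). Case (B) is parallel: substituting $G_k(q) = H_3(q)\, H_4(q)^k / k!$ and using the EGF defining series $\widehat{F}(z) = \sum_{n \geq 0} f_n z^n / n!$ gives
\[
\widetilde{S}_f[g](q) \;=\; H_3(q) \sum_{k \geq 1} \frac{f_k H_4(q)^k}{k!} \;=\; H_3(q)\bigl(\widehat{F}(H_4(q)) - f_0\bigr).
\]

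There is no genuine obstacle here; the only point meriting care is the composition $F \circ H_2$ (respectively $\widehat{F} \circ H_4$) as a formal power series. This requires that $H_2(0) = 0$ (or $H_4(0)=0$) so that the composed series is well-defined with finite coefficients, or alternatively that the substitution lies inside the disc of convergence of $F$ (resp.\ $\widehat{F}$); in the analytic reading, the stated growth hypothesis $|q| < \sigma_{g,a}$ with $\sigma_{g,a} > 1$, together with the product form of $G_k$, forces the geometric-type decay $|H_2(q)|<1$ that makes $F(H_2(q))$ converge whenever $F$ does. I would make this standing assumption explicit at the start of the proof and otherwise treat the computation as a direct verification.
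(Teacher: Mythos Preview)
Your proof is correct and follows exactly the approach the paper takes: the paper's own proof simply says that the claims are ``nearly trivial'' once one interchanges the order of summation (justified by absolute convergence of the $G_k(q)$) and then sums the resulting geometric or exponential series. Your write-up is a fleshed-out version of precisely this argument, and your observation about the stray $H_2(2)$ factor being a typo is accurate.
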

\begin{proof}
The proofs are nearly trivial provided that the OGFs $G_k(q)$ are absolutely convergent 
so that we can interchange the order of summation. With this assumption, each respective claim 
follows by summing a geometric or exponential series. 
\end{proof}
     
\begin{definition}
The \emph{Hadamard product} of two ordinary generating functions $F(q)$ and $G(q)$, respectively 
enumerating the sequences of $\{f_n\}_{n \geq 0}$ and $\{g_n\}_{n \geq 0}$ is defined by 
\begin{align*}
(F \circ G)(q) & := \sum_{n \geq 0} f_n g_n q^n,\ \text{ for $|q| < \sigma_{c,f} \sigma_{c,g}$, } 
\end{align*} 
where $\sigma_{c,f}$ and $\sigma_G{c,g}$ 
denote the radii (abscissa) of convergence of each respective generating function. 
Analytically, we have an integral formula and corresponding coefficient extraction formula 
for the Hadamard product of two generating functions when $F(q)$ is expandable in a 
fractional (Pusieux) series 
respectively given by \cite[\S 1.12(V); Ex.\ 1.30, p.\ 85]{ADVCOMB} \cite[\S 6.3]{ECV2} 
\begin{align*} 
(F \circ G)(q^2) & = \frac{1}{2\pi} \int_0^{\pi} F(q e^{\imath t}) G(q e^{-\imath t}) dt \\ 
(F \circ G)(q) & = [x^0] F\left(\frac{q}{x}\right) G(x). 
\end{align*}
\end{definition}

\begin{theorem}[An integral formula for generalized sums]
\label{theorem_IntFormula_HProdForSummationTypeOGFs_v2}
We adopt the notation for the summation and column type OGFs from 
Proposition \ref{prop_ClassicalFuncOGFEGFEqns_InitSurveyOfSpCases_v1}. 
Let the bivariate generating function 
\[
G(w, q) := \sum_{k \geq 1} G_k(q) w^k. 
\]
We have that 
\[
S_f[g](n) = [q^n] \left(\frac{1}{2\pi} \int_{-\pi}^{\pi} F\left(\sqrt{q} e^{\imath t}\right) 
     G(1, \sqrt{q} e^{-\imath t}) dt\right). 
\]
\end{theorem}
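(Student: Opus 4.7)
The plan is to verify the identity via a Hadamard product interpretation combined with Fourier orthogonality. First I would interchange the order of summation: under the standing hypothesis that the column generating functions $G_k(q)$ converge absolutely on $|q| < \sigma_{g,a}$ with $\sigma_{g,a} > 1$, one has
\begin{align*}
\widetilde{S}_f[g](q) & = \sum_{n \geq 1} \left(\sum_{k=1}^n g_{n,k} f_k\right) q^n = \sum_{k \geq 1} f_k \sum_{n \geq k} g_{n,k} q^n = \sum_{k \geq 1} f_k \, G_k(q).
\end{align*}
So the ordinary generating function of the sums $S_f[g](n)$ is precisely the Hadamard (coefficient-wise) product in the auxiliary variable $w$ between $F(w) = \sum_n f_n w^n$ and $G(w, q) = \sum_k G_k(q) w^k$, evaluated after setting the Hadamard variable to an appropriate value.

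Second, I would apply the classical contour-integral formula for Hadamard products cited earlier in the excerpt (from \cite{ADVCOMB} and \cite{ECV2}), namely
\[
\sum_{k \geq 0}\bigl([w^k] F(w)\bigr)\bigl([w^k] H(w)\bigr) u^{2k} = \frac{1}{2\pi} \int_{-\pi}^{\pi} F(u e^{\imath t}) \, H(u e^{-\imath t}) \, dt,
\]
with $H(w) := G(w, q)$. The parameterization $u = \sqrt{q}$ absorbs the $q$-dependence into the contour argument so that the coefficient extraction operator $[q^n]$ applied to the resulting integral selects exactly the finite sum $\sum_{k=1}^n g_{n,k} f_k$.

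Third, I would verify the identity explicitly by substituting the bivariate Taylor expansions $F(\sqrt{q} e^{\imath t}) = \sum_{n \geq 0} f_n q^{n/2} e^{\imath n t}$ and $G(1, \sqrt{q} e^{-\imath t}) = \sum_{k \geq 1} \sum_{m \geq k} g_{m,k} q^{m/2} e^{-\imath m t}$, multiplying the two series, and applying the Fourier orthogonality relation
\[
\frac{1}{2\pi} \int_{-\pi}^{\pi} e^{\imath (n - m) t} \, dt = \delta_{n,m}
\]
to collapse the double sum onto the diagonal. Coefficient extraction via $[q^n]$ then retrieves $S_f[g](n)$ as claimed.

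The principal obstacle is justifying the termwise interchange of series summation with the contour integral in the third step. Absolute convergence of the relevant double series must be established on the circle $|w| = \sqrt{|q|}$, which should follow from the hypothesis $\sigma_{g,a} > 1$ together with analyticity of $F$ on a disk properly containing the unit disk; a Weierstrass $M$-test on $[-\pi,\pi]$ supplies the uniform bound needed to invoke Fubini's theorem. Once the interchange is licensed, the orthogonality and coefficient extraction are routine, and the identity extends from a neighborhood of the origin in $q$ either by analytic continuation or as a formal power series identity.
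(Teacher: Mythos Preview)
Your approach via the Hadamard product integral is exactly what the paper intends; its own proof consists of the single sentence ``The proof of this theorem is trivial,'' so in spirit you have supplied the details the paper omits.

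However, your step 3 does not actually produce $S_f[g](n)$, and this is a genuine gap. Carry the computation through: with $F(\sqrt{q}\,e^{\imath t}) = \sum_{n\ge 0} f_n\, q^{n/2} e^{\imath n t}$ and $G(1,\sqrt{q}\,e^{-\imath t}) = \sum_{k\ge 1}\sum_{m\ge k} g_{m,k}\, q^{m/2} e^{-\imath m t}$, the orthogonality relation forces $n = m$, not $n = k$. After integrating and extracting $[q^n]$ you obtain
\[
f_n \sum_{k=1}^{n} g_{n,k},
\]
which is the diagonal term $f_n \cdot [q^n] G(1,q)$, not the weighted sum $\sum_{k=1}^{n} g_{n,k} f_k = S_f[g](n)$. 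These coincide only in degenerate cases (e.g.\ $g_{n,k}$ supported on $k=n$). The issue is that placing $\sqrt{q}\,e^{-\imath t}$ in the \emph{second} slot of $G$ makes the Hadamard pairing act on the $q$-exponent of $G_k$, whereas to recover $\sum_k f_k G_k(q)$ the pairing must act on the index $k$, i.e.\ on the $w$-variable. A version that does yield $\widetilde{S}_f[g](q)$ is
\[
\widetilde{S}_f[g](q) \;=\; \frac{1}{2\pi}\int_{-\pi}^{\pi} F\!\left(e^{\imath t}\right) G\!\left(e^{-\imath t},\, q\right)\, dt,
\]
under suitable convergence of $F$ on the unit circle. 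So either the stated identity carries a typo in the arguments of $G$, or your verification needs to be redone with the Hadamard variable taken to be $w$ rather than $q$; as written, the orthogonality step collapses onto the wrong diagonal.
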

\begin{proof}
The proof of this theorem is trivial. 
\end{proof} 

\begin{example} 
Suppose that we set $g_{n,k} := \Iverson{(n,k)=1} \Iverson{n \geq k}$. 
We have seen that 
\[
G_k(q) = q^k \times \sum_{d|k} \frac{\mu(d)}{1-q^{d}}. 
\]
Theorem \ref{theorem_IntFormula_HProdForSummationTypeOGFs_v2} implies that the sums 
\[
S_f[g](n) := \sum_{\substack{d \leq n \\ (d,n)=1}} f(d), 
\]
are generated as the coefficients of $q^n$ in the expansion 
\cite[\S 1]{TAOCPV1} 
\begin{align*}
\frac{1}{2\pi} \times \int_{-\pi}^{\pi} & \left(\sum_{n,k \geq 1} \sum_{d|k} 
     \frac{\mu(d) f_n \sqrt{q}^{n+k}}{1-\sqrt{q}^d e^{-\imath dt}} e^{\imath (n-k) t}\right) dt \\ 
     & = 
     \sum_{\substack{n,k \geq 1 \\ m \geq 0}} \sum_{d|k} \mu(d)) f_n \Iverson{n=md+k} \\ 
     & = \sum_{k \geq 1} \sum_{0 \leq m < k} \sum_{d|(k,k-m)} \mu(d) \times \sum_{n \geq 1} 
     f_{nk+m} q^{nk+m} \\ 
     & = 
     \sum_{k \geq 1} \sum_{\substack{1 \leq m \leq k \\ (m, k)=1}} \sum_{0 \leq r < k} 
     F\left(q e^{\frac{2\pi\imath r}{k}}\right) \frac{e^{\frac{2\pi\imath m}{k}}}{k}. 
\end{align*} 
This argument also shows that 
\[
\phi_m(n) = \sum_{1 \leq k \leq n} k^m\left(\sum_{d|(k,n-k)} \mu(d)\right), n \geq 1. 
\]
More generally, for any arithmetic function $f$ we have that 
\[
\sum_{\substack{d \leq n \\ (d,n)=1}} f(d) = \sum_{1 \leq k \leq n} f(k)\left(\sum_{d|(k,n-k)} \mu(d)\right), n \geq 1. 
\]
\end{example}

\SubsectionGTThesisFormatted{Definitions of generalized kernel-based discrete convolution type sums} 
\label{subSection_DCvls_v1}

For a bivariate kernel function 
$\mathcal{D}: \mathbb{N}^2 \rightarrow \mathbb{C}$, 
we define the next class of convolution type sums, or $\mathcal{D}$-convolution type sums, 
according to the formula 
\begin{equation}
\label{eqn_DCvlSummationBasedExpDefs_restated_v2} 
(f \boxdot_{\mathcal{D}} g)(n) := \sum_{k=1}^{n} f(k) g(n+1-k) \mathcal{D}(n, k), n \geq 1. 
\end{equation} 
In the section ahead, we are able to connect these 
special convolution type sums that form a widely reaching class of applications through 
particular specializations of the $(f, g; \mathcal{D})$. 

\begin{definition}
We say that a kernel, or weight function $\mathcal{D}: (\mathbb{Z}^{+})^2 \rightarrow \mathbb{C}$ is 
\emph{lower triangular} if $\mathcal{D}(n, k) = 0$ for all $k > n \geq 1$. 
We say that this kernel function is invertible provided that 
\[
\det\left[(\mathcal{D}(n,k))_{1 \leq n,k \leq N}\right] \neq 0, \forall N \geq 1. 
\]
Suppose that $\mathcal{D}$ is an invertible, lower triangular kernel function, 
and that the arithmetic function $g$ is invertible with respect to 
$\mathcal{D}$-convolution, i.e., defined such that $g(1) \neq 0$. Then we can express 
the OGF of these sums according to the following parameterized 
invertible matrix based factorizations for $n \geq 1$: 
\begin{equation}
\label{eqn_DCvlGenFactThmExp_restated_v2} 
(f \boxdot_{\mathcal{D}} g)(n) = [q^n] 
     \sum_{n \geq 1} \left(\sum_{k=1}^{n} s_{n,k}(g; \mathcal{C},\mathcal{D}) f(k) 
     \right) \frac{q^n}{\mathcal{C}(q)}, 
     \text{\ for\ } \mathcal{C}(0) \neq 0. 
\end{equation}
\end{definition}

\begin{prop}[Inversion]
Suppose that $\mathcal{D}$ is a kernel function for a convolution sequence defined in 
\eqref{eqn_DCvlSummationBasedExpDefs_restated_v2} 
that is both invertible and satisfies $\mathcal{D}(n,n) \neq 0$ for all $n \geq 1$. 
Let the corresponding lower triangular sequence of entries for its inverse matrix be denoted by 
$\mathcal{D}^{-1}(n,k)$. 
Then for any $n \geq 1$ we have that 
\[
g(n) = (f \boxdot_{\mathcal{D}} 1)(n) \iff f(n) = \sum_{k=1}^{n} g(k) \mathcal{D}^{-1}(n,k). 
\]
\end{prop}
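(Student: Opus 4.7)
The plan is to recognize the ``if'' direction as the assertion that $\mathcal{D}$-convolution with the constant function $1$ acts on $f$ as left-multiplication by the lower triangular matrix $\mathbf{D}_N := (\mathcal{D}(n,k))_{1 \leq n,k \leq N}$ for every truncation $N \geq 1$. Concretely, since $g \equiv 1$ gives $g(n+1-k) = 1$ for all $1 \leq k \leq n$, the defining sum in \eqref{eqn_DCvlSummationBasedExpDefs_restated_v2} collapses to
\[
(f \boxdot_{\mathcal{D}} 1)(n) = \sum_{k=1}^{n} f(k) \mathcal{D}(n, k),
\]
which is exactly the $n^{th}$ entry of the matrix-vector product $\mathbf{D}_N \cdot (f(1), \ldots, f(N))^{T}$ whenever $N \geq n$. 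So the forward implication is a tautology after unwinding the convolution definition when $g \equiv 1$.

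Next I would invert this system. Since $\mathcal{D}$ is lower triangular and the diagonal entries $\mathcal{D}(n,n)$ are all nonzero, each finite principal submatrix $\mathbf{D}_N$ is a triangular matrix with nonzero diagonal, hence invertible with lower triangular inverse $\mathbf{D}_N^{-1}$. The hypothesis that $\mathcal{D}^{-1}(n,k)$ denotes the entries of this inverse matrix is well-posed because the $(n,k)$ entry of $\mathbf{D}_N^{-1}$ is independent of the truncation size $N$ for any $N \geq n$: this follows from the standard fact that the inverse of a block-lower-triangular matrix has block-lower-triangular form whose upper-left block agrees with the inverse of the corresponding upper-left block of the original, applied with blocks of sizes $n$ and $N-n$. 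Thus the formula $f(n) = \sum_{k=1}^n g(k) \mathcal{D}^{-1}(n,k)$ is unambiguous and is precisely the $n^{th}$ row of $\vec{f} = \mathbf{D}_N^{-1} \vec{g}$.

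Finally I would close the equivalence by verifying both directions via a brief orthogonality argument: substituting the forward expression for $g(n)$ into the proposed formula for $f(n)$ yields
\[
\sum_{k=1}^{n} \mathcal{D}^{-1}(n,k) \sum_{j=1}^{k} \mathcal{D}(k,j) f(j) = \sum_{j=1}^{n} f(j) \sum_{k=j}^{n} \mathcal{D}^{-1}(n,k) \mathcal{D}(k,j) = \sum_{j=1}^{n} f(j) \delta_{n,j} = f(n),
\]
after swapping the order of summation (justified because the sums are finite) and applying the defining relation $\mathbf{D}_N^{-1} \mathbf{D}_N = I_N$. The reverse composition is analogous. There is essentially no hard step here: the entire content is a restatement of the invertibility of triangular matrices with nonzero diagonal, packaged for the specific convolution $f \boxdot_{\mathcal{D}} 1$. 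The only mild subtlety worth explicit mention in the write-up is why the notation $\mathcal{D}^{-1}(n,k)$ makes sense independent of the finite truncation, which I would address in one sentence as above.
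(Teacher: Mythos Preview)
Your proposal is correct and follows essentially the same approach as the paper: recognize that $(f \boxdot_{\mathcal{D}} 1)(n) = \sum_{k=1}^n f(k)\mathcal{D}(n,k)$ is the $n^{th}$ entry of the matrix-vector product $\mathcal{D}(N)\cdot\vec{f}$, then invert the triangular matrix. The paper's proof is a two-line version of exactly this; your write-up is more thorough (addressing truncation-independence of $\mathcal{D}^{-1}(n,k)$ and spelling out the orthogonality check), but there is no difference in method.
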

\begin{proof}
By construction, we suppose that $1 \leq N < +\infty$ and that we have 
two $N$-dimensional vectors, $\vec{f} := [f(1), \ldots, f(N)]^{T}$ and 
$\vec{g} := [g(1), \ldots, g(N)]^{T}$. 
It follows that 
\[
\vec{g} = \mathcal{D}(N) \cdot \vec{f} \implies 
     \vec{f} = \mathcal{D}(N)^{-1} \cdot \vec{g}, 
\]
where $\mathcal{D}(N) := (\mathcal{D}(n, k))_{1 \leq n,k \leq N}$. 
\end{proof}

Suppose that $\mathcal{D}(n, k)$ is an invertible, lower triangular kernel function. 
We say that an arithmetic function $g$ is invertible with respect to $\mathcal{D}$-convolution 
if there exists a (left) inverse function $g^{-1}[\mathcal{D}](n)$ such that for all 
integers $n \geq 1$ we have that $(g^{-1}[\mathcal{D}] \boxdot_{\mathcal{D}} g)(n) = \delta_{n,1}$. 
We can restrict ourselves to the cases where we take $\mathcal{D}$ to be 
\emph{symmetric} with respect to $\mathcal{D}$-convolution:
That is, where we have that $\mathcal{D}(n, k) = \mathcal{D}(n, n+1-k)$ for all $1 \leq k \leq n$. 
In these cases, we have that the left and corresponding right inverse functions of any $g$ with respect to 
$\mathcal{D}$-convolution are identical when they exist.

\begin{prop}[Inverses with respect to $\mathcal{D}$-convolution] 
An arithmetic function $g$ is invertible with respect to $\mathcal{D}$-convolution for a fixed invertibly 
lower triangular and symmetric kernel $\mathcal{D}$ if and only if 
$g(1) \neq 0$. When the function $g^{-1}[\mathcal{D}](n)$ exists, it is unique, and 
can be computed exactly by recursion via the following formula: 
\[
g^{-1}[\mathcal{D}](n) = \begin{cases} 
     \frac{1}{\mathcal{D}(1, 1) g(1)}, & n = 1; \\ 
     -\frac{1}{\mathcal{D}(n, n) g(1)} \times \sum\limits_{1 \leq k < n} 
     g^{-1}[\mathcal{D}](k) g(n+1-k) \mathcal{D}(n, k), & n \geq 2. 
\end{cases} 
\]
Moreover, provided that $g^{-1}[\mathcal{D}]$ exists, we have that 
\[
g^{-1}[\mathcal{D}](n) = \mathcal{D}^{-1}(n, 1) \times 
     [q^{n-1}]\left(\sum_{n \geq 0} g(n+1) q^n\right)^{-1}, n \geq 1. 
\]
\end{prop}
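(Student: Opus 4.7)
The plan is to break the proposition into three parts: the iff characterization of invertibility, the recursive formula for $g^{-1}[\mathcal{D}](n)$, and the closed-form expansion. Each part follows from looking at the equation $(g^{-1}[\mathcal{D}] \boxdot_{\mathcal{D}} g)(n) = \delta_{n,1}$ under progressively more structured viewpoints.

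For the iff together with the recursion, I would first evaluate the convolution identity at $n=1$, which collapses to the single term $g^{-1}[\mathcal{D}](1) \cdot g(1) \cdot \mathcal{D}(1,1) = 1$. Since $\mathcal{D}$ is invertibly lower triangular we have $\mathcal{D}(1,1) \neq 0$, so this equation is solvable precisely when $g(1) \neq 0$, which proves necessity and simultaneously fixes the base case of the recursion. For $n \geq 2$, I would isolate the $k=n$ term in $(g^{-1}[\mathcal{D}] \boxdot_{\mathcal{D}} g)(n) = 0$. That top term has the coefficient $g(1) \mathcal{D}(n,n)$, and both factors are nonzero by hypothesis; solving for $g^{-1}[\mathcal{D}](n)$ gives precisely the stated recursive formula. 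This also yields uniqueness for free: the value at each $n$ is forced by the values at $1, 2, \ldots, n-1$, so the recursion determines $g^{-1}[\mathcal{D}]$ completely and consistently produces a function satisfying the inverse identity.

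For the closed-form formula, I would rewrite the defining identity as a lower triangular linear system. Let $T_{g,\mathcal{D}}$ denote the $N \times N$ matrix whose $(n,k)$-entry is $g(n+1-k)\mathcal{D}(n,k)$ for $k \leq n$ and zero otherwise. Then $(g^{-1}[\mathcal{D}] \boxdot_{\mathcal{D}} g)(n) = \delta_{n,1}$ is exactly $T_{g,\mathcal{D}}\,\vec{h} = \vec{e}_1$, so $g^{-1}[\mathcal{D}](n)$ is the $(n,1)$-entry of $T_{g,\mathcal{D}}^{-1}$. The matrix $T_{g,\mathcal{D}}$ is the Hadamard product of the Toeplitz matrix $T_N(g(\cdot+1))$ coming from the sequence $\{g(n+1)\}_{n\geq 0}$ (whose inverse generates the formal power series reciprocal of $G(q) := \sum_{n\geq 0} g(n+1) q^n$) with the kernel matrix $\mathcal{D}$. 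The strategy is to factor the first-column action so that $\mathcal{D}^{-1}$ accounts for the kernel weighting while the formal power series inversion of $G(q)$ handles the Toeplitz part. I would verify the claimed product by checking that the right-hand-side function $n \mapsto \mathcal{D}^{-1}(n,1)\,[q^{n-1}] G(q)^{-1}$ satisfies the same recursion and initial value produced in paragraph one, and then invoke the uniqueness already established to conclude.

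The main obstacle is the final factorization step: the Hadamard product of a Toeplitz matrix with a general lower triangular matrix does not have a simple inverse in closed form, so the identification $T_{g,\mathcal{D}}^{-1}(n,1) = \mathcal{D}^{-1}(n,1)\,[q^{n-1}]G(q)^{-1}$ is not a formal matrix identity but a claim specific to the first column. I expect the cleanest route is to avoid attacking the Hadamard inverse head-on and instead verify the formula inductively by plugging the candidate into the recursion derived earlier, using the column relation $\sum_{k} \mathcal{D}(n,k)\,\mathcal{D}^{-1}(k,1) = \delta_{n,1}$ together with the defining identity of the formal inverse power series to collapse the cross terms. If this verification fails in full generality then the formula must be read as requiring additional hypotheses on $\mathcal{D}$ (for instance a separability or symmetry condition), and the plan would then be to isolate the minimal structural assumption under which the factorization becomes valid.
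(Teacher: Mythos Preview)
Your treatment of the invertibility criterion and the recursion is the same as the paper's: evaluate $(g^{-1}[\mathcal{D}]\boxdot_{\mathcal{D}} g)(n)=\delta_{n,1}$ at $n=1$ to get necessity and the base case, then isolate the $k=n$ term for $n\ge 2$ to obtain the recursion and uniqueness. No issues there.

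Where you diverge from the paper is on the closed-form formula, and your divergence is in fact more honest than the paper's argument. The paper sets up the same system $A_{\mathcal{D},g}\vec f=\vec e_1$ with $A_{\mathcal{D},g}=(\mathcal{D}(n,k)g(n+1-k))_{n,k}$, but then asserts the ordinary matrix-product factorization $A_{\mathcal{D},g}=\mathcal{D}(N)\cdot T_g$ and inverts. You correctly observe that $A_{\mathcal{D},g}$ is the \emph{Hadamard} product of $\mathcal{D}(N)$ with the Toeplitz matrix, not the ordinary product, so that factorization is not available in general.

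Your instinct that the inductive verification might fail, and that extra hypotheses on $\mathcal{D}$ may be needed, is right on target. A direct check at $n=2$ already shows the problem: the recursion gives
\[
g^{-1}[\mathcal{D}](2)=-\frac{\mathcal{D}(2,1)\,g(2)}{\mathcal{D}(1,1)\mathcal{D}(2,2)\,g(1)^2},
\]
whereas the closed-form formula gives $\mathcal{D}^{-1}(2,1)\cdot\bigl(-g(2)/g(1)^2\bigr)=+\dfrac{\mathcal{D}(2,1)\,g(2)}{\mathcal{D}(1,1)\mathcal{D}(2,2)\,g(1)^2}$, off by a sign even under the symmetry hypothesis $\mathcal{D}(2,1)=\mathcal{D}(2,2)$. (This also disagrees with the paper's own symbolic table of $g^{-1}[\mathcal{D}](n)$.) So the closed-form identity as stated is not correct without further constraints on $\mathcal{D}$, the paper's matrix-product step is the source of the error, and the inductive verification you planned would indeed not go through. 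Your fallback plan of isolating a structural condition on $\mathcal{D}$ under which the first-column factorization holds is the right way forward.
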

\begin{proof}
Fix any arithmetic function $g$ with $g(1) \neq 0$. 
The recursive formula follows by a rearrangement of the terms in the 
equation $(g^{-1}[\mathcal{D}] \boxdot_{\mathcal{D}} g)(n) = \delta_{n,1}$. 
To prove the exact formula, we see that we can set up an invertible matrix vector system of 
the form $A_{\mathcal{D},g}(N) \cdot \vec{f} = \vec{b}$ and solve for $\vec{f}$ where 
$A_{\mathcal{D},g}(N) := (\mathcal{D}(n, k) g(n+1-k) \Iverson{k \leq n})_{1 \leq n,k \leq N}$, 
$\vec{f} = [g^{-1}[\mathcal{D}](1), \ldots, g^{-1}[\mathcal{D}](N)]^{T}$, and 
$\vec{b} = [1, 0, \ldots, 0]^{T}$ for any $N \geq 1$. 
Then we have that 
$$g^{-1}[\mathcal{D}](n) = (A_{\mathcal{D},g}^{-1}(N))_{n,1}, \forall N \geq n \geq 1.$$
Notice that for $\mathcal{D}(N) := (\mathcal{D}(n,k))_{1 \leq n,k \leq N}$, we can write 
\[
A_{\mathcal{D},g}(N) = \mathcal{D}(N) \cdot \begin{bmatrix} 
     g(1) & 0 & 0 & \cdots & 0 \\ 
     g(2) & g(1) & 0 & \cdots & 0 \\ 
     g(3) & g(2) & g(1) & \cdots & 0 \\ 
          &      &      & \vdots & 0 \\ 
     g(N) & g(N-1) & g(N-2) & \cdots & g(1) 
\end{bmatrix},
\]
where the right-hand-side matrix involving $g$ is an invertible Topelitz matrix. 
Thus by inversion, we see that our claimed formula is correct. 
\end{proof}

\begin{table}[h!]

\caption[Symbolic computations of the inverse functions for D-convolutions]{
	 The symbolic inverse functions $g^{-1}[\mathcal{D}]$ 
         for any fixed arithmetic function $g$ such that 
         $g(1) \neq 0$, and any fixed invertible, lower 
         triangular kernel function satisfying $\mathcal{D}(n, n) \neq 0$ for all $n \geq 1$. }

\begin{center}
\begin{equation*}
\boxed{
\begin{array}{c|l}
 n & g^{-1}[\mathcal{D}](n) \\ \hline 
 1 & \frac{1}{\mathcal{D}(1,1) g(1)} \\
 2 & -\frac{\mathcal{D}(2,2) g(2)}{\mathcal{D}(1,1) \mathcal{D}(2,1) g(1)^2} \\
 3 & \frac{\mathcal{D}(2,2) \mathcal{D}(3,2) g(2)^2}{\mathcal{D}(1,1) \mathcal{D}(2,1) 
     \mathcal{D}(3,1) g(1)^3}-\frac{\mathcal{D}(3,3)
     g(3)}{\mathcal{D}(1,1) \mathcal{D}(3,1) g(1)^2} \\
 4 & \scriptsize{-}\frac{\mathcal{D}(2,2) \mathcal{D}(3,2) \mathcal{D}(4,2) g(2)^3}{\mathcal{D}(1,1) 
     \mathcal{D}(2,1) \mathcal{D}(3,1) \mathcal{D}(4,1)
	g(1)^4}\tiny{+}\frac{(\mathcal{D}(2,1) \mathcal{D}(3,3) \mathcal{D}(4,2)+\mathcal{D}(2,2) \mathcal{D}(3,1) 
     \mathcal{D}(4,3)) g(3) g(2)}{\mathcal{D}(1,1)
     \mathcal{D}(2,1) \mathcal{D}(3,1) \mathcal{D}(4,1) g(1)^3}\tiny{-}\frac{\mathcal{D}(4,4) g(4)}{\mathcal{D}(1,1) 
     \mathcal{D}(4,1) g(1)^2} \\
\end{array}
}
\end{equation*}
\end{center}

\end{table}

In what follows, we adopt the notation that 
$c_n(\mathcal{C}) := [q^n] \mathcal{C}(q)$ and 
$p_n(\mathcal{C}) := [q^n] \mathcal{C}(q)^{-1}$ for any $n \geq 0$ and any OGF 
$\mathcal{C}(q)$ such that $\mathcal{C}(0) \neq 0$. 

\begin{theorem}[Generalized factorization theorems for $\mathcal{D}$-convolution]
Suppose that $g$ is any arithmetic function that is invertible with respect to 
$\mathcal{D}$-convolution for some fixed invertible, lower triangular kernel function 
$\mathcal{D}(n, k)$. The matrices with entries given by 
$s_{n,k}(g; \mathcal{C},\mathcal{D})$ in 
\eqref{eqn_DCvlGenFactThmExp_restated_v2} 
are invertible and satisfy the following formulas for $1 \leq k \leq n$: 
\begin{align*}
s_{n,k}(g; \mathcal{C},\mathcal{D}) & = \sum_{j=1}^{n} c_{n-j}(\mathcal{C}) g(j+1-k) \mathcal{D}(j, k) \\ 
s_{n,k}^{-1}(g; \mathcal{C},\mathcal{D}) & = \sum_{j=1}^{n} g^{-1}[\mathcal{D}](n+1-j) p_{j-k}(\mathcal{C}) 
     \mathcal{D}(n, j). 
\end{align*}
\end{theorem}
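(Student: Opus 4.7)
The plan is to prove the two stated identities by first multiplying the factorization equation \eqref{eqn_DCvlGenFactThmExp_restated_v2} through by $\mathcal{C}(q)$ and then recognizing the resulting identity as an $N \times N$ matrix factorization that decouples the series $\mathcal{C}(q)$ from the data of $(g, \mathcal{D})$.

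First, for the formula for $s_{n,k}$, I would multiply both sides of \eqref{eqn_DCvlGenFactThmExp_restated_v2} by $\mathcal{C}(q)$ to obtain
\[
\mathcal{C}(q) \sum_{m \geq 1} (f \boxdot_{\mathcal{D}} g)(m) q^m = \sum_{n \geq 1}\left(\sum_{k=1}^{n} s_{n,k}(g; \mathcal{C},\mathcal{D}) f(k)\right) q^n.
\]
Expanding the $\mathcal{D}$-convolution on the left using its definition in \eqref{eqn_DCvlSummationBasedExpDefs_restated_v2}, interchanging the order of summation (legitimate term-by-term in the ring of formal power series), and collecting the coefficient of $f(k)$, the left-hand side becomes $\sum_{k \geq 1} f(k)\, \mathcal{C}(q) \sum_{j \geq k} g(j+1-k)\, \mathcal{D}(j, k) q^j$. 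Extracting $[q^n]$ and recognizing the result as an ordinary Cauchy product then yields $s_{n,k} = \sum_{j=k}^{n} c_{n-j}(\mathcal{C})\, g(j+1-k)\, \mathcal{D}(j, k)$, which matches the claim (the lower limit can be shifted down to $j = 1$ because $\mathcal{D}(j,k) = 0$ whenever $j < k$ by the lower triangular hypothesis).

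Next, I would truncate to size $N \geq 1$ and record the matrix version of this identity. Let $S_N$, $C_N$, $A_N$ denote the $N \times N$ lower triangular matrices with $(n,k)$-entries $s_{n,k}$, $c_{n-k}(\mathcal{C})$, and $g(n+1-k)\,\mathcal{D}(n,k)$ respectively. Step one rewrites as $S_N = C_N A_N$. Because $\mathcal{C}(0) = c_0 \neq 0$, the Topelitz matrix $C_N$ is invertible, and its inverse is the Topelitz matrix whose $(n,k)$-entry is $p_{n-k}(\mathcal{C})$. Because $g(1) \neq 0$ and $\mathcal{D}(n,n) \neq 0$ for all $n$, the matrix $A_N$ is also invertible. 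Consequently $S_N$ is invertible and $S_N^{-1} = A_N^{-1} C_N^{-1}$, which already gives the invertibility half of the theorem.

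The remaining and most delicate step is producing a closed form for the entries of $A_N^{-1}$. By the preceding proposition on inverses with respect to $\mathcal{D}$-convolution, the first column of $A_N^{-1}$ is exactly the vector $(g^{-1}[\mathcal{D}](n))_{n=1}^{N}$. To obtain the full matrix I would argue that, under the symmetry hypothesis $\mathcal{D}(n,k) = \mathcal{D}(n, n+1-k)$ recorded just before the theorem, the remaining columns shift this first column in a controlled way, yielding the compact identity $(A_N^{-1})_{n,j} = g^{-1}[\mathcal{D}](n+1-j)\,\mathcal{D}(n, j)$. With that in hand, expanding the $(n,k)$-entry of the product $A_N^{-1} C_N^{-1}$ as a Cauchy-type sum produces the stated formula $s_{n,k}^{-1} = \sum_{j=1}^n g^{-1}[\mathcal{D}](n+1-j)\, p_{j-k}(\mathcal{C})\, \mathcal{D}(n,j)$. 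The hard part here will be justifying the compact form of $A_N^{-1}$: since $A_N$ is not Topelitz, the identity cannot come from convolutional algebra alone, and must instead rest squarely on the symmetry of $\mathcal{D}$ and the recursive characterization of $g^{-1}[\mathcal{D}]$ from the previous subsection.
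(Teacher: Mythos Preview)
Your approach is essentially the paper's. For the forward formula both of you multiply through by $\mathcal{C}(q)$ and extract coefficients; for the inverse formula the paper substitutes $f(n):=s_{n,k}^{-1}$ into the factorization and uses the orthogonality $\sum_j s_{m,j}s_{j,k}^{-1}=\delta_{m,k}$ to obtain $p_{n-k}(\mathcal{C})=\sum_j s_{j,k}^{-1}\,g(n+1-j)\,\mathcal{D}(n,j)$, which is exactly your column-by-column reading of $S_N^{-1}=A_N^{-1}C_N^{-1}$, and then, like you, appeals to the $\mathcal{D}$-invertibility of $g$ to finish --- indeed the paper dispatches your ``hard part'' (the closed form for $A_N^{-1}$ and the role of symmetry) with the single sentence ``Since $g$ is invertible with respect to $\mathcal{D}$-convolution, we recover our claimed formula.''
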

\begin{proof}
The formula for the ordinary matrix entries is obvious upon multiplying both sides of 
\eqref{eqn_DCvlGenFactThmExp_restated_v2} by the 
OGF, $\mathcal{C}(q)$, and then extracting the coefficients of $q^n$ and $f(k)$ in the 
resulting expansion. The proof of the inverse matrix formulas is routine, but less obvious. 
Since $s_{n,k}(g; \mathcal{C},\mathcal{D})$ is lower triangular with non-zero entries when $n = k$ 
for all $n \geq 1$, it forms a sequence of invertible square matrices taking determinants over 
$1 \leq n,k \leq N$ for each fixed $N \geq 1$. 
Consider the special case of the $\mathcal{D}$-convolution sums 
where $f(n) := s_{n,k}^{-1}(g; \mathcal{C},\mathcal{D})$, an inverse sequence that 
we know is unique for each fixed pair $(f, g)$, for integers $k \geq 1$. 
By the orthogonality relations between the lower triangular ordinary and inverse matrices, 
we can see that 
\[
p_{n-k}(\mathcal{C}) = \sum_{j=1}^{n} s_{n,k}^{-1}(g; \mathcal{C},\mathcal{D}) g(n+1-j) \mathcal{D}(n, j). 
\]
Since $g$ is invertible with respect to $\mathcal{D}$-convolution, we recover our claimed formula for the 
inverse matrix entries. 
\end{proof}

\newpage
\SectionGTThesisFormatted{Canonical representations of factorization theorems for special sums}
\label{Section_CanonicalReprsOfFactThms_KernelBasedDCVL} 

The material we present in this concluding section of the thesis is not exhaustive, nor conclusive. 
Rather it serves to motivate a discussion of rigorously formulating ``best possible'' factorization theorems 
and relationships between application-dependent convolution type 
sequences. These so-termed ``canonical'' 
expressions that arise in other important applications and future useful constructions based on our work 
from Section \ref{Section_LGFFactTheorems_PriorWork}. 
A few conjectures are presented in the last subsection below that suggest a loose 
application-tied partition theoretic interpretation behind the ideal correlation for factorization 
theorems we have for the class of more general convolution type sums defined in 
equation \eqref{eqn_DCvlSummationBasedExpDefs_restated_v2}. 

\SubsectionGTThesisFormatted{Correlation statistics to quantify the notion of a ``canonically best'' property}

There is a vast body of modern literature in number theory that motivates semi-standardized ways to quantify 
relationships between functions and sequences we study via correlation based statistics. 
There is historically relevant literature about using statistical analysis to motivate 
studying number theoretic objects. For example, the non-trivial zeros of the Riemann zeta function 
have been related and bounded via pair correlation formulas. Moreover, this 
topic continues to be a active and fruitful way of understanding this complicated subject matter. 
We recall from \cite{WILLIAMS-MILLER-PCORR-OVERVIEW-REF} that results in analytic number theory that 
make sense of the distribution of the non-trivial zeros of $\zeta(s)$ originated in the work of Montgomery. 
Subsequent follow-up work that collectively builds on Montgomery's contributions in the context of 
$L$-functions, Gaussian Unitary Ensemble (or GUE), 
applications in random matrix theory and their associated correlation statistics is famously due to 
Hejal, Rudnick, Sarnak and Odlyzko. 

We posit by extension that using correlation metrics, 
or so-called sequence correlation \emph{statistics}, to precisely define and 
rigorously formulate what we consider to be best possible attainable relationships 
for the factorization theorems given in \eqref{eqn_DCvlGenFactThmExp_restated_v2}. 
In general, we can study the so-called \emph{sequence vector correlation} (including the 
information theoretic cross-correlation statistics seen below) 
that relate more general sequences and vectors of real and rational numbers. 
In our case, we need to identify and prove optimal representations 
for our notion of the ``best possible'', or optimal. e.g., canonical view point, for how we should express 
the OGF factorization theorems as they are identified in 
\eqref{eqn_DCvlGenFactThmExp_restated_v2}. 
We then set out to precisely construct formulas that can be maximized (minimized) with respect to all 
possible one-dimensional sequences in a way that captures the qualitatively meaningful relationships 
between the sequences from the LGF case. 
The goal is to do this in a very general setting that reveals 
underlying hidden relationships charaterizing any particular class of $\mathcal{D}$-convolution type sums in 
analog to the observations of natural relationships between multiplicative number theory and 
the partition functions from the LGF case witnessed in 
Section \ref{Section_LGFFactTheorems_PriorWork}. 

\begin{example}[A model starting point]
The exact bounded ranges we can expect for cross-correlation coefficients to express a numerical index 
between vectors in our problem context are, in general, variable and subject to the qualitative 
interpretation which we have to reason about separately to ensure a good model fit. 
If we wish to normalize the range to be within $[-1, 1]$, there is the standardized 
definition of the 
(non-central, or non-centralized) \emph{Pearson correlation coefficient}. 
It is defined as the numerical statistic relating any two $N$-tuples, 
$\vec{a} := (a_1, \ldots, a_N), \vec{b} := (b_1, \ldots, b_N) \in \mathbb{Q}^N$ for any fixed $N \geq 1$, 
given by 
\[
\operatorname{PearsonCorr}(N; \vec{a}, \vec{b}) := 
     \frac{1}{N} \times \frac{\sum\limits_{j=1}^{N} a_j b_j}{\sqrt{
     \sum\limits_{1 \leq i,j \leq N} a_i^2 b_j^2}} \in [-1, 1]. 
\]
\end{example}

\begin{notation}
We again define the shorthand sequence notation of 
$c_n(\mathcal{C}) := [q^n] \mathcal{C}(q)$ and 
$p_n(\mathcal{C}) := [q^n] \mathcal{C}(q)^{-1}$ for 
any $\mathcal{C}(q)$ such that $\mathcal{C}(0) \neq 0$. 
We are going to adapt the non-centralized Pearson cross-correlation formula 
by choosing our correlation statistic to be computed according to the following sums:
\begin{align}
\label{eqn_CrossCorrelationStatFormula_for_fixed_CDFuncs} 
\operatorname{Corr}(n; \mathcal{C}, \mathcal{D}) & := 
     \frac{1}{n} \times \frac{\sum\limits_{k=1}^{n} |c_k(\mathcal{C}) \mathcal{D}^{-1}(n, k)|}{ 
     \sqrt{\left(\sum\limits_{k=1}^{n} c_k(\mathcal{C})^2\right) \left( 
     \sum\limits_{k=1}^{n} \mathcal{D}^{-1}(n, k)^2\right)}} \\ 
\notag 
\operatorname{Corr}(\mathcal{C}, \mathcal{D}) & := 
     \sum_{n \geq 1} \operatorname{Corr}(n; \mathcal{C}, \mathcal{D}). 
\end{align}
\end{notation}

\begin{question}[The crux of our correlation statistic optimization problem]
For a fixed lower triangular, invertible kernel function $\mathcal{D}$, 
we need to identify a concrete candidate OGF, $\mathcal{C}(q)$, so that 
\[
0 \leq \operatorname{Corr}(\mathcal{C}, \mathcal{D}) < +\infty,
\]
is maximized or minimized (and finite) over all possible input functions 
$\mathcal{C}(q) \in \mathbb{Q}[[q]]$ such that $\mathcal{C}(0) \neq 0$, or alternately 
$\mathcal{C}(q) \in \mathbb{Z}[[q]]$ with $\mathcal{C}(0) = 1$. 
Note that this criteria and the corresponding maximization procedure 
is always independent of the arithmetic functions $f,g$ input to the 
weighted $\mathcal{D}$-convolution sums, $f \boxdot_{\mathcal{D}} g$. 
\end{question}

\begin{example}[Finding optimal statistics for the LGF case]
We will make a somewhat arbitrary decision that works well in practice to define 
\[
f(n,k) := \begin{cases} 
     \frac{1}{n} \times \frac{ 
     c_{k}(\mathcal{C}) \mathcal{D}^{-1}(n, k)}{ 
     \left(\sum\limits_{m \leq n} c_m(\mathcal{C})^2\right)^{\frac{1}{2}} \left( 
     \sum\limits_{m \leq n} \mathcal{D}^{-1}(n, m)^2\right)^{\frac{1}{2}}}, 
     & \text{if\ } 1 \leq k \leq n \leq N; \\ 
     0, & \text{otherwise,}
     \end{cases}
\]
Notice that in the cases we next look at for the LGF example, we have that 
\[
\sum_{m \leq n} \mathcal{D}^{-1}(n, m)^2 = \sum_{d|n} \mu^2(d) = 2^{\omega(n)}, n \geq 1. 
\]
We then want to optimize the minimal bounded formulas 
\[
\lim_{N \rightarrow \infty} \sum_{n \leq N} \sum_{k=1}^n f(n, k) 
     = \lim_{N \rightarrow \infty} 
     \sum_{n \leq N} \frac{(|c_{-}(\mathcal{C})| \ast |\mu|)(n)}{ 
     n \rho_{\mathcal{C}}(n) \left(\sqrt{2}\right)^{\omega(n)}} 
     \in [0, 1], 
\]
over all $\mathcal{C}(q)$ such that $\mathcal{C}(0) \neq 0$ and with 
$$\rho_{\mathcal{C}}(n) := \left(\sum_{0 \leq m \leq n} c_m(\mathcal{C})^2\right)^{\frac{1}{2}}.$$
We see that minimizing the reciprocal of the limiting series in the previous equation leads to a maximal 
possible bound on the cross-correlation statistics we defined in 
\eqref{eqn_CrossCorrelationStatFormula_for_fixed_CDFuncs}.
The preliminary numerical results we cite for this case in 
Section \ref{subSection_LGF_TheoreticalAnalysisOfBestPossibleBoundsForCorrStats} below 
is able to numerically predict how closely this statistic for the LGF case comes to attaining the 
theoretically maximal correlation statistic in limiting cases. 
These series approximations can be made very accurate for certain classes of OGFs that 
commonly arise in applications. 
\end{example}

\SubsectionGTThesisFormatted{Maximal correlation bounds for the LGF case} 
\label{subSection_LGF_TheoreticalAnalysisOfBestPossibleBoundsForCorrStats} 

Since the task of identifying the target limiting cross-correlation statistic in 
\eqref{eqn_CrossCorrelationStatFormula_for_fixed_CDFuncs} is substantially complicated 
in the general case, we first look at the problem of optimality for the LGF case. 
For each such OGF $\mathcal{C}(q)$, we define 
\begin{align}
\notag 
\operatorname{Corr}_{\operatorname{LGF}}(\mathcal{C}) & = 
     \lim_{n \rightarrow \infty} \sum_{k=1}^{n} \frac{\mu^2(k)}{k} \times 
     \sum_{j \leq \Floor{n}{k}} \frac{|c_j(\mathcal{C})|}{j \rho_{\mathcal{C}}(jk) 
     (\sqrt{2})^{\omega\left(jk\right)}} \\ 
\label{eqn_CorrLGFStat_DoubleSeriesExps_v2} 
     & = \sum_{j,k \geq 1} 
     \frac{\mu^2(j) |c_k(\mathcal{C})|}{(jk) \rho_{\mathcal{C}}(jk) 
     (\sqrt{2})^{\omega(jk)}}, 
\end{align} 
where we define the partial variance of $\mathcal{C}$ to be 
\[
\rho_{\mathcal{C}}(N) := \sqrt{\sum_{1 \leq i \leq N} c_i(\mathcal{C})^2}, N \geq 1. 
\]
The previous doubly infinite series for the LGF correlation statistic is non-trivial 
to tightly bound from above and below because we have, in general, for $j,k \geq 1$ that 
\[
\omega(jk) = \omega(j) + \omega\left(\frac{k}{(k, j)}\right). 
\]
That is, the function $\omega(n)$ is only \emph{strongly} (as opposed to \emph{completely}) \emph{additive}. 

\begin{definition}
\label{def_PairCorrStatsGenOGF_CharsAndParameterSpecs_v2} 
Fix any $0 < \delta < +\infty$. Provided an input test OGF $\mathcal{C}(q)$, we set 
\[
A_0(\mathcal{C}, \delta) := \lim_{N \rightarrow \infty} 
     \frac{1}{N^{\delta}} \times \sqrt{\sum_{1 \leq n \leq N} 
     c_n(\mathcal{C})^2}. 
\]
\end{definition}

We are naturally interested in finding the optimal, or so-termed ``canonically best`` correlation 
coefficient that corresponds to an explicit OGF $\mathcal{C}(q)$. 
We have already noticed that taking $\mathcal{C}(q) := (q; q)_{\infty}$ leads to very interesting 
relationships between the matrices in the Lambert series factorization theorems. 
Conjecture \ref{conj_OptimalDeltaParameter_FromLGFAndMertensRHCase_v1} 
given at the end of this section is suggestive of why the expected optimal OGF witness that 
attains the maximal correlation statistic, the function $\mathcal{C}(q)$, 
has series coefficients that satisfy (by the \emph{pentagonal number theorem}) 
$$\delta = \sup\left\{\rho > 0: A_0(\mathcal{C}, \rho) > 0\right\} \equiv \frac{1}{4}.$$ 
Hence, we are interested in considering OGFs $\mathcal{C}(q)$ with integer coefficients such that 
the maximal $\delta > 0$ in the definition for which $A_0(\mathcal{C},\delta) > 0$ is given by 
$\delta := \frac{1}{4}$. 

Using the same construction 
as the special case where $\mathcal{C}(q) := (q; q)_{\infty}$, OGF forms whose 
coefficients are in $\{0, \pm 1\}$ such that the corresponding $\delta = \frac{1}{4}$ 
can be seen as often having non-zero coefficients, $|c_n(\mathcal{C})|$, for 
$n \in \{p(n)\}_{n=-\infty}^{\infty} \setminus \{0\}$ where 
$\hat{p}(n) = \frac{n(an+b)}{2}$ for integers $a \geq 3$ and $1 \leq b < a$. 
As we can see through the next OGF examples, generating functions of this form 
are natural to consider in partition theoretic applications \citep[\S 19.9]{HARDYWRIGHT}: 
\begin{align*}
\tag{PF-A}
(q; q)_{\infty} & = \prod_{n \geq 0} \left\{(1-q^{3n+1})(1-q^{3n+2})(1-q^{3n+3})\right\} \\ 
     & = 
     \sum_{n=-\infty}^{\infty} (-1)^n q^{n(3n+1)/2} \\ 
(-q; q^2)_{\infty}^2 (q^2; q^2)_{\infty} & = \prod_{n \geq 0} \left\{(1+q^{2n+1})^2(1-q^{2n+2})\right\} \\ 
     & = 
     \sum_{n=-\infty}^{\infty} q^{n^2} \\ 
     (q; q^2)_{\infty}^2 (q^2; q^2)_{\infty} & = \prod_{n \geq 0} \left\{(1-q^{2n+1})^2(1-q^{2n+2})\right\} \\ 
     & = 
     \sum_{n=-\infty}^{\infty} (-1)^n q^{n^2} \\ 
\tag{PF-B}
(q; q^5)_{\infty} (q^4; q^5)_{\infty} (q^5; q^5)_{\infty} & = 
     \prod_{n \geq 0} \left\{(1+q^{5n+1})(1-q^{5n+4})(1-q^{5n+5})\right\} \\ 
     & = 
     \sum_{n=-\infty}^{\infty} (-1)^n q^{n(5n+3)/2} \\ 
\tag{PF-C}
(q^2; q^5)_{\infty} (q^3; q^5)_{\infty} (q^5; q^5)_{\infty} & = 
     \prod_{n \geq 0} \left\{(1+q^{5n+2})(1-q^{5n+3})(1-q^{5n+5})\right\} \\ 
     & = 
     \sum_{n=-\infty}^{\infty} (-1)^n q^{n(5n+1)/2}
\end{align*}
In these cases, we have that 
$A \equiv A_0(\mathcal{C}, \delta) = 2\left(\frac{2}{a}\right)^{\frac{1}{4}}$ with 
$a = 3$ (PF-A), $1$ (not labeled), $1$ (not labeled), $5$ (PF-B) and $5$ (PF-C), respectively. 
A comparison of these generating functions extending the visual projection of a correlation 
matrix onto the penguin image is provided below for reference 
with the OGFs labeled (PF-A), (PF-B) and (PF-C)
corresponding to the images in 
Figure \ref{figure_TucCompsOfPredictablePartFuncGFProductExps_v2} 
ordered from left to right. 

\begin{figure}[h!]

\caption[Visualization of OGF correlation for LGFs using partition function OGFs]{}
\label{figure_TucCompsOfPredictablePartFuncGFProductExps_v2}

\begin{center}
\fbox{\includegraphics[width=\textwidth]{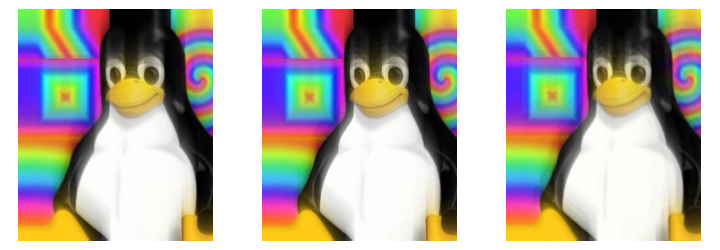}}
\end{center}

\end{figure}

Whenever $(a, b)$ are integers such that $a \geq 1$ and $1 \leq b < a$, let 
\[
\mathcal{C}_{a,b}(q) := \sum_{n=-\infty}^{\infty} (-1)^n q^{\frac{n(an+b)}{2}}. 
\]
We compute the special case values of 
$\operatorname{Corr}_{\operatorname{LGF}}(\mathcal{C}_{a,b})$ 
in Figure \ref{figure_CorrLGFCab_v1}. 

\begin{figure}[h!]

\caption[Numerical computations of correlation statistics for LGFs]{}
\label{figure_CorrLGFCab_v1}

\small
\begin{equation*}
\boxed{
\begin{array}{l|l|l|l|l|l|l|l}
(a, b) & (1, 0) & (3, 1) & (5, 1) & (5, 3) & (7, 1) & (7, 3) & (7, 5) \\ \hline
\operatorname{Corr}_{\operatorname{LGF}}(\mathcal{C}_{a,b}) & 
     0.7634 & 0.92008 & 0.062497 & 0.672979 & 0.010865 & 0.03742 & 0.6128 \\ \hline\hline
(a, b) & (11, 1) & (11, 3) & (11, 5) & (11, 7) & (11, 9) & (13, 1) & (13, 3) \\ \hline
\operatorname{Corr}_{\operatorname{LGF}}(\mathcal{C}_{a,b}) & 
     0.001585 & 0.002645 & 0.00471 & 0.02028 & 0.5875 & 0.000749 & 0.00108 \\ \hline\hline 
(a, b) & (13, 5) & (13, 7) & (13, 9) & (13, 11) & (17, 15) & (23, 21) & (29, 27) \\ \hline
\operatorname{Corr}_{\operatorname{LGF}}(\mathcal{C}_{a,b}) & 
     0.001824 & 0.00467 & 0.01882 & 0.5831 & 0.5824 & 0.5695 & 0.567 \\ 
\end{array}
}
\end{equation*}

\end{figure}

\begin{lemma}
\label{lemma_AvgOrder_of_omegan_along_quadratic_polys_v1} 
We have that the expectation (or average order)
\[
\mathbb{E}\left[\omega(\hat{p}(x)) + \omega(\hat{p}(-x))\right] 
     \sim \left(\frac{a+4}{a^2}\right) \left(\log\log x + B\right), 
     \text{ as } x \rightarrow \infty, 
\]
where $B \approx 0.261497$ is the \emph{Mertens constant} from Mertens' second theorem. 
That is, the average order of $\omega(n)$ over 
the two distinct integer-valued polynomials 
we get by expanding $\hat{p}(\pm n)$ over all non-zero integers $n$ is approximately 
a constant times $\mathbb{E}[\omega(x)]$ up to error terms that vanish as 
$x \rightarrow \infty$. 
\end{lemma}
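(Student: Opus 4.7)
The plan is to express each $\omega$-value as a sum of prime indicators and then apply Mertens' second theorem to the resulting density sum, following the template used for Turán--Kubilius style results applied to polynomial values.

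First, I would use the decomposition $\omega(m)=\sum_{p}\mathbf{1}[p\mid m]$ together with linearity to rewrite
\[
\sum_{|n|\le X}\bigl[\omega(\hat{p}(n))+\omega(\hat{p}(-n))\bigr] \;=\; \sum_{p}\Bigl(\#\{|n|\le X : p\mid \hat{p}(n)\}+\#\{|n|\le X : p\mid \hat{p}(-n)\}\Bigr).
\]
For each prime $p$, both inner counts reduce to counting residue classes of $n\bmod p$ satisfying a quadratic congruence of the form $n(\pm a n+b)/2\equiv 0\pmod{p}$. For odd primes $p$ coprime to $2ab$, there are exactly two such residues (namely $n\equiv 0$ and $n\equiv\mp b\,a^{-1}$), so the count is $\tfrac{2(2X+1)}{p}+O(1)$; for the finitely many exceptional primes $p\mid 2ab$, the count differs but is still $O(X/p)$ with an $O(1)$ local correction.

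Next, I would sum the local densities $\rho(p)/p$ over primes up to $|\hat{p}(\pm n)|\le\tfrac{a}{2}X^{2}$ and invoke Mertens' second theorem
\[
\sum_{p\le y}\tfrac{1}{p}=\log\log y + B + O(1/\log y).
\]
Because $\log\log X^{2}=\log\log X+O(1/\log X)$, the leading contribution is a constant multiple of $\log\log X+B$. A careful bookkeeping over the special primes -- those dividing $a$ (where the two generic roots collapse to one), those dividing $b$ (where the nonzero root coincides with $n\equiv 0$), and $p=2$ (where the systematic halving in $\hat{p}(n)=n(an+b)/2$ shifts the local behavior) -- should then convert the naive generic coefficient into the claimed factor $(a+4)/a^{2}$. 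To justify the asymptotic rigorously, I would split the prime sum at $p=X^{\varepsilon}$: small primes are handled by Mertens with uniform error, while large primes are controlled by a Turán--Kubilius variance bound (or equivalently by a Brun/large-sieve-type tail estimate) for $\omega$ along the polynomial values, ensuring an $o(\log\log X)$ contribution.

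The main obstacle will be obtaining the precise leading constant $(a+4)/a^{2}$: this demands exact enumeration of solutions modulo each exceptional prime, together with an inclusion-exclusion that tracks (i) primes $p\mid a$ where $\rho(p)=1$ rather than $2$, (ii) primes $p\mid b$ where the two roots of $\hat{p}(\pm n)$ mod $p$ coincide at $0$, and (iii) the halving by $2$ in the definition of $\hat{p}$, whose local effect at $p=2$ depends on the joint parities of $a$, $b$, and $n$. Once those local factors are pinned down, combining them with the Mertens asymptotic yields the stated main term, and the tail estimate completes the proof.
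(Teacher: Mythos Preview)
Your approach is fundamentally different from the paper's and, more importantly, cannot produce the constant $(a+4)/a^{2}$ that the lemma claims. The paper does \emph{not} use the prime-indicator decomposition at all: it performs a change of variable $v=\hat{p}(\pm n)$, replaces $\sum_{n}\omega(\hat{p}(\pm n))$ by a weighted sum $\sum_{v\le x}\omega(v)\cdot(dn/dv)$ over \emph{all} integers $v\le x$, and then applies Abel summation with the summatory function $A(t)=t(\log\log t+B+o(1))$. The $a$-dependence of the final constant enters entirely through the Jacobian $dn/dv=2/\sqrt{8av+b^{2}}$ and the subsequent integral manipulations, not through any local root-counting.

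The gap in your proposal is the sentence ``careful bookkeeping over the special primes\ldots should then convert the naive generic coefficient into the claimed factor $(a+4)/a^{2}$.'' It will not. In the Tur\'an--Kubilius framework, the leading coefficient of $\log\log X$ is determined solely by the average of $\rho(p)$ over primes, which for the reducible quadratic $\hat{p}(n)=n(an+b)/2$ equals $2$ (two linear factors over $\mathbb{Q}$). The finitely many primes dividing $2ab$ contribute only $O(1)$ to $\sum_{p}\rho(p)/p$; they can shift the constant $B$ but cannot alter the $\log\log X$ coefficient. Your method therefore yields $\mathbb{E}[\omega(\hat{p}(n))+\omega(\hat{p}(-n))]\sim 4\log\log n$, with the constant $4$ independent of $a$. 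Since for $a=3$ the lemma asserts a leading constant of $7/9<1$, your argument and the paper's are computing genuinely different quantities (the paper's change of variable implicitly treats $\omega$ at polynomial values as distributed like $\omega$ at random integers of the same size). You should either follow the paper's Abel-summation route directly, or recognise that your rigorous approach will not recover the stated constant.
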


\begin{remark}
The function $\omega(n)$ stays near its 
average order with a limiting centrally normal tendency as 
proved by the Erd\H{o}s-Kac theorem that states 
\cite[\S 1.7]{IWANIEC-KOWALSKI} \cite[\cf \S 7.4]{MV}
\[
\frac{1}{x} \times \#\left\{n \leq x: \frac{\omega(n) - \log\log n}{\sqrt{\log\log n}} \leq z\right\} = 
     \Phi(z) + o(1), z \in \mathbb{R}, \text{ as } x \rightarrow \infty. 
\]
We also have that uniformly for $0 < r \leq 1$ 
\[
\#\{n \leq x: \omega(n) \leq r \log\log x\} \ll x (\log x)^{r-1-r\log r}, 
     \text{ as } x \rightarrow \infty, 
\]
and that uniformly for $1 \leq r \leq R < 2$ 
\[
\#\{n \leq x: \omega(n) \geq r \log\log x\} \ll_R x (\log x)^{r-1-r\log r}, 
     \text{ as } x \rightarrow \infty. 
\]
Our intuition is hence to notice that $\omega(k)$ is so universally centered at its average order 
for almost every positive integer $k \geq 3$. 
Suppose that for $j, k \geq 16$ 
(so that the first n such that $\log\log(n) > 1$ is $n := \left\lceil e^e \right\rceil = 16$) 
we replace the terms in 
\eqref{eqn_CorrLGFStat_DoubleSeriesExps_v2} 
involving $\omega(jk)$ with the corresponding average order formula from 
Lemma \ref{lemma_AvgOrder_of_omegan_along_quadratic_polys_v1} evaluated at $x := jk$. 
We will denote this modified series by 
$\widehat{\operatorname{Corr}}_{\operatorname{LGF}}(\mathcal{C}_{a,b})$. 
The following table (in comparison to the one given above) is suggestive of the regularity 
of these series and hence may be an approach towards obtaining tight bounds on the 
actual LGF correlation statistics as we observe in 
Figure \ref{figure_CorrLGFHatCab_v2}. 
\end{remark}

\begin{figure}[h!]

\caption[Numerical computations of correlation statistics for LGFs (approximation)]{}
\label{figure_CorrLGFHatCab_v2}

\small
\begin{equation*}
\boxed{
\begin{array}{l|l|l|l|l|l|l|l}
(a, b) & (1, 0) & (3, 1) & (5, 1) & (5, 3) & (7, 1) & (7, 3) & (7, 5) \\ \hline
\widehat{\operatorname{Corr}}_{\operatorname{LGF}}(\mathcal{C}_{a,b}) & 
     0.7634 & 0.92008 & 0.062497 & 0.672979 & 0.010865 & 0.03742 & 0.6128 \\ \hline\hline
(a, b) & (11, 1) & (11, 3) & (11, 5) & (11, 7) & (11, 9) & (13, 1) & (13, 3) \\ \hline
\widehat{\operatorname{Corr}}_{\operatorname{LGF}}(\mathcal{C}_{a,b}) & 
     0.001585 & 0.002645 & 0.00471 & 0.02028 & 0.5875 & 0.000749 & 0.00108 \\ \hline\hline 
(a, b) & (13, 5) & (13, 7) & (13, 9) & (13, 11) & (17, 15) & (23, 21) & (29, 27) \\ \hline
\widehat{\operatorname{Corr}}_{\operatorname{LGF}}(\mathcal{C}_{a,b}) & 
     0.001824 & 0.00467 & 0.01882 & 0.5831 & 0.5824 & 0.5695 & 0.567 \\ 
\end{array}
}
\end{equation*}

\end{figure}

\begin{proof}[Proof of Lemma \ref{lemma_AvgOrder_of_omegan_along_quadratic_polys_v1}]
Suppose that we are evaluating the following sum:
\begin{align*}
E_{a,b}(x) & = \frac{1}{\sqrt{x}} \times \sum_{\substack{n \geq 1 \\ \frac{n(an \pm b)}{2} \leq x}} 
     \left[\omega\left(\frac{n(an + b)}{2}\right) + \omega\left(\frac{n(an - b)}{2}\right)\right]. 
\end{align*}
We can perform a change of variable in the form of $v = \frac{n(an \pm b)}{2}$ so that 
$n = \frac{\sqrt{b^2+8av} \mp b}{2a}$ and $dn = \frac{2dv}{\sqrt{8av+b^2}}$. 
We know that the average order of the original function $\omega(n)$ is given by \cite[\S 22.10]{HARDYWRIGHT} 
$$\mathbb{E}[\omega(n)] = \frac{1}{x} \times \sum_{n \leq x} \omega(n) = \log\log n + B + o(1).$$
Then we have by the Abel summation formula, taking the 
summatory function, $A(t) := t(\log\log t + B + o(1))$, that 
\begin{align*}
E_{a,b}(x) & = \frac{4}{\sqrt{x}} \times \sum_{v \leq x} \frac{\omega(v)}{\sqrt{8av+b^2}} \\ 
     & \sim \frac{4}{\sqrt{x}} \left( \frac{(\log\log x + B) x}{\sqrt{8ax+b^2}} + 
     \int_3^x \frac{4av (\log\log v + B)}{(8av+b^2)^{\frac{3}{2}}} dv\right) \\ 
     & \sim \frac{1}{2} \sqrt{\frac{8}{a}} (\log\log x + B) + \frac{16a}{(8a)^{\frac{3}{2}} \sqrt{x}} \times 
     \int_3^x \frac{\log\log v + B}{\sqrt{v}} dv \\ 
     & = \frac{1}{2} \sqrt{\frac{8}{a}} (\log\log x + B) + 
     \frac{32a}{(8a)^{\frac{3}{2}} \sqrt{x}} \left((\log\log x + B) \sqrt{x} - 
     2 \operatorname{Ei}\left(\frac{\log x}{2}\right)\right) \\ 
\tag{$\ast$} 
     & \sim \frac{1}{2} \sqrt{\frac{8}{a}}\left(1 + \frac{4}{a}\right) \left(\log\log x + B\right). 
\end{align*}
In determining the main term in transition from the second to last equation above, we have used that 
\[
1 - \frac{3\log x}{8} \leq 
     \operatorname{Ei}\left(\frac{\log x}{2}\right) - \log\log\left(\frac{x}{2}\right) \leq 
     1 - \frac{3\log x}{8} + \frac{11 (\log x)^2}{144}. 
\]
The main term for $\frac{2E_{a,b}(x)}{\sqrt{8a}}$ in ($\ast$) above 
corresponds to the average order formula we seek to evaluate since  
$n \leq \frac{\sqrt{8ax+b^2} \mp b}{2a}$ so that the correct scalar multiple in front of the average sum is 
similar to $\frac{2}{\sqrt{8ax}}$. 
\end{proof}

\SubsectionGTThesisFormatted{Conjectures on canonically best factorization theorems} 

\SubsubsectionGTThesisFormatted{Partition theoretic conjectures}
\label{subSection_LGFCaseExampleAnalysis_MotivatingByTheEulerTForm} 

Given the way in which we have chosen to expand the factorizations of $L_f(q)$ as 
\[
(f \ast \mathds{1})(n) = [q^n] L_f(q) = \frac{1}{\mathcal{C}(q)} \times \sum_{n \geq 1} \sum_{k=1}^{n} 
     s_{n,k}[\mathcal{C}] f(k) q^n, 
\]
the form of the invertible, lower triangular matrices with entries given by the $s_{n,k}[\mathcal{C}]$ are 
independent of any arithmetic $f$ that defines these expansions. 
Moreover, these matrices are completely determined by the choice of the reciprocal 
generating function factor of $\mathcal{C}(q)$ subject only to the 
requirement that $\mathcal{C}(0) \neq 0$. 
It follows that seeking an interpretation of the canonically ``best possible'' 
choice of this function as corresponding to the choice of taking 
$\mathcal{C}(q) := (q; q)_{\infty}$, a criteria which we define qualitatively as 
inducing an unexpected, or particularly revealing substructure to the left-hand-side 
divisor sums, $f \ast \mathds{1}$, is also independent of any fixed arithmetic $f$ that 
defines $L_{f}(q)$. 

\begin{definition}[The Euler transform]
We observe a property called the \emph{Euler transform} 
which nicely suggests motivation for why the partition function $p(n)$ arises so 
naturally in this class of LGF examples. 
Namely, we borrow the canonical integer sequence transformation 
identified by Bernstein and Sloane (circa 2002) 
called \texttt{EULER} \cite{OEIS-EULERTF-REF}. 
It states that if two arithmetic functions 
$a_n,b_n$ with a corresponding former OGF defined by 
$A(x) := \sum_{n \geq 1} a_n x^n$ are related by the identity 
\[
1 + \sum_{n \geq 1} b_n x^n = \prod_{i \geq 1} \frac{1}{(1-x^i)^{a_i}} \equiv 
     \exp\left(\sum_{k \geq 1} \frac{A(x^k)}{k}\right), 
\]
then we can explicitly relate these sequence by introducing an intermediate divisor sum, denoted by $c_n$. 
In particular, if $c_n := \sum_{d|n} d \cdot a_d$, then we have that 
\[
a_n = \frac{1}{n} \times \sum_{d|n} c_d \mu\left(\frac{n}{d}\right), n \geq 1. 
\]
\end{definition} 

Results from elementary number theory due to Euler show that the partition function $p(n)$ is 
related to the (ordinary) sum-of-divisors function, $\sigma(n) \equiv \sigma_1(n) := \sum_{d|n} d$, 
through the following recurrence relation:
\[
n \cdot p(n) = \sum_{0 \leq k < n} \sigma_1(n-k) p(k), n \geq 1. 
\]
Since $p(0) = 1$, the resulting ODE for the generating functions that relate these two sequences 
shows that 
$$p(n) = [q^n] \exp\left(\sum_{k \geq 1} \frac{\sigma_1(k) q^k}{k}\right).$$
On the other hand, when we take the constant sequence $a_n \equiv 1, \forall n \geq 1$, the 
product expanded through the \texttt{EULER} transformation we defined above 
corresponds to the infinite $q$-Pochhammer function product, $(q; q)_{\infty}^{-1}$, which again 
generates $p(n)$ for all $n \geq 0$. 
Since $\sigma_{-\alpha}(n) = \sigma_{\alpha}(n) n^{-\alpha}$ for all $n \geq 1$ and any 
real parameter $\alpha > 0$, taking the exponential of the sum over the OGF $A(q)$ 
yields that 
\[
\prod_{i \geq 1} (1-q^i)^{-1} = \exp\left(\sum_{k \geq 1} \frac{q^k}{k(1-q^k)}\right) = 
     \exp\left(\sum_{k \geq 1} \frac{\sigma_1(k) q^k}{k}\right). 
\]
Thus, we reason that the fundamental relation for $p(n)$ to the multiplicative divisor sums 
$\sigma_1(n)$ explains why the partition function arises here in the LGF case. 
We can look to the LGF special case for clues to see a good first order heuristic that we can use 
to measure how closely related the matrix and inverse matrix coefficients are for a fixed 
$\mathcal{D}$-convolution summation type. We clearly must define our metric to qunatify this heuristic 
so that is depends only on the kernel function $\mathcal{D}$, and the OGF $\mathcal{C}(q)$, 
and is always (of course) independent of $f,g$. 

\begin{conjecture}
An optimal OGF, $\mathcal{C}(q)$, that maximizes the correlation coefficients in 
\eqref{eqn_CrossCorrelationStatFormula_for_fixed_CDFuncs}, is given by 
\[
\mathcal{C}(q) := \prod_{k \geq 1} \left(\sum_{n \geq 0} \mathcal{D}(n+k-1, k) q^n\right)^{-1}. 
\]
\end{conjecture}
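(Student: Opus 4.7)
The plan is to attack the conjecture by analyzing both sides of the correlation ratio in the formula for $\operatorname{Corr}(n;\mathcal{C},\mathcal{D})$ under the prescribed choice, and then comparing it with arbitrary admissible $\mathcal{C}$. First I would introduce, for each $k \geq 1$, the ``shifted column generating function'' of the kernel,
\[
\mathcal{M}_k(q) := \sum_{n \geq 0} \mathcal{D}(n+k-1, k) q^n,
\]
so that the conjectured optimal OGF is $\mathcal{C}^{\ast}(q) = \prod_{k \geq 1} \mathcal{M}_k(q)^{-1}$. Using the formula for $s_{n,k}(g;\mathcal{C},\mathcal{D})$ from the previous subsection together with the column identity
\[
\sum_{n \geq k} s_{n,k}(g;\mathcal{C},\mathcal{D})\,q^n \;=\; \mathcal{C}(q)\cdot q^{k-1}\mathcal{M}_k(q)\,G(q),
\]
where $G(q)$ is the OGF of $g$, I would show that $\mathcal{C}^{\ast}$ is precisely the OGF that simultaneously cancels the column-wise ``growth'' contributed by every $\mathcal{M}_k$. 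This generalizes the LGF observation that $(q;q)_{\infty}^{-1} = \prod_k (1-q^k)^{-1}$ matches exactly the column generating functions $q^{k-1}/(1-q^k)$ of the divisibility kernel.

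Next I would verify the analog of the Euler-transform heuristic from Section~\ref{subSection_LGFCaseExampleAnalysis_MotivatingByTheEulerTForm}: interpret $\mathcal{C}^{\ast}(q)^{-1}$ combinatorially as the generating function for ``$\mathcal{D}$-weighted partitions'' — multiset compositions of $n$ whose $k$-th part type carries the column sequence weight of $\mathcal{D}$. This interpretation parallels the role of $p(n)$ in the LGF case, and it should justify why the matrices $s_{n,k}(g;\mathcal{C}^{\ast},\mathcal{D})$ and their inverses admit clean closed forms in terms of restricted $\mathcal{D}$-partition counts. In particular, the row vector $(\mathcal{D}^{-1}(n,k))_k$ should decompose through the $\mathcal{M}_k$'s in a way that matches (up to sign) the coefficient vector $(c_k(\mathcal{C}^{\ast}))_k$, which is the content that the Pearson numerator in \eqref{eqn_CrossCorrelationStatFormula_for_fixed_CDFuncs} is trying to capture.

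Then comes the optimization step. Using Cauchy--Schwarz on the numerator $\sum_{k \leq n} |c_k(\mathcal{C})\,\mathcal{D}^{-1}(n,k)|$ and tracking the growth rate of $\rho_{\mathcal{C}}(n)$ via the ``abscissa'' constants $A_0(\mathcal{C},\delta)$ of Definition~\ref{def_PairCorrStatsGenOGF_CharsAndParameterSpecs_v2}, I would show that up to a common normalization the Pearson correlation is maximized exactly when $(c_k(\mathcal{C}))_k$ is, asymptotically in $n$, proportional (with appropriate signs) to the row vector $(\mathcal{D}^{-1}(n,k))_k$ — at which point the conjectured $\mathcal{C}^{\ast}$ emerges as the unique admissible OGF whose coefficients reproduce this proportionality on the diagonal average. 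The LGF computations from the previous subsection (yielding $\delta = \tfrac{1}{4}$ and $A_0 = 2(2/a)^{1/4}$) would serve as the base case of this general criterion.

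The hard part, I expect, will be precisely the last step: translating the vague claim ``asymptotic proportionality in $n$'' into a rigorous extremal statement. Since $\mathcal{D}^{-1}(n,k)$ depends on $n$ while $c_k(\mathcal{C})$ does not, no single $\mathcal{C}$ can make the two vectors parallel row-by-row, so the extremization has to be performed on the full sum $\operatorname{Corr}(\mathcal{C},\mathcal{D})$ rather than on individual rows, using a mean-value argument; moreover, the admissibility restriction $\mathcal{C}\in\mathbb{Z}[\![q]\!]$ with $\mathcal{C}(0)=1$ adds a discreteness that defeats any naive Lagrange-multiplier approach. Establishing that the product formula nonetheless attains the supremum over this discrete family is the genuinely difficult step, and I suspect it will require the kind of careful asymptotic analysis akin to the Erd\H{o}s--Kac bound invoked in Lemma~\ref{lemma_AvgOrder_of_omegan_along_quadratic_polys_v1}, together with a uniqueness argument showing that any other candidate OGF which asymptotically matches the row norms of $\mathcal{D}^{-1}$ must differ from $\mathcal{C}^{\ast}$ only by a factor whose series coefficients decay strictly faster than the scale set by $A_0(\mathcal{C}^{\ast},\delta)$.
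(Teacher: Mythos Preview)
The statement you are attempting to prove is explicitly labeled a \emph{conjecture} in the paper, and the paper provides no proof of it. The surrounding section is expressly expository: the author states that the material ``is not exhaustive, nor conclusive'' and that it ``serves to motivate a discussion,'' with the conjectures ``remain[ing] open at the time of this publication.'' So there is no paper proof to compare your proposal against.

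As for the merits of your sketch on its own: the first two paragraphs are reasonable heuristics that generalize the LGF picture, and your column-generating-function identity is consistent with how the paper sets up the factorization matrices. But you have correctly identified the genuine gap yourself in the final paragraph. The crux is that the Pearson-type statistic in \eqref{eqn_CrossCorrelationStatFormula_for_fixed_CDFuncs} compares a fixed coefficient vector $(c_k(\mathcal{C}))_k$ against an $n$-dependent row $(\mathcal{D}^{-1}(n,k))_k$, so no choice of $\mathcal{C}$ achieves row-by-row proportionality, and the passage from Cauchy--Schwarz to ``the supremum over $\mathbb{Z}[\![q]\!]$ is attained at $\mathcal{C}^{\ast}$'' is not an argument but a restatement of the conjecture. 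Your appeal to an Erd\H{o}s--Kac-type averaging is plausible as motivation (and matches the spirit of the paper's numerical evidence), but nothing in the sketch forces the product $\prod_k \mathcal{M}_k(q)^{-1}$ rather than, say, a perturbation of it by a bounded unit in $\mathbb{Z}[\![q]\!]$; the uniqueness clause at the end is asserted, not argued. In short, your proposal is a coherent outline of \emph{why one might believe} the conjecture, in line with the paper's own heuristics, but it does not close the optimization step, and the paper does not either.
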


\begin{conjecture}
The LGF OGF matchings we saw in 
Section \ref{subSection_LGFCaseExampleAnalysis_MotivatingByTheEulerTForm} by 
applying the Euler transform of sequences suggests an optimal selection of 
$\mathcal{C}(q)$ satisfies the following expansions: 
\[
\mathcal{C}(q) = \exp\left(-\sum_{n \geq 1} \sum_{k=1}^{n} k \mathcal{D}(n,k) \frac{q^n}{n}\right) = 
     \prod_{n \geq 1} \left(1 + q \times \sum_{k=1}^{n} k \mathcal{D}(n, k)\right)^{-1}.
\]
We have the identification of the convolution sums in the last equation with 
a special case of equation \eqref{eqn_DCvlSummationBasedExpDefs_restated_v2} as 
\[
\left(\operatorname{Id}_1 \boxdot_{\mathcal{D}} \mathds{1}\right)(n) = \sum_{k=1}^{n} k \mathcal{D}(n, k). 
\]
\end{conjecture}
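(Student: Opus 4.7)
The plan is to tackle the conjecture in three layers: first verify the auxiliary convolution identity, then justify the exponential form via the Euler transform framework, and finally attempt to reconcile the product form and the optimality assertion.

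First I would dispatch the identification $(\operatorname{Id}_1 \boxdot_{\mathcal{D}} \mathds{1})(n) = \sum_{k=1}^n k\,\mathcal{D}(n,k)$ by direct substitution into \eqref{eqn_DCvlSummationBasedExpDefs_restated_v2}: since $g(n+1-k) \equiv 1$ and $f(k) = k$, the stated formula is immediate and requires no further work. This is a warm-up that pins down the intended sequence $c_n := \sum_{k=1}^n k\,\mathcal{D}(n,k)$ appearing in both candidate expressions.

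Next, for the exponential representation, I would model the argument on the \texttt{EULER} transform discussion in Section \ref{subSection_LGFCaseExampleAnalysis_MotivatingByTheEulerTForm}. Taking the formal logarithmic derivative of $\mathcal{C}(q)$ and invoking the identity $\exp\bigl(-\sum_{n\geq 1} c_n q^n/n\bigr) = \prod_{m\geq 1} (1-q^m)^{a_m}$ with $a_m = \tfrac{1}{m}\sum_{d|m} c_d\,\mu(m/d)$, I would verify the sanity check that $\mathcal{D}(n,k) := \Iverson{k|n}$ forces $c_n = \sigma_1(n)$, $a_m \equiv 1$, and hence $\mathcal{C}(q) = (q;q)_\infty$, matching the LGF baseline. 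Generality then follows by linearity of the Möbius transform in $c_n$, provided $\mathcal{D}$ is lower triangular with $\mathcal{D}(n,n) \neq 0$.

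The product-form equality is the subtle step, since writing $c_n$ in either the $\exp(-\sum c_n q^n/n)$ style or a pure $\prod(1 + q\,c_n)^{-1}$ style is not a formal identity in full generality; my plan here is to interpret the product expression as arising from the factorization $\mathcal{C}(q)^{-1} = \prod_{k\geq 1} \mathcal{H}_{\mathcal{D},k}(q)$ suggested by Theorem \ref{theorem_InvertibleFactThmStmtsForKCvls_v1}, where each column generating function $\mathcal{H}_{\mathcal{D},k}(q)$ carries the data of the $k$-th diagonal of $\mathcal{D}$. Reconciling the two forms would amount to proving that, for the class of $\mathcal{D}$ of interest, the triangular column-generating functions collapse into the single-variable polynomial factors $(1 + q\,c_n)$; this likely requires a structural hypothesis on $\mathcal{D}$ (e.g., multiplicativity in the second argument) that I would formulate precisely as a working assumption.

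The hard part — and in my view the real open problem — is showing that this candidate $\mathcal{C}(q)$ actually maximizes $\operatorname{Corr}(\mathcal{C},\mathcal{D})$ defined in \eqref{eqn_CrossCorrelationStatFormula_for_fixed_CDFuncs}. My approach would be to treat the maximization as a constrained variational problem on $\mathbb{Z}[[q]]$ with $\mathcal{C}(0)=1$, derive stationarity conditions by perturbing one coefficient $c_n(\mathcal{C})$ at a time, and show that the resulting Euler-Lagrange-type system is solved by the sequence $a_m = \tfrac{1}{m}\sum_{d|m} c_d\mu(m/d)$. The obstruction is that the statistic couples $c_k(\mathcal{C})$ with $\rho_{\mathcal{C}}(jk)$ and with the inverse matrix entries $\mathcal{D}^{-1}(n,k)$ through a non-separable ratio, and the asymptotic behavior of the normalization involves arithmetic quantities like $2^{\omega(n)}$ whose average order (per Lemma \ref{lemma_AvgOrder_of_omegan_along_quadratic_polys_v1}) mixes additively rather than multiplicatively. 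Absent a clean closed form for the inverse matrices beyond the LGF case, I expect only a partial result: optimality within the restricted class of OGFs whose coefficients are supported on quadratic-polynomial index sets (the $\hat{p}(n) = n(an+b)/2$ family), with the general conjecture remaining open pending a quantitative Erdős–Kac-type concentration argument for the denominator $(\sqrt{2})^{\omega(jk)}$.
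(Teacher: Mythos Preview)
The statement you are attempting to prove is labeled a \emph{conjecture} in the paper, and the paper supplies no proof for it. It appears in the concluding subsection of open problems, introduced with the remark that ``A few conjectures are presented in the last subsection below that suggest a loose application-tied partition theoretic interpretation\ldots''. There is therefore no proof in the paper against which to compare your proposal.

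Your instinct that the optimality assertion is the genuinely open part is correct, and the paper agrees: that is precisely why the statement is a conjecture rather than a theorem. Your observation that the exponential and product forms $\exp\bigl(-\sum_{n\ge 1} c_n q^n/n\bigr)$ and $\prod_{n\ge 1}(1+q\,c_n)^{-1}$ are not equal as formal power series in general is also well taken; the paper does not address this discrepancy, and the displayed equality should be read as heuristic rather than as a claimed identity. The only part of the statement that is rigorously verifiable is the auxiliary identification $(\operatorname{Id}_1 \boxdot_{\mathcal{D}} \mathds{1})(n) = \sum_{k=1}^n k\,\mathcal{D}(n,k)$, which you correctly dispatch by direct substitution into the definition \eqref{eqn_DCvlSummationBasedExpDefs_restated_v2}.
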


\begin{conjecture}[Equivalence of problems]
The cross-correlation statistic 
\[
\operatorname{Corr}_1(\mathcal{C},\mathcal{D}) := \sum_{n \geq 1} 
     \frac{\frac{1}{n} \times \sum\limits_{1 \leq k \leq n} c_k(\mathcal{C}) \mathcal{D}^{-1}(n, k)}{ 
     \sqrt{\sum\limits_{1 \leq k \leq n} c_k(\mathcal{C})^2 \times 
     \sum\limits_{1 \leq k \leq n} \mathcal{D}^{-1}(n, k)^2}}, 
\]
is maximized (minimized) over all possible OGFs $\mathcal{C}(q)$ if and only if 
\[
\operatorname{Corr}_2(\mathcal{C},\mathcal{D}) := \sum_{n \geq 1} 
     \frac{\frac{1}{n} \times \sum\limits_{1 \leq k \leq n} p_k(\mathcal{C}) \mathcal{D}(n, k)}{ 
     \sqrt{\sum\limits_{1 \leq k \leq n} p_k(\mathcal{C})^2 \times 
     \sum\limits_{1 \leq k \leq n} \mathcal{D}(n, k)^2}},
\]
is maximized (minimized) over all such OGFs. 
\end{conjecture}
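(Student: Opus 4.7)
The plan is to exploit the natural involution $\mathcal{C} \mapsto \mathcal{C}^{-1}$ on the space of admissible OGFs (those with $\mathcal{C}(0) \neq 0$), which swaps the sequences $c_k(\mathcal{C}) \leftrightarrow p_k(\mathcal{C})$, together with the analogous matrix involution $\mathcal{D} \mapsto \mathcal{D}^{-1}$. The crucial observation is the pointwise functional identity $\operatorname{Corr}_2(\mathcal{C}, \mathcal{D}) = \operatorname{Corr}_1(\mathcal{C}^{-1}, \mathcal{D}^{-1})$, which follows immediately by substituting the two involutions simultaneously into the defining sums, since $c_k(\mathcal{C}^{-1}) = p_k(\mathcal{C})$ and $(\mathcal{D}^{-1})^{-1}(n,k) = \mathcal{D}(n,k)$ make the numerators and denominators match termwise. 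Because $\mathcal{C} \mapsto \mathcal{C}^{-1}$ is a bijection on the admissible space, this immediately reduces the equivalence of the two extremization problems over $\mathcal{C}$ (for fixed $\mathcal{D}$) to the statement that $\max_{\mathcal{C}} \operatorname{Corr}_1(\mathcal{C}, \mathcal{D})$ is attained at some $\mathcal{C}^{\star}$ if and only if $\max_{\mathcal{C}} \operatorname{Corr}_1(\mathcal{C}, \mathcal{D}^{-1})$ is attained at $(\mathcal{C}^{\star})^{-1}$.

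To establish this self-dual property, I would first extract a termwise characterization from the Cauchy-Schwarz inequality: each $n$-th summand of $\operatorname{Corr}_1(\mathcal{C}, \mathcal{D})$ is bounded above by $\tfrac{1}{n}$, with equality precisely when the truncated vector $(c_1(\mathcal{C}), \ldots, c_n(\mathcal{C}))$ is proportional to $(\mathcal{D}^{-1}(n,1), \ldots, \mathcal{D}^{-1}(n,n))$. The corresponding termwise condition for $\operatorname{Corr}_1(\mathcal{C}, \mathcal{D}^{-1})$ demands proportionality to $(\mathcal{D}(n,1), \ldots, \mathcal{D}(n,n))$. The bridge between these two families of proportionality constraints should come from combining the matrix-inversion identity $\sum_j \mathcal{D}(n,j) \mathcal{D}^{-1}(j,k) = \delta_{n,k}$ with the series-inversion identity $\sum_j c_j(\mathcal{C}) p_{n-j}(\mathcal{C}) = \delta_{n,0}$; substituting a proposed proportionality into one identity and summing in the dual index should yield the other constraint together with an explicit recipe for the scaling constants. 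The classical LGF case, in which $\mathcal{D}(n,k) = \Iverson{k|n}$ and the expected optimizer is $\mathcal{C}(q) = (q; q)_{\infty}$ whose reciprocal generates the partition function $p(n)$, provides the guiding template for what the involution produces in practice.

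The hard part will be handling the situation in which the supremum is not attained by any single OGF. The termwise proportionality conditions across different $n$ form an overdetermined system in general, so one must work with a sequence $\{\mathcal{C}_m\}_{m \geq 1}$ of approximate maximizers and argue that the inverted sequence $\{\mathcal{C}_m^{-1}\}$ is an approximate maximizer of the dual problem. Ensuring that $\mathcal{C} \mapsto \mathcal{C}^{-1}$ is continuous in an appropriate topology on normalized OGFs -- for instance, termwise convergence on coefficient sequences with prescribed polynomial decay -- so that near-maximizing limits transfer between the two extremization problems is the principal technical obstacle that remains after the algebraic framework above is in place. A compactness-plus-continuity argument restricted to a natural subclass (such as those OGFs with integer coefficients satisfying a fixed growth bound on their partial variances $\rho_{\mathcal{C}}(N)$) should then close the gap, with the quantitative behavior observed in the LGF calculations providing numerical confirmation that the required tightness holds along the expected extremizing sequences.
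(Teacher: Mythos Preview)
The paper does \emph{not} prove this statement: it is explicitly listed as an open conjecture in the final section (``A few conjectures are presented\ldots that remain open at the time of this publication''). There is therefore no proof in the paper to compare your proposal against; you are attempting to settle something the author left unresolved.

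As for the proposal itself, the opening algebra is sound: the identity $\operatorname{Corr}_2(\mathcal{C},\mathcal{D}) = \operatorname{Corr}_1(\mathcal{C}^{-1},\mathcal{D}^{-1})$ is immediate from the definitions, and you correctly recognize that this only \emph{reformulates} the conjecture as a self-duality statement for $\operatorname{Corr}_1$ under $\mathcal{D} \leftrightarrow \mathcal{D}^{-1}$. But the subsequent plan does not close the gap. Your termwise Cauchy--Schwarz bound gives $\tfrac{1}{n}$ per summand, whose sum diverges; since the paper is interested precisely in OGFs for which the total statistic is finite, no admissible $\mathcal{C}$ can saturate the termwise inequality for all large $n$, and the maximizer is therefore \emph{not} characterized by the proportionality conditions you describe. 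The ``bridge'' you sketch---combining $\sum_j \mathcal{D}(n,j)\mathcal{D}^{-1}(j,k)=\delta_{n,k}$ with $\sum_j c_j p_{n-j}=\delta_{n,0}$---mixes a matrix-type orthogonality (indexed by divisibility or the kernel structure) with a Cauchy-product identity (indexed additively), and there is no mechanism offered for why substituting one proportionality relation into the former should yield the dual relation; for a generic lower-triangular $\mathcal{D}$ the rows of $\mathcal{D}$ and of $\mathcal{D}^{-1}$ need bear no such relationship. The closing compactness-and-continuity paragraph is likewise only a wish list. In short, the involution is the right first move, but the substantive content of the conjecture---why the \emph{same} $\mathcal{C}$ should optimize both statistics for a fixed $\mathcal{D}$---remains entirely open after your outline, just as it does in the paper.
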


\SubsubsectionGTThesisFormatted{Other conjectures} 

The values of certain signed sums are often modeled as a $\{\pm 1\}$-valued random walk on the integers whose 
height after the $x^{th}$ step is taken to be approximately $M(x)$ where the probabilities of 
moving by $\pm 1$ at any given step are randomized \cite{ONTHERH-FRACTALRW,SMOOTH-RDMFUNCS}. 
We know that the expectation of the absolute height at $x$ of 
a prototypical random walk of this type is asymptotically $C\sqrt{x}$ for $C > 0$ an absolute constant. 
Namely, suppose that $\{X_i\}_{i \geq 1}$ is a sequence of independent random variables 
defined such that $\mathbb{P}[X_i=-1] = \mathbb{P}[X_i=+1] = \frac{1}{2}$ for all $i \geq 1$. 
We can form the sums $Y_n := \sum_{i \leq n} X_i$ for $n \geq 1$. 
By computation we have that $\mathbb{E}[Y_n] = 0$. 
At the same time, we can show that  
\[
\sigma_{Y_n} := \sqrt{\mathbb{E}[Y_n^2]-\mathbb{E}[Y_n]^2} = \sqrt{n}. 
\]
This follows since 
\[
Y_{n+1}^2 = (Y_n + X_n)^2 \implies \mathbb{E}[Y_{n+1}^2] = \mathbb{E}[Y_n^2] + 1 \implies 
     \mathbb{E}[Y_n^2] = n, \text{ for all } n \geq 1. 
\]
An interpretation of the combined first and second moment analysis is that we should 
expect the random walk modeled by $Y_n$ to be approximately zero-valued most of the time but 
with an expected spread in actual values of as much as $\sqrt{n}$ \cite{RHRW-WEBSITE-REF}. 
The \emph{law of the iterated logarithm} more precisely implies that 
$Y_n = O\left(\sqrt{n \log\log n}\right)$ for all sufficiently large $n$. 

Since the coefficients enumerated by a LGF for any function $f$ are $f \ast \mathds{1}$, and 
we know by elementary number theory that $(\mu \ast \mathds{1})(n) = (1 \ast \mu)(n) = \delta_{n,1}$ 
(the multiplicative identity function with respect to Dirichlet convolution), 
we have that inversion of the convolution sums of this type is performed by Dirichlet convolution 
with $\mu(n)$ (\cf M\"obius inversion). 
The summatory function, or partial sums of $\mu(n)$ are defined by 
$M(x) := \sum_{n \leq x} \mu(n)$ for any $x \geq 1$. 
The values of $M(x)$ are often modeled by a similar random walk whose values are $\{0,\pm 1\}$-valued 
according to the distribution of $\mu(n) \mapsto \pm 1$. 
This is often viewed as a case of the $\pm 1$-valued 
random walk above with a different leading constant factor on the variance. 

The Riemann Hypothesis is equivalent to proving that 
\[
M(x) = O\left(x^{\frac{1}{2}+\epsilon}\right), \text{ for all } 0 < \epsilon < \frac{1}{2}. 
\]
The dependence of the divisors $d|n$ over which we sum $f$ to compute 
$(f \ast \mathds{1})(n)$ at each $n \geq 1$ is deeply connected to the distribution of the primes. 
We assert that the conventional interpretation of the primes as randomly determined in the sense of the 
$\pm 1$-valued random walk model from above plays a pivotal role in the maximal correlation statistic that relates 
the kernel $\mathcal{D}^{-1}(n, k) = \mu\left(\frac{n}{k}\right) \Iverson{k|n}$ to an optimal OGF, 
$\mathcal{C}(q) = (q; q)_{\infty}$ (as we have predicted it should be). 
The next conjecture cuts precisely to the crux of the matter with respect to 
why we seem to witness an optimal correlation statistic in the LGF case 
when $\mathcal{C}(q)$ satisfies $\rho_{\mathcal{C}}(n)^2 \asymp \sqrt{n}$.

\begin{conjecture}
\label{conj_OptimalDeltaParameter_FromLGFAndMertensRHCase_v1}
For any fixed lower triangular, invertible kernel function $\mathcal{D}(n, k)$, let 
\[
M_{\mathcal{D},k}(x) := \sum_{n \leq x} \mathcal{D}^{-1}(n, k), \text{ for } 1 \leq k \leq x. 
\]
Suppose that 
\[
\delta^{\frac{1}{2}} = \inf\left\{\rho > 0: \max_{1 \leq k \leq x} M_{\mathcal{D},k}(x) = O\left(x^{\rho+\varepsilon}\right), 
     \forall \varepsilon>0\right\}. 
\]
An optimal OGF, $\mathcal{C}(q)$ with integer-valued coefficients such that $\mathcal{C}(0) \neq 0$, 
that attains the theoretical maximum value of 
$\operatorname{Corr}(\mathcal{C},\mathcal{D})$ satisfies 
$\rho_{\mathcal{C}}(n) \asymp n^{\delta}$ as $n \rightarrow \infty$. 
That is, $\rho_{\mathcal{C}}(n)$ is bounded above and below in absolute value by 
bounded constant multiples of $n^{\delta}$ for all sufficiently large $n$. 
\end{conjecture}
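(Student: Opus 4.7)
The plan is to bound $\operatorname{Corr}(\mathcal{C},\mathcal{D})$ from above for arbitrary admissible $\mathcal{C}(q)$ and then show that the bound is only attained (up to constants) when the partial variance $\rho_{\mathcal{C}}(n)$ satisfies the predicted scaling. The crux is that the definition of $\delta^{1/2}$ encodes a Mertens-type cancellation exponent for the \emph{columns} of $\mathcal{D}^{-1}$, while $\rho_{\mathcal{C}}(n)$ measures the $\ell^{2}$-size of the \emph{row} data of $\mathcal{C}$; the canonical scaling arises precisely when these two quantities balance in the bilinear pairing defining the correlation.

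First I would interchange the order of summation in the correlation series to rewrite it as
\[
\operatorname{Corr}(\mathcal{C},\mathcal{D}) = \sum_{k \geq 1} |c_k(\mathcal{C})| \, W_k(\mathcal{C},\mathcal{D}), \qquad
W_k := \sum_{n \geq k} \frac{|\mathcal{D}^{-1}(n,k)|}{n \, \rho_{\mathcal{C}}(n) \, R_{\mathcal{D}}(n)},
\]
where $R_{\mathcal{D}}(n) := (\sum_{k \leq n} \mathcal{D}^{-1}(n,k)^{2})^{1/2}$. This decouples the influence of the coefficients $c_{k}$ from the kernel weights $W_{k}$ and reformulates the problem as the constrained extremal problem of maximizing $\sum_{k} |c_{k}| W_{k}$ subject to self-consistent $\ell^{2}$ constraints on $(c_{k})$.

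Next I would estimate $W_{k}$ by Abel summation on the inner series, using the column bound $|M_{\mathcal{D},k}(x)| \ll x^{\delta^{1/2}+\varepsilon}$: assuming a scaling ansatz $\rho_{\mathcal{C}}(n) \asymp n^{\rho}$ and $R_{\mathcal{D}}(n) \asymp n^{\tau}$, partial summation gives $W_{k} \ll k^{\delta^{1/2} - 1 - \rho - \tau + o(1)}$. A reverse Cauchy--Schwarz then yields
\[
\operatorname{Corr}(\mathcal{C},\mathcal{D}) \leq \Bigl(\sum_{k} c_{k}^{2}\Bigr)^{1/2}\Bigl(\sum_{k} W_{k}^{2}\Bigr)^{1/2},
\]
with equality (and hence maximality) if and only if $|c_{k}| \propto W_{k}$. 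Imposing the self-consistency condition $\rho_{\mathcal{C}}(n)^{2} = \sum_{k \leq n} c_{k}^{2}$ on this proportionality turns the ansatz into a fixed-point equation in $\rho$; its unique solution extracts the exponent $\rho = \delta = (\delta^{1/2})^{2}$. Realizability in integer coefficients would then be certified by exhibiting (as in the Euler pentagonal identity for the LGF case) a sparse $\{0, \pm 1\}$-valued OGF supported on a quadratic progression of density $\asymp n^{\delta}$.

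The main obstacle is controlling the unsigned column sums $\sum_{n \leq N} |\mathcal{D}^{-1}(n,k)|$ or the squared sums $\sum_{n \leq N} \mathcal{D}^{-1}(n,k)^{2}$ in terms of the signed $M_{\mathcal{D},k}(x)$, since passing from $L^{1}$-signed cancellation to absolute values generally loses the very exponent $\delta^{1/2}$ one is trying to invoke. In the LGF archetype this is harmless: $\mathcal{D}^{-1}(n,k) = \mu(n/k)\,\Iverson{k|n}$, so $|\mathcal{D}^{-1}(n,k)| = \mu^{2}(n/k)\,\Iverson{k|n}$ and classical sieve asymptotics $\sum_{m \leq x} \mu^{2}(m) \sim 6x/\pi^{2}$ align perfectly with $|M(x)| \ll x^{1/2+\varepsilon}$ under RH. In full generality, however, the conjecture almost certainly requires supplementary structural hypotheses on $\mathcal{D}$ --- for instance, a multiplicative or divisor-lattice-type sparsity --- under which both the signed and the unsigned partial sums of each column share the exponent $\delta^{1/2}$. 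Identifying the minimal such axioms, and then closing the resulting fixed-point argument rigorously, is the essential technical challenge the conjecture leaves open.
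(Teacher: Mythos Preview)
The statement you are attempting to prove is presented in the paper as a \emph{conjecture}, not a theorem: the paper offers no proof whatsoever. What precedes it is purely motivational --- a discussion of the $\{\pm 1\}$-valued random walk model for the Mertens function, the equivalence of RH with $M(x) = O(x^{1/2+\varepsilon})$, and the heuristic that the optimal $\mathcal{C}(q)$ in the LGF case should satisfy $\rho_{\mathcal{C}}(n)^{2} \asymp \sqrt{n}$ because $(q;q)_{\infty}$ has pentagonal-number support. There is therefore no ``paper's proof'' against which to compare your proposal; the paper explicitly leaves this open.

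As a proof strategy, your outline is a plausible heuristic but not a proof, and you correctly diagnose the essential obstruction yourself. The correlation statistic \eqref{eqn_CrossCorrelationStatFormula_for_fixed_CDFuncs} is built from the \emph{absolute values} $|c_{k}(\mathcal{C})\,\mathcal{D}^{-1}(n,k)|$, whereas the hypothesis on $\delta^{1/2}$ constrains only the \emph{signed} column partial sums $M_{\mathcal{D},k}(x)$. Your Abel-summation step would need $\sum_{n \leq x} |\mathcal{D}^{-1}(n,k)|$ (or its $\ell^{2}$ analogue) to inherit the exponent $\delta^{1/2}$, and in general it does not: cancellation exponents for signed sums say nothing about the growth of unsigned ones. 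Beyond this, the Cauchy--Schwarz step with the ``self-consistency'' fixed point is formal --- the equality case $|c_{k}| \propto W_{k}$ need not be realizable by an integer-coefficient OGF, and the fixed-point argument presupposes the very power-law ansatz $\rho_{\mathcal{C}}(n) \asymp n^{\rho}$ whose optimality you are trying to establish. What you have is a consistency check that the conjectured exponent is compatible with a natural scaling heuristic, which is in the same spirit as the paper's own motivation but does not advance beyond it.
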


\setcounter{section}{0}
\setcounter{gtsection}{0}
\renewcommand{\SectionNumberFormat}[1]{\Alph{#1}}
\renewcommand{\TheSectionPrefixName}[0]{Appendix}
\renewcommand{\TheSectionPrefixNameUC}[0]{APPENDIX}

\newpage
\SectionGTThesisFormatted{Computational experiments and supporting software}

\SubsectionGTThesisFormatted{Invertible magic sign-smoothing partition convolutions}

\begin{definition}
\label{def_p1np2n_magicPartFnSeqs_v1}
For integers $n \geq 0$, we define the sequences 
\cite[\seqnum{A000009}; \seqnum{A081362}]{OEIS} 
\begin{align*}
p_1(n) & := [q^n] \prod_{m \geq 1} (1+q^m), \\ 
p_2(n) & := [q^n] \prod_{m \geq 1} (1+q^m)^{-1}. 
\end{align*}
\end{definition}

\begin{definition}
\label{def_s1s2fn_cvlTfSeqs_v1} 
We define two invertible transformations on any arithmetic function $f$ formed by the 
next discrete convolutions for any $n \geq 0$. 
\begin{align*} 
s_1[f](n) & := \sum_{j=1}^{n} f(j) p_1(n-j), \\ 
s_2[f](n) & := (-1)^n \times \sum_{j=1}^{n} f(j) p_2(n-j) 
\end{align*}
\end{definition}

\begin{definition}
We say that the arithmetic sequence $\{f(n)\}_{n \geq 1}$ has 
\emph{property $\mathcal{P}_1$} at $N$ if the sign of $s_1[f](n)$ is constant for all $n \geq N$. 
Similarly, we say that the sequence has \emph{property $\mathcal{P}_2$} at $N$ if the sign of 
$s_2[f](n)$ is constant for all $n \geq N$. We define 
\begin{align*} 
M_{1}[f] & := \sup \left\{n \geq 1: f \text{\ does not have property\ } \mathcal{P}_1 \text{\ at\ } n\right\}, \\ 
M_{2}[f] & := \sup \left\{n \geq 1: f \text{\ does not have property\ } \mathcal{P}_2 \text{\ at\ } n\right\}. 
\end{align*} 
\end{definition} 

\begin{conjecture}[Magic sign-smoothing transformations of multiplicative Dirichlet inverse functions]
Suppose that $f$ is a non-constant Dirichlet invertible function. 
If $f(n)$ has constant sign for all $n \geq 1$, then 
\begin{itemize}
\item[\rm (C.1)] $M_1\left[f^{-1}\right]$ is bounded, i.e., for all sufficiently large $n$ the 
	transformation $s_1[f^{-1}](n)$ has constant sign, and its sign is given by 
	$\operatorname{sgn}\left(f\left(M_1\left[f^{-1}\right]\right)\right) = -\operatorname{sgn}\left(f(1)\right)$; and 
\item[\rm (C.2)] $M_2\left[f^{-1}\right]$ is bounded, i.e., for all sufficiently large $n$ the 
	transformation $s_2[f^{-1}](n)$ has constant sign, and its sign is given by 
	$\operatorname{sgn}\left(f\left(M_2\left[f^{-1}\right]\right)\right) = \operatorname{sgn}\left(f(1)\right)$.
\end{itemize}
If $\{-1, 1\} \subseteq \{\operatorname{sgn}(f(n)): n \geq 1\}$, i.e., 
$f(n)$ oscillates in sign over the positive integers, then 
\begin{itemize}
\item[\rm (C.3)] $M_1\left[f^{-1}\right]$ is bounded, i.e., for all sufficiently large $n$ the 
	transformation $s_1[f^{-1}](n)$ has constant sign, and its sign is given by 
	$\operatorname{sgn}\left(f\left(M_1\left[f^{-1}\right]\right)\right) = \operatorname{sgn}\left(f(1)\right)$; and 
\item[\rm (C.4)] $M_2\left[f^{-1}\right]$ is bounded, i.e., for all sufficiently large $n$ the 
	transformation $s_2[f^{-1}](n)$ has constant sign, and its sign is given by 
	$\operatorname{sgn}\left(f\left(M_2\left[f^{-1}\right]\right)\right) = -\operatorname{sgn}\left(f(1)\right)$.
\end{itemize}
\end{conjecture}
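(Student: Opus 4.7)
The plan is to handle all four parts of the conjecture through a unified generating function analysis. Let $F^{-1}(q) := \sum_{n \geq 1} f^{-1}(n) q^n$ denote the OGF of the Dirichlet inverse, and set $P_1(q) := (q^2;q^2)_{\infty}/(q;q)_{\infty}$ and $P_2(q) := (q;q)_{\infty}/(q^2;q^2)_{\infty} = 1/P_1(q)$ so that $p_i(n) = [q^n] P_i(q)$ as in Definition \ref{def_p1np2n_magicPartFnSeqs_v1}. Then
\[
\sum_{n \geq 1} s_1[f^{-1}](n) q^n = F^{-1}(q) P_1(q), \qquad \sum_{n \geq 1} s_2[f^{-1}](n) q^n = F^{-1}(-q) P_2(-q).
\]
Because $P_1$ and $P_2$ are reciprocal formal power series, the two transforms are dual under the substitution $q \mapsto -q$, and (C.2), (C.4) should reduce to (C.1), (C.3) once the sign prescriptions have been matched. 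The substantive content is therefore establishing (C.1) and (C.3).

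The next step is to substitute the explicit Dirichlet-inverse formula from Corollary \ref{cor_FormulasForTheDirichletInverse_v1}, which expresses $f^{-1}(n)$ in terms of the M\"obius-convolved shifted partition functions $p_k \ast \mu$ weighted by the nested convolutions $D_{n,f}$ from Definition \ref{def_djn_Dn_func_defs}. Interchange of summation then yields a representation of $s_1[f^{-1}](n)$ as a triple partition-theoretic sum whose leading contribution has the shape $f(1)^{-1} \sum_k p_1(n-k) M_k(n)$ for Mertens-like partial sums $M_k(n)$, and whose remainder involves the nested convolutions $D_{n,f}$. Applying the pentagonal number theorem to $(q;q)_{\infty}$ in the denominator of $P_1(q)$ further reshapes this into an alternating sum over generalized pentagonal numbers, bringing the problem into the range of the combinatorial techniques developed throughout Section \ref{Section_LGFFactTheorems_PriorWork}.

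Step three is the asymptotic analysis. By Hardy--Ramanujan, $p_1(n) \sim (4 \cdot 3^{1/4})^{-1} n^{-3/4} e^{\pi\sqrt{n/3}}$ exhibits super-polynomial growth, while natural families of Dirichlet invertible $f$ admit inverses of at most polynomial size. Since $p_1(n-j)/p_1(n-1)$ decays roughly like $\exp(-c(j-1)/\sqrt{n})$, the convolution $s_1[f^{-1}](n)$ is dominated by an initial window of terms with $j = O(\sqrt{n})$, so for large $n$ the sign of $s_1[f^{-1}](n)$ is forced by the signed behavior of $f^{-1}$ near $n = 1$. When $f$ has constant sign, the alternation in $f^{-1}$ (analogous to $\mu$ for $f \equiv 1$) forces the dominant contribution to carry sign $-\operatorname{sgn}(f(1))$, matching (C.1); when $f$ itself oscillates, the leading terms of $f^{-1}$ instead align with $f^{-1}(1) = 1/f(1)$, matching (C.3). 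The analogous reasoning applied to $s_2$ produces the opposite sign prescriptions precisely because $P_2 = 1/P_1$.

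The principal obstacle is ruling out sporadic sign flips that persist at arbitrarily large $n$. The oscillation amplitude of $f^{-1}$ is not \emph{a priori} controlled, so a rigorous uniform argument will likely require a circle-method or saddle-point analysis of the generating function product $F^{-1}(q) P_1(q)$ near the dominant singularity at $q = 1$, with minor arcs bounded uniformly against the exponential main contribution coming from $P_1$. I expect this to go through cleanly for multiplicative $f$ whose Dirichlet series $1/F(s)$ admits meromorphic continuation, so that classical Tauberian estimates apply to the coefficients of $F^{-1}$; for the full generality stated in the conjecture a bijective proof pairing signed partitions with the convolution structure, in the spirit of Merca's combinatorial identities used earlier in the thesis, may ultimately be needed to extract effective bounds on $M_1[f^{-1}]$ and $M_2[f^{-1}]$.
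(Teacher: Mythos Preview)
The statement you are attempting to prove is labeled a \emph{Conjecture} in the paper, and the paper does not supply a proof. It appears in the appendix on computational experiments, and the only support the author offers is numerical: the tables immediately following the conjecture (Tables~\ref{table_s12fn_SignSmoothingTF_v1}--\ref{table_s12fn_SignSmoothingTF_v5}) exhibit the claimed sign stabilization for a handful of classical arithmetic functions, and the accompanying remark contrasts these with the non-smoothing behavior of the analogous transforms built from $p(n)$ and $\hat{p}(n)$. There is no argument in the paper to compare your proposal against.

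As for the proposal itself, what you have written is a strategy outline rather than a proof, and you are candid about this: you explicitly flag that ruling out persistent sign flips is the ``principal obstacle,'' and you retreat at the end to multiplicative $f$ with meromorphically continued Dirichlet series, which is far narrower than the bare hypothesis ``$f$ is non-constant and Dirichlet invertible.'' That retreat is warranted. The conjecture as stated imposes no growth or regularity on $f$ whatsoever, so there is no a priori bound on $|f^{-1}(n)|$ to play against the subexponential growth of $p_1(n)$; one can cook up $f$ with $f(1)=1$ and $f(n)$ enormous at isolated $n$, forcing $f^{-1}$ to have spikes that swamp the partition convolution at arbitrarily large indices. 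Your dominant-window heuristic therefore cannot succeed in the stated generality, and the circle-method refinement you propose would need, at minimum, quantitative control on the singularities of $F^{-1}(q)$ that the hypotheses do not supply. A correct resolution would either prove the conjecture under added hypotheses (multiplicativity, polynomial growth of $f$, etc.) or exhibit a counterexample to the full statement; your sketch is a reasonable starting point for the former but does not yet isolate which extra hypothesis is both sufficient and sharp.
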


\begin{remark}
Examples of the sign-smoothing properties in (B.1) and (B.2) of the conjecture are provided in 
Table \ref{table_s12fn_SignSmoothingTF_v1}--\ref{table_s12fn_SignSmoothingTF_v5}. 
Proofs based on the circle method applied to the respective sequence OGFs show that 
\cite{ACOMB-BOOK}
\begin{align*}
p_1(n) & \sim \frac{3^{\frac{3}{4}}}{12 n^{\frac{3}{4}}} \times \exp\left(\pi\sqrt{\frac{n}{3}}\right), \\ 
p_2(n) & \sim \frac{(-1)^n}{2 \cdot 24^{\frac{1}{4}} n^{\frac{3}{4}}} \times \exp\left(\pi\sqrt{\frac{n}{6}}\right)
\end{align*}
A natural follow up question is whether there are sequences that are inverses of one another in the 
sense of Definition \ref{def_s1s2fn_cvlTfSeqs_v1} that also preserve 
analogous sign-smoothing properties to the conjecture. 
To give some intuition for why the partition function sequences from 
Definition \ref{def_p1np2n_magicPartFnSeqs_v1} are special, we have 
tabulated the analogous transforms for the more common pair of partition functions 
defined for integers $n \geq 0$ by 
\cite[\seqnum{A000041}; \seqnum{A010815}]{OEIS} 
\begin{align*}
p(n) & := [q^n] \prod_{m \geq 1} (1-q^m)^{-1}, \\ 
\hat{p}(n) & := [q^n] \prod_{m \geq 1} (1-q^m). 
\end{align*}
The corresponding invertible convolution transforms performed with respect to these functions 
are defined by 
\begin{align*} 
\widehat{s}_1[f](n) & := \sum_{j=1}^{n} f(j) \hat{p}(n-j), \\ 
\widehat{s}_2[f](n) & := (-1)^n \times \sum_{j=1}^{n} f(j) p(n-j). 
\end{align*}
Analogs to the first tables demonstrating the qualitatively sign-smoothing transformations 
by the operations in Definition \ref{def_s1s2fn_cvlTfSeqs_v1} for the corresponding 
operations defined in the previous equations are shown for comparison in 
Table \ref{table_s12Hatfn_SignSmoothingTF_v1}--\ref{table_s12Hatfn_SignSmoothingTF_v5}.
While we do not obtain the conjectured eventually constant signed values by 
convolution with the partition functions used to form the transformations 
$s_1[f^{-1}](n)$ and $s_2[f^{-1}](n)$ in these cases, we do still observe a curious and very 
interesting pattern in these tables of the sequences $\widehat{s}_2[f^{-1}](n)$. 
Namely, for each arithmetic function $f$, 
whether signed or unsigned over the positive integers, such that $f(1) \neq 0$ we have 
\[
	\exists N < +\infty \text{ such that for all } n \geq N:
	\operatorname{sgn}\left(\widehat{s}_2\left[f^{-1}\right](n)\right) = (-1)^{n+1}. 
\]
\end{remark}

\SubsectionGTThesisFormatted{Source code for the magic partition transformation experiments}

\begin{lstlisting}[caption={Mathematica code for the magic partitions tables},captionpos=t]
DirichletInverse[n_, ffunc_] := 
     Module[{icVec, postMultMtrx, dinvSols, mult, extraEqns},
     If[n <= 0 || ! IntegerQ[n] || ffunc[1] === 0, 
          Return[Null];
     ];
     If[n === 1, 
          Return[1 / ffunc[1]];
     ];
     Return[-1 / ffunc[1] * DivisorSum[n, 
            ffunc[#] * DirichletInverse[n / #, ffunc]&, # > 1&]
	   ];
   ];

p1[n_] := SeriesCoefficient[Product[(1 + q^m), {m, 1, n}], {q, 0, n}]
p2[n_] := SeriesCoefficient[1 / Product[(1 + q^m), {m, 1, n}], {q, 0, n}]
PartitionsEncode[n_, func_] := Sum[func[j] * p1[n - j], {j, 1, n}]
PartitionsDecode[n_, func_] := Sum[func[j] * p2[n - j], {j, 1, n}]
PartitionsEncode2[n_, func_] := Sum[func[j] * p2[n - j], {j, 1, n}]
PartitionsDecode2[n_, func_] := Sum[func[j] * p1[n - j], {j, 1, n}]

p[n_] := PartitionsP[n]
phat[n_] := SeriesCoefficient[Product[(1 - q^m), {m, 1, n}], {q, 0, n}]
PartitionsEncodeHat[n_, func_] := Sum[func[j] * phat[n - j], {j, 1, n}]
PartitionsDecodeHat[n_, func_] := Sum[func[j] * p[n - j], {j, 1, n}]
PartitionsEncodeHat2[n_, func_] := Sum[func[j] * p[n - j], {j, 1, n}]
PartitionsDecodeHat2[n_, func_] := Sum[func[j] * phat[n - j], {j, 1, n}]

(**** : Generate the LaTeX table data -- V1: ****)
GetAFuncEntry[n_, afunc_] := 
     With[{afuncInv = DirichletInverse[#, afunc]&}, 
          {DirichletInverse[n, afunc], PartitionsEncode[n, afuncInv], 
           Power[-1, n] * PartitionsEncode2[n, afuncInv]}
	 ]
GetTableRowEntry[n_] := 
     Flatten[Prepend[
	     Table[Map[postProcessAFunc[[afuncIndex]], 
	               GetAFuncEntry[n, afuncs[[afuncIndex]]]
		  ], {afuncIndex, 1, Length[afuncs]}], 
	     {n}], 
             1];
tables = {};

(****)
afuncs = {EulerPhi, (PrimeNu[#] + 1)&};
postProcessAFunc = {#&, #&};
Table[GetTableRowEntry[n], {n, 1, nupper}];
tables = Append[tables, %];

(****)
afuncs = {DivisorSigma[0, #]&, DivisorSigma[1, #]&};
postProcessAFunc = {#&, #&};
Table[GetTableRowEntry[n], {n, 1, nupper}];
tables = Append[tables, %];

(****)
afuncs = {(PrimePi[#] + If[# == 1, 1, 0])&, (PrimeOmega[#] + 1)&};
postProcessAFunc = {#&, #&};
Table[GetTableRowEntry[n], {n, 1, nupper}];
tables = Append[tables, %];

(****)
afuncs = {N[MangoldtLambda[#] + If[# == 1, 1, 0], 12]&, Power[-1, #]&};
postProcessAFunc = {
   #&, #&, 
   With[{preDecimalDigits = If[# < 1, 0, Max[0, Ceiling[Log[10, #]]]]}, 
        If[# < 0, NumberForm[#, {Max[1, preDecimalDigits], 4 - preDecimalDigits}],
                  NumberForm[#, {preDecimalDigits, 5 - preDecimalDigits}]]]&
};
Table[GetTableRowEntry[n], {n, 1, nupper}];
tables = Append[tables, %];

(****)
afuncs = {MoebiusMu, LiouvilleLambda};
postProcessAFunc = {#&, #&};
Table[GetTableRowEntry[n], {n, 1, nupper}];
tables = Append[tables, %];

(****)
Map[TeXForm, tables]

(****** : Generate the LaTeX table data -- V2: ******)
GetAFuncEntry[n_, afunc_] := 
     With[{afuncInv = DirichletInverse[#, afunc]&}, 
          {DirichletInverse[n, afunc], PartitionsEncodeHat[n, afuncInv], 
           Power[-1, n] * PartitionsEncodeHat2[n, afuncInv]}
	 ]
GetTableRowEntry[n_] := 
     Flatten[Prepend[
	     Table[Map[postProcessAFunc[[afuncIndex]], 
	               GetAFuncEntry[n, afuncs[[afuncIndex]]]
		  ], {afuncIndex, 1, Length[afuncs]}], 
	     {n}], 
             1];
tables = {};

(****)
afuncs = {EulerPhi, (PrimeNu[#] + 1)&};
postProcessAFunc = {#&, #&};
Table[GetTableRowEntry[n], {n, 1, nupper}];
tables = Append[tables, %];

(****)
afuncs = {DivisorSigma[0, #]&, DivisorSigma[1, #]&};
postProcessAFunc = {#&, #&};
Table[GetTableRowEntry[n], {n, 1, nupper}];
tables = Append[tables, %];

(****)
afuncs = {(PrimePi[#] + If[# == 1, 1, 0])&, (PrimeOmega[#] + 1)&};
postProcessAFunc = {#&, #&};
Table[GetTableRowEntry[n], {n, 1, nupper}];
tables = Append[tables, %];

(****)
afuncs = {N[MangoldtLambda[#] + If[# == 1, 1, 0], 12]&, Power[-1, #]&};
postProcessAFunc = {
   # &, # &, 
   With[{preDecimalDigits = If[# < 1, 0, Max[0, Ceiling[Log[10, #]]]]}, 
        If[# < 0, NumberForm[#, {Max[1, preDecimalDigits], 4 - preDecimalDigits}],
                  NumberForm[#, {preDecimalDigits, 5 - preDecimalDigits}]]]&
};
Table[GetTableRowEntry[n], {n, 1, nupper}];
tables = Append[tables, %];

(****)
afuncs = {MoebiusMu, LiouvilleLambda};
postProcessAFunc = {#&, #&};
Table[GetTableRowEntry[n], {n, 1, nupper}];
tables = Append[tables, %];

(****)
Map[TeXForm, tables]

\end{lstlisting}

\SubsectionGTThesisFormatted{Source code for the LGF correlation visualizations}
\lstinputlisting[caption={Mathematica code for the LGF correlation visualizations},captionpos=t]{ThesisPackage.m}

\begin{lstlisting}[caption={Mathematica notebook code for the LGF correlation visualizations},captionpos=t]
<< "ThesisPackage.m"
tuxPenguinImage = Import["BGEnhanced-800px-Tux.png"];

NUpper = 32;
DnkKernelFunc = Function[{n, k}, 
                         If[Divisible[n, k], MoebiusMu[n / k], 0]];
CogfFunc = Function[q, QPochhammer[q, q]];
returnedImages = ImageCorrelateAndProject[
	         tuxPenguinImage, NUpper, CogfFunc, DnkKernelFunc];
NCCScalingFactor -> returnedImages[[3]]
Show[GraphicsGrid[{
	{returnedImages[[1]], returnedImages[[2]]}, 
	{ImageAdjust[returnedImages[[1]]], tuxPenguinImage}, 
	{ImageAdjust[returnedImages[[2]]], tuxPenguinImage}}
     ]
] // Magnify[#, 1.0] &
GraphicsRow[{ImageAdjust[returnedImages[[1]]], returnedImages[[1]]}, 
            Frame -> All] // Image
ImageAdjust[returnedImages[[1]]] // Image

(**** : To generate the images in the introduction,           : ****)
(**** : repeat the above procedure with each of the following : ****
(**** : definitions for `CogfFunc`:                           : ****)
CogfFunc = Function[q, 1 / QPochhammer[q, q]];
CogfFunc = Function[q, QPochhammer[q^2, q^5]];
CogfFunc = Function[q, q * Exp[-q^4]];
CogfFunc = Function[q, q / (Exp[q] - 1)];
CogfFunc = Function[q, 1 / Power[1 - q, 3/2]];
CogfFunc = Function[q, 1 / (1 - q)];
CogfFunc = Function[q, q/(1 - q - q^2)];

(**** : Same thing for the partition product OGFs in the : ****)
(**** : last section of the thesis:                      : ****)
NUpper = 42;
CogfFunc1 = Function[q, QPochhammer[q, q]];
CogfFunc2 = Function[q, QPochhammer[q, q^5] * 
                     QPochhammer[q^4, q^5] * QPochhammer[q^5, q^5]];
CogfFunc3 = Function[q, QPochhammer[q^2, q^5] * QPochhammer[q^3, q^5] * 
                     QPochhammer[q^5, q^5]];
GraphicsRow[{c1Image, c2Image, c3Image}, 
            Spacings -> 20] // Magnify[#, 2]&
\end{lstlisting}

\begin{table}[h!]
\caption{Sign-smoothing transformations: $f_1 \equiv \phi$ and $f_2 \equiv \omega + \mathds{1}$}
\label{table_s12fn_SignSmoothingTF_v1}

\begin{equation*}
\begin{array}{|c|ccc|ccc|} 
 \hline
 n & f_1^{-1}(n) & \widehat{s}_1[f_1^{-1}](n) & \widehat{s}_2[f_1^{-1}](n) & 
     f_2^{-1}(n) & \widehat{s}_1[f_2^{-1}](n) & 
     \widehat{s}_2[f_2^{-1}](n) \\ \hline
 1 & 1 & 1 & -1 & 1 & 1 & -1 \\
 2 & -1 & 0 & -2 & -2 & -1 & -3 \\
 3 & -2 & -2 & 1 & -2 & -3 & 0 \\
 4 & -1 & -2 & 0 & 2 & 0 & 3 \\
 5 & -4 & -7 & 1 & -2 & -4 & 1 \\
 6 & 2 & -6 & 6 & 5 & 0 & 6 \\
 7 & -6 & -13 & 7 & -2 & -1 & 8 \\
 8 & -1 & -20 & 8 & -2 & -8 & 3 \\
 9 & -2 & -23 & 5 & 2 & -2 & 3 \\
 10 & 4 & -31 & 15 & 5 & -1 & 10 \\
 11 & -10 & -47 & 24 & -2 & -4 & 14 \\
 12 & 2 & -60 & 23 & -7 & -12 & 2 \\
 13 & -12 & -82 & 32 & -2 & -11 & 8 \\
 14 & 6 & -107 & 47 & 5 & -12 & 25 \\
 15 & 8 & -113 & 37 & 5 & -9 & 15 \\
 16 & -1 & -159 & 39 & 2 & -7 & 13 \\
 \hline
\end{array}
\end{equation*}
\end{table}

\begin{table}[h!]
\caption{Sign-smoothing transformations: $f_1 \equiv d \equiv \sigma_0$ and $f_2 \equiv \sigma \equiv \sigma_1)$}
\label{table_s12fn_SignSmoothingTF_v2}

\begin{equation*}
\begin{array}{|c|ccc|ccc|} 
 \hline
 n & f_1^{-1}(n) & \widehat{s}_1[f_1^{-1}](n) & \widehat{s}_2[f_1^{-1}](n) & 
     f_2^{-1}(n) & \widehat{s}_1[f_2^{-1}](n) & 
     \widehat{s}_2[f_2^{-1}](n) \\ \hline
 1 & 1 & 1 & -1 & 1 & 1 & -1 \\
 2 & -2 & -1 & -3 & -3 & -2 & -4 \\
 3 & -2 & -3 & 0 & -4 & -6 & 1 \\
 4 & 1 & -1 & 2 & 2 & -3 & 5 \\
 5 & -2 & -5 & 0 & -6 & -12 & 4 \\
 6 & 4 & -2 & 5 & 12 & -3 & 18 \\
 7 & -2 & -4 & 6 & -8 & -11 & 22 \\
 8 & 0 & -9 & 4 & 0 & -23 & 16 \\
 9 & 1 & -6 & 4 & 3 & -12 & 16 \\
 10 & 4 & -7 & 8 & 18 & -11 & 39 \\
 11 & -2 & -10 & 13 & -12 & -23 & 58 \\
 12 & -2 & -16 & 7 & -8 & -40 & 31 \\
 13 & -2 & -17 & 12 & -14 & -45 & 56 \\
 14 & 4 & -21 & 22 & 24 & -48 & 109 \\
 15 & 4 & -18 & 18 & 24 & -20 & 75 \\
 16 & 0 & -23 & 16 & 0 & -42 & 66 \\
 \hline
\end{array}
\end{equation*}
\end{table}

\begin{table}[h!]
\caption{Sign-smoothing transformations: $f_1 \equiv \pi + \varepsilon$ and $f_2 \equiv \Omega + \mathds{1}$}
\label{table_s12fn_SignSmoothingTF_v3}

\begin{equation*}
\begin{array}{|c|ccc|ccc|} 
 \hline
 n & f_1^{-1}(n) & \widehat{s}_1[f_1^{-1}](n) & \widehat{s}_2[f_1^{-1}](n) & 
     f_2^{-1}(n) & \widehat{s}_1[f_2^{-1}](n) & 
     \widehat{s}_2[f_2^{-1}](n) \\ \hline
 1 & 1 & 1 & -1 & 1 & 1 & -1 \\
 2 & -1 & 0 & -2 & -2 & -1 & -3 \\
 3 & -2 & -2 & 1 & -2 & -3 & 0 \\
 4 & -1 & -2 & 0 & 1 & -1 & 2 \\
 5 & -3 & -6 & 0 & -2 & -5 & 0 \\
 6 & 1 & -6 & 4 & 5 & -1 & 6 \\
 7 & -4 & -11 & 4 & -2 & -3 & 7 \\
 8 & -1 & -17 & 5 & 0 & -8 & 4 \\
 9 & 0 & -19 & 1 & 1 & -4 & 5 \\
 10 & 2 & -26 & 7 & 5 & -4 & 10 \\
 11 & -5 & -37 & 13 & -2 & -6 & 15 \\
 12 & 3 & -45 & 13 & -4 & -13 & 6 \\
 13 & -6 & -61 & 18 & -2 & -12 & 12 \\
 14 & 2 & -80 & 22 & 5 & -14 & 26 \\
 15 & 6 & -85 & 19 & 5 & -9 & 19 \\
 16 & 1 & -114 & 21 & 0 & -11 & 16 \\
 \hline
\end{array}
\end{equation*}
\end{table}

\begin{table}[h!]
\caption{Sign-smoothing transformations: $f_1 \equiv \Lambda + \varepsilon$ and $f_2(n) \equiv (-1)^n$}
\label{table_s12fn_SignSmoothingTF_v4}

\begin{equation*}
\begin{array}{|c|ccc|ccc|} 
 \hline
 n & f_1^{-1}(n) & \widehat{s}_1[f_1^{-1}](n) & \widehat{s}_2[f_1^{-1}](n) & 
     f_2^{-1}(n) & \widehat{s}_1[f_2^{-1}](n) & 
     \widehat{s}_2[f_2^{-1}](n) \\ \hline
 1 & 1.000000 & 1.000000 & -1.000000 & -1 & -1 & 1 \\
 2 & -0.693147 & 0.306852 & -1.693147 & -1 & -2 & 0 \\
 3 & -1.098612 & -0.791759 & 0.405465 & 1 & -1 & -2 \\
 4 & -0.212694 & -0.00445 & -0.114081 & -2 & -4 & -2 \\
 5 & -1.609437 & -2.30703 & -0.29640 & 1 & -4 & -3 \\
 6 & 1.523000 & -0.88265 & 2.53790 & 1 & -3 & -1 \\
 7 & -1.945910 & -2.73440 & 2.66168 & 1 & -6 & -3 \\
 8 & -0.065265 & -5.20086 & 2.68285 & -4 & -10 & -9 \\
 9 & 0.108336 & -4.57398 & 1.15160 & 0 & -12 & -6 \\
 10 & 2.23115 & -5.9026 & 4.70078 & 1 & -13 & -3 \\
 11 & -2.397895 & -9.1498 & 8.24036 & 1 & -19 & -9 \\
 12 & -0.06049 & -12.9305 & 5.62768 & 2 & -18 & -10 \\
 13 & -2.564949 & -16.3708 & 8.63533 & 1 & -24 & -9 \\
 14 & 2.69760 & -21.3719 & 14.6211 & 1 & -30 & -11 \\
 15 & 3.53629 & -19.5413 & 11.5039 & -1 & -35 & -9 \\
 16 & -0.02002 & -28.0212 & 10.0733 & -8 & -50 & -24 \\
 \hline
\end{array}
\end{equation*}
\end{table}

\begin{table}[h!]
\caption{Sign-smoothing transformations: $f_1 \equiv \mu$ and $f_2 \equiv \lambda$}
\label{table_s12fn_SignSmoothingTF_v5}

\begin{equation*}
\begin{array}{|c|ccc|ccc|} 
 \hline
 n & f_1^{-1}(n) & \widehat{s}_1[f_1^{-1}](n) & \widehat{s}_2[f_1^{-1}](n) & 
     f_2^{-1}(n) & \widehat{s}_1[f_2^{-1}](n) & 
     \widehat{s}_2[f_2^{-1}](n) \\ \hline
 1 & 1 & 1 & -1 & 1 & 1 & -1 \\
 2 & 1 & 2 & 0 & 1 & 2 & 0 \\
 3 & 1 & 3 & 0 & 1 & 3 & 0 \\
 4 & 1 & 5 & -1 & 0 & 4 & -2 \\
 5 & 1 & 7 & 0 & 1 & 6 & -1 \\
 6 & 1 & 10 & -1 & 1 & 9 & -1 \\
 7 & 1 & 14 & 0 & 1 & 12 & -1 \\
 8 & 1 & 19 & -1 & 0 & 16 & -3 \\
 9 & 1 & 25 & -1 & 0 & 20 & -2 \\
 10 & 1 & 33 & -1 & 1 & 27 & -1 \\
 11 & 1 & 43 & -1 & 1 & 35 & -3 \\
 12 & 1 & 55 & -1 & 0 & 44 & -4 \\
 13 & 1 & 70 & -2 & 1 & 56 & -5 \\
 14 & 1 & 88 & -1 & 1 & 70 & -3 \\
 15 & 1 & 110 & -2 & 1 & 87 & -5 \\
 16 & 1 & 137 & -2 & 0 & 108 & -8 \\
 \hline
\end{array}
\end{equation*}
\end{table}

\begin{table}[h!]
\caption{Sign-non-smoothing transformations: $f_1 \equiv \phi$ and $f_2 \equiv \omega + \mathds{1}$}
\label{table_s12Hatfn_SignSmoothingTF_v1}

\begin{equation*}
\begin{array}{|c|ccc|ccc|} 
 \hline
 n & f_1^{-1}(n) & \widehat{s}_1[f_1^{-1}](n) & \widehat{s}_2[f_1^{-1}](n) & 
     f_2^{-1}(n) & \widehat{s}_1[f_2^{-1}](n) & 
     \widehat{s}_2[f_2^{-1}](n) \\ \hline 
 1 & 1 & 1 & -1 & 1 & 1 & -1 \\
 2 & -1 & -2 & 0 & -2 & -3 & -1 \\
 3 & -2 & -2 & 1 & -2 & -1 & 2 \\
 4 & -1 & 2 & -2 & 2 & 6 & -1 \\
 5 & -4 & -1 & 7 & -2 & -2 & 5 \\
 6 & 2 & 8 & -8 & 5 & 6 & -2 \\
 7 & -6 & -5 & 21 & -2 & -7 & 8 \\
 8 & -1 & 2 & -30 & -2 & -6 & -11 \\
 9 & -2 & 3 & 51 & 2 & 6 & 15 \\
 10 & 4 & 1 & -69 & 5 & 1 & -14 \\
 11 & -10 & -11 & 120 & -2 & -2 & 28 \\
 12 & 2 & -2 & -159 & -7 & -14 & -36 \\
 13 & -12 & -4 & 252 & -2 & 9 & 52 \\
 14 & 6 & 9 & -333 & 5 & 16 & -61 \\
 15 & 8 & 19 & 479 & 5 & 7 & 85 \\
 16 & -1 & -27 & -643 & 2 & -11 & -101 \\
 \hline
\end{array}
\end{equation*}
\end{table}

\begin{table}[h!]
\caption{Sign-non-smoothing transformations: $f_1 \equiv d \equiv \sigma_0$ and $f_2 \equiv \sigma \equiv \sigma_1$}
\label{table_s12Hatfn_SignSmoothingTF_v2}

\begin{equation*}
\begin{array}{|c|ccc|ccc|} 
 \hline
 n & d^{-1}(n) & \widehat{s}_1[d^{-1}](n) & \widehat{s}_2[d^{-1}](n) & 
     \sigma^{-1}(n) & \widehat{s}_1[\sigma^{-1}](n) & 
     \widehat{s}_2[\sigma^{-1}](n) \\ \hline
 1 & 1 & 1 & -1 & 1 & 1 & -1 \\
 2 & -2 & -3 & -1 & -3 & -4 & -2 \\
 3 & -2 & -1 & 2 & -4 & -2 & 5 \\
 4 & 1 & 5 & -2 & 2 & 9 & -5 \\
 5 & -2 & -1 & 6 & -6 & -4 & 16 \\
 6 & 4 & 6 & -5 & 12 & 17 & -10 \\
 7 & -2 & -6 & 12 & -8 & -17 & 32 \\
 8 & 0 & -3 & -16 & 0 & -7 & -38 \\
 9 & 1 & 2 & 24 & 3 & 10 & 60 \\
 10 & 4 & -1 & -28 & 18 & 5 & -59 \\
 11 & -2 & -2 & 47 & -12 & -19 & 116 \\
 12 & -2 & -8 & -59 & -8 & -28 & -135 \\
 13 & -2 & 5 & 86 & -14 & 17 & 216 \\
 14 & 4 & 9 & -106 & 24 & 44 & -237 \\
 15 & 4 & 8 & 146 & 24 & 36 & 337 \\
 16 & 0 & -11 & -182 & 0 & -60 & -402 \\
 \hline
\end{array}
\end{equation*}
\end{table}

\begin{table}[h!]
\caption{Sign-non-smoothing transformations: $f_1 \equiv \pi + \varepsilon$ and $f_2 \equiv \Omega + \mathds{1}$}
\label{table_s12Hatfn_SignSmoothingTF_v3}

\begin{equation*}
\begin{array}{|c|ccc|ccc|} 
 \hline
 n & f_1^{-1}(n) & \widehat{s}_1[f_1^{-1}](n) & \widehat{s}_2[f_1^{-1}](n) & 
     f_2^{-1}(n) & \widehat{s}_1[f_2^{-1}](n) & 
     \widehat{s}_2[f_2^{-1}](n) \\ \hline
 1 & 1 & 1 & -1 & 1 & 1 & -1 \\
 2 & -1 & -2 & 0 & -2 & -3 & -1 \\
 3 & -2 & -2 & 1 & -2 & -1 & 2 \\
 4 & -1 & 2 & -2 & 1 & 5 & -2 \\
 5 & -3 & 0 & 6 & -2 & -1 & 6 \\
 6 & 1 & 6 & -8 & 5 & 7 & -4 \\
 7 & -4 & -3 & 18 & -2 & -7 & 11 \\
 8 & -1 & 1 & -27 & 0 & -4 & -14 \\
 9 & 0 & 3 & 43 & 1 & 2 & 21 \\
 10 & 2 & -2 & -61 & 5 & 0 & -22 \\
 11 & -5 & -7 & 99 & -2 & -2 & 39 \\
 12 & 3 & -1 & -133 & -4 & -11 & -48 \\
 13 & -6 & -5 & 202 & -2 & 8 & 70 \\
 14 & 2 & 2 & -272 & 5 & 12 & -82 \\
 15 & 6 & 13 & 381 & 5 & 9 & 113 \\
 16 & 1 & -12 & -511 & 0 & -13 & -136 \\
 \hline
\end{array}
\end{equation*}
\end{table}

\begin{table}[h!]
\caption{Sign-non-smoothing transformations: $f_1 \equiv \Lambda + \varepsilon$ and $f_2(n) \equiv (-1)^n$}
\label{table_s12Hatfn_SignSmoothingTF_v4}

\begin{equation*}
\begin{array}{|c|ccc|ccc|} 
 \hline
 n & f_1^{-1}(n) & \widehat{s}_1[f_1^{-1}](n) & \widehat{s}_2[f_1^{-1}](n) & 
     f_2^{-1}(n) & \widehat{s}_1[f_2^{-1}](n) & 
     \widehat{s}_2[f_2^{-1}](n) \\ \hline
 1 & 1.000000 & 1.000000 & -1.000000 & -1 & -1 & 1 \\
 2 & -0.693147 & -1.693147 & 0.306852 & -1 & 0 & -2 \\
 3 & -1.098612 & -1.405465 & -0.208240 & 1 & 3 & 2 \\
 4 & -0.212694 & 1.579065 & 0.30239 & -2 & -2 & -6 \\
 5 & -1.609437 & -0.29813 & 1.09879 & 1 & 2 & 7 \\
 6 & 1.523000 & 4.34513 & -0.27339 & 1 & 1 & -11 \\
 7 & -1.945910 & -2.55261 & 3.62496 & 1 & -2 & 15 \\
 8 & -0.065265 & 0.25903 & -5.1718 & -4 & -6 & -27 \\
 9 & 0.108336 & 1.21367 & 9.2977 & 0 & 0 & 34 \\
 10 & 2.23115 & -0.51996 & -11.3478 & 1 & 7 & -51 \\
 11 & -2.397895 & -3.42708 & 23.0725 & 1 & -1 & 67 \\
 12 & -0.06049 & -3.44910 & -29.7571 & 2 & 2 & -94 \\
 13 & -2.564949 & 0.35117 & 48.9493 & 1 & -4 & 123 \\
 14 & 2.69760 & 4.17862 & -62.779 & 1 & 0 & -169 \\
 15 & 3.53629 & 6.66814 & 91.464 & -1 & -7 & 217 \\
 16 & -0.02002 & -9.33079 & -120.591 & -8 & -4 & -300 \\
 \hline
\end{array}
\end{equation*}
\end{table}

\begin{table}[ht!]
\caption{Sign-non-smoothing transformations: $f_1 \equiv \mu$ and $f_2 \equiv \lambda$}
\label{table_s12Hatfn_SignSmoothingTF_v5}

\begin{equation*}
\begin{array}{|c|ccc|ccc|} 
 \hline
 n & f_1^{-1}(n) & \widehat{s}_1[f_1^{-1}](n) & \widehat{s}_2[f_1^{-1}](n) & 
     f_2^{-1}(n) & \widehat{s}_1[f_2^{-1}](n) & 
     \widehat{s}_2[f_2^{-1}](n) \\ \hline 1 & 1 & 1 & -1 & 1 & 1 & -1 \\
 1 & 1 & 1 & -1 & 1 & 1 & -1 \\
 2 & 1 & 0 & 2 & 1 & 0 & 2 \\
 3 & 1 & -1 & -4 & 1 & -1 & -4 \\
 4 & 1 & -1 & 7 & 0 & -2 & 6 \\
 5 & 1 & -1 & -12 & 1 & 0 & -11 \\
 6 & 1 & 0 & 19 & 1 & 1 & 17 \\
 7 & 1 & 0 & -30 & 1 & 0 & -27 \\
 8 & 1 & 1 & 45 & 0 & 0 & 39 \\
 9 & 1 & 1 & -67 & 0 & 0 & -58 \\
 10 & 1 & 1 & 97 & 1 & 3 & 83 \\
 11 & 1 & 1 & -139 & 1 & 1 & -119 \\
 12 & 1 & 1 & 195 & 0 & 0 & 164 \\
 13 & 1 & 0 & -272 & 1 & 0 & -229 \\
 14 & 1 & 0 & 373 & 1 & 0 & 311 \\
 15 & 1 & 0 & -508 & 1 & -1 & -423 \\
 16 & 1 & -1 & 684 & 0 & -2 & 564 \\
 17 & 1 & -1 & -915 & 1 & -1 & -754 \\
 18 & 1 & -1 & 1212 & 0 & -1 & 991 \\
 19 & 1 & -1 & -1597 & 1 & 0 & -1304 \\
 20 & 1 & -1 & 2087 & 0 & 0 & 1693 \\
 21 & 1 & -1 & -2714 & 1 & 0 & -2198 \\
 22 & 1 & -1 & 3506 & 1 & 0 & 2825 \\
 23 & 1 & 0 & -4508 & 1 & -1 & -3626 \\
 24 & 1 & 0 & 5763 & 0 & 1 & 4613 \\
 25 & 1 & 0 & -7338 & 0 & -2 & -5863 \\
 26 & 1 & 0 & 9296 & 1 & 1 & 7399 \\
 27 & 1 & 1 & -11732 & 0 & 1 & -9319 \\
 28 & 1 & 1 & 14742 & 0 & 2 & 11668 \\
 29 & 1 & 1 & -18460 & 1 & 2 & -14584 \\
 30 & 1 & 1 & 23025 & 1 & 0 & 18133 \\
 31 & 1 & 1 & -28629 & 1 & 0 & -22505 \\
 32 & 1 & 1 & 35471 & 0 & -1 & 27803 \\
 \hline
\end{array}
\end{equation*}
\end{table}

\label{page_LastPageOfDocumentMainMatterCount}
\clearpage

\newpage 
\renewcommand{\refname}{\uppercase{References}} 
\addcontentsline{toc}{section}{References}
\bibliographystyle{plain}

\label{TheLastPage}

\end{document}